\newcommand{\+}{\nobreakdash-}
\renewcommand{\:}{\colon}
\newcommand{\rarrow}{\longrightarrow}
\newcommand{\larrow}{\longleftarrow}
\newcommand{\ot}{\otimes}
\DeclareMathOperator{\Hom}{Hom}
\DeclareMathOperator{\Ext}{Ext}
\DeclareMathOperator{\Tor}{Tor}
\DeclareMathOperator{\coker}{coker}
\DeclareMathOperator{\Ten}{Ten}
\DeclareMathOperator{\Sym}{Sym}
\newcommand{\Modl}{{\operatorname{\mathsf{--Mod}}}}
\newcommand{\Modr}{{\operatorname{\mathsf{Mod--}}}}
\newcommand{\Contra}{{\operatorname{\mathsf{--Contra}}}}
\newcommand{\Contrar}{{\operatorname{\mathsf{Contra--}}}}
\newcommand{\Comodl}{{\operatorname{\mathsf{--Comod}}}}
\newcommand{\Comodr}{{\operatorname{\mathsf{Comod--}}}}
\newcommand{\comodl}{{\operatorname{\mathsf{--comod}}}}
\newcommand{\comodr}{{\operatorname{\mathsf{comod--}}}}
\newcommand{\Vect}{{\operatorname{\mathsf{--Vect}}}}
\newcommand{\tors}{{\operatorname{\mathsf{-tors}}}}
\newcommand{\ctra}{{\operatorname{\mathsf{-ctra}}}}
\newcommand{\inj}{{\mathsf{inj}}}
\newcommand{\proj}{{\mathsf{proj}}}
\newcommand{\bb}{{\mathsf{b}}}
\newcommand{\co}{{\mathsf{co}}}
\newcommand{\ctr}{{\mathsf{ctr}}}
\newcommand{\rop}{{\mathrm{op}}}
\newcommand{\sop}{{\mathsf{op}}}
\newcommand{\lrarrow}{\mskip.5\thinmuskip\relbar\joinrel\relbar
   \joinrel\rightarrow\mskip.5\thinmuskip\relax}
\newcommand{\llarrow}{\mskip.5\thinmuskip\leftarrow\joinrel\relbar
   \joinrel\relbar\mskip.5\thinmuskip}
\newcommand{\bu}{{\text{\smaller\smaller$\scriptstyle\bullet$}}}
\newcommand{\sA}{\mathsf A}
\newcommand{\sB}{\mathsf B}
\newcommand{\sD}{\mathsf D}
\newcommand{\sE}{\mathsf E}
\newcommand{\boL}{\mathbb L}
\newcommand{\Section}[1]{\bigskip\section{#1}\medskip}
\theoremstyle{plain}
\newtheorem{thm}{Theorem}[section]
\newtheorem{prop}[thm]{Proposition}
\newtheorem{lem}[thm]{Lemma}
\newtheorem{cor}[thm]{Corollary}
\theoremstyle{definition}
\newtheorem{rem}[thm]{Remark}
\newtheorem{ex}[thm]{Example}
\begin{document}

\title{Homological full-and-faithfulness \\ of comodule inclusion \\
and contramodule forgetful functors}

\author{Leonid Positselski}

\address{Institute of Mathematics, Czech Academy of Sciences \\
\v Zitn\'a~25, 115~67 Prague~1 \\ Czech Republic} 

\email{positselski@math.cas.cz}

\begin{abstract}
 In this paper we consider a conilpotent coalgebra $C$ over a field~$k$.
 Let $\Upsilon\:C\Comodl\rarrow C^*\Modl$ be the natural functor of
inclusion of the category of $C$\+comodules into the category of
$C^*$\+modules, and let $\Theta\:C\Contra\rarrow C^*\Modl$ be
the natural forgetful functor.
 We prove that the functor $\Upsilon$ induces a fully faithful
triangulated functor on bounded (below) derived categories if and only
if the functor $\Theta$ induces a fully faithful triangulated functor on
bounded (above) derived categories, and if and only if the $k$\+vector
space $\Ext_C^n(k,k)$ is finite-dimensional for all $n\ge0$.
 We call such coalgebras ``weakly finitely Koszul''.
\end{abstract}

\maketitle

\tableofcontents

\section{Introduction}
\medskip

 In this paper we work with coassociative, counital coalgebras over
a field~$k$.
 For any such coalgebra $C$, the dual $k$\+vector space $C^*$ is 
naturally an associative, unital algebra over~$k$.
 One has to choose between two opposite ways of defining
the multiplication on~$C^*$.
 We prefer the notation in which any left $C$\+comodule becomes
a left $C^*$\+module, and any right $C$\+comodule becomes
a right $C^*$\+module.

 It is known, at least, since 1960s that the resulting exact functor
$\Upsilon\:C\Comodl\rarrow C^*\Modl$ is fully
faithful~\cite[Section~2.1]{Swe}.
 Following the terminology of the book~\cite{Swe}, $C^*$\+modules coming
from $C$\+comodules are often called ``rational'' in the literature.
 The essential image of the functor $\Upsilon$ is a \emph{hereditary
pretorsion class} in $C^*\Modl$: this means that the full subcategory
$\Upsilon(C\Comodl)$ is closed under subobjects, quotient objects,
and infinite coproducts in $C^*\Modl$.
 However, this full subcategory \emph{need not} be closed under
extensions.
 In other words, $\Upsilon(C\Comodl)$ is \emph{not} always a torsion
class or a Serre subcategory in $C^*\Modl$.
 A module extension of two comodules need not be a comodule.

 When is the essential image of $\Upsilon$ closed under extensions in
$C^*\Modl$\,?
 There is a vast body of literature on this topic, including
the papers~\cite{Rad,Shu,Lin,CNO,Cua,TT,Iov}.
 In this paper we discuss further questions going in this direction,
under an additional assumption.
 The assumption is that the coalgebra $C$ is \emph{conilpotent}.
 The conilpotent coalgebras were called ``pointed irreducible'' in
the terminology of~\cite{Swe}.
 For a conilpotent coalgebra $C$, the full subcategory $C\Comodl$
is closed under extensions in $C^*\Modl$ if and only if the coalgebra
$C$ is \emph{finitely cogenerated}~\cite[Corollary~2.4 and
Section~2.5]{Rad}, \cite[Theorem~4.6]{Shu}, \cite[Corollary~21]{Lin},
\cite[Theorem~2.8]{CNO}, \cite[Proposition~3.13 and
Corollary~3.14]{Cua}, \cite[Lemma~1.2 and Theorem~4.8]{Iov}.
 If $C$ is not finitely cogenerated, then there is a two-dimensional
$C^*$\+module which is not a $C$\+comodule, but an extension of two
one-dimensional $C$\+comodules.

 Let $\sA$ and $\sB$ be two abelian categories, and $\Phi\:\sB\rarrow
\sA$ be a fully faithful exact functor.
 Then the essential image of $\Phi$ is a full subcategory closed under
kernels and cokernels in~$\sA$.
 The full subcategory $\Phi(\sB)$ is closed under extensions in $\sA$ if
and only if the functor $\Phi$ induces an isomorphism
\begin{equation} \label{induced-Ext1-map}
 \Phi\:\Ext^1_\sB(X,Y)\lrarrow\Ext^1_\sA(\Phi(X),\Phi(Y))
\end{equation}
for all objects $X$, $Y\in\sB$.
 Generally speaking, for a fully faithful exact functor $\Phi$,
the map~\eqref{induced-Ext1-map} is a monomorphism, but not necessarily
an isomorphism.

 Thus, the following question is a natural extension of the question
about extension closedness of $\Upsilon(C\Comodl)$ in $C^*\Modl$.
 Put $\sB=C\Comodl$, \ $\sA=C^*\Modl$, and $\Phi=\Upsilon$.
 Consider the induced maps on the Ext spaces
\begin{equation} \label{induced-Ext-i-map}
 \Phi\:\Ext^i_\sB(X,Y)\lrarrow\Ext^i_\sA(\Phi(X),\Phi(Y)).
\end{equation}
 When is the map~\eqref{induced-Ext-i-map} in isomorphism for
all $X$, $Y\in\sB$\,?
 Generally speaking, for an exact functor of abelian categories
$\Phi\:\sB\rarrow\sA$, all one can say is that
the map~\eqref{induced-Ext-i-map} is a monomorphism for $i=n+1$ and
all $X$, $Y\in\sB$ whenever it is an isomorphism for $i=n$ and
all $X$, $Y\in\sB$.

 Let $C$ be a conilpotent coalgebra over a field~$k$.
 Then the one-dimensional $k$\+vector space $k$ has a unique left
$C$\+comodule structure (and a unique right $C$\+comodule structure)
provided by the unique coaugmentation of~$C$.
 In this context, we show that the maps~\eqref{induced-Ext-i-map} are
isomorphisms for $\Phi=\Upsilon$ and all $1\le i\le n$ if and only if
the $k$\+vector spaces $\Ext^i_C(k,k)$ (computed in the abelian
category of left or right $C$\+comodules) are finite-dimensional
for all $1\le i\le n$.
 In particular, $\Ext^1_C(k,k)$ is the vector space of cogenerators of
a conilpotent coalgebra~$C$; so $C$ is finitely cogenerated if and only
if $\Ext^1_C(k,k)$ is finite-dimensional.

 Alongside with the abelian categories of left and right
\emph{comodules} over a coalgebra $C$, there are much less familiar,
but no less natural abelian categories of left and right
\emph{$C$\+contramodules}~\cite{Prev}.
 Endowing the dual vector space $C^*$ to a coalgebra $C$ with
the natural algebra structure in which any left $C$\+comodule is a left
$C^*$\+module and any right $C$\+comodule is a right $C^*$\+module,
one  also obtains a natural left $C^*$\+module structure on any left
$C$\+contramodule.
 So there is an exact forgetful functor $\Theta\:C\Contra\rarrow
C^*\Modl$.

 The functor $\Theta$ is \emph{not} in general fully faithful.
 It was shown in the paper~\cite[Theorem~2.1]{Psm} that that
the functor $\Theta$ is fully faithful for any finitely cogenerated
conilpotent coalgebra~$C$.
 In this paper we demonstrate a counterexample proving the (much easier)
converse implication: if a conilpotent coalgebra $C$ is \emph{not}
finitely cogenerated, then the functor $\Theta$ is \emph{not} fully
faithful.

 More generally, the maps~\eqref{induced-Ext-i-map} are
isomorphisms for $\Phi=\Theta$ and all $0\le i\le n-1$ (for separated
contramodules $Y$; and also for $0\le i\le n-2$ and arbitrary~$Y$) if 
and only if the $k$\+vector spaces $\Ext^i_C(k,k)$ are
finite-dimensional for all $1\le i\le n$.
 Summarizing the assertions for comodules and contramodules, let us
state the following theorem.

\begin{thm} \label{cohomological-degree-careful-main-theorem}
 Let $C$ be a conilpotent coalgebra over a field~$k$ and $n\ge1$ be
an integer.
 Then the following five conditions are equivalent:
\begin{enumerate}
\renewcommand{\theenumi}{\roman{enumi}}
\item the map
$$
 \Ext^i_C(L,M)\lrarrow\Ext^i_{C^*}(L,M)
$$
induced by the inclusion functor\/ $\Upsilon\:C\Comodl\rarrow C^*\Modl$
is an isomorphism for all left $C$\+comodules $L$ and $M$, and
all\/ $1\le i\le n$;
\item the map
$$
 \Ext^i_{C^\rop}(L,M)\lrarrow\Ext^i_{C^*{}^\rop}(L,M)
$$
induced by the inclusion functor $\Comodr C\rarrow\Modr C^*$ is
an isomorphism for all right $C$\+comodules $L$ and $M$, and
all\/ $1\le i\le n$;
\item the map
\begin{equation} \label{contramodule-ext-map}
 \Ext^{C,i}(P,Q)\lrarrow\Ext^i_{C^*}(P,Q)
\end{equation}
induced by the forgetful functor\/ $\Theta\:C\Contra\rarrow C^*\Modl$
is an isomorphism for all left $C$\+contramodules $P$, all
\emph{separated} left $C$\+contramodules $Q$, and all\/ $0\le i\le n-1$;
\item the $k$\+vector space\/ $\Ext^i_C(k,k)$ is finite-dimensional
for all\/ $1\le i\le n$;
\item the $k$\+vector space\/ $\Ext^{C,i}(k,k)$ is finite-dimensional
for all\/ $1\le i\le n$.
\end{enumerate}
 If any one of the equivalent conditions~\textup{(i\+-v)} holds, then
the map~\eqref{contramodule-ext-map} is an isomorphism for \emph{all}
left $C$\+contramodules $P$ and $Q$ and all\/ $0\le i\le n-2$.
\end{thm}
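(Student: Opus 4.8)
\emph{Logical skeleton.} Since $C^{\rop}$ is again conilpotent, with $(C^{\rop})^*=(C^*)^{\rop}$ and a canonical isomorphism $\Ext^\bullet_{C^{\rop}}(k,k)\cong\Ext^\bullet_C(k,k)$, conditions (i) and (ii) are interchanged by passage to the opposite coalgebra; so it suffices to prove (i)$\Leftrightarrow$(iv). For (iv)$\Leftrightarrow$(v) I would observe that $\Ext^i_C(k,k)$ is the cohomology of the reduced cobar complex $k\rarrow\bar C\rarrow\bar C\ot_k\bar C\rarrow\cdots$ (apply $\Hom_C(k,-)$ to the cobar injective resolution of the trivial left comodule $k$), while $\Ext^{C,i}(k,k)$ is the cohomology of its $k$\+linear dual complex $\cdots\larrow(\bar C\ot_k\bar C)^*\larrow\bar C^*\larrow k$ (apply $\Hom^C(-,k)$ to the bar projective resolution of the trivial left contramodule $k$); since vector-space duality is exact, $\Ext^{C,i}(k,k)\cong\bigl(\Ext^i_C(k,k)\bigr)^*$, and a $k$\+vector space is finite-dimensional if and only if its dual is. It remains to prove (i)$\Leftrightarrow$(iv), (iv)$\Leftrightarrow$(iii), and the final refinement.

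\emph{Reduction on the comodule side.} Recall that $\Upsilon$ is exact and fully faithful, with a right adjoint $\Gamma$ (the largest-rational-submodule functor). The key point is the chain of identifications, for a left comodule $L$ and any $k$\+vector space $W$,
$$
 \Hom_{C^*}(\Upsilon L,\ C\ot_k W)\ =\ \Hom_C(L,\ C\ot_k W)\ =\ \Hom_k(L,W),
$$
the first equality by full faithfulness (the target is a comodule), the second because $W\mapsto C\ot_k W$ is right adjoint to the forgetful functor $C\Comodl\rarrow k\Vect$; in particular $\Hom_{C^*}(k,N)=\Hom_C(k,N)$ for every comodule $N$, so the $C^*$\+module socle of a comodule agrees with its comodule socle. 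Resolving an arbitrary comodule $M$ by a cofree coresolution $0\rarrow M\rarrow C\ot_k W^0\rarrow C\ot_k W^1\rarrow\cdots$ (cofree comodules being injective in $C\Comodl$) and feeding it into the spectral sequence $\Ext^p_{C^*}(\Upsilon L,\ C\ot_k W^q)\Rightarrow\Ext^{p+q}_{C^*}(\Upsilon L,\Upsilon M)$, one sees that condition (i) is \emph{equivalent} to the acyclicity statement
$$
 (\ast)\qquad \Ext^p_{C^*}(\Upsilon L,\ C\ot_k W)=0\quad\text{for all comodules }L,\text{ all }W,\text{ and }1\le p\le n
$$
(the implication (i)$\Rightarrow(\ast)$ coming from taking $M$ itself cofree, since $\Ext^{\ge1}_C(L,\ C\ot_k W)=0$). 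Using that $C$ is conilpotent — so every finite-dimensional comodule is a finite iterated extension of copies of $k$, and an arbitrary comodule is the filtered union of its finite-dimensional subcomodules with split-monomorphic structure maps — together with the long exact sequences and the spectral sequence $\varprojlim^p_\alpha\Ext^q_{C^*}(L_\alpha,-)\Rightarrow\Ext^{p+q}_{C^*}(\varinjlim_\alpha L_\alpha,-)$, the statement $(\ast)$ reduces to the single case $L=k$, namely $(\ast\ast)$: $\Ext^p_{C^*}(k,\ C\ot_k W)=0$ for all $W$ and $1\le p\le n$.

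\emph{The finiteness condition, and the main obstacle.} Condition (iv) enters through the translation of ``$\Ext^i_C(k,k)$ finite-dimensional for $1\le i\le n$'' into ``$k$ is a $C^*$\+module of type $\mathrm{FP}_n$'': for $i=1$ this is the finite-cogeneration criterion recalled in the introduction ($\mathfrak m=\bar C^*$ is a finitely generated ideal iff $\dim_k\Ext^1_C(k,k)<\infty$), and the higher stages follow inductively from the minimal free resolution of $k$ over $C^*$. For $k$ of type $\mathrm{FP}_n$ the functors $\Ext^p_{C^*}(k,-)$ commute with arbitrary direct sums in cohomological degrees $p\le n$, which reduces $(\ast\ast)$ to $\Ext^p_{C^*}(k,C)=0$ for $1\le p\le n$; this I would establish from the minimal injective coresolution $k\rarrow C\rarrow C\ot_k\Ext^1_C(k,k)\rarrow C\ot_k\Ext^2_C(k,k)\rarrow\cdots$ in $C\Comodl$ — whose terms are \emph{finite} direct sums of copies of $C$ under (iv) — together with the socle identity $\Hom_{C^*}(k,-)=\Hom_C(k,-)$ on comodules and the vanishing $\Ext^{\ge1}_C(k,\ C\ot_k V)=0$. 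The main obstacle is that the $\mathrm{FP}_n$ reduction and this coresolution bootstrap are intertwined: distributing $\Ext^i_{C^*}(k,-)$ over the $i$\+th coresolution term seems to require $\Ext^i_C(k,k)$ finite-dimensional, which is exactly what that stage of the induction should already have granted. The induction must therefore be organized degree by degree, so that at each stage the $\Ext^i_{C^*}$\+computation involves only coresolution terms of strictly smaller degree (already known finite) together with $\Ext^{<i}_{C^*}(k,C)=0$. For the converse (i)$\Rightarrow$(iv), the base case $n=1$ is the known equivalence from the introduction; I would argue it, and bootstrap to higher $n$, by observing that the natural map $\Ext^i_C(k,k)\rarrow\Ext^i_{C^*}(k,k)$ factors, via the minimal coresolution, as $\Ext^i_C(k,k)\hookrightarrow\bigl(\Ext^i_C(k,k)\bigr)^{**}\hookrightarrow\Ext^i_{C^*}(k,k)$ with the first arrow the canonical embedding into the double dual, so that if the composite is bijective (which is forced by (i), via $(\ast)$) then $\Ext^i_C(k,k)$ is reflexive, hence finite-dimensional.

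\emph{Contramodule side and the refinement.} For (iv)$\Leftrightarrow$(iii) I would run the formally dual argument: $\Theta$ is exact with a \emph{left} adjoint $\Delta$ (the contramodule completion), the free $C$\+contramodules $\Hom_k(C,W)$ playing the role of cofree comodules, and for $W$ finite-dimensional $\Theta\bigl(\Hom_k(C,W)\bigr)$ is a finitely generated free $C^*$\+module with $\Hom_{C^*}\bigl(\Theta(\Hom_k(C,W)),\Theta Q\bigr)=\Hom^C\bigl(\Hom_k(C,W),Q\bigr)=\Hom_k(W,Q)$. Under (iv) the trivial contramodule $k$ has a resolution by finitely generated free contramodules up to degree $n$, and resolving $P$ this way, with the dual dévissage along the decreasing filtration $Q\supseteq\mathfrak m Q\supseteq\mathfrak m^2Q\supseteq\cdots$ (whose associated graded is a product of copies of $k$), reduces (iii) to the $(k,k)$\+case already settled. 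The hypothesis that $Q$ be \emph{separated} (i.e.\ $\bigcap_m\mathfrak m^mQ=0$) is precisely what makes the counit map $\Delta\Theta Q\rarrow Q$ behave correctly against the resolution; for an arbitrary contramodule $Q$ one first passes to the separated quotient $Q/\bigcap_m\mathfrak m^mQ$, and the discrepancy — governed by the non-separated part together with $\varprojlim$ and $\varprojlim^1$ over the tower $(Q/\mathfrak m^mQ)_m$, hence concentrated in two consecutive homological degrees — costs exactly one additional unit in the degree range, yielding the isomorphism \eqref{contramodule-ext-map} for \emph{all} $P$ and $Q$ in degrees $0\le i\le n-2$ as claimed in the final sentence.
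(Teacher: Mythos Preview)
Your overall architecture matches the paper's: reduce the comodule and contramodule Ext comparison to statements about injective/projective objects, then to the single pair $(k,k)$ via the conilpotent filtration and an Eklof-type argument, and pivot everything on the finiteness of $\Ext^i_C(k,k)$.  Several of your reductions are essentially those of the paper.  But there are real gaps.

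\textbf{The implication (iii)$\Rightarrow$(iv) is missing.}  You write ``(iv)$\Leftrightarrow$(iii): run the formally dual argument,'' but only sketch (iv)$\Rightarrow$(iii).  The converse is \emph{not} formally dual to (i)$\Rightarrow$(iv), because of the degree shift: condition~(iii) speaks only of degrees $0\le i\le n-1$, while (iv) demands finiteness in degree~$n$.  The paper's proof (Theorem~\ref{contramodule-ext-isom-implies-wf-koszulity-theorem}) is a genuine construction: assuming $\dim_k\Ext^n_C(k,k)=\infty$, one builds an infinite-dimensional trivial contramodule $T$ for which the map $\Ext^{C,n}(T,k)\to\Ext^n_{C^*}(T,k)$ fails to be injective (computing both sides explicitly via the minimal resolution and identifying the map with the dual of the proper inclusion $V_n^*\otimes T\hookrightarrow\Hom_k(V_n,T)$), and then uses a diagram chase to push the failure down to degree $n-1$ for a projective~$P$.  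Your proposal contains nothing like this.

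\textbf{Full faithfulness of $\Theta$ is assumed, not proved.}  The case $i=0$ of (iii) is the statement that $\Theta$ is fully faithful, and your computation $\Hom_{C^*}(\Theta(\Hom_k(C,W)),\Theta Q)=\Hom^C(\Hom_k(C,W),Q)$ already presupposes it.  This is a nontrivial theorem (Theorem~\ref{fully-faithful-contramodule-forgetful}, proved in~\cite{Psm}) which the paper invokes as the base case; it does not follow from the resolution/d\'evissage machinery you describe.

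\textbf{The top degree of (iv)$\Rightarrow$(i).}  Your reduction of $(\ast\ast)$ to $W=k$ via ``$\Ext^p_{C^*}(k,-)$ commutes with direct sums for $p\le n$'' only works for $p\le n-1$: the condition $\mathrm{FP}_n$ gives commutation with filtered colimits through degree $n-1$, not~$n$, since the $(n{+}1)$-st term of the resolution is not finitely generated.  The paper handles $p=n$ differently: it proves (Lemma~\ref{C-star-hom-described}) that the natural map $\eta_{N,U}\colon N\otimes_k U\to\Hom_{C^*}(N^*,C\otimes_k U)$ is \emph{injective} for every right comodule~$N$, and uses this with $N=V_{n+1}\otimes_k C$ to identify the kernel at degree~$n$ directly, bypassing any direct-sum commutation.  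Your ``coresolution bootstrap'' paragraph does not supply this ingredient.

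\textbf{Minor correction.}  In your (iv)$\Leftrightarrow$(v) argument, $\Ext^{C,i}(k,k)$ is the \emph{double} dual of $\Ext^i_{C^\rop}(k,k)$, not the single dual (this is Proposition~\ref{ext-k-k-co-contra-prop}; the point is Lemma~\ref{co-contra-hom-double-dual-lemma}).  This does not affect the finiteness equivalence, but your computation is off.
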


 Notice a curious cohomological dimension shift in comparison between
the assertions about the comodule and contramodule inclusion/forgetful
functors.
 Let $n\ge1$ be the minimal integer for which the vector space
$\Ext^n_C(k,k)$ is infinite-dimensional.
 Then the map $\Ext^n_C(k,k)\rarrow\Ext^n_{C^*}(k,k)$
induced by the functor $\Upsilon$ is injective, but not surjective.
 In fact, the dimension cardinality of the vector space
$\Ext^n_{C^*}(k,k)$ is larger than that of $\Ext^n_C(k,k)$ in this case:
$\Ext^n_{C^*}(k,k)$ is as large as the double dual vector space
$\Ext^n_C(k,k)^{**}$ to $\Ext^n_C(k,k)$.

 At the same time, denoting by $T$ an infinite-dimensional
$k$\+vector space endowed with the trivial $C$\+contramodule structure,
the map $\Ext^{C,n}(T,k)\rarrow\Ext_{C^*}^n(T,k)$ induced by
the functor $\Theta$ is \emph{not injective} (for the integer~$n$
as in the previous paragraph).
 Consequently, there exists a projective $C$\+contramodule $P$ such
that the map $\Ext^{C,n-1}(P,k)\rarrow\Ext_{C^*}^{n-1}(P,k)$ is
injective, but not surjective.
 For $n>1$ this, of course, means that $\Ext_{C^*}^{n-1}(P,k)\ne0$,
while $\Ext^{C,n-1}(P,k)=0$ as it should be.
 Notice that both the contramodules $P$ and~$k$ are separated.

 Let us emphasize that \emph{we do not know} whether the equivalent
conditions~(i\+-v) of
Theorem~\ref{cohomological-degree-careful-main-theorem} imply
bijectivity of the maps~\eqref{contramodule-ext-map} for arbitrary
(nonseparated) contramodules $Q$ and $i=n-1$.
 This remains an open question.

 Returning to the discussion of a fully faithful exact functor
$\Phi\:\sB\rarrow\sA$, it is clear that the maps of Ext
groups~\eqref{induced-Ext-i-map} induced by $\Phi$ are isomorphisms if
and only if the induced triangulated functor between the bounded derived
categories $\Phi^\bb\:\sD^\bb(\sB)\rarrow\sD^\bb(\sA)$ is fully
faithful.
 Now assume that there are enough injective objects in the abelian
category $\sA$ and the functor $\Phi\:\sB\rarrow\sA$ has
a right adjoint.
 Then the functor $\Phi^\bb$ is fully faithful if and only if
the similar functor between the bounded below derived categories
$\Phi^+\:\sD^+(\sB)\rarrow\sD^+(\sA)$ is fully faithful.

 Dually, assume that there are enough projective objects in $\sA$ and
the functor $\Phi\:\sB\rarrow\sA$ has a left adjoint.
 Then the functor $\Phi^\bb$ is fully faithful if and only if
the similar functor between the bounded above derived categories
$\Phi^-\:\sD^-(\sB)\rarrow\sD^-(\sA)$ is fully
faithful~\cite[Proposition~6.5]{Pper}.

 In the situation at hand, there are enough projective and injective
objects in the categories of modules over associative rings.
 The comodule inclusion functor
$\Upsilon\:C\Comodl\rarrow C^*\Modl$ has a right adjoint functor
$\Gamma\:C^*\Modl\rarrow C\Comodl$, while the contramodule forgetful
functor $\Theta\:C\Contra\rarrow C^*\Modl$ has a left adjoint functor
$\Delta\:C^*\Modl\rarrow C\Contra$.
 Consequently,
Theorem~\ref{cohomological-degree-careful-main-theorem} implies
the following result about full-and-faithfulness of induced
triangulated functors.

\begin{thm} \label{bounded-half-unbounded-derived-main-theorem}
 For any conilpotent coalgebra $C$ over a field~$k$, the following
eight conditions are equivalent:
\begin{enumerate}
\renewcommand{\theenumi}{\roman{enumi}}
\item the triangulated functor\/
$\Upsilon^\bb\:\sD^\bb(C\Comodl)\rarrow\sD^\bb(C^*\Modl)$
induced by the comodule inclusion functor\/ $\Upsilon\:C\Comodl
\rarrow C^*\Modl$ is fully faithful;
\item the triangulated functor\/
$\Upsilon^+\:\sD^+(C\Comodl)\rarrow\sD^+(C^*\Modl)$
induced by the comodule inclusion functor\/ $\Upsilon$
is fully faithful;
\item the triangulated functor\/
$\sD^\bb(\Comodr C)\rarrow\sD^\bb(\Modr C^*)$
induced by the comodule inclusion functor $\Comodr C\rarrow\Modr C^*$
is fully faithful;
\item the triangulated functor\/
$\sD^+(\Comodr C)\rarrow\sD^+(\Modr C^*)$
induced by the comodule inclusion functor is fully faithful;
\item the triangulated functor\/
$\Theta^\bb\:\sD^\bb(C\Contra)\rarrow\sD^\bb(C^*\Modl)$
induced by the contramodule forgetful functor\/
$\Theta\:C\Contra\rarrow C^*\Modl$ is fully faithful;
\item the triangulated functor\/
$\Theta^-\:\sD^-(C\Contra)\rarrow\sD^-(C^*\Modl)$
induced by the contramodule forgetful functor\/ $\Theta$
is fully faithful;
\item the $k$\+vector space\/ $\Ext^n_C(k,k)$ is finite-dimensional
for all\/ $n\ge0$;
\item the $k$\+vector space\/ $\Ext^{C,n}(k,k)$ is finite-dimensional
for all\/ $n\ge0$.
\end{enumerate}
\end{thm}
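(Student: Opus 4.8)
The plan is to obtain Theorem~\ref{bounded-half-unbounded-derived-main-theorem} by assembling Theorem~\ref{cohomological-degree-careful-main-theorem} with the general facts about derived functors recalled in the introduction; there is essentially one step that needs care, concerning nonseparated contramodules. First I would use the observation (recalled in the introduction) that, for an exact functor $\Phi\:\sB\rarrow\sA$ of abelian categories, the induced triangulated functor $\Phi^\bb\:\sD^\bb(\sB)\rarrow\sD^\bb(\sA)$ is fully faithful if and only if the comparison maps~\eqref{induced-Ext-i-map} are isomorphisms for all $X$, $Y\in\sB$ and all $i\ge0$. Applying this to $\Phi=\Upsilon\:C\Comodl\rarrow C^*\Modl$, and recalling that $\Upsilon$ is fully faithful (so that the case $i=0$ is automatic), condition~(i) of the present theorem holds if and only if condition~(i) of Theorem~\ref{cohomological-degree-careful-main-theorem} holds for every integer $n\ge1$. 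Similarly, condition~(iii) here holds if and only if condition~(ii) of Theorem~\ref{cohomological-degree-careful-main-theorem} holds for every $n\ge1$; condition~(vii) here holds if and only if condition~(iv) there holds for every $n\ge1$ (the degree-zero case being trivial, as $\Ext^0_C(k,k)=k$); and condition~(viii) here holds if and only if condition~(v) there holds for every $n\ge1$. Since Theorem~\ref{cohomological-degree-careful-main-theorem} asserts that its conditions (i), (ii), (iv), (v) are equivalent for each fixed $n$, the equivalences (i)$\Leftrightarrow$(iii)$\Leftrightarrow$(vii)$\Leftrightarrow$(viii) of the present theorem follow at once.

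Next I would fit the contramodule forgetful functor $\Theta\:C\Contra\rarrow C^*\Modl$ into the picture. By the same translation, condition~(v) of the present theorem says that~\eqref{contramodule-ext-map} is an isomorphism for \emph{all} left $C$\+contramodules $P$ and $Q$ and all $i\ge0$. This implies, upon restricting to separated~$Q$, that condition~(iii) of Theorem~\ref{cohomological-degree-careful-main-theorem} holds for every $n\ge1$, hence condition~(viii). For the converse, assuming~(viii) I would invoke Theorem~\ref{cohomological-degree-careful-main-theorem} for each $n\ge1$ and use its concluding assertion, which states that~\eqref{contramodule-ext-map} is then an isomorphism for all $P$, $Q$ and all $0\le i\le n-2$; letting $n\to\infty$ yields condition~(v). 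I expect this to be the only genuinely delicate point: for a single fixed~$n$ the comparison map in the top degree $i=n-1$ is not controlled for nonseparated~$Q$ (as the authors note, this is an open question), but that deficiency becomes invisible after passing to the union over all~$n$, so no difficulty remains here in the unbounded setting.

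Finally I would pass between the bounded and the one-sided-bounded derived categories, using the statements recalled in the introduction (and ultimately \cite[Proposition~6.5]{Pper}). The category $C^*\Modl$ has enough injective objects and enough projective objects. The comodule inclusion $\Upsilon$, and likewise the inclusion $\Comodr C\rarrow\Modr C^*$, admits a right adjoint (the functor $\Gamma$), whence $\Upsilon^\bb$ is fully faithful if and only if $\Upsilon^+$ is; this gives (i)$\Leftrightarrow$(ii) and (iii)$\Leftrightarrow$(iv). Dually, the contramodule forgetful functor $\Theta$ admits a left adjoint (the functor $\Delta$), whence $\Theta^\bb$ is fully faithful if and only if $\Theta^-$ is; this gives (v)$\Leftrightarrow$(vi). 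Combining these equivalences with those established in the previous two paragraphs shows that all eight conditions are equivalent, which completes the proof. The substantive input is entirely Theorem~\ref{cohomological-degree-careful-main-theorem}; everything else is organizational.
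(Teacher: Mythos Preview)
Your proposal is correct and follows essentially the same approach as the paper's own proof: both deduce the theorem from Theorem~\ref{cohomological-degree-careful-main-theorem} together with the abstract bounded/half-bounded comparison (Proposition~\ref{equivalent-to-half-bounded-prop}, i.e.\ \cite[Proposition~6.5]{Pper}), and both handle the nonseparated-contramodule issue in the (vii/viii)$\Rightarrow$(v) direction by invoking the last assertion of Theorem~\ref{cohomological-degree-careful-main-theorem} for arbitrarily large~$n$. Your identification of this as the only delicate point matches the paper exactly.
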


 Let us discuss the condition of finite-dimensionality of the Ext
spaces $\Ext_C^i(k,k)$ for all $i\ge0$ in some detail.
 One can consider the special case when the coalgebra $C$ is positively
graded with finite-dimensional components; so $C$ is the graded dual
coalgebra to a positively graded algebra $A=\bigoplus_{m=0}^\infty A_m$
with $\dim A_m<\infty$ for all $m\ge0$ and $A_0=k$.
 Then one has $\Ext_C^i(k,k)\simeq\bigoplus_{j=i}^\infty
\Ext_A^{i,j}(k,k)$ and $\Ext_A^i(k,k)\simeq\prod_{j=i}^\infty
\Ext_A^{i,j}(k,k)$, where $i$~is the usual cohomological grading on
the Ext spaces, while the \emph{internal grading}~$j$ is induced by
the grading on~$A$ (cf.~\cite[Section~1 of Chapter~1]{PP},
\cite[Section~2.1]{Prel}, and~\cite[Section~2]{Pbogom}).

 Assume further that $A$ is multiplicatively generated by $A_1$ with
relations of degree~$2$; so $A$ is a quadratic algebra with
finite-dimensional components over~$k$.
 Then the vector spaces $\Ext_C^i(k,k)$ or $\Ext_A^i(k,k)$ still
need \emph{not} be finite-dimensional; in fact, $\Ext_C^3(k,k)$ or 
(equivalently) $\Ext_A^3(k,k)$ can be infinite-dimensional
already~\cite[Theorem~7.6]{An}, \cite{FGL},
\cite[Section~6 of Chapter~6]{PP}.

 Nevertheless, for a (\emph{homogeneous}) \emph{Koszul} $k$\+algebra $A$
with finite-dimensional grading components~\cite{Pr,PP} and the graded
dual coalgebra $C$, the vector spaces $\Ext_C^i(k,k)$ and
$\Ext_A^i(k,k)$ are, of course, finite-dimensional for all $i\ge0$.
 More generally, for a finitely cogenerated conilpotent coalgebra $C$,
all the vector spaces $\Ext_C^i(k,k)$ are finite-dimensional whenever
the Ext-algebra $\Ext_C^*(k,k)$ is generated by $\Ext_C^1(k,k)$.
 This includes the important particular case when the Ext-algebra
$\Ext_C^*(k,k)$ is Koszul (but the coalgebra $C$ need not be graded)
described in~\cite[Main Theorem]{PV} and~\cite[Sections~5\+-6]{Pqf}.
 Here we keep assuming that the coalgebra $C$ is finitely cogenerated.

 With these important special cases in mind, we call a conilpotent
coalgebra $C$ \emph{weakly finitely Koszul} if the vector space
$\Ext_C^n(k,k)$ is finite-dimensional for every $n\ge0$.
 This terminology goes back to~\cite[Sections~5.4 and~5.7]{Prel}.

 Let us emphasize that \emph{we do not know} whether the induced
triangulated functors between the unbounded derived categories
$$
 \Upsilon^\varnothing\:\sD(C\Comodl)\lrarrow\sD(C^*\Modl)
 \quad\text{and}\quad
 \Theta^\varnothing\:\sD(C\Contra)\lrarrow\sD(C^*\Modl)
$$
are fully faithful under the equivalent conditions of
Theorem~\ref{bounded-half-unbounded-derived-main-theorem}
(or even for a graded coalgebra $C$ graded dual to a Koszul algebra
$A$ with finite-dimensional components) in general.
 However, the triangulated functors $\Upsilon^\varnothing$ and
$\Theta^\varnothing$ are known to be fully faithful for any finitely
cogenerated conilpotent \emph{cocommutative} coalgebra~$C$.
 This is a particular case of~\cite[Theorems~1.3 and~2.9]{Pmgm}
(applied to the complete Noetherian commutative local $k$\+algebra
$R=C^*$ with its maximal ideal $I=\mathfrak m$).

 In this connection, it should first of all be mentioned that all
finitely cogenerated conilpotent cocommutative coalgebras $C$ are
weakly finitely Koszul since they are Artinian, and consequently,
co-Noetherian~\cite[Section~2]{Pmc}.
 Quite generally, any left or right co-Noetherian conilpotent coalgebra
is weakly finitely Koszul (and so is any finitely cogenerated left or
right cocoherent coalgebra).
 But this does not seem to be enough.
 The proofs of~\cite[Theorems~1.3 and~2.9]{Pmgm} are essentially based
on the observation that \emph{the right adjoint functor\/ $\Gamma$ to\/
$\Upsilon$ and the left adjoint functor\/ $\Delta$ to\/ $\Theta$ have
finite homological dimensions} (cf.~\cite[Theorem~6.4]{PMat}
and~\cite[Proposition~6.5]{Pper}).
 We cannot think of any general noncommutative versions of these
properties, proved in~\cite{PSY} and~\cite{Pmgm} using commutative
Koszul complexes.

\subsection*{Acknowledgement}
 I am grateful to Victor Roca i~Lucio and Teresa Conde for stimulating
correspondence.
 I~also wish to thank Jan \v St\!'ov\'\i\v cek for helpful
conversations.
 An acknowledgement is due to an anonymous referee for the suggestion
to include Remark~\ref{second-kind-not-relevant-remark}.
 The author is supported by the GA\v CR project 23-05148S and
the Czech Academy of Sciences (RVO~67985840).

\Section{Preliminaries on Coalgebras, Comodules, and Contramodules}
\label{preliminaries-on-co-and-contra-secn}

 Unless otherwise mentioned, all \emph{coalgebras}, \emph{comodules},
and \emph{contramodules} in this paper are presumed to be coassociative
and counital; all \emph{coalgebra homomorphisms} are presumed to
preserve the counit.
 Dually, all \emph{rings}, \emph{algebras}, and \emph{modules} are
presumed to be associative and unital.

 A (coassociative, counital) \emph{coalgebra} $C$ over a field~$k$ is
a $k$\+vector space endowed with $k$\+linear maps of
\emph{comultiplication} $\mu\:C\rarrow C\ot_k C$ and \emph{counit}
$\epsilon\:C\rarrow k$ satisfying the usual coassociativity and
counitality axioms (which can be obtained by writing down the definition
of an associative, unital $k$\+algebra in the tensor notation and
inverting the arrows).
 We suggest the books~\cite{Swe,Mon} as the standard reference sources
on coalgebras over a field.
 The present author's surveys~\cite[Section~1]{Prev},
\cite[Section~3]{Pksurv} can be used as additional reference sources.

 Let $C$ be a coalgebra over~$k$.
 A \emph{right $C$\+comodule} $N$ is a $k$\+vector space endowed with
a $k$\+linear map of \emph{right coaction} $\nu\:N\rarrow N\ot_k C$
satisfying the usual coassociativity and counitality axioms (which can
be obtained by inverting the arrows in the definition of a module over
an associative, unital algebra).
 Similarly, a \emph{left $C$\+comodule} $M$ is a $k$\+vector space
endowed with a $k$\+linear map of \emph{left coaction} $\nu\:M\rarrow
C\ot_k M$ satisfying the coassociativity and counitality axioms.

 A \emph{left $C$\+contramodule} (see~\cite[Section~III.5]{EM},
\cite[Sections~0.2.4 and~3.1.1, and Appendix~A]{Psemi},
\cite[Sections~1.1\+-1.6]{Prev}, and~\cite[Section~8]{Pksurv}) is
a $k$\+vector space $P$ endowed with a $k$\+linear map of \emph{left
contraaction} $\pi\:\Hom_k(C,P)\rarrow P$ satisfying the following
\emph{contraassociativity} and \emph{contraunitality} axioms.
 Firstly, the two maps $\Hom(C,\pi)$ and $\Hom(\mu,P)\:
\Hom_k(C\ot_kC,\>P)\simeq\Hom_k(C,\Hom_k(C,P))\rarrow\Hom_k(C,P)$
must have equal compositions with the contraaction map $\pi\:
\Hom_k(C,P)\rarrow P$,
$$
 \Hom_k(C\ot_kC,\>P)\simeq\Hom_k(C,\Hom_k(C,P))
 \,\rightrightarrows\,\Hom_k(C,P)\rarrow P.
$$
 Secondly, the composition of the map $\Hom(\epsilon,P)\:P\rarrow
\Hom_k(C,P)$ with the map $\pi\:\Hom_k(C,P)\rarrow P$ must be equal
to the identity endomorphism of~$P$,
$$
 P\rarrow \Hom_k(C,P)\rarrow P.
$$
 Here the $k$\+vector space isomorphism $\Hom_k(C\ot_kC,\>P)
\simeq\Hom_k(C,\Hom_k(C,P))$ is obtained as a particular case of
the adjunction isomorphism $\Hom_k(U\ot_kV,\>W)\simeq
\Hom_k(V,\Hom_k(U,W))$, which holds for any vector spaces $U$, $V$,
and~$W$.

 The definition of a \emph{right $C$\+contramodule} is similar, with
the only difference that the isomorphism $\Hom_k(C\ot_kC,\>P)
\simeq\Hom_k(C,\Hom_k(C,P))$ arising as a particular case of
the identification $\Hom_k(V\ot_kU,\>W)\simeq\Hom_k(V,\Hom_k(U,W))$
is used.

 For any right $C$\+comodule $N$ and any $k$\+vector space $V$,
the vector space $\Hom_k(N,V)$ has a natural left $C$\+contramodule
structure.
 The left contraaction map
$$
 \pi\:\Hom_k(C,\Hom_k(N,V))\simeq\Hom_k(N\ot_kC,\>V)
 \lrarrow\Hom_k(N,V)
$$
is constructed by applying the functor $\Hom_k({-},V)$ to the right
coaction map $\nu\:N\rarrow N\ot_kC$, i.~e., $\pi=\Hom_k(\nu,V)$.

 Let $C^\rop$ denote the opposite coalgebra to~$C$ (i.~e., the same
vector space endowed with the same counit and the left-right opposite
comultiplication to that in~$C$).
 Similarly, given a ring $A$, we denote by $A^\rop$ the opposite ring.
 Then right $C$\+comodules are the same things as left
$C^\rop$\+comodules, and vice versa.

 We will use the notation $\Hom_C({-},{-})$ for the vector spaces of
morphisms in the category of left $C$\+comodules $C\Comodl$ and
the notation $\Hom_{C^\rop}({-},{-})$ for the vector spaces of
morphisms in the category of right $C$\+comodules $\Comodr C$.
 The vector spaces of morphisms in the left and right contramodule
categories $C\Contra$ and $\Contrar C$ will be denoted by
$\Hom^C({-},{-})$ and $\Hom^{C^\rop}({-},{-})$.
 Similarly, the Yoneda Ext spaces are denoted by
$\Ext_C^*({-},{-})$ in $C\Comodl$, by $\Ext_{C^\rop}^*({-},{-})$
in $\Comodr C$, and by $\Ext^{C,*}({-},{-})$ in $C\Contra$.

 The category of left $C$\+comodules $C\Comodl$ is a locally finite
Grothendieck abelian category.
 Any $C$\+comodule is the union of its finite-dimensional
subcomodules~\cite[Propositions~2.1.1\+-2.1.2 and Corollary~2.1.4]{Swe},
\cite[Lemma~3.1(b)]{Pksurv}.
 The forgetful functor $C\Comodl\rarrow k\Vect$ from the category of
$C$\+comodules to the category of $k$\+vector spaces is exact and
preserves the infinite coproducts (but \emph{not}
the infinite products).
 So the coproduct (and more generally, filtered direct limit) functors
are exact in $C\Comodl$, while the functors of infinite product are
usually \emph{not} exact.

 Left $C$\+comodules of the form $C\ot_k V$ and right $C$\+comodules
of the form $V\ot_k C$, where $V$ ranges over the $k$\+vector spaces,
are called the \emph{cofree} $C$\+comodules.
 For any left $C$\+comodule $L$, the vector space of $C$\+comodule
morphisms $L\rarrow C\ot_k V$ is naturally isomorphic to the vector
space of $k$\+linear maps $L\rarrow V$,
$$
 \Hom_C(L,\>C\ot_kV)\simeq\Hom_k(L,V).
$$
 Hence the cofree comodules are injective (as objects of
$C\Comodl$ or $\Comodr C$).
 A $C$\+comodule is injective if and only if it is a direct summand of
a cofree one.

 The category of left $C$\+contramodules $C\Contra$ is a locally
presentable abelian category with enough projective objects.
 The forgetful functor $C\Contra\rarrow k\Vect$ is exact and preserves
the infinite products (but \emph{not} the infinite coproducts).
 Hence the functors of infinite product are exact in $C\Contra$, while
the coproduct functors are usually \emph{not} exact.

 Left $C$\+contramodules of the form $\Hom_k(C,V)$, where $V\in k\Vect$,
are called the \emph{free} $C$\+contramodules.
 For any left $C$\+contramodule $Q$, the vector space of
$C$\+contramodule morphisms $\Hom_k(C,V)\rarrow Q$ is naturally
isomorphic to the vector space of $k$\+linear maps $V\rarrow Q$,
$$
 \Hom^C(\Hom_k(C,V),Q)\simeq\Hom_k(V,Q).
$$
 Hence the free contramodules are projective (as objects of
$C\Contra$).
 A $C$\+con\-tra\-mod\-ule is projective if and only if it is a direct 
summand of a free one.

 Let us introduce a simplified version of the \emph{Sweedler
notation}~\cite[Section~1.2]{Swe}, \cite[Notation~1.4.2]{Mon} for
the comultiplication in~$C$.
 Given an element $c\in C$, we write
$$
 \mu(c)=c_{(1)}\ot c_{(2)}\in C\ot_k C.
$$
 Following the convention in~\cite{Psemi,Prev,Pksurv} (which is
opposite to the convention in~\cite{Swe,Mon}), we define
the associative algebra structure on the dual vector space
$C^*=\Hom_k(C,k)$ to a coalgebra $C$ by the formula
$$
 (fg)(c)=f(c_{(2)})g(c_{(1)})
 \qquad\text{for all $f$, $g\in C^*$ and $c\in C$}.
$$
 The counit on $C$ induces a unit in $C^*$ in the obvious way.

 Then, for any left $C$\+comodule $M$, the composition
$$
 C^*\ot_k M\lrarrow C^*\ot_k C\ot_k M\lrarrow M
$$
of the map induced by the coaction map $\nu\:M\rarrow C\ot_k M$
and the map induced by the pairing map $C^*\ot_k C\rarrow k$
endows $M$ with a left $C^*$\+module structure.
 Similarly, for any right $C$\+comodule $N$, the composition
$$
 N\ot_k C^*\lrarrow N\ot_k C\ot_k C^*\lrarrow N
$$
endows $N$ with a right $C^*$\+module structure.
 Finally, for any left $C$\+contramodule $P$, the composition
$$
 C^*\ot_k P\lrarrow\Hom_k(C,P)\lrarrow P
$$
of the natural embedding of vector spaces $C^*\ot_k P\rightarrowtail
\Hom_k(C,P)$ and the contraaction map $\pi\:\Hom_k(C,P)\rarrow P$
endows $P$ with a left $C^*$\+module structure.

 We have constructed the \emph{comodule inclusion functors}
\begin{align*}
 \Upsilon\:C\Comodl&\lrarrow C^*\Modl, \\
 \Comodr C&\lrarrow\Modr C^*
\end{align*}
and the \emph{contramodule forgetful functor}
$$
 \Theta\:C\Contra\lrarrow C^*\Modl.
$$
 The comodule inclusion functors (for a coalgebra $C$ over a field~$k$)
are always fully faithful (see~\cite[Propositions~2.1.1\+-2.1.2
and Theorem~2.1.3(e)]{Swe} for a discussion).
 The contramodule forgetful functor is \emph{not} fully faithful in
general, as we will see in Example~\ref{contramodule-extension-example}
below.

\Section{Conilpotent Coalgebras and Minimal Resolutions}
\label{conilpotent-minimal-secn}

 What we call \emph{conilpotent} coalgebras (in the terminology going
back to~\cite[Section~3.1]{PV}, \cite[Section~4.1]{Pbogom}) were called
``pointed irreducible'' coalgebras in~\cite[Section~8.0]{Swe}.
 We refer to~\cite[Sections~3.3\+-3.4]{Pksurv} for an introductory
discussion.

 Let $D$ be a coalgebra without counit over a field~$k$.
 For every $n\ge1$, there is the uniquely defined \emph{iterated
comultiplication map} $\mu^{(n)}\:D\rarrow D^{\ot n+1}$.
 The coalgebra $D$ is said to be \emph{conilpotent} if for every
$d\in D$ there exists $n\ge1$ such that $\mu^{(n)}(d)=0$
in $D^{\ot n+1}$.
 Clearly, one then also has $\mu^{(m)}(d)=0$ for all $m\ge n$.

 A \emph{coaugmentation}~$\gamma$ of a coalgebra $C$ is a homomorphism
of (counital) coalgebras $\gamma\:k\rarrow C$.
 So the composition $\epsilon\gamma\:k\rarrow C\rarrow k$ must be
the identity map.

 Given a coaugmented coalgebra $(C,\gamma)$, the cokernel
$D=C/\gamma(k)$ of the map~$\gamma$ has a unique coalgebra structure
for which the natural surjection $C\rarrow D$ is a homomorphism of
noncounital coalgebras.
 A coaugmented coalgebra $C$ us called \emph{conilpotent} if
the noncounital coalgebra $D$ is conilpotent (in the sense of
the definition above).

 Obviously, no nonzero noncounital coalgebra homomorphisms $k\rarrow D$
exist for a conilpotent noncounital coalgebra~$D$.
 Consequently, a conilpotent coaugmented coalgebra $(C,\gamma)$ admits
no other coaugmentation but~$\gamma$.

 The following result is a version of Nakayama lemma for conilpotent
noncounital coalgebras.

\begin{lem} \label{noncounital-nakayama}
\textup{(a)} Let $D$ be a conilpotent noncounital coalgebra and $M\ne0$
be a noncounital left $D$\+comodule.
 Then the coaction map $M\rarrow D\ot_kM$ is \emph{not}
injective. \par
\textup{(b)} Let $D$ be a conilpotent noncounital coalgebra and $P\ne0$
be a noncounital left $D$\+contramodule.
 Then the contraaction map\/ $\Hom_k(D,P)\rarrow P$ is \emph{not}
surjective.
\end{lem}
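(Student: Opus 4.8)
The plan is to prove the two parts by the same mechanism, exploiting the fact that conilpotency gives an exhaustive (resp. separated) filtration on which the (co/contra)action is controlled. For part (a), suppose for contradiction that $\nu\:M\rarrow D\ot_kM$ is injective. Consider the iterated coactions $\nu^{(n)}\:M\rarrow D^{\ot n}\ot_kM$ obtained by applying $\nu$ repeatedly; by coassociativity these factor through $\mu^{(n-1)}\ot\mathrm{id}_M$ in the appropriate sense. Pick any nonzero $m\in M$. Since $M$ is a $D$\+comodule over a field, $m$ lies in a finite-dimensional subcomodule $M'\subseteq M$, and the restriction of $\nu$ to $M'$ lands in $D'\ot_kM'$ for a finite-dimensional subcoalgebra $D'\subseteq D$ (using that finite-dimensional comodules generate everything, as recalled in the Preliminaries). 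On the finite-dimensional $D'$, conilpotency means $\mu^{(n)}$ vanishes for $n$ large enough — here is the one place the hypothesis enters in force. The plan is then to show that injectivity of $\nu$ forces $\nu^{(n)}$ to be injective for every $n$ (by an easy induction, iterating injective maps tensored with identities over a field), yet $\nu^{(n)}$ restricted to $M'$ factors through $D'^{\ot n}\ot_k M'$, which is eventually zero. That contradicts $m\ne0$, finishing (a).

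For part (b) I would dualize this argument through $\Hom_k(-,-)$. Suppose the contraaction $\pi\:\Hom_k(D,P)\rarrow P$ is surjective. Iterating contraassociativity, one gets maps $\pi^{(n)}\:\Hom_k(D^{\ot n},P)\rarrow P$ whose definition goes through $\mu^{(n-1)}$; surjectivity of $\pi$ should force surjectivity of each $\pi^{(n)}$ by the dual induction (applying $\Hom_k(D,-)$ to a surjection of vector spaces stays surjective, and composing surjections stays surjective). The subtlety on the contramodule side is that contramodules are not unions of ``finite-dimensional subcontramodules'', so the reduction to a finite-dimensional piece of $D$ must be done on the other variable: fix any $\phi\in\Hom_k(D,P)$ and restrict attention to how $\pi$ sees $\phi$ through the filtration of $D$ by the kernels of the $\mu^{(n)}$. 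Concretely, writing $D=\bigcup_n D_{\le n}$ where $D_{\le n}=\{d: \mu^{(n)}(d)=0\}$ is the conilpotent filtration (so $\bigcup_n D_{\le n}=D$), I would show that $\pi$ factors through $\varinjlim_n\Hom_k(D/D_{\le n-1},P)$ in a way that makes the image of $\pi$ land inside a proper subspace once $P\ne0$, because the ``top layer'' $D_{\le 1}$ contributes nothing. Equivalently: the composite $P\xrightarrow{\Hom(\epsilon,P)}\Hom_k(D,P)\xrightarrow{\pi}P$ is the identity by contraunitality, so surjectivity of $\pi$ would be automatic and useless; the real content is that the contraaction is ``built from'' $\Hom_k(D_{+},P)$ where $D_+$ is the non-counital part, and here conilpotency forces a nontrivial kernel on the dual side.

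Let me re-examine which formulation is cleanest. For (b) the honest approach is: assume $\pi$ surjective, and derive that $P$ carries a \emph{counital} $C$\+contramodule structure with $P\ne0$ mapping to a conilpotent coalgebra's associated graded being zero — better, I would instead argue by a direct "lowest-weight" contradiction. Dualize the vector space: the argument in (a) applied to a suitable comodule built from $P$, or applied to the dual. Actually the simplest route: (b) follows from (a) by duality in the finite-dimensional case and a limit argument in general, OR by running the $\Hom_k(D,-)$-dual of the iteration in (a) directly. I would write it as the direct dual iteration: define $\pi^{(n)}$, show surjectivity propagates, then observe $\pi^{(n)}$ factors through $\Hom_k(D^{\ot n},P)$ composed with $\Hom_k(\mu^{(n-1)},P)$, and for each fixed element of $P$ to be hit we only need finitely many "layers" of $D$, on which $\mu^{(n)}=0$ eventually — giving the contradiction.

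The main obstacle I anticipate is precisely the non-finitary nature of contramodules in part (b): unlike comodules, a contramodule has no exhausting system of finite-dimensional subobjects, so the reduction to the finite-dimensional subcoalgebra where conilpotency literally says $\mu^{(n)}=0$ must be smuggled in through the $\Hom_k(C,-)$ side rather than through the contramodule itself. Getting the bookkeeping of the iterated contraaction $\pi^{(n)}$ right — in particular checking that surjectivity genuinely propagates and that the factorization through $\mu^{(n-1)}$ is the correct one (as opposed to an off-by-one in the counit normalization) — is where I would spend the most care. Part (a) should be essentially routine given the locally finite structure of $C\Comodl$ recalled above.
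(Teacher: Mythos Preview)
For part~(a) your core mechanism matches the paper's, but the detour through a finite-dimensional subcoalgebra $D'$ is unnecessary. The paper argues directly: if $\nu$ is injective then so is every iterate $\nu^{(n)}$; for $0\ne x\in M$ write $\nu(x)=\sum_{i=1}^r d_i\ot y_i$ as a finite sum, choose $n$ with $\mu^{(n)}(d_i)=0$ for each of these finitely many $d_i$, and compute $\nu^{(n+1)}(x)=\sum_i\mu^{(n)}(d_i)\ot y_i=0$, a contradiction. Conilpotency is applied elementwise to the $d_i$; no subcoalgebra is needed. (The local-finiteness facts you invoke from the Preliminaries are stated there for \emph{counital} coalgebras, so using them for noncounital $D$ would itself require justification.)

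For part~(b) there is a real gap. Your iteration correctly shows that if $\pi$ is surjective then so is each $\pi^{(n)}$, and contraassociativity gives $\pi^{(n)}=\pi\circ\Hom_k(\mu^{(n-1)},P)$, whence $P=\pi\bigl(\Hom_k(D/\ker\mu^{(n-1)},P)\bigr)$ for every~$n$. But from $\bigcap_n\Hom_k(D/\ker\mu^{(n-1)},P)=0$ you cannot conclude that $\bigcap_n\pi\bigl(\Hom_k(D/\ker\mu^{(n-1)},P)\bigr)=0$: the intersection of images is not the image of the intersection. This is exactly the non-separatedness phenomenon for the natural decreasing filtration on contramodules that the paper discusses later (Section~\ref{wf-koszulity-implies-contramodule-ext-isom-secn}). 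Your aside about contraunitality making surjectivity of $\pi$ ``automatic and useless'' is also off target: $D$ is noncounital here, so there is no counit section $P\to\Hom_k(D,P)$ and no ``$D_+$'' distinct from~$D$. The paper itself does not prove~(b) in place, remarking that it ``is a bit more involved'' and citing~\cite[Lemma~A.2.1]{Psemi}; the argument there is not the straightforward dual of~(a) that you sketch.
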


\begin{proof}
 Part~(a): for the sake of contradiction, assume that the coaction map
$\nu\:M\rarrow D\ot_kM$ is injective.
 Then the iterated coaction map $\nu^{(n)}\:M\rarrow D^{\ot n}\ot_kM$
is also injective for every $n\ge1$.

 Pick a nonzero element $x\in M$, and write $\nu(x)=
\sum_{i=1}^r d_r\ot y_r$ for some $d_i\in D$ and $y_i\in M$.
 Choose $n\ge1$ such that $\mu^{(n)}(d_i)=0$ in $D^{\ot n+1}$
for every $1\le i\le r$.
 Then $\nu^{(n+1)}(x)=\sum_{i=1}^r\mu^{(n)}(d_i)\ot y_i=0$ in
$D^{\ot n+1}\ot_k M$, a contradiction.

 The proof of part~(b) is a bit more involved; it can be found
in~\cite[Lemma~A.2.1]{Psemi}.
 For a discussion of generalizations and other versions of
the comodule and contramodule Nakayama lemmas,
see~\cite[Lemma~2.1]{Prev}.
\end{proof}

 Let $E$ be a subcoalgebra in a coalgebra~$C$.
 Then in any left $C$\+comodule $M$ there exists a unique maximal
subcomodule whose $C$\+comodule structure arises from
an $E$\+comodule structure.
 We denote this subcomodule, which can be computed as the kernel of
the composition of maps $M\rarrow C\ot_k M\rarrow C/E\ot_kM$,
by ${}_EM\subset M$.
 The similar subcomodule of a right $C$\+comodule $N$ will be
denoted by $N_E\subset N$.

 Dually, any left $C$\+contramodule $P$ admits a unique maximal
quotient contramodule whose $C$\+contramodule structure arises from
an $E$\+contramodule structure.
 We denote this quotient contramodule, which can be computed as
the cokernel of the composition of maps $\Hom_k(C/E,P)\rarrow
\Hom_k(C,P)\rarrow P$, by ${}^E\!P\twoheadleftarrow P$.
A further discussion of the functors $M\longmapsto{}_EM$ and
$P\longmapsto{}^E\!P$ can be found in~\cite[Section~2]{Pmc}
or~\cite[Section~8.4]{Pksurv}.

 For any right $C$\+comodule $N$, any subcoalgebra $E\subset C$,
and any $k$\+vector space $V$, there is a natural isomorphism of
left $E$\+contramodules
\begin{equation} \label{E-quotcontramodule-of-Hom-contramodule}
 {}^E\!\Hom_k(N,V)\simeq\Hom_k(N_E,V),
\end{equation}
where the left contramodule structure on the space of linear maps
from a right comodule to a vector space is constructed as explained
in Section~\ref{preliminaries-on-co-and-contra-secn}.
 The natural isomorphism~\eqref{E-quotcontramodule-of-Hom-contramodule}
follows immediately from the constructions of the functors
$M\longmapsto {}_EM$ and $P\longmapsto{}^E\!P$ above.

 In this section, we will be interested in the particular case when
$(C,\gamma)$ is a coaugmented (eventually, conilpotent) coalgebra
and $E=\gamma(k)\subset C$.
 In this case, we put ${}_\gamma M={}_EM$ and ${}^\gamma\!P={}^E\!P$.
 The similar notation for a right $C$\+comodule $N$ is
$N_\gamma=N_E$, and for a right $C$\+contramodule $Q$ it is
$Q^\gamma=Q^E$.
 Endowing the one-dimensional $k$\+vector space~$k$ with
the (left and right) $C$\+comodule and $C$\+contramodule structures
defined in terms of~$\gamma$, one has natural isomorphisms of
$k$\+vector spaces
$$
 {}_\gamma M\simeq\Hom_C(k,M) \quad\text{and}\quad
 \Hom_k({}^\gamma\!P,k)\simeq\Hom^C(P,k).
$$
 One can further compute the vector space ${}^\gamma\!P$ as
the \emph{contratensor product} $k\odot_CP$
(see~\cite[Section~3.1]{Prev} for the definition), but we will not
need to use this fact.

\begin{lem} \label{coaugmented-nakayama}
\textup{(a)} Let $(C,\gamma)$ be a conilpotent coaugmented coalgebra
and $M\ne0$ be a left $C$\+comodule.
 Then ${}_\gamma M\ne0$. \par
\textup{(b)} Let $(C,\gamma)$ be a conilpotent coaugmented coalgebra
and $P\ne0$ be a left $C$\+contramodule.
 Then ${}^\gamma\!P\ne0$.
\end{lem}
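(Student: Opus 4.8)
The plan is to reduce both statements to the noncounital Nakayama Lemma~\ref{noncounital-nakayama}, by reinterpreting a $C$\+comodule (respectively, a $C$\+contramodule) as a noncounital left $D$\+comodule (respectively, $D$\+contramodule) via its \emph{reduced} coaction (respectively, contraaction). Here $D=C/\gamma(k)$ is the conilpotent noncounital coalgebra attached to~$(C,\gamma)$. The identity $\epsilon\gamma=\mathrm{id}_k$ provides a $k$\+vector space decomposition $C=\gamma(k)\oplus\ker\epsilon$, under which the composite $\ker\epsilon\hookrightarrow C\twoheadrightarrow D$ is an isomorphism; I will silently identify $D$ with $\ker\epsilon$ throughout.

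For part~(a), let $M\ne0$ be a left $C$\+comodule with coaction $\nu\:M\rarrow C\ot_kM$. Counitality of~$\nu$ lets me write $\nu(x)=\gamma(1)\ot x+\bar\nu(x)$ with $\bar\nu(x)\in\ker\epsilon\ot_kM=D\ot_kM$, and coassociativity of~$\nu$ then shows (via the corresponding decomposition $\mu(c)=\gamma(1)\ot c+c\ot\gamma(1)+\bar\mu(c)$ of the comultiplication for $c\in\ker\epsilon$) that $\bar\nu\:M\rarrow D\ot_kM$ is a noncounital left $D$\+comodule structure on~$M$. By the computation of the maximal $E$\+subcomodule recalled in Section~\ref{conilpotent-minimal-secn} applied to $E=\gamma(k)$, the subcomodule ${}_\gamma M$ is exactly the kernel of the composite $M\rarrow C\ot_kM\rarrow D\ot_kM$, which under the above identification is precisely~$\bar\nu$. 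Hence, if ${}_\gamma M$ were zero, then $\bar\nu$ would be an injective coaction on a nonzero noncounital comodule over the conilpotent noncounital coalgebra~$D$, in direct contradiction with Lemma~\ref{noncounital-nakayama}(a). Therefore ${}_\gamma M\ne0$.

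Part~(b) is entirely dual. Let $P\ne0$ be a left $C$\+contramodule with contraaction $\pi\:\Hom_k(C,P)\rarrow P$. The decomposition $C=\gamma(k)\oplus\ker\epsilon$ induces $\Hom_k(C,P)\simeq P\oplus\Hom_k(D,P)$, and contraunitality identifies the restriction of $\pi$ to the first summand with $\mathrm{id}_P$; I write $\bar\pi\:\Hom_k(D,P)\rarrow P$ for the restriction of $\pi$ to the second summand. Contraassociativity of $\pi$ makes $\bar\pi$ a noncounital left $D$\+contramodule structure on~$P$, and the computation of the maximal $E$\+quotient contramodule recalled in Section~\ref{conilpotent-minimal-secn} (with $E=\gamma(k)$) shows that ${}^\gamma\!P$ is exactly the cokernel of the composite $\Hom_k(D,P)\rarrow\Hom_k(C,P)\rarrow P$, i.e.\ $P/\bar\pi(\Hom_k(D,P))=\coker\bar\pi$. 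If this were zero, then $\bar\pi$ would be a surjective contraaction on a nonzero noncounital contramodule over the conilpotent noncounital coalgebra~$D$, contradicting Lemma~\ref{noncounital-nakayama}(b); hence ${}^\gamma\!P\ne0$. I expect the only mildly technical point to be the routine bookkeeping that verifies $\bar\nu$ (respectively, $\bar\pi$) is a genuine $D$\+comodule (respectively, $D$\+contramodule) structure and that it matches the stated kernel/cokernel description of ${}_\gamma M$ (respectively, ${}^\gamma\!P$); an alternative but essentially equivalent route would go through the natural isomorphisms ${}_\gamma M\simeq\Hom_C(k,M)$ and $\Hom_k({}^\gamma\!P,k)\simeq\Hom^C(P,k)$ recorded in Section~\ref{conilpotent-minimal-secn}.
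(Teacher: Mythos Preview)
Your proof is correct and follows precisely the approach indicated in the paper: the paper's proof is a single sentence stating that the lemma is ``an equivalent restatement of Lemma~\ref{noncounital-nakayama} for $D=C/\gamma(k)$,'' and what you have written is a careful unpacking of that equivalence (identifying ${}_\gamma M$ with the kernel of the reduced coaction~$\bar\nu$ and ${}^\gamma\!P$ with the cokernel of the reduced contraaction~$\bar\pi$). There is nothing to correct.
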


\begin{proof}
 This is an equivalent restatement of
Lemma~\ref{noncounital-nakayama} for $D=C/\gamma(k)$.
\end{proof}

 For a conilpotent coalgebra $C$, the subcomodule ${}_\gamma M$ of
a $C$\+comodule $M$ can be also described as the socle (i.~e.,
the maximal semisimple subcomodule) of $M$, and the quotient
contramodule ${}^\gamma\!P$ of a $C$\+contramodule $P$ can be
described as the cosocle (i.~e., the maximal semisimple quotient
contramodule) of~$P$.

\begin{lem} \label{mono-epi-characterized}
 Let $(C,\gamma)$ be a conilpotent coaugmented coalgebra.  Then \par
\textup{(a)} a morphism of left $C$\+comodules $f\:L\rarrow M$ is
injective if and only if the induced map of vector spaces
${}_\gamma f\:{}_\gamma L\rarrow{}_\gamma M$ is injective; \par
\textup{(b)} a morphism of left $C$\+contramodules $f\:P\rarrow Q$ is
surjective if and only if the induced map of vector spaces
${}^\gamma\!f\:{}^\gamma\!P\rarrow{}^\gamma Q$ is surjective.
\end{lem}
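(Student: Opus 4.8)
The plan is to deduce both parts directly from the Nakayama-type Lemma~\ref{coaugmented-nakayama}, using that the functor $M\longmapsto{}_\gamma M$ on left $C$\+comodules preserves kernels while the functor $P\longmapsto{}^\gamma\!P$ on left $C$\+contramodules preserves cokernels. Concretely, in part~(a) one applies ${}_\gamma(-)$ to $\ker f$, and in part~(b) one applies ${}^\gamma(-)$ to $\coker f$.

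For part~(a), put $K=\ker f$, so $K$ is a left $C$\+comodule and $f$ is injective if and only if $K=0$; by Lemma~\ref{coaugmented-nakayama}(a), the latter holds if and only if ${}_\gamma K=0$. It remains to identify ${}_\gamma K$ with $\ker({}_\gamma f)$. Since $K$ is a subcomodule of~$L$, I would check that ${}_\gamma K=K\cap{}_\gamma L$ inside~$L$, using the description of ${}_E(-)$ from Section~\ref{conilpotent-minimal-secn} together with the elementary identity $(E\ot_kL)\cap(C\ot_kK)=E\ot_kK$ valid for subspaces $E\subset C$ and $K\subset L$. As ${}_\gamma f$ is the restriction of~$f$ to the subspace ${}_\gamma L\subset L$, this gives ${}_\gamma K=\{x\in{}_\gamma L:f(x)=0\}=\ker({}_\gamma f)$. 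Hence $f$ is injective if and only if $K=0$, if and only if ${}_\gamma K=0$, if and only if ${}_\gamma f$ is injective. (This also reproves that ${}_\gamma(-)\simeq\Hom_C(k,{-})$ preserves kernels, i.e.\ is left exact.)

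Part~(b) is handled dually. Put $N=\coker f$, so $N$ is a left $C$\+contramodule and $f$ is surjective if and only if $N=0$; by Lemma~\ref{coaugmented-nakayama}(b), the latter holds if and only if ${}^\gamma N=0$. One then identifies ${}^\gamma N$ with $\coker({}^\gamma\!f)$: writing ${}^\gamma\!P=P/P^\circ$, where $P^\circ\subset P$ denotes the image of the composition $\Hom_k(C/\gamma(k),P)\to\Hom_k(C,P)\to P$ whose second map is the contraaction~$\pi$, the natural surjection $q\:Q\to N$ satisfies $N^\circ=q(Q^\circ)$ --- this uses the exactness of the functor $P\longmapsto\Hom_k(C/\gamma(k),P)$ on $C\Contra$ and the naturality in~$P$ of the composition defining $P^\circ$ --- whence ${}^\gamma N=N/N^\circ=Q/\bigl(f(P)+Q^\circ\bigr)=\coker({}^\gamma\!f)$. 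Therefore $f$ is surjective if and only if $N=0$, if and only if ${}^\gamma N=0$, if and only if ${}^\gamma\!f$ is surjective.

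What is really being used in part~(b) is the right exactness of $P\longmapsto{}^\gamma\!P$, and this is the only point of the argument requiring a small verification; it holds because ${}^\gamma\!P$ is the cokernel of a natural transformation from the exact functor $P\longmapsto\Hom_k(C/\gamma(k),P)$ to the (exact, by the preliminaries) forgetful functor $C\Contra\to k\Vect$. One could alternatively invoke the identification ${}^\gamma\!P\simeq k\odot_CP$ with a contratensor product, which is a left adjoint and hence right exact, but I prefer to avoid the contratensor product. Beyond this I anticipate no real obstacle: the lemma is essentially the combination of this soft exactness property with Lemma~\ref{coaugmented-nakayama}.
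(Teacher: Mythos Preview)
Your proof is correct and follows essentially the same approach as the paper: reduce to the kernel (resp.\ cokernel), invoke Lemma~\ref{coaugmented-nakayama}, and use left exactness of ${}_\gamma(-)$ (resp.\ right exactness of ${}^\gamma(-)$) to identify ${}_\gamma\ker f$ with $\ker({}_\gamma f)$ (resp.\ ${}^\gamma\coker f$ with $\coker({}^\gamma\!f)$). The paper simply asserts left/right exactness of these functors, whereas you spell out the verification; your explicit identification $N^\circ=q(Q^\circ)$ is exactly what underlies the right exactness the paper invokes.
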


\begin{proof}
 Part~(a): if the map~$f$ is injective, then so is
the map~${}_\gamma f$, since ${}_\gamma N$ is a vector subspace in $N$
for every left $C$\+comodule~$N$.
 Conversely, the functor $N\longmapsto{}_\gamma N$ is left exact; so
if $K=\ker(f)\in C\Comodl$, then ${}_\gamma K=\ker({}_\gamma f)$.
 Now if ${}_\gamma K=0$, then $K=0$ by 
Lemma~\ref{coaugmented-nakayama}(a).
 The proof of part~(b) is similar (or rather, dual-analogous in
the sense of~\cite[Section~8.2]{Pksurv}).
 The functor $P\longmapsto{}^\gamma\!P$ is right exact, etc.
\end{proof}

\begin{lem} \label{inj-proj-isoms-characterized}
 Let $(C,\gamma)$ be a conilpotent coaugmented coalgebra.  Then \par
\textup{(a)} a morphism of injective left $C$\+comodules $f\:I\rarrow J$
is an isomorphism if and only if the induced map of vector spaces
${}_\gamma f\:{}_\gamma I\rarrow{}_\gamma J$ is an isomorphism; \par
\textup{(b)} a morphism of projective left $C$\+contramodules
$f\:P\rarrow Q$ is an isomorphism if and only if the induced map of
vector spaces ${}^\gamma\!f\:{}^\gamma\!P\rarrow{}^\gamma Q$ is
an isomorphism.
\end{lem}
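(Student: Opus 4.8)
The plan is to deduce both assertions from Lemma~\ref{mono-epi-characterized} and Lemma~\ref{coaugmented-nakayama}, using the injectivity (resp.\ projectivity) hypothesis precisely in order to \emph{split} a monomorphism (resp.\ an epimorphism) and then apply the Nakayama lemma to the complementary direct summand. In one direction both parts are immediate: the functors $N\longmapsto{}_\gamma N\simeq\Hom_C(k,N)$ and $P\longmapsto{}^\gamma\!P$ are functors, hence take isomorphisms to isomorphisms.

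For the converse in part~(a), suppose $f\:I\rarrow J$ is a morphism of injective left $C$\+comodules for which ${}_\gamma f$ is an isomorphism. Since ${}_\gamma f$ is in particular injective, Lemma~\ref{mono-epi-characterized}(a) gives that $f$ is injective. As $I$ is an injective object of $C\Comodl$, the monomorphism $f$ splits, so $J=f(I)\oplus J'$ for some subcomodule $J'\subset J$, with $f$ corestricting to an isomorphism $I\rarrow f(I)$. First I would apply the additive functor ${}_\gamma({-})$ to this decomposition, obtaining ${}_\gamma J={}_\gamma f(I)\oplus{}_\gamma J'$ with ${}_\gamma f(I)\simeq{}_\gamma I$, under which ${}_\gamma f$ is the composition of this isomorphism with the coordinate inclusion ${}_\gamma f(I)\hookrightarrow{}_\gamma J$. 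Hence the image of the surjective map ${}_\gamma f$ is the summand ${}_\gamma f(I)$, forcing ${}_\gamma J'=0$, and then $J'=0$ by Lemma~\ref{coaugmented-nakayama}(a). Therefore $f$ is also surjective, hence an isomorphism. (One can equally avoid splitting: from $0\rarrow I\rarrow J\rarrow\coker f\rarrow 0$ together with $\Ext^1_C(k,I)=0$ one gets a short exact sequence $0\rarrow{}_\gamma I\rarrow{}_\gamma J\rarrow{}_\gamma\coker f\rarrow 0$, whence ${}_\gamma\coker f=0$ and $\coker f=0$.)

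Part~(b) goes through dual-analogously. Given a morphism $f\:P\rarrow Q$ of projective left $C$\+contramodules with ${}^\gamma\!f$ an isomorphism, the surjectivity of ${}^\gamma\!f$ and Lemma~\ref{mono-epi-characterized}(b) yield that $f$ is surjective. Since $Q$ is a projective object of $C\Contra$, the short exact sequence $0\rarrow\ker f\rarrow P\rarrow Q\rarrow 0$ splits, so $P=(\ker f)\oplus Q'$ with $f$ restricting to an isomorphism $Q'\rarrow Q$. Applying the additive (right exact) functor ${}^\gamma({-})$ gives ${}^\gamma\!P={}^\gamma(\ker f)\oplus{}^\gamma Q'$ with ${}^\gamma Q'\simeq{}^\gamma Q$, under which ${}^\gamma\!f$ is the projection onto the second summand; as ${}^\gamma\!f$ is injective, we get ${}^\gamma(\ker f)=0$, hence $\ker f=0$ by Lemma~\ref{coaugmented-nakayama}(b), and $f$ is an isomorphism.

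I do not anticipate a real obstacle here: the only point requiring attention is that the direct sum decompositions produced by splitting are compatible with the functors ${}_\gamma$, ${}^\gamma$ and with $f$, which is immediate from additivity; everything else is a direct invocation of the two preceding lemmas. It is worth stressing that the injectivity/projectivity hypothesis is genuinely used in the splitting step and cannot be dropped — for an arbitrary morphism $f$ with ${}_\gamma f$ (resp.\ ${}^\gamma\!f$) an isomorphism one can only conclude that $f$ is injective (resp.\ surjective), as the inclusion of the socle into a nonsplit self-extension of~$k$ (existing whenever $\Ext^1_C(k,k)\ne0$) already illustrates.
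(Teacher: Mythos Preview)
Your argument is correct and follows essentially the same route as the paper: use Lemma~\ref{mono-epi-characterized} to get injectivity (resp.\ surjectivity) of~$f$, split using the injectivity of~$I$ (resp.\ projectivity of~$Q$), and apply Lemma~\ref{coaugmented-nakayama} to the complementary summand. The paper phrases the splitting consequence as ${}_\gamma\coker(f)=\coker({}_\gamma f)$ rather than writing out the direct sum, but this is only a cosmetic difference.
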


\begin{proof}
 Part~(a): if the map~${}_\gamma f$ is injective, then the morphism~$f$
is injective by Lemma~\ref{mono-epi-characterized}(a).
 Now if the $C$\+comodule $I$ is injective, then $f$~is a split
monomorphism in $C\Comodl$.
 Therefore, ${}_\gamma\coker(f)=\coker({}_\gamma f)$, and
Lemma~\ref{coaugmented-nakayama}(a) tells us that $f$~is an isomorphism
whenever ${}_\gamma f$~is.
 The proof of part~(b) is dual-analogous.
\end{proof}

\begin{lem} \label{one-step-co-free-co-resolution}
 Let $(C,\gamma)$ be a conilpotent coaugmented coalgebra.  Then \par
\textup{(a)} for any left $C$\+comodule $M$ there exists a cofree
left $C$\+comodule $J$ together with an injective $C$\+comodule morphism
$f\:M\rarrow J$ such that the induced map ${}_\gamma f\:{}_\gamma M
\rarrow{}_\gamma J$ is an isomorphism of $k$\+vector spaces; \par
\textup{(b)} for any left $C$\+contramodule $Q$ there exists a free
left $C$\+contramodule $P$ together with a surjective $C$\+contramodule
morphism $f\:P\rarrow Q$ such that the induced map ${}^\gamma\!f\:
{}^\gamma\!P\rarrow{}^\gamma Q$ is an isomorphism of $k$\+vector spaces.
\end{lem}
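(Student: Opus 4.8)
The plan is to construct $J$ (resp.\ $P$) by hand, as the cofree $C$\+comodule (resp.\ free $C$\+contramodule) on the $k$\+vector space $V={}_\gamma M$ (resp.\ $V={}^\gamma Q$), with the comodule (resp.\ contramodule) structure map built from a $k$\+linear splitting of ${}_\gamma M\hookrightarrow M$ (resp.\ $Q\twoheadrightarrow{}^\gamma Q$); injectivity of $f$ (resp.\ surjectivity of $f$) will then follow from Lemma~\ref{mono-epi-characterized} once we have checked that the induced map ${}_\gamma f$ (resp.\ ${}^\gamma f$) is an \emph{isomorphism}. Concretely, for part~(a) I would put $V={}_\gamma M\subseteq M$, fix a $k$\+linear retraction $g\:M\rarrow V$ of the inclusion (which exists since $V$ is a subspace of a $k$\+vector space), take $J=C\ot_kV$, and let $f\:M\rarrow J$ be the $C$\+comodule morphism corresponding to $g$ under the adjunction isomorphism $\Hom_C(M,\>C\ot_kV)\simeq\Hom_k(M,V)$ of Section~\ref{preliminaries-on-co-and-contra-secn}; explicitly, $f=(\mathrm{id}_C\ot g)\circ\nu_M$.

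Two short computations then finish part~(a). First, for $x\in{}_\gamma M$ the very definition of ${}_\gamma M={}_{\gamma(k)}M$ together with counitality forces $\nu_M(x)=\gamma(1)\ot x$, whence $f(x)=\gamma(1)\ot g(x)=\gamma(1)\ot x$. Second, ${}_\gamma(C\ot_kV)=\gamma(k)\ot_kV$: by construction this subcomodule is the kernel of $\bar\mu\ot\mathrm{id}_V$, where $\bar\mu\:C\rarrow(C/\gamma(k))\ot_kC$ is induced by the comultiplication; one checks directly from counitality that $\ker\bar\mu=\gamma(k)$, and tensoring over the field $k$ with $V$ preserves kernels. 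Combining the two observations, the induced map ${}_\gamma f\:{}_\gamma M\rarrow{}_\gamma J$ is $x\longmapsto\gamma(1)\ot x$, which is an isomorphism onto $\gamma(k)\ot_kV={}_\gamma J$. Hence $f$ is injective by Lemma~\ref{mono-epi-characterized}(a), while $J$ is cofree by construction.

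Part~(b) is handled dually. I would put $V={}^\gamma Q$, let $q\:Q\twoheadrightarrow V$ be the canonical surjection, fix a $k$\+linear section $s\:V\rarrow Q$ of $q$, set $P=\Hom_k(C,V)$, and let $f\:P\rarrow Q$ correspond to $s$ under $\Hom^C(\Hom_k(C,V),Q)\simeq\Hom_k(V,Q)$, i.e.\ $f=\pi_Q\circ\Hom_k(C,s)$. Using the isomorphism~\eqref{E-quotcontramodule-of-Hom-contramodule} for the right $C$\+comodule $N=C$, together with the equality $C_\gamma=\gamma(k)$ (proved from counitality as before), one identifies ${}^\gamma P\simeq\Hom_k(\gamma(k),V)\simeq V$, under which the quotient map $P\rarrow{}^\gamma P$ becomes $\phi\longmapsto\phi(\gamma(1))$. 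Evaluating on the elements $\phi=\Hom_k(\epsilon,V)(v)$ for $v\in V$, and using contraunitality to compute $f(\phi)=s(v)$, shows that ${}^\gamma f$ is the identity map of $V$ under these identifications. Hence $f$ is surjective by Lemma~\ref{mono-epi-characterized}(b), and $P$ is free by construction.

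The argument is short, and I do not anticipate a deep obstacle; the one place that calls for care is the second step above — verifying that ${}_\gamma f$ and ${}^\gamma f$ are \emph{bijective} rather than merely injective and surjective. This rests on the identifications ${}_\gamma(C\ot_kV)=\gamma(k)\ot_kV$ and ${}^\gamma\Hom_k(C,V)\simeq V$, and, at the level of design, on the choice of taking the generating vector space of the cofree (resp.\ free) object to be exactly ${}_\gamma M$ (resp.\ ${}^\gamma Q$), with the structure morphism induced by a linear splitting of ${}_\gamma M\hookrightarrow M$ (resp.\ $Q\twoheadrightarrow{}^\gamma Q$) — a weaker choice would only yield injectivity (resp.\ surjectivity) of $f$ without the control over ${}_\gamma f$ (resp.\ ${}^\gamma f$) needed for the minimal resolutions constructed later in this section.
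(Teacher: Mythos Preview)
Your proof is correct and follows essentially the same approach as the paper's: both set $V={}_\gamma M$ (resp.\ $V={}^\gamma Q$), take $J=C\ot_kV$ (resp.\ $P=\Hom_k(C,V)$), and verify that ${}_\gamma f$ (resp.\ ${}^\gamma f$) is an isomorphism so that Lemma~\ref{mono-epi-characterized} applies. The only cosmetic difference is that the paper phrases the construction of $f$ as extending the comodule map ${}_\gamma M\rarrow J$, $v\mapsto\gamma(1)\ot v$, using injectivity of the cofree comodule $J$, whereas you unwind this via the cofree adjunction $\Hom_C(M,C\ot_kV)\simeq\Hom_k(M,V)$ applied to a chosen $k$\+linear retraction $g\:M\rarrow V$; these are the same operation, and your version has the virtue of making the verification that ${}_\gamma f$ is bijective (not merely injective) fully explicit.
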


\begin{proof}
 Part~(a): put $V={}_\gamma M$ and $J=C\ot_k V$.
 Then the injective map $V\rarrow J$ induced by the map $\gamma\:k
\rarrow C$ is a $C$\+comodule morphism $M\supset{}_\gamma M\rarrow J$.
 Since the cofree comodules are injective, this morphism can be
extended to a $C$\+comodule morphism $f\:M\rarrow J$.
 The morphism~$f$ is injective by Lemma~\ref{mono-epi-characterized}(a).
 The comodule $J$ can be also described as an injective envelope of
the comodule $M$ in the comodule category $C\Comodl$.
 The proof of part~(b) is dual-analogous; the contramodule $P$ can be
also described as a projective cover of the contramodule $Q$ in
contramodule category $C\Contra$ (cf.~\cite[Example~12.3]{Pproperf}).
\end{proof}

\begin{cor} \label{inj-proj-co-free-cor}
 Let $C$ be a conilpotent (coaugmented) coalgebra.  Then \par
\textup{(a)} a $C$\+comodule is injective if and only if is cofree; \par
\textup{(b)} a $C$\+contramodule is projective if and only if it is
free.
\end{cor}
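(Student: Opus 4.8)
The plan is to deduce both parts directly from Lemma~\ref{one-step-co-free-co-resolution} and Lemma~\ref{inj-proj-isoms-characterized}, since one implication in each part is already available: cofree comodules are injective and free contramodules are projective, as recorded in Section~\ref{preliminaries-on-co-and-contra-secn}. So the real content lies in the converse implications, and the argument is the same formal two-line move in both cases.

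For part~(a), let $I$ be an injective left $C$\+comodule. By Lemma~\ref{one-step-co-free-co-resolution}(a) there is a cofree left $C$\+comodule $J$ together with an injective $C$\+comodule morphism $f\:I\rarrow J$ for which the induced map ${}_\gamma f\:{}_\gamma I\rarrow{}_\gamma J$ is an isomorphism of $k$\+vector spaces. Since $J$, being cofree, is injective, the morphism $f$ is a map between \emph{injective} left $C$\+comodules inducing an isomorphism on ${}_\gamma({-})$; hence Lemma~\ref{inj-proj-isoms-characterized}(a) applies and shows that $f$ is an isomorphism. Therefore $I\simeq J$ is cofree. The case of right $C$\+comodules follows by applying the same reasoning to the opposite coalgebra $C^\rop$ (or dual-analogously).

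For part~(b), let $Q$ be a projective left $C$\+contramodule. By Lemma~\ref{one-step-co-free-co-resolution}(b) there is a free left $C$\+contramodule $P$ together with a surjective $C$\+contramodule morphism $f\:P\rarrow Q$ for which ${}^\gamma\!f\:{}^\gamma\!P\rarrow{}^\gamma Q$ is an isomorphism. Since $P$, being free, is projective, $f$ is a map between \emph{projective} left $C$\+contramodules inducing an isomorphism on ${}^\gamma({-})$, so Lemma~\ref{inj-proj-isoms-characterized}(b) shows that $f$ is an isomorphism. Thus $Q\simeq P$ is free, and the right-hand case follows symmetrically.

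I do not expect any serious obstacle here: the corollary is a formal consequence of the minimal (co)resolution package assembled in Lemmas~\ref{noncounital-nakayama}--\ref{one-step-co-free-co-resolution}. The only point worth keeping in mind is that Lemma~\ref{inj-proj-isoms-characterized} is exactly what upgrades the conclusion of Lemma~\ref{one-step-co-free-co-resolution} from ``$I$ embeds into a cofree comodule inducing an isomorphism on socles'' to ``$I$ is itself cofree''; and that this, in turn, rests on the Nakayama Lemma~\ref{coaugmented-nakayama}, i.e., on the conilpotence hypothesis, without which the statement is false in general.
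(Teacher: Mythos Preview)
Your proof is correct and follows exactly the approach the paper takes: deduce each part from the combination of Lemma~\ref{one-step-co-free-co-resolution} and Lemma~\ref{inj-proj-isoms-characterized}, with the easy direction supplied by the general facts in Section~\ref{preliminaries-on-co-and-contra-secn}. The paper's proof is stated more tersely (it just cites the two lemmas), but your write-up is precisely the intended argument spelled out.
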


\begin{proof}
 Generally speaking, over a coalgebra $C$ over a field~$k$,
the injective comodules are the direct summands of the cofree ones, and
the projective contramodules are the direct summands of the free ones
(see Section~\ref{preliminaries-on-co-and-contra-secn}).
 In the situation at hand with a conilpotent coalgebra $C$,
the assersion of part~(a) follows straighforwardly from
Lemmas~\ref{inj-proj-isoms-characterized}(a)
and~\ref{one-step-co-free-co-resolution}(a).
 Part~(b) follows from Lemmas~\ref{inj-proj-isoms-characterized}(b)
and~\ref{one-step-co-free-co-resolution}(b).
\end{proof}

 Let $K^\bu$ be a complex of left $C$\+comodules.
 We will say that the complex $K^\bu$ is \emph{minimal} if
the differential in the complex of vector spaces ${}_\gamma K^\bu$
vanishes.
 Similarly, a complex of left $C$\+contramodules $Q_\bu$ is said to be
\emph{minimal} if the differential in the complex of vector spaces
${}^\gamma Q_\bu$ vanishes.

 Let $M$ be a $C$\+comodule.
 An injective coresolution $M\rarrow J^\bu$ of the comodule $M$ is
said to be \emph{minimal} if the complex of comodules $J^\bu$
is minimal.
 Dually, a projective resolution $Q_\bu\rarrow P$ of a $C$\+contramodule
$P$ is said to be \emph{minimal} if the complex of contramodules
$Q_\bu$ is minimal.

\begin{prop} \label{minimal-co-resolutions-prop}
 The following assertions hold for any conilpotent coalgebra~$C$. \par
\textup{(a)} Any $C$\+comodule $M$ admits a minimal injective
coresolution.  A minimal injective coresolution of $M$ is unique up to
a (nonunique) isomorphism. \par
\textup{(b)} Any $C$\+contramodule $P$ admits a minimal projective
resolution.  A minimal projective resolution of $P$ is unique up to
a (nonunique) isomorphism.
\end{prop}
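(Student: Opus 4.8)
The plan is to prove part~\textup{(a)} in full and to derive part~\textup{(b)} by the dual-analogous argument. Fix the (unique) coaugmentation~$\gamma$ of the conilpotent coalgebra~$C$. I will use throughout that $M\longmapsto{}_\gamma M\simeq\Hom_C(k,M)$ is a left exact functor $C\Comodl\rarrow k\Vect$, and dually that $P\longmapsto{}^\gamma\!P$ is right exact.

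For existence in~\textup{(a)}, I would construct the coresolution by iterating Lemma~\ref{one-step-co-free-co-resolution}\textup{(a)}. Set $M^0=M$; given a left $C$\+comodule $M^i$, pick a cofree comodule $J^i$ together with an injection $f^i\colon M^i\rarrow J^i$ for which ${}_\gamma f^i$ is an isomorphism, put $M^{i+1}=\coker(f^i)$, and let $d^i\colon J^i\rarrow J^{i+1}$ be the composite $J^i\twoheadrightarrow M^{i+1}\rightarrowtail J^{i+1}$. Splicing the short exact sequences $0\rarrow M^i\rarrow J^i\rarrow M^{i+1}\rarrow0$ shows that $0\rarrow M\rarrow J^0\rarrow J^1\rarrow\cdots$, with differentials the~$d^i$, is an injective coresolution, because cofree comodules are injective. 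To see it is minimal, I would apply the left exact functor ${}_\gamma({-})$ to $0\rarrow M^i\rarrow J^i\rarrow M^{i+1}\rarrow0$; since ${}_\gamma f^i$ is an isomorphism, the next arrow ${}_\gamma J^i\rarrow{}_\gamma M^{i+1}$ vanishes, whence ${}_\gamma d^i=0$.

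For uniqueness in~\textup{(a)}, let $J^\bu$ and $\widetilde J^\bu$ be two minimal injective coresolutions of~$M$. The comparison theorem for injective coresolutions supplies a morphism of complexes $\phi\colon J^\bu\rarrow\widetilde J^\bu$ extending $\mathrm{id}_M$, unique only up to chain homotopy --- this is precisely the source of the word ``nonunique'' in the statement. I claim each component $\phi^i$ is an isomorphism, for which, by Lemma~\ref{inj-proj-isoms-characterized}\textup{(a)}, it is enough to check that ${}_\gamma\phi^i$ is bijective. Write $M^i=\ker(d^i)$, so that $M^0\simeq M$, and likewise $\widetilde M^i$; then $\phi^i$ restricts to some $\alpha^i\colon M^i\rarrow\widetilde M^i$ with $\alpha^0=\mathrm{id}_M$. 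The induction on~$i$ runs as follows. Applying the left exact functor ${}_\gamma({-})$ to the exact sequences $0\rarrow M^i\rarrow J^i\rarrow J^{i+1}$ and $0\rarrow\widetilde M^i\rarrow\widetilde J^i\rarrow\widetilde J^{i+1}$ and using minimality (${}_\gamma d^i={}_\gamma\widetilde d^i=0$) shows that the inclusions $M^i\rightarrowtail J^i$ and $\widetilde M^i\rightarrowtail\widetilde J^i$ become isomorphisms; so in the commutative square built from these inclusions together with $\alpha^i$ and $\phi^i$, bijectivity of ${}_\gamma\alpha^i$ (the inductive hypothesis) forces bijectivity of ${}_\gamma\phi^i$, hence $\phi^i$ is an isomorphism by Lemma~\ref{inj-proj-isoms-characterized}\textup{(a)}. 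Passing to cokernels, using $\coker(M^i\rightarrowtail J^i)\simeq\mathrm{im}(d^i)=M^{i+1}$, then shows $\alpha^{i+1}$ is an isomorphism, and the induction continues. Consequently $\phi$ is an isomorphism of complexes.

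Part~\textup{(b)} I would obtain by the dual-analogous argument: iterate Lemma~\ref{one-step-co-free-co-resolution}\textup{(b)} to build a minimal free resolution $\cdots\rarrow Q_1\rarrow Q_0\rarrow P$, replacing left exactness of $M\longmapsto{}_\gamma M$ by right exactness of $P\longmapsto{}^\gamma\!P$ and invoking Lemma~\ref{inj-proj-isoms-characterized}\textup{(b)} in place of~\textup{(a)}, and reverse all arrows in the uniqueness argument (kernels become cokernels, ``onto'' becomes ``into'', and so on). I do not expect a genuine obstacle: all the substantive content has been pushed into Lemmas~\ref{one-step-co-free-co-resolution} and~\ref{inj-proj-isoms-characterized}. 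The one step to handle with some care is the uniqueness half --- producing $\phi$ from the comparison theorem and then feeding minimality into Lemma~\ref{inj-proj-isoms-characterized} one cohomological degree at a time --- together with the observation that neither $\phi$ nor the resulting isomorphism of resolutions is canonical, only well-defined up to homotopy.
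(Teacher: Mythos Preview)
Your proof is correct. The existence argument is identical to the paper's: iterate Lemma~\ref{one-step-co-free-co-resolution}(a) and observe that ${}_\gamma d^i=0$ because the inclusion ${}_\gamma M^i\rightarrowtail{}_\gamma J^i$ is already an isomorphism.

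For uniqueness you take a genuinely different route. The paper argues globally: applying $\Hom_C(k,{-})={}_\gamma({-})$ to $\phi\colon J^\bu\rarrow\widetilde J^\bu$ yields a map between two complexes with zero differential that is a quasi-isomorphism (both compute $\Ext_C^*(k,M)$), hence automatically a termwise isomorphism; then Lemma~\ref{inj-proj-isoms-characterized}(a) finishes. You instead run an induction along the cosyzygies $M^i=\ker d^i$, using minimality plus left exactness of ${}_\gamma({-})$ to identify ${}_\gamma M^i\simeq{}_\gamma J^i$ at each stage and propagate the isomorphism through the commutative square. Both arguments are short; the paper's is slicker in that it invokes a single conceptual observation (quasi-iso between zero-differential complexes), while yours is more hands-on and avoids any appeal to the Ext interpretation, making it marginally more self-contained. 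The dual-analogous remark for part~(b) is handled the same way in both.
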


\begin{proof}
 Part~(a): to construct a minimal injective coresolution of $M$, all
one needs to do is to iterate the construction of
Lemma~\ref{one-step-co-free-co-resolution}(a).
 Pick an injective $C$\+comodule $J^0$ together with an injective
morphism of $C$\+comodules $M\rarrow J^0$ such that the induced map
${}_\gamma M\rarrow{}_\gamma J^0$ is an isomorphism.
 Then the induced map ${}_\gamma J^0\rarrow{}_\gamma(J^0/M)$ is zero.
 Pick an injective $C$\+comodule $J^1$ together with an injective
morphism of $C$\+comodules $J^0/M\rarrow J^1$ such that the induced
map ${}_\gamma(J^0/M)\rarrow{}_\gamma J^1$ is an isomorphism, etc.

 To prove uniqueness, let $M\rarrow I^\bu$ and $M\rarrow J^\bu$ be two
minimal injective coresolutions of~$M$.
 Then there exists a morphism of complexes of $C$\+comodules $f\:I^\bu
\rarrow J^\bu$ making the triangular diagram $M\rarrow I^\bu\rarrow
J^\bu$ commutative.
 Now we have ${}_\gamma I^\bu=\Hom_C(k,I^\bu)$ and ${}_\gamma J^\bu
=\Hom_C(k,I^\bu)$.
 The map of complexes $\Hom_C(k,f)\:\Hom_C(k,I^\bu)\rarrow
\Hom_C(k,J^\bu)$ induces an isomorphism of the cohomology spaces,
as both the complexes compute $\Ext_C^*(k,M)$.
 Since both the complexes ${}_\gamma I^\bu$ and ${}_\gamma J^\bu$ have
vanishing differentials by assumption, it follows that $\Hom_C(k,f)=
{}_\gamma f$ is a termwise isomorphism of complexes of vector spaces.
 Applying Lemma~\ref{inj-proj-isoms-characterized}(a), we conclude
that $f$~is a termwise isomorphism of complexes of $C$\+comodules.

 The proof of part~(b) is dual-analogous.
\end{proof}

\Section{The Left and Right Comodule and Contramodule Ext Comparison}

 Let $(C,\gamma)$ be a coaugmented coalgebra over a field~$k$
(as defined in Section~\ref{conilpotent-minimal-secn}).
 Then the one-dimensional $k$\+vector space $k$ can be endowed with
left and right $C$\+comodule and $C$\+contramodule structures
defined in terms of~$\gamma$.
 We recall the notation $\Ext_C^*({-},{-})$ for the Ext spaces
in the category of left $C$\+comodules $C\Comodl$ and
$\Ext_{C^\rop}^*({-},{-})$ for the Ext spaces in the category of
right $C$\+comodules $\Comodr C$, as well as the notation
$\Ext^{C,*}({-},{-})$ for the Ext spaces in the category of left
$C$\+contramodules $C\Contra$.

 The condition that the vector spaces $\Ext_C^n(k,k)$ be
finite-dimensional plays a key role in this paper.
 In this section we explain that this condition is left-right symmetric:
it holds for a coalgebra $C$ if and only if it holds for~$C^\rop$.
 Furthermore, the vector space $\Ext_C^n(k,k)$ is finite-dimensional
if and only if the vector space $\Ext^{C,n}(k,k)$ is.
 The following proposition is certainly essentially well-known; we spell
out the details here for the sake of completeness of the exposition.

\begin{prop} \label{ext-k-k-left-right-symmetric}
 Let $(C,\gamma)$ be a coaugmented coalgebra over~$k$.
 Then there are natural isomorphisms of Ext spaces
$$
 \Ext_C^n(k,k)\simeq\Ext_{C^\rop}^n(k,k)
 \qquad\text{for all\/ $n\ge0$}.
$$
 Moreover, in fact, the Ext-algebras\/ $\Ext_C^*(k,k)$ and\/
$\Ext_{C^\rop}^*(k,k)$ are naturally opposite to each other
(as graded algebras),
$$
 \Ext_{C^\rop}^*(k,k)\simeq\Ext_C^*(k,k)^\rop.
$$
\end{prop}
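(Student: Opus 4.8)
The plan is to construct explicit minimal resolutions and to identify the cohomology of the associated "cobar-type" complexes on the two sides. First I would take a minimal injective coresolution $k\rarrow J^\bu$ of the left $C$\+comodule $k$, which exists and is essentially unique by Proposition~\ref{minimal-co-resolutions-prop}(a). By Corollary~\ref{inj-proj-co-free-cor}(a) each term is cofree, $J^i=C\ot_k E^i$ for some $k$\+vector spaces $E^i$, and by minimality the differential in ${}_\gamma J^\bu$ vanishes, so $E^i\simeq{}_\gamma J^i\simeq\Ext_C^i(k,k)$ canonically. Applying $\Hom_C(k,{-})$ to $J^\bu$ and using $\Hom_C(k,C\ot_kE^i)\simeq E^i$ then recovers $\Ext_C^*(k,k)$ as the graded vector space $E^\bu$ with zero differential, which already yields the additive statement once I show the corresponding construction on the right produces the same graded vector space.

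For the ring structure, the key observation is that the minimal injective coresolution $J^\bu$ can be chosen to carry the structure of a DG\+coalgebra (or, dually, that $\Hom_C(k,J^\bu)$ with its natural composition product coming from the Yoneda product is computed by a cobar construction). Concretely, I would use the standard \emph{cobar resolution} of $k$ over the conilpotent coaugmented coalgebra $C$: the cofree comodule coresolution with terms $C\ot_k\overline C{}^{\ot n}$, where $\overline C=C/\gamma(k)$, equipped with the usual cobar differential. This is a (generally nonminimal) injective coresolution, and applying $\Hom_C(k,{-})$ turns it into the reduced cobar complex $\overline C{}^{\ot\bu}$ with its well-known DG\+algebra structure given by concatenation, whose cohomology is the Ext\+algebra $\Ext_C^*(k,k)$. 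For the opposite coalgebra $C^\rop$ the cobar complex has the same underlying spaces $\overline C{}^{\ot n}$ but the tensor factors are read in the opposite order and the concatenation product is reversed; the comparison map sending $c_1\ot\dots\ot c_n$ to $c_n\ot\dots\ot c_1$ is an isomorphism of complexes that reverses the product, hence induces $\Ext_{C^\rop}^*(k,k)\simeq\Ext_C^*(k,k)^\rop$ on cohomology. (One checks compatibility with the cobar differentials using that $\mu^\rop$ is $\mu$ composed with the flip, and that the cobar differential on $C^\rop$ is, under the order\+reversal map, exactly the cobar differential on $C$ up to the sign coming from re\+indexing the internal comultiplications.)

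I would then record the naturality: the order\+reversal isomorphism is manifestly functorial in $C$ and independent of choices, so the isomorphism $\Ext_C^n(k,k)\simeq\Ext_{C^\rop}^n(k,k)$ is natural, and the graded\+algebra identification $\Ext_{C^\rop}^*(k,k)\simeq\Ext_C^*(k,k)^\rop$ follows. Alternatively, and perhaps more in the spirit of the preceding section, one can avoid the cobar machinery entirely: observe that $\Hom_k({-},k)$ sends a minimal injective coresolution of $k$ in $\Comodr C$ to a minimal projective resolution of $k$ in $C\Contra$ (using the isomorphism~\eqref{E-quotcontramodule-of-Hom-contramodule} and the fact that $\Hom_k$ of a cofree right comodule is a free left contramodule), so that $\Ext_{C^\rop}^n(k,k)\simeq\Ext^{C,n}(k,k)$ as well — but to get the \emph{algebra} anti\+isomorphism cleanly the cobar comparison is the most transparent route.

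The main obstacle I anticipate is the bookkeeping of signs and the order of tensor factors when matching the cobar differential for $C$ with that for $C^\rop$ under the order\+reversal map: one must verify that reversing the order of $n$ tensor factors and simultaneously reversing each internal comultiplication $\mu\rightsquigarrow\mu^\rop$ leaves the cobar differential invariant, and that the Koszul signs work out so that concatenation is genuinely reversed rather than merely permuted. This is a routine but slightly delicate verification; everything else is formal.
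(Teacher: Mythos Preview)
Your core argument---computing $\Ext_C^*(k,k)$ via the reduced cobar complex $\overline C{}^{\ot\bu}$ and identifying it with the cobar complex for $C^\rop$ under the order-reversal map $c_1\ot\dots\ot c_n\mapsto c_n\ot\dots\ot c_1$---is exactly the paper's first proof.  Two small corrections are in order.  First, the proposition is stated for an \emph{arbitrary} coaugmented coalgebra, not a conilpotent one; your opening paragraph invoking minimal injective coresolutions (Proposition~\ref{minimal-co-resolutions-prop} and Corollary~\ref{inj-proj-co-free-cor}) therefore falls outside the hypotheses and should be dropped.  Fortunately your cobar argument does not actually require conilpotence: the reduced cobar coresolution $0\to M\to C\ot_kM\to C\ot_kC_+\ot_kM\to\dotsb$ is an injective coresolution of any left $C$-comodule $M$ over any coaugmented coalgebra, so simply delete the word ``conilpotent'' from your second paragraph.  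Second, the paper points out that the concatenation product on the cobar complex induces the \emph{opposite} of the Yoneda product on $\Ext_C^*(k,k)$, not the Yoneda product itself; this does not alter your conclusion (an anti-isomorphism between the opposites is still an anti-isomorphism of the originals), but it is worth getting right.  The paper also supplies a second proof you did not consider, via the anti-equivalence $N\longmapsto N^*$ between the abelian categories of finite-dimensional right and left $C$-comodules.
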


 The module analogue of Proposition~\ref{ext-k-k-left-right-symmetric}
is well-known: for any augmented associative algebra $A$ over
a field~$k$, there are natural isomorphisms of $k$\+vector spaces
$\Ext^n_A(k,k)\simeq\Tor^A_n(k,k)^*\simeq\Ext^n_{A^\rop}(k,k)$, making
$\Ext_{A^\rop}^*(k,k)$ the graded algebra with the opposite
multiplication to $\Ext_A^*(k,k)$.

\begin{proof}[First proof]
 Put $C_+=C/\gamma(k)$.
 For any left $C$\+comodule $M$, the reduced cobar coresolution
$$
 0\lrarrow M\lrarrow C\ot_k M\lrarrow C\ot_k C_+\ot_k M\lrarrow
 C\ot_k C_+\ot_k C_+\ot_k M\lrarrow\dotsb
$$
is an injective coresolution of $M$ in the abelian category $C\Comodl$.
 Applying the functor $\Hom_C(k,{-})$, we obtain a cobar complex
\begin{equation} \label{cobar-complex-of-comodule}
 M\lrarrow C_+\ot_k M\lrarrow C_+\ot_k C_+\ot_k M\lrarrow\dotsb
\end{equation}
computing the vector spaces $\Ext_C^*(k,M)$.
 In particular, the vector spaces $\Ext_C^*(k,k)$ are computed by
the cobar complex
\begin{equation} \label{cobar-complex-of-k}
 k\lrarrow C_+\lrarrow C_+\ot_k C_+\lrarrow C_+\ot_k C_+\ot_k C_+
 \lrarrow\dotsb
\end{equation}
\cite[Section~1.1]{PV}, \cite[Section~2.1]{Pbogom}.

 Reordering the tensor factors inside every term of the complex in
the opposite way identifies
the complex~\eqref{cobar-complex-of-k} for a coalgebra $C$ with
the similar complex for the opposite coalgebra~$C^\rop$.
 This suffices to prove the first assertion of the proposition.

 To prove the second assertion, one needs to observe that the obvious
(free associative) multiplication on the cobar
complex~\eqref{cobar-complex-of-k} induces a multiplication on its
cohomology spaces that is precisely opposite to the composition
multiplication on $\Ext_C^*(k,k)$.
 Moreover, the obvious left action of
the DG\+algebra~\eqref{cobar-complex-of-k} on
the complex~\eqref{cobar-complex-of-comodule} induces a left action
of the cohomology algebra of~\eqref{cobar-complex-of-k} on
the cohomology of~\eqref{cobar-complex-of-comodule} corresponding to
the natural graded right module structure over $\Ext_C^*(k,k)$
on $\Ext^*_C(k,M)$.
 Thus the isomorphism $\Ext_C^*(k,k)\simeq\Ext_{C^\rop}^*(k,k)$
obtained by comparing the cobar complexes~\eqref{cobar-complex-of-k}
for $C$ and $C^\rop$ makes the multiplication on the latter Ext-algebra
opposite to the one on the former one.

 This argument was sketched in~\cite[Section~2.4]{Pbogom}.
 An alternative proof of the first assertion of the proposition was
suggested in~\cite[Section~2.5]{Pbogom}.
\end{proof}

\begin{proof}[Second proof]
 Let $C\comodl$ and $\comodr C$ denote the abelian categories of
\emph{finite-dimensional} left and right $C$\+comodules, respectively.
 One makes two observations, which, taken together, imply both
the assertions of the proposition.

 Firstly, the fully faithful, exact inclusions of abelian categories
$C\comodl\rarrow C\Comodl$ and $\comodr C\rarrow\Comodr C$ induce
isomorphisms on the Ext spaces.
 Moreover, the triangulated functors between bounded above derived
categories $\sD^-(C\comodl)\rarrow\sD^-(C\Comodl)$ and
$\sD^-(\comodr C)\rarrow\sD^-(\Comodr C)$ induced by these inclusions
of abelian categories are fully faithful.
 This is a particular case of a well-known general result for abelian
or even exact categories (see~\cite[opposite version of
Proposition~1.7.11]{KS1}, \cite[opposite version of Theorem~13.2.8]{KS2},
\cite[opposite version of Theorem~12.1(b)]{Kel}, 
or~\cite[Proposition~2.1]{PSch}).
 The point is that for any finite-dimensional $C$\+comodule $L$,
any $C$\+comodule $M$, and any surjective $C$\+comodule morphism
$M\rarrow L$, there exists a finite-dimensional $C$\+subcomodule
$M'\subset M$ such that the composition $M'\rarrow M\rarrow L$
is surjective.

 Secondly, for any finite-dimensional right $C$\+comodule $N$, the dual
vector space $N^*$ is naturally a finite-dimensional left $C$\+comodule.
 Here it helps to notice that for any such $N$ there exists
a finite-dimensional subcoalgebra $E\subset C$ such that
the $C$\+comodule structure on $N$ arises from an $E$\+comodule
structure~\cite[Lemma~3.1(c)]{Pksurv}.
 Consequently, there is a natural anti-equivalence of abelian categories
$$
 N\longmapsto N^*\: (\comodr C)^\sop\lrarrow C\comodl
$$
taking the right $C$\+comodule~$k$ to the left $C$\+comodule~$k$.
 This anti-equivalence induces the desired anti-isomorphism of
the Ext algebras.
\end{proof}

\begin{lem} \label{co-contra-hom-double-dual-lemma}
 Let $C$ be a coalgebra over a field~$k$.
 Let $M\in\Comodr C$ be a right $C$\+comodule and $L\in\comodr C$ be
a finite-dimensional right $C$\+comodule.
 Then the vector space of left $C$\+contramodule homomorpisms
$M^*\rarrow L^*$ is naturally isomorphic to the double dual vector
space to the vector space of right $C$\+comodule homomorphisms
$L\rarrow M$,
\begin{equation} \label{comodule-contramodule-hom-double-dual}
 \Hom^C(M^*,L^*)\simeq\Hom_{C^\rop}(L,M)^{**}.
\end{equation}
 This isomorphism identifies the map\/ $\Hom_{C^\rop}(L,M)\rarrow
\Hom^C(M^*,L^*)$ induced by the contravariant functor of vector
space dualization $N\longmapsto N^*\:\Comodr C\rarrow C\Contra$
with the natural inclusion\/ $\Hom_{C^\rop}(L,M)\rarrow
\Hom_{C^\rop}(L,M)^{**}$ of a vector space\/ $\Hom_{C^\rop}(L,M)$
into its double dual vector space.
\end{lem}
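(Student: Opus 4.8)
The plan is to compute both sides of~\eqref{comodule-contramodule-hom-double-dual} by passing through the \emph{contratensor product}. Recall that for any right $C$\+comodule $N$, any left $C$\+contramodule $P$, and any $k$\+vector space $V$ there is a natural adjunction isomorphism
$$
 \Hom^C(P,\Hom_k(N,V))\simeq\Hom_k(N\odot_CP,\>V),
$$
where $\Hom_k(N,V)$ is endowed with the left $C$\+contramodule structure described in Section~\ref{preliminaries-on-co-and-contra-secn}, and $N\odot_CP$ is the contratensor product (see~\cite[Section~3.1]{Prev}), a quotient vector space of $N\ot_kP$. Specializing to $N=L$, \,$P=M^*$, and $V=k$ produces a natural isomorphism $\Hom^C(M^*,L^*)\simeq(L\odot_CM^*)^*$. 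Thus the task is reduced to identifying the vector space $L\odot_CM^*$, and it is precisely here that the finite-dimensionality of $L$ will be used.

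By construction, $L\odot_CM^*$ is the cokernel of the difference of two natural linear maps $L\ot_k\Hom_k(C,M^*)\rightrightarrows L\ot_kM^*$: one of them is the tensor product of $\mathrm{id}_L$ with the contraaction map $\pi\:\Hom_k(C,M^*)\rarrow M^*$ (here $\pi=\Hom_k(\nu_M,k)$, where $\nu_M\:M\rarrow M\ot_kC$ is the coaction of~$M$); the other is the composition of the map $L\ot_k\Hom_k(C,M^*)\rarrow L\ot_kC\ot_k\Hom_k(C,M^*)$ induced by $\nu_L$ with the map $L\ot_kC\ot_k\Hom_k(C,M^*)\rarrow L\ot_kM^*$ induced by the evaluation pairing $C\ot_k\Hom_k(C,M^*)\rarrow M^*$. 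On the other hand, $\Hom_{C^\rop}(L,M)$ is the equalizer of the two natural maps $\Hom_k(L,M)\rightrightarrows\Hom_k(L,M\ot_kC)$ given by $g\longmapsto\nu_M\circ g$ and $g\longmapsto(g\ot\mathrm{id}_C)\circ\nu_L$; applying the (exact) functor of $k$\+vector space dualization turns this into the right exact sequence
$$
 \Hom_k(L,M\ot_kC)^*\rightrightarrows\Hom_k(L,M)^*\rarrow\Hom_{C^\rop}(L,M)^*\rarrow0.
$$
Since $L$ is finite-dimensional, there are natural isomorphisms $\Hom_k(L,M)^*\simeq L\ot_kM^*$ and $\Hom_k(L,M\ot_kC)^*\simeq L\ot_k(M\ot_kC)^*\simeq L\ot_k\Hom_k(C,M^*)$. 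Unwinding the definitions of the coactions $\nu_L$ and $\nu_M$, one checks that under these identifications the two maps $\Hom_k(L,M\ot_kC)^*\rightrightarrows\Hom_k(L,M)^*$ go over into the two structure maps $L\ot_k\Hom_k(C,M^*)\rightrightarrows L\ot_kM^*$ in the definition of the contratensor product above. Hence $L\odot_CM^*\simeq\Hom_{C^\rop}(L,M)^*$, and combining this with the previous paragraph we obtain the natural isomorphism
$$
 \Hom^C(M^*,L^*)\simeq(L\odot_CM^*)^*\simeq\Hom_{C^\rop}(L,M)^{**}.
$$

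It remains to check that this isomorphism identifies the map $\Hom_{C^\rop}(L,M)\rarrow\Hom^C(M^*,L^*)$, \ $f\longmapsto\Hom_k(f,k)$, with the canonical embedding of $\Hom_{C^\rop}(L,M)$ into its double dual. Tracing a $C$\+comodule morphism $f\:L\rarrow M$ through the contratensor adjunction, one finds that the contramodule morphism $\Hom_k(f,k)\:M^*\rarrow L^*$ corresponds to the functional on $L\odot_CM^*$ sending the class of $l\ot\xi\in L\ot_kM^*$ to $\xi(f(l))\in k$; and under the identification $L\odot_CM^*\simeq\Hom_{C^\rop}(L,M)^*$ of the previous paragraph, the class of $l\ot\xi$ corresponds to the functional $g\longmapsto\xi(g(l))$ on $\Hom_{C^\rop}(L,M)$. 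Therefore the functional attached to $f$ is evaluation at~$f$, as desired. I expect the verification that the two parallel pairs of maps agree in the second paragraph to be the only computational point of substance; the rest is an assembly of the contratensor adjunction with the elementary identity $\Hom_k(L,{-})^*\simeq L\ot_k({-})^*$ for a finite-dimensional vector space~$L$.
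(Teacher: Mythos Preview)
Your argument is correct, but it takes a different route from the paper's own proof. The paper works directly with the equalizer descriptions of both Hom spaces: it writes $\Hom_{C^\rop}(L,M)$ as the kernel of the pair $\Hom_k(L,M)\rightrightarrows\Hom_k(L,M\ot_kC)$ and $\Hom^C(M^*,L^*)$ as the kernel of the pair $\Hom_k(M^*,L^*)\rightrightarrows\Hom_k(\Hom_k(C,M^*),L^*)$, and then uses only the elementary identity $\Hom_k(V^*,W^*)\simeq\Hom_k(W,V)^{**}$ for finite-dimensional~$W$ to show that the double-dual functor carries the first pair to the second. No contratensor product appears; indeed the paper explicitly remarks (just after introducing ${}^\gamma\!P$) that it will not need this notion.

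Your approach packages the same computation through the adjunction $\Hom^C(P,\Hom_k(N,V))\simeq\Hom_k(N\odot_CP,V)$, reducing the statement to the identification $L\odot_CM^*\simeq\Hom_{C^\rop}(L,M)^*$, which you then obtain by dualizing the equalizer for $\Hom_{C^\rop}(L,M)$ and matching the resulting coequalizer with the defining one for $L\odot_CM^*$. This is more conceptual and makes the role of the adjunction explicit; the price is importing the contratensor machinery from~\cite{Prev}, while the paper's argument stays entirely within elementary linear algebra. The ``computational point of substance'' you flag---that the two parallel pairs agree under the identifications---is exactly the same verification the paper performs, just viewed on the dual side; so in the end the two proofs differ in framing rather than in content.
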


\begin{proof}
 Notice that for any $k$\+vector space $V$ and any finite-dimensional
$k$\+vector space $W$ there is a natural isomorphism of $k$\+vector
spaces
\begin{equation} \label{double-dual-vector-space-hom-isomorphism}
 \Hom_k(V^*,W^*)\simeq\Hom_k(W,V)^{**},
\end{equation}
where $V^*=\Hom_k(V,k)$ and $W^*=\Hom_k(W,k)$.
 The isomorphism~\eqref{double-dual-vector-space-hom-isomorphism}
identifies the map $\Hom_k(W,V)\rarrow\Hom_k(V^*,W^*)$ induced
by the contravariant functor $U\longmapsto U^*\:k\Vect\rarrow k\Vect$
with the natural inclusion $\Hom_k(W,V)\rarrow\Hom_k(W,V)^{**}$
of a vector space $\Hom_k(W,V)$ into its double dual vector space.

 Now, for any two right $C$\+comodules $L$ and $M$, the vector space
$\Hom_{C^\rop}(L,M)$ is the kernel of (the difference of) a natural
pair of maps $\Hom_k(L,M)\rightrightarrows
\Hom_k(L,\>\allowbreak M\ot_k\nobreak C)$,
\begin{equation} \label{comodule-hom-computed}
 0\rarrow\Hom_{C^\rop}(L,M)\rarrow\Hom_k(L,M)\,\rightrightarrows
 \Hom_k(L,\>M\ot_kC).
\end{equation}
 For any two left $C$\+contramodules $P$ and $Q$, the vector space
$\Hom^C(P,Q)$ is the kernel of (the difference of) a natural pair of
maps $\Hom_k(P,Q)\rightrightarrows\Hom_k(\Hom_k(C,P),Q)$,
\begin{equation} \label{contramodule-hom-computed}
 0\rarrow\Hom^C(P,Q)\rarrow\Hom_k(P,Q)\,\rightrightarrows
 \Hom_k(\Hom_k(C,P),Q).
\end{equation}

 In particular, for $P=M^*$ and $Q=L^*$, we have
\begin{equation} \label{contramodule-hom-between-duals-computed}
 0\rarrow\Hom^C(M^*,L^*)\rarrow\Hom_k(M^*,L^*)\,\rightrightarrows
 \Hom_k(\Hom_k(C,M^*),L^*).
\end{equation}
 It remains to use the natural
isomorphism~\eqref{double-dual-vector-space-hom-isomorphism} in
order to show that the double dual vector space functor
$U\longmapsto U^{**}$ takes the rightmost pair of parallel morphisms
in the sequence~\eqref{comodule-hom-computed} to the rightmost pair of
parallel morphisms in
the sequence~\eqref{contramodule-hom-between-duals-computed}
for a finite-dimensional $C$\+comodule~$L$.
\end{proof}

\begin{prop} \label{ext-k-k-co-contra-prop}
 Let $(C,\gamma)$ be a coaugmented coalgebra over~$k$.
 Then the vector space\/ $\Ext^{C,n}(k,k)$ is naturally isomorphic to
the double dual vector space to the vector space\/
$\Ext_{C^\rop}^n(k,k)$,
$$
 \Ext^{C,n}(k,k)\simeq\Ext^n_{C^\rop}(k,k)^{**}
 \qquad\text{for all\/ $n\ge0$}.
$$
 This isomorphism identifies the map\/ $\Ext_{C^\rop}^n(k,k)\rarrow
\Ext^{C,n}(k,k)$ induced by the exact contravariant functor of vector
space dualization $N\longmapsto N^*\:\Comodr C\rarrow C\Contra$ with
the natural inclusion\/ $\Ext^n_{C^\rop}(k,k)\rarrow
\Ext^n_{C^\rop}(k,k)^{**}$ of a vector space\/ $\Ext^n_{C^\rop}(k,k)$
into its double dual vector space.
\end{prop}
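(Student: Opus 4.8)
The plan is to obtain the isomorphism by dualizing an injective coresolution of~$k$ in $\Comodr C$ and then invoking Lemma~\ref{co-contra-hom-double-dual-lemma}; note that no conilpotence hypothesis is needed. Concretely, I would start from the reduced cobar coresolution $0\rarrow k\rarrow J^\bu$ of the right $C$\+comodule~$k$ (the right-hand version of the coresolution used in the first proof of Proposition~\ref{ext-k-k-left-right-symmetric}), so that $J^n=C_+^{\ot n}\ot_k C$ is a cofree right $C$\+comodule for every $n\ge0$. Applying $\Hom_{C^\rop}(k,{-})$ to this coresolution recovers the cobar complex $k\rarrow C_+\rarrow C_+\ot_k C_+\rarrow\dotsb$ computing $\Ext_{C^\rop}^*(k,k)$.

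The next step is to apply the exact contravariant functor $N\longmapsto N^*\:\Comodr C\rarrow C\Contra$ to the coresolution $0\rarrow k\rarrow J^\bu$. Since vector space dualization over a field is exact, this produces an exact complex $\dotsb\rarrow(J^1)^*\rarrow(J^0)^*\rarrow k^*\rarrow0$, i.e.\ a resolution of the left $C$\+contramodule $k=k^*$; and because $N\longmapsto N^*$ sends a cofree right comodule $V\ot_k C$ to the free left $C$\+contramodule $\Hom_k(V\ot_k C,\>k)\simeq\Hom_k(C,V^*)$ (a routine identification of contraactions), this resolution is by projective left $C$\+contramodules. Hence $\Ext^{C,*}(k,k)$ is computed by the complex $\Hom^C((J^\bu)^*,\>k)$. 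Now for each $n\ge0$ I would apply Lemma~\ref{co-contra-hom-double-dual-lemma} with $M=J^n$ and $L=k$ — here it is crucial that $L$ is finite-dimensional, which it is — obtaining a natural isomorphism $\Hom^C((J^n)^*,k^*)\simeq\Hom_{C^\rop}(k,J^n)^{**}$ that carries the map induced by $N\longmapsto N^*$ into the canonical embedding of $\Hom_{C^\rop}(k,J^n)$ into its double dual. By the naturality clause of the lemma, these isomorphisms intertwine the differentials (all of which are induced functorially from the differential of $J^\bu$), so the complex computing $\Ext^{C,*}(k,k)$ is identified with the termwise double dual of the cobar complex computing $\Ext_{C^\rop}^*(k,k)$, compatibly with the map induced by $N\longmapsto N^*$, which becomes the termwise canonical embedding into the double dual.

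Finally, since dualization of $k$\+vector spaces is exact, the double dual functor $V\longmapsto V^{**}$ is exact as well, hence commutes with passage to cohomology; taking $H^n$ of the identification above yields the natural isomorphism $\Ext^{C,n}(k,k)\simeq\Ext_{C^\rop}^n(k,k)^{**}$, under which the map induced by $N\longmapsto N^*$ is the canonical embedding $\Ext_{C^\rop}^n(k,k)\rarrow\Ext_{C^\rop}^n(k,k)^{**}$. The conceptual content is entirely contained in Lemma~\ref{co-contra-hom-double-dual-lemma} together with the exactness of vector space dualization; the main obstacle is bookkeeping — verifying that the adjunction isomorphisms match the free contramodule structures on $(V\ot_k C)^*$ and that the cobar differentials are correctly transported, so that the claimed identification is genuinely an isomorphism of complexes and not merely a termwise one.
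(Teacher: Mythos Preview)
Your proposal is correct and follows essentially the same approach as the paper: dualize an injective coresolution of~$k$ in $\Comodr C$ to obtain a projective resolution of~$k$ in $C\Contra$, then apply Lemma~\ref{co-contra-hom-double-dual-lemma} termwise and pass to cohomology using exactness of the double dual. The only cosmetic differences are that the paper works with an arbitrary injective coresolution rather than the specific cobar one, and phrases the intermediate isomorphism in the greater generality $\Ext^{C,n}(M^*,L^*)\simeq\Ext^n_{C^\rop}(L,M)^{**}$ for any right $C$\+comodule $M$ and finite-dimensional $L$ before specializing to $M=L=k$.
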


\begin{proof}
 This observation goes back to the discussion
in~\cite[Section~A.1.2]{Psemi}.
 The point is that the functor $N\longmapsto N^*\:\Comodr C
\rarrow C\Contra$ takes injective right $C$\+comodules to projective
left $C$\+contramodules, which makes it easy to compute the induced
map of the Ext spaces.

 For any coalgebra $C$ and any right $C$\+comodule $M$, pick
an injective coresolution
\begin{equation} \label{right-comodule-injective-cores}
 0\lrarrow M\lrarrow J^0\lrarrow J^1\lrarrow J^2\lrarrow\dotsb
\end{equation}
of $M$ in the abelian category $\Comodr C$.
 Applying the dual vector space functor $\Hom_k({-},k)$
to the complex~\eqref{right-comodule-injective-cores}, we obtain
the complex
\begin{equation} \label{left-contra-proj-res-dual-to-comodule-cores}
 0\llarrow\Hom_k(M,k)\llarrow\Hom_k(J^0,k)\llarrow
 \Hom_k(J^1,k)\llarrow\dotsb,
\end{equation}
which is a projective resolution of the left $C$\+contramodule
$M^*=\Hom_k(M,k)$.

 Now let $L$ be a another right $C$\+comodule.
 Then one can compute the vector spaces $\Ext^n_{C^\rop}(L,M)$
as the cohomology spaces of the complex obtained by applying
the functor $\Hom_{C^\rop}(L,{-})$ to
the coresolution~\eqref{right-comodule-injective-cores}.
 Similarly, one can compute the vector spaces $\Ext^{C,n}(M^*,L^*)$
as the cohomology spaces of the complex obtained by applying
the functor $\Hom^C({-},L^*)$ to
the resolution~\eqref{left-contra-proj-res-dual-to-comodule-cores}.
 Taking into account Lemma~\ref{co-contra-hom-double-dual-lemma},
we obtain a natural isomorphism of complexes
$$
 \Hom^C((J^\bu)^*,L^*)\simeq\Hom_{C^\rop}(L,J^\bu)^{**}
$$
inducing a natural isomorphism of the cohomology spaces
$$
 \Ext^{C,n}(M^*,L^*)\simeq\Ext^n_{C^\rop}(L,M)^{**}
$$
for all right $C$\+comodules $M$, all finite-dimensional right
$C$\+comodules $L$, and all integers $n\ge0$.
 Moreover, it follows from the second assertion of the lemma that
the map $\Ext^n_{C^\rop}(L,M)\rarrow\Ext^{C,n}(M^*,L^*)$ induced by
the functor $N\longmapsto N^*\:\Comodr C\rarrow C\Contra$
agrees with the obvious map $\Ext^n_{C^\rop}(L,M)\rarrow
\Ext^n_{C^\rop}(L,M)^{**}$.
\end{proof}

\Section{Weakly Finite Koszulity Implies Comodule Ext Isomorphism}
\label{wf-koszulity-implies-comodule-ext-isom-secn}

 The aim of this section is to prove the following theorem.

\begin{thm} \label{wf-koszulity-implies-comodule-ext-isom-theorem}
 Let $C$ be a conilpotent coalgebra over a field~$k$ and $n\ge0$
be an integer such that the vector space $\Ext^i_C(k,k)$ is
finite-dimensional for all\/ $1\le i\le n$.
 Then the map of Ext spaces
$$
 \Ext_C^i(L,M)\lrarrow\Ext_{C^*}^i(L,M)
$$
induced by the inclusion functor\/ $\Upsilon\:C\Comodl\rarrow
C^*\Modl$ is an isomorphism for all left $C$\+comodules $L$ and $M$
and all\/ $0\le i\le n$.
\end{thm}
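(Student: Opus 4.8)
The plan is to reduce everything to the case of the trivial comodule $k$ via dévissage, and then to resolve $k$ explicitly by a minimal injective coresolution, comparing the comodule and module Ext computations term by term. The crucial structural input is that, under the finite-dimensionality hypothesis on $\Ext^i_C(k,k)$ for $1\le i\le n$, a minimal injective coresolution $k\to J^\bu$ of $k$ in $C\Comodl$ (which exists and is unique by Proposition~\ref{minimal-co-resolutions-prop}(a)) has \emph{finite-dimensional} cosocles ${}_\gamma J^i\cong\Ext^i_C(k,k)$ in degrees $0\le i\le n$. Since the cofree comodule $J^i=C\ot_k{}_\gamma J^i$ is a \emph{finite} direct sum of copies of $C$ in those degrees, and $\Upsilon(C)=C$ is the $C^*$-module $C$ (which, being the restricted dual situation, sits inside $C^*$-Mod in a controlled way), the point will be to check that $\Upsilon$ sends $J^0,\dots,J^n$ to modules that are still ``injective enough'' relative to the comparison we need — more precisely, that $\Ext^j_{C^*}(L,J^i)=0$ for $j>0$, $i\le n$, and all comodules $L$, or at least enough vanishing to run an induction on~$i$.

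First I would handle $i=0$: the map $\Hom_C(L,M)\to\Hom_{C^*}(L,M)$ is an isomorphism because $\Upsilon$ is fully faithful (stated in Section~\ref{preliminaries-on-co-and-contra-secn}), so there is nothing to prove there. Next, for the case $L=M=k$, I would use the minimal injective coresolution $k\to J^\bu$ truncated at degree~$n$: applying $\Hom_C(k,-)$ computes $\Ext^i_C(k,k)$, while applying $\Hom_{C^*}(k,-)$ to the same complex of $C^*$-modules computes something that maps to $\Ext^i_{C^*}(k,k)$ — and here the finite-dimensionality is exactly what makes $\Hom_{C^*}(k,C^{\oplus m})=\Hom_{C^*}(k,C)^{\oplus m}$ behave well, since $\Hom_{C^*}(k,C)\cong{}_\gamma C\cong k$ (the socle of $C$) just as in the comodule category. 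The main work is to show that the truncated cofree comodules $J^0,\dots,J^{n-1}$ compute $C^*$-module Ext correctly, i.e. that no higher $\Ext^{>0}_{C^*}$ obstructions appear below degree~$n$; I expect to establish this by observing that a finite direct sum of copies of $C$ is a rational injective $C^*$-module and that for rational modules the $C^*$-Ext into such finite cofree objects agrees with the comodule Ext in the relevant range — this is where the finiteness of the index set is indispensable, since an infinite coproduct of copies of $C$ need not be injective over $C^*$.

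Then I would bootstrap from $k$ to arbitrary finite-dimensional comodules by dévissage: every finite-dimensional $C$-comodule has a finite filtration with subquotients isomorphic to $k$ (the coalgebra is conilpotent, so its finite-dimensional comodules are successive extensions of the trivial comodule), and the five-lemma applied to the long exact sequences of $\Ext_C$ and $\Ext_{C^*}$ — compatible via $\Upsilon$ — propagates the isomorphism in degrees $0\le i\le n$ from $k$ to all finite-dimensional $L,M$. Finally, to pass to arbitrary comodules, I would use that $C\Comodl$ is locally finite: every comodule is the filtered union of its finite-dimensional subcomodules, and both $\Ext^i_C(-,-)$ and $\Ext^i_{C^*}(-,-)$ (for rational modules) commute with the relevant filtered colimits/limits in the appropriate variable — for $M$ one uses that $\Ext$ in degrees $\le n$ commutes with the filtered colimit $M=\varinjlim M_\alpha$ since the coresolutions involved are finite in length, and for $L$ one uses a Mittag-Leffler / finite-length argument on the inverse system, exactly as in the standard comparison of comodule and co-Noetherian-module cohomology. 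The step I expect to be the genuine obstacle is the middle one: proving that a finite coproduct of copies of $C$ is acyclic for $\Hom_{C^*}(L,-)$ in the range $i<n$ (equivalently, that $\Upsilon$ of a minimal injective coresolution of $k$, truncated at $n$, is still ``$\Hom_{C^*}(L,-)$-acyclic'' far enough) — everything else is formal dévissage and colimit bookkeeping, whereas this acyclicity is precisely the content that ``weak finite Koszulity'' is supposed to buy.
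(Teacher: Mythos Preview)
Your instinct to reduce (via Lemma~\ref{ext-preservation-enough-for-injectives}) to showing $\Ext^j_{C^*}(L,J)=0$ for cofree comodules $J$ and $1\le j\le n$ is sound, but the justification you propose for that vanishing is circular. Calling a finite direct sum of copies of $C$ a ``rational injective $C^*$-module'' only records that it is injective in $C\Comodl$; a cofree $C$-comodule is \emph{not} in general injective, nor $\Hom_{C^*}(L,{-})$-acyclic, in $C^*\Modl$. Your assertion that ``for rational modules the $C^*$-Ext into such finite cofree objects agrees with the comodule Ext in the relevant range'' is precisely the theorem for $M=J$, so it cannot be invoked. The colimit passage to arbitrary $L,M$ is also unsupported: $\Ext^i_{C^*}(k,{-})$ does not commute with filtered colimits unless one already knows $k$ admits a resolution by finitely generated projective $C^*$-modules, and there is no Mittag--Leffler control on $\Ext^i_{C^*}({-},M)$ for a general rational~$M$.

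The paper's argument works in the opposite variable, and this is where the finite-dimensionality hypothesis is actually spent. One takes a minimal injective coresolution of $k$ as a \emph{right} $C$-comodule, with terms $V_i\ot_k C$ where $V_i\cong\Ext^i_C(k,k)$, and applies $\Hom_k({-},k)$. For $i\le n$ the dual $(V_i\ot_k C)^*\cong C^*\ot_k V_i^*$ is a finitely generated \emph{free} left $C^*$-module, so the dualized complex is an initially projective resolution of the left $C^*$-module~$k$. By Lemma~\ref{initially-projective-resolution-computes-ext}, applying $\Hom_{C^*}({-},C\ot_k U)$ to it computes $\Ext^i_{C^*}(k,C\ot_k U)$ for $i\le n$; and Lemma~\ref{C-star-hom-described} identifies that complex, in degrees $\le n$, with the original coresolution tensored with~$U$, which is exact. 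Hence $\Ext^i_{C^*}(k,C\ot_k U)=0$ for $1\le i\le n$. The passage from $L=k$ to arbitrary $L$ is handled by the canonical conilpotent filtration on $L$ (trivial subquotients) together with the Eklof lemma, rather than by an inverse-limit argument.
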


 We start with a lemma describing some vector spaces of morphisms in
the category of left $C^*$\+modules for a coalgebra~$C$.
 A right $C$\+comodule $N$ is said to be \emph{finitely cogenerated}
if it is a subcomodule of a cofree $C$\+comodule $V\ot_k C$ with
a finite-dimensional vector space of cogenerators~$V$.

\begin{lem} \label{C-star-hom-described}
 Let $C$ be a coalgebra over~$k$, let $N$ be a right $C$\+comodule,
and let $U$ be a $k$\+vector space.
 Then there is a natural monomorphism of $k$\+vector spaces
$$
 \eta_{N,U}\:N\ot_kU\lrarrow\Hom_{C^*}(N^*,\>C\ot_kU),
$$
which is an isomorphism whenever $N$ is a finitely cogenerated
injective $C$\+comodule.
 Here the left $C^*$\+module structure on $N^*=\Hom_k(N,k)$ is
obtained by dualizing the right $C^*$\+module structure on $N$
(or equivalently, by applying the forgetful functor to the natural
left $C$\+contramodule structure on~$N^*$), while the left
$C^*$\+module structure on $C\ot_kU$ comes from the cofree left
$C$\+comodule structure.
\end{lem}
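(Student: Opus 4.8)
The plan is to construct the map $\eta_{N,U}$ explicitly and then prove injectivity in general and bijectivity under the stated hypotheses. First I would define $\eta_{N,U}$ as follows: an element $x\ot u\in N\ot_kU$ should go to the $C^*$\+module map $N^*\rarrow C\ot_kU$ that sends $\xi\in N^*$ to $\sum \xi(x_{(0)})\,x_{(1)}\ot u$, where $\nu(x)=\sum x_{(0)}\ot x_{(1)}$ is the right coaction in Sweedler-like notation; equivalently, one composes the coaction $N\rarrow N\ot_kC$ with the evaluation $N\ot_kN^*\rarrow k$ suitably arranged, landing in $\Hom_k(N^*,C)$, and then tensors with $U$. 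One checks routinely that the image lands in the subspace of \emph{$C^*$\+linear} maps (using that the $C^*$\+action on $N^*$ is dual to that on $N$, and the $C^*$\+action on $C\ot_kU$ is the cofree one), so $\eta_{N,U}$ is well-defined. Since the construction is $k$-linear in $U$, and $\Hom_{C^*}(N^*,C\ot_k{-})$ commutes with finite direct sums, it suffices in the end to treat $U=k$ and then pass to the general case by a colimit/additivity argument — though I would keep $U$ general since no extra work is needed.

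For injectivity of $\eta_{N,U}$: compose $\eta_{N,U}(x\ot u)$ with the counit map $C\ot_kU\rarrow U$ (which is $C^*$\+linear, being evaluation against the unit $\epsilon\in C^*$). The composite sends $\xi\mapsto \xi(x)\,u$ after using counitality $\sum\epsilon(x_{(1)})x_{(0)}=x$; more precisely, one recovers the canonical map $N\ot_kU\rarrow\Hom_k(N^*,U)$, $x\ot u\mapsto(\xi\mapsto\xi(x)u)$, which is injective for \emph{any} vector spaces because $N$ embeds into $N^{**}$. Hence $\eta_{N,U}$ is a monomorphism.

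For the isomorphism statement, assume $N$ is a finitely cogenerated injective $C$\+comodule. By Corollary~\ref{inj-proj-co-free-cor}(a) and the definition of ``finitely cogenerated,'' $N$ is a direct summand of a cofree comodule $W\ot_kC$ with $\dim_kW<\infty$; using additivity of $\eta$ in $N$ (which holds since both sides are additive functors of $N$, the left side obviously and the right side because $N\mapsto N^*$ is additive and $\Hom_{C^*}({-},C\ot_kU)$ sends finite direct sums to finite direct sums), I reduce to the case $N=W\ot_kC$ with $W$ finite-dimensional. Then $N^*\simeq W^*\ot_kC^*$ as left $C^*$\+modules — here finite-dimensionality of $W$ is what makes $(W\ot_kC)^*\simeq W^*\ot_kC^*$ and what makes this a \emph{free} $C^*$\+module of finite rank — so $\Hom_{C^*}(N^*,\,C\ot_kU)\simeq\Hom_{C^*}((C^*)^{\oplus\dim W},\,C\ot_kU)\simeq(C\ot_kU)^{\oplus\dim W}\simeq W\ot_kC\ot_kU=N\ot_kU$, and one checks this chain of isomorphisms is inverse to $\eta_{N,U}$. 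The main obstacle is the bookkeeping in this last identification: one must verify that the evident free-module description of $N^*$ is compatible with the comodule-theoretic description of $\eta_{N,U}$, i.e.\ that tracing a basis element through all the canonical isomorphisms reproduces the formula for $\eta_{N,U}$; this is where finite-dimensionality of the cogenerator space $W$ is essential, since for infinite-dimensional $W$ the dual $(W\ot_kC)^*$ is a product rather than a sum of copies of $C^*$ and need not be projective, so $\eta$ fails to be surjective.
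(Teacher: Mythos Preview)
Your approach is essentially identical to the paper's: same explicit formula for $\eta_{N,U}$, same injectivity argument via post-composition with the counit, and same reduction to $N=W\ot_kC$ with $\dim_kW<\infty$ for the isomorphism statement.

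One correction, though: your parenthetical claim that the counit map $C\ot_kU\to U$ is ``$C^*$\+linear, being evaluation against the unit $\epsilon\in C^*$'' is wrong, and the paper makes a point of this---the counit $\epsilon\:C\to k$ is \emph{not} a $C$\+comodule morphism, hence not a $C^*$\+module morphism. Fortunately this does not damage your argument: post-composition with $\epsilon\ot\mathrm{id}_U$ still defines a $k$\+linear map $\Hom_{C^*}(N^*,C\ot_kU)\to\Hom_k(N^*,U)$, and injectivity of the composite $N\ot_kU\to\Hom_k(N^*,U)$ is all you use. Also, drop the citation of Corollary~\ref{inj-proj-co-free-cor}(a): that result assumes $C$ is conilpotent, whereas the present lemma is for arbitrary~$C$; the fact that a finitely cogenerated injective comodule is a direct summand of some $W\ot_kC$ with $\dim_kW<\infty$ follows directly from the definition of ``finitely cogenerated'' plus injectivity.
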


\begin{proof}
 The map $\eta_{N,U}$ is defined by the formula
$$
 \eta_{N,U}(x\ot u)(f)=f(x_{(0)})x_{(1)}\ot u,
$$
for all $x\in N$, \ $u\in U$, and $f\in N^*$.
 Here $\nu(x)=x_{(0)}\ot x_{(1)}\in N\ot_k C$ is a notation for
the right $C$\+coaction map $\nu\:N\rarrow N\ot_kC$.

 To prove the isomorphism assertion for a finitely cogenerated
injective $C$\+comodule $N$, it suffices to consider the case
$N=V\ot_kC$, where $\dim_kV<\infty$.
 In this case, $N^*=C^*\ot_k V^*$ is a free left $C^*$\+module,
so $\Hom_{C^*}(N^*,\>C\ot_kU)=\Hom_k(V^*,\>C\ot_kU)=
V\ot_kC\ot_kU=N\ot_kU$.

 To prove that $\eta_{N,U}$~is an injective map for any $C$\+comodule
$N$, consider the $k$\+linear map
$$
 \Hom_{C^*}(N^*,\>C\ot_kU)\lrarrow\Hom_k(N^*,U)
$$
induced by the counit map $\epsilon\:C\rarrow k$.
 Notice that the counit is \emph{not} a $C$\+comodule morphism, and
consequently not a $C^*$\+module morphism, but only a linear map of
$k$\+vector spaces.
 The composition
$$
 N\ot_kU\overset\eta\lrarrow\Hom_{C^*}(N^*,\>C\ot_kU)
 \lrarrow\Hom_k(N^*,U)
$$
is the natural injective map of $k$\+vector spaces $N\ot_kU\rarrow
\Hom_k(N^*,U)$ (defined for any two vector spaces $U$ and $W=N$).
 Since the composition is a monomorphism, so is
the map~$\eta=\eta_{N,U}$.
\end{proof}

 Let $C=(C,\gamma)$ be a coaugmented coalgebra.
 For every $m\ge0$, denote by $F_mC\subset C$ the kernel of
the composition $C\rarrow C^{\ot m+1}\rarrow (C/\gamma(k))^{\ot m+1}$,
where $\mu^{(m)}\:C\rarrow C^{\ot m+1}$ is the iterated comultiplication
map and $C^{\ot m+1}\rarrow (C/\gamma(k))^{\ot m+1}$ is the natural
surjection.
 So one has $F_{-1}C=0$ and $F_0C=\gamma(k)\subset C$.
 One can check that $F$ is a comultiplicative filtration on $C$,
that is,
$$
 \mu(F_mC)\subset\sum\nolimits_{p+q=m} F_pC\ot_k F_qC\subset C\ot_kC
$$
for all $m\ge0$ \,\cite[Section~3.1]{PV}.

 Let $L$ be a left $C$\+comodule.
 Denote by $F_mL\subset L$ the full preimage of the subspace
$F_mC\ot_k L\subset C\ot_k L$ under the coaction map $\nu\:L\rarrow
C\ot_k L$.
 So one has $F_{-1}L=0$ and $F_0L={}_\gamma L$ (in the notation of
Section~\ref{conilpotent-minimal-secn}).
 One can check that $F$ is a comultiplicative filtration on $L$
compatible with the filtration $F$ on $C$, that is,
$$
 \nu(F_mL)\subset\sum\nolimits_{p+q=m} F_pC\ot_k F_qL\subset C\ot_k L
$$
for all $m\ge0$ \,\cite[Section~4.1]{Pbogom}.

 In other words, this means that $F_mL$ is a $C$\+subcomodule of $L$
for every $m\ge0$ \emph{and} the successive quotient $C$\+comodules
$F_mL/F_{m-1}L$ are trivial (i.~e., the coaction of $C$ in them is
induced by the coaugmentation~$\gamma$).
 In fact, one has $F_mL/F_{m-1}L={}_\gamma(L/F_{m-1}L)\subset
L/F_{m-1}L$ for every $m\ge1$.

 By the definition, a coaugmented coalgebra $C$ is conilpotent if and
only if $C=\bigcup_{m=0}^\infty F_mC$ (i.~e., the increasing filtration
$F$ on $C$ is exhaustive).
 In this case, it follows that $L=\bigcup_{m=0}^\infty F_mL$, i.~e.,
the increasing filtration $F$ on any $C$\+comodule is exhaustive
as well.

\begin{proof}[Proof of
Theorem~\ref{wf-koszulity-implies-comodule-ext-isom-theorem}]
 The argument resembles the proofs of the comodule Ext comparison
theorems in~\cite[Sections~5.4 and~5.7]{Prel} (see
specifically~\cite[Theorem~5.21]{Prel}).
 Notice first of all that the functor $\Upsilon$ is always fully
faithful by~\cite[Propositions~2.1.1\+-2.1.2 and Theorem~2.1.3(e)]{Swe}.

 According to Lemma~\ref{ext-preservation-enough-for-injectives} from
the appendix, in order to prove the theorem it suffices to show that
$\Ext^i_{C^*}(L,J)=0$ for all left $C$\+comodules $L$, all injective
left $C$\+comodules~$J$, and all integers $1\le i\le n$.
 Equivalently, this means that the space $\Ext^i_{C^*}(L,\>C\ot_kU)$
should vanish for all $k$\+vector spaces~$U$ and $1\le i\le n$.

 According to the discussion preceding this proof, the $C$\+comodule
$L$ has a natural increasing filtration with (direct sums of)
the trivial $C$\+comodule~$k$ as the successive quotients.
 Using the Eklof lemma~\cite[Lemma~1]{ET}, the problem reduces to
showing that $\Ext^i_{C^*}(k,\>C\ot_kU)=0$ for all $k$\+vector
spaces~$U$.

 Let
\begin{multline} \label{minimal-coresolution}
 0\lrarrow k\lrarrow V_0\ot_k C\lrarrow V_1\ot_k C\lrarrow V_2\ot_k C
 \lrarrow\dotsb \\ \lrarrow V_{n-1}\ot_k C\lrarrow V_n\ot_k C
 \overset{\tau_n}\lrarrow V_{n+1}\ot_k C\lrarrow\dotsb
\end{multline}
be a minimal injective/cofree coresolution of the right
$C$\+comodule~$k$, as per Corollary~\ref{inj-proj-co-free-cor}(a)
and Proposition~\ref{minimal-co-resolutions-prop}(a).
 Here $V_i$, \,$i\ge0$, are some $k$\+vector spaces.
 Computing the Ext spaces $\Ext_{C^\rop}^*(k,k)$ using the injective
coresolution~\eqref{minimal-coresolution}, we obtain isomorphisms
$V_i\simeq\Ext_{C^\rop}^i(k,k)$ (in particular, $V_0\simeq k$).
 Furthermore, Proposition~\ref{ext-k-k-left-right-symmetric}
provides isomorphisms $\Ext_{C^\rop}^i(k,k)\simeq\Ext_C^i(k,k)$.
 So the vector space $V_i$ is finite-dimensional for $0\le i\le n$
by assumption.

 Applying the dual vector space functor $\Hom_k({-},k)$ to
the complex of right $C$\+comodules~\eqref{minimal-coresolution},
we obtain a resolution of the left $C^*$\+module~$k$,
\begin{multline} \label{dual-initially-projective-resolution}
 0\llarrow k\llarrow C^*\ot_kV_0^*\llarrow C^*\ot_kV_1^*\llarrow
 C^*\ot_kV_2^* \llarrow\dotsb \\
 \llarrow C^*\ot_kV_{n-1}^*\llarrow C^*\ot_kV_n^*
 \llarrow (V_{n+1}\ot_k C)^*\llarrow\dotsb
\end{multline}
 The exact complex of left
$C^*$\+modules~\eqref{dual-initially-projective-resolution} starts and
proceeds up to the homological degree~$n$ as a resolution by finitely
generated projective $C^*$\+modules before turning into a resolution
by some arbitrary $C^*$\+modules (in fact, projective left
$C$\+contramodules) from the homological degree~$n+1$ on.

 For every $0\le i\le n+1$ and any left $C^*$\+module $J$, the space
$\Ext^i_{C^*}(k,J)$ can be computed as the degree~$i$ cohomology space
of the complex obtained by applying the functor $\Hom_{C^*}({-},J)$ to
the resolution~\eqref{dual-initially-projective-resolution} of
the left $C^*$\+module~$k$ (see
Lemma~\ref{initially-projective-resolution-computes-ext}).
 We are only interested in $0\le i\le n$ now, so let us write down
the resulting complex in the cohomological degrees from~$0$ to~$n+1$.
 It has the form
$$
 0\rarrow V_0\ot_k J\rarrow V_1\ot_k J\rarrow\dotsb
 \rarrow V_n\ot_k J\rarrow\Hom_{C^*}((V_{n+1}\ot_k C)^*,\>J),
$$
which for $J=C\ot_kU$ turns into
\begin{multline} \label{computing-ext-by-init-proj-for-cofree}
 0\lrarrow V_0\ot_k C\ot_k U\lrarrow V_1\ot_k C\ot_k U\lrarrow\dotsb
 \\ \lrarrow V_n\ot_k C\ot_k U\overset\theta\lrarrow
 \Hom_{C^*}((V_{n+1}\ot_k C)^*,\>C\ot_kU).
\end{multline}

 In the cohomological degrees~$\le n$,
the complex~\eqref{computing-ext-by-init-proj-for-cofree} can be
simply obtained by applying the vector space tensor product functor
${-}\ot_kU$ to the coresolution~\eqref{minimal-coresolution}.
 This follows from Lemma~\ref{C-star-hom-described}.
 Consequently, the complex~\eqref{computing-ext-by-init-proj-for-cofree}
is exact in the cohomological degrees $0<i<n$.
 It is only the complicated rightmost term
of~\eqref{computing-ext-by-init-proj-for-cofree} that remains
to be dealt with.

 Finally, the same Lemma~\ref{C-star-hom-described} provides
a commutative square diagram of $k$\+linear maps
\begin{equation} \label{bad-term-comparison-square}
\begin{gathered}
 \xymatrix{
  V_n\ot_k C\ot_k U \ar[r]^{\tau_n\ot_k U} \ar@{=}[d]_\eta
  \ar[rd]^\theta
  & V_{n+1}\ot_k C\ot_k U \ar@{>->}[d]^\eta \\
  \Hom_{C^*}((V_n\ot_k C)^*,\>C\ot_kU) \ar[r] &
  \Hom_{C^*}((V_{n+1}\ot_k C)^*,\>C\ot_kU)
 }
\end{gathered}
\end{equation}
where the leftmost vertical isomorphism is $\eta=\eta_{V_n\ot_kC,\>U}$,
the rightmost vertical monomorphism is $\eta=\eta_{V_{n+1}\ot_kC,\>U}$,
the horizontal arrows are induced by the differential~$\tau_n$
in the coresolution~\eqref{minimal-coresolution},
and the diagonal composition~$\theta$ is the rightmost differential
in the complex~\eqref{computing-ext-by-init-proj-for-cofree}.
 Now it is clear from the diagram~\eqref{bad-term-comparison-square}
that the kernel of the map $\theta\:V_n\ot_k C\ot_k U\rarrow
\Hom_{C^*}((V_{n+1}\ot_k C)^*,\>C\ot_kU)$ coincides with
the kernel of the map $\tau_n\ot_kU\:V_n\ot_k C\ot_k U\rarrow
V_{n+1}\ot_k C\ot_k U$ (because the rightmost vertical map~$\eta$ is
injective).
 Thus exactness of the coresolution~\eqref{minimal-coresolution} implies
exactness of the complex~\eqref{computing-ext-by-init-proj-for-cofree}
in the cohomological degree~$n$ as well.
\end{proof}

\begin{cor} \label{comodule-ext-injective-map-cor}
 Let $C$ be a conilpotent coalgebra over a field~$k$ and $n\ge0$ be
an integer such that the vector space\/ $\Ext_C^i(k,k)$ is
finite-dimensional for all\/ $1\le i\le n$.
 Then the map of Ext spaces
$$
 \Ext_C^i(L,M)\lrarrow\Ext_{C^*}^i(L,M)
$$
induced by the comodule inclusion functor\/ $\Upsilon\:C\Comodl
\rarrow C^*\Modl$ is injective for all left $C$\+comodules $L$ and $M$
and all\/ $0\le i\le n+1$.
\end{cor}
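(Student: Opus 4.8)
The plan is to deduce this corollary from Theorem~\ref{wf-koszulity-implies-comodule-ext-isom-theorem} together with the soft homological principle recalled in the Introduction just after~\eqref{induced-Ext-i-map}: for an exact functor between abelian categories, an isomorphism on $\Ext^n$ for all pairs of objects forces a monomorphism on $\Ext^{n+1}$. The functor $\Upsilon\:C\Comodl\rarrow C^*\Modl$ is exact (and fully faithful), and under the present hypothesis Theorem~\ref{wf-koszulity-implies-comodule-ext-isom-theorem} already says that the induced maps $\Ext_C^i(L,M)\rarrow\Ext_{C^*}^i(L,M)$ are \emph{isomorphisms}, hence in particular injective, for all left $C$\+comodules $L$, $M$ and all $0\le i\le n$. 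So the only remaining case is injectivity in cohomological degree $i=n+1$, which I expect to follow purely formally.

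Here is how I would carry out that last step. Put $\sB=C\Comodl$, $\sA=C^*\Modl$, $\Phi=\Upsilon$, and let $\xi\in\Ext_\sB^{n+1}(L,M)$ be a class with $\Phi(\xi)=0$ in $\Ext_\sA^{n+1}(\Phi L,\Phi M)$. I would represent $\xi$ by a Yoneda $(n+1)$\+fold extension and split off its leftmost short exact sequence $0\rarrow M\rarrow Z\rarrow W\rarrow0$ in $\sB$, where $W$ is the cokernel of the monomorphism $M\rarrow Z$; the remaining $n$\+fold extension then defines a class $\bar\xi\in\Ext_\sB^n(L,W)$, and $\xi=\partial(\bar\xi)$ for the connecting map $\partial\:\Ext_\sB^n(L,W)\rarrow\Ext_\sB^{n+1}(L,M)$ attached to this short exact sequence. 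By the long exact sequence of $\Ext_\sB^*(L,{-})$, proving $\xi=0$ amounts to showing that $\bar\xi$ lies in the image of $p_*\:\Ext_\sB^n(L,Z)\rarrow\Ext_\sB^n(L,W)$, where $p\:Z\rarrow W$ is the surjection.

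To get this, I would apply the exact functor $\Phi$ to the short exact sequence: by naturality of connecting maps, $\Phi(\xi)=\partial(\Phi(\bar\xi))$ in $\sA$, so from $\Phi(\xi)=0$ and the long exact sequence of $\Ext_\sA^*(\Phi L,{-})$ we get $\Phi(\bar\xi)=\Phi(p)_*(\zeta)$ for some $\zeta\in\Ext_\sA^n(\Phi L,\Phi Z)$. Since $\Phi\:\Ext_\sB^n(L,Z)\rarrow\Ext_\sA^n(\Phi L,\Phi Z)$ is surjective --- being an isomorphism by Theorem~\ref{wf-koszulity-implies-comodule-ext-isom-theorem} --- I can write $\zeta=\Phi(\zeta')$ with $\zeta'\in\Ext_\sB^n(L,Z)$; then $\Phi(p_*\zeta')=\Phi(p)_*\zeta=\Phi(\bar\xi)$, and injectivity of $\Phi\:\Ext_\sB^n(L,W)\rarrow\Ext_\sA^n(\Phi L,\Phi W)$ (again an isomorphism) yields $p_*\zeta'=\bar\xi$. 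Hence $\bar\xi$ is in the image of $p_*$, so $\xi=0$, which gives injectivity in degree $n+1$. For $n=0$ this specializes to the familiar statement that a fully faithful exact functor induces a monomorphism on $\Ext^1$.

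I do not expect a genuine obstacle here: the argument is a pure diagram chase, and the only ingredients beyond Theorem~\ref{wf-koszulity-implies-comodule-ext-isom-theorem} are the existence of Yoneda long exact sequences, available in any abelian category. The one point worth stressing is that the degree-$(n+1)$ injectivity comes essentially for free from the degree-$n$ isomorphism --- it does \emph{not} need finite-dimensionality of $\Ext_C^{n+1}(k,k)$ --- which is precisely why the corollary records an isomorphism through degree $n$ but still an injection one degree further.
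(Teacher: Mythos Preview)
Your proof is correct and follows essentially the same approach as the paper: the paper also deduces the corollary purely formally from Theorem~\ref{wf-koszulity-implies-comodule-ext-isom-theorem}, citing Lemma~\ref{induced-on-Ext-1-lemma} for $n=0$ and Lemma~\ref{next-degree-ext-injectivity-lemma}(a) for the general case. The only cosmetic difference is that the paper's appendix lemma runs the diagram chase in the \emph{first} variable (choosing an epimorphism $P\twoheadrightarrow T$ and using the long exact sequence of $\Ext_\sB^*({-},Y)$), whereas you split off the leftmost step of a Yoneda extension and work in the \emph{second} variable; the two arguments are dual to each other and equally valid.
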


\begin{proof}
 This is a purely formal consequence of
Theorem~\ref{wf-koszulity-implies-comodule-ext-isom-theorem}.
 See Lemma~\ref{induced-on-Ext-1-lemma} in the appendix for the case
$n=0$, and Lemma~\ref{next-degree-ext-injectivity-lemma}(a) for
the general case.
\end{proof}

\Section{Comodule Ext Isomorphism Implies Weakly Finite Koszulity}
\label{comodule-ext-isom-implies-wf-koszulity-secn}

 The aim of this section is to prove the following theorem.

\begin{thm} \label{comodule-ext-isom-implies-wf-koszulity-theorem}
 Let $C$ be a conilpotent coalgebra over a field~$k$ and $n\ge1$
be the minimal integer for which the vector space $\Ext_C^n(k,k)$ is
infinite-dimensional.
 Then the map\/ $\Ext^n_C(k,k)\rarrow\Ext^n_{C^*}(k,k)$ induced by
the comodule inclusion functor\/ $\Upsilon\:C\Comodl\rarrow C^*\Modl$
is injective, but \emph{not} surjective.
 In fact, if $\lambda$~is the dimension cardinality of the $k$\+vector
space\/ $\Ext^n_C(k,k)$ and $\kappa$~is the cardinality of
the field~$k$, then the dimension cardinality of\/
$\Ext^n_{C^*}(k,k)$ is equal to~$\kappa^{\kappa^\lambda}$.
\end{thm}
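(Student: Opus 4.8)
The plan is to treat injectivity and the dimension count separately. Injectivity of the map $\Ext^n_C(k,k)\rarrow\Ext^n_{C^*}(k,k)$ is immediate from Corollary~\ref{comodule-ext-injective-map-cor}: since $n$ is minimal, the vector spaces $\Ext^i_C(k,k)$ are finite-dimensional for all $1\le i\le n-1$, so that corollary (applied with $n-1$ in place of $n$) gives injectivity of the induced maps on $\Ext^i$ for all $0\le i\le n$, in particular for $i=n$ and $L=M=k$. Everything else reduces to computing $\dim_k\Ext^n_{C^*}(k,k)$; I would do this by reusing the machinery of Section~\ref{wf-koszulity-implies-comodule-ext-isom-secn}, and then invoke the Erd\H os--Kaplansky theorem ($\dim_k V^*=\kappa^{\dim_k V}$ for an infinite-dimensional $V$) twice.

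Concretely: fix a minimal injective/cofree coresolution $0\rarrow k\rarrow V_0\ot_kC\rarrow V_1\ot_kC\rarrow\dotsb$ of the right $C$\+comodule $k$ (Corollary~\ref{inj-proj-co-free-cor}(a), Proposition~\ref{minimal-co-resolutions-prop}(a)); then $V_i\simeq\Ext^i_{C^\rop}(k,k)\simeq\Ext^i_C(k,k)$ by Proposition~\ref{ext-k-k-left-right-symmetric}, so $V_0,\dotsc,V_{n-1}$ are finite-dimensional while $\dim_kV_n=\lambda$. Dualizing produces a resolution $P_\bu$ of the left $C^*$\+module $k$ with $P_i=(V_i\ot_kC)^*$, which for $i\le n-1$ is the finitely generated free $C^*$\+module $C^*\ot_kV_i^*$; as in the proof of Theorem~\ref{wf-koszulity-implies-comodule-ext-isom-theorem}, Lemma~\ref{initially-projective-resolution-computes-ext} lets one compute $\Ext^n_{C^*}(k,k)$ as $H^n\Hom_{C^*}(P_\bu,k)$. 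The differential $\Hom_{C^*}(P_{n-1},k)\rarrow\Hom_{C^*}(P_n,k)$ vanishes because the induced map ${}_\gamma J^{n-1}\rarrow{}_\gamma J^n$ on the coresolution is zero by minimality, so one gets $\Ext^n_{C^*}(k,k)=\ker\bigl(\Hom_{C^*}(P_n,k)\rarrow\Hom_{C^*}(P_{n+1},k)\bigr)=\Hom_{C^*}(\Omega_n,k)=(\Omega_n/I\Omega_n)^*$, where $\Omega_n=\ker(P_{n-1}\rarrow P_{n-2})\simeq(Z^{n-1})^*$ is the dual of the $(n-1)$\+st cosyzygy $Z^{n-1}$ of $k$ and $I=\ker(C^*\rarrow k)$. (Equivalently one may phrase this through $\Ext^n_{C^*}(k,k)\simeq\Tor_n^{C^*}(k,k)^*$ with $\Tor_n^{C^*}(k,k)=\Omega_n/I\Omega_n$.)

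It remains to show $\dim_k(\Omega_n/I\Omega_n)=\kappa^\lambda$. For the lower bound, the right exact functor $P\mapsto{}^\gamma P$ gives a surjection $\Omega_n/I\Omega_n\twoheadrightarrow{}^\gamma\Omega_n\simeq\bigl((Z^{n-1})_\gamma\bigr)^*$ by~\eqref{E-quotcontramodule-of-Hom-contramodule}, and $(Z^{n-1})_\gamma\simeq\Ext^n_{C^\rop}(k,k)\simeq\Ext^n_C(k,k)$ (minimality makes $V_n\ot_kC$ an injective envelope of $Z^{n-1}$), so the dimension is $\ge\kappa^\lambda$. The upper bound rests on the observation that $\dim_kC\le\lambda$: the iterated reduced comultiplication embeds $\operatorname{gr}_mC$ into $(\operatorname{gr}_1C)^{\ot m}$ for the primitive filtration, where $\operatorname{gr}_1C\simeq\Ext^1_C(k,k)$, so $\dim_kC\le\sum_m(\dim_k\Ext^1_C(k,k))^m$; if $n=1$ this is $\lambda$, and if $n\ge2$ then $\Ext^1_C(k,k)$ is finite-dimensional, forcing $\dim_kC\le\aleph_0$, whence $\lambda=\dim_k\Ext^n_C(k,k)\le\aleph_0=\lambda$ too. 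Consequently $Z^{n-1}$, a quotient of $V_{n-1}\ot_kC$, has dimension $\le\lambda$, so $\dim_k\Omega_n=\dim_k(Z^{n-1})^*=\kappa^{\dim_kZ^{n-1}}\le\kappa^\lambda$ by Erd\H os--Kaplansky, and hence $\dim_k(\Omega_n/I\Omega_n)\le\kappa^\lambda$. This yields $\dim_k\Ext^n_{C^*}(k,k)=\kappa^{\kappa^\lambda}$; since $\kappa^{\kappa^\lambda}>\lambda=\dim_k\Ext^n_C(k,k)$, the injective map in question cannot be surjective.

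The main obstacle I anticipate is the bookkeeping in the second paragraph: correctly identifying the term $\Omega_n$ and checking, via minimality, that the incoming differential vanishes so that $\Ext^n_{C^*}(k,k)$ is genuinely $(\Omega_n/I\Omega_n)^*$ rather than a proper subspace of it; this, together with the (perhaps unexpected) cardinality bound $\dim_kC\le\lambda$, is what makes the upper bound $\dim_k(\Omega_n/I\Omega_n)\le\kappa^\lambda$ go through.
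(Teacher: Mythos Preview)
Your argument is correct and complete; the bookkeeping you flag as a concern all checks out. In particular, the vanishing of the differential $\Hom_{C^*}(P_{n-1},k)\rarrow\Hom_{C^*}(P_n,k)$ follows because $P_{n-1}=C^*\ot_kV_{n-1}^*$ is free, so any $C^*$\+map $P_{n-1}\rarrow k$ factors through the quotient $P_{n-1}\rarrow V_{n-1}^*$ dual to the inclusion $(J^{n-1})_\gamma\hookrightarrow J^{n-1}$; precomposing with $P_n\rarrow P_{n-1}$ then dualizes the zero map $(J^{n-1})_\gamma\rarrow J^n$. Your syzygy identification and the surjection $\Omega_n/I\Omega_n\twoheadrightarrow{}^\gamma\Omega_n$ (from the inclusion $I\Omega_n\subset F^1\Omega_n$) are likewise sound, and the cardinality bound $\dim_kC\le\lambda$ is a nice observation.

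The paper's route is organized differently. It splits off $n=1$ (Proposition~\ref{comodule-ext-1-isom-implies-prop}, which argues directly with the bar description of $\Ext^1$ and uses the same bound $\dim_kC_+=\lambda$), and for $n\ge2$ proves the sharper statement $\Ext^n_{C^*}(k,k)\simeq\Ext^n_C(k,k)^{**}$ (Proposition~\ref{comodule-ext-n-isom-implies-prop}). The key extra tool there is Lemma~\ref{hom-from-dual-into-k}, which identifies each term $\Hom_{C^*}((V_i\ot_kC)^*,k)$ with $V_i^{**}$; this lemma in turn relies on the full faithfulness of the contramodule forgetful functor $\Theta$ (Theorem~\ref{fully-faithful-contramodule-forgetful}). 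So the paper obtains a canonical double-dual description at the cost of invoking contramodule input, whereas your uniform argument stays on the module side and trades the natural isomorphism for a two-sided cardinality estimate. Either approach proves the theorem as stated; yours is more self-contained, the paper's yields the finer identification used later in the Remark following the proof.
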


 The following lemma computing the dimension cardinality of the dual
vector space is a classical result.

\begin{lem} \label{dimension-of-the-dual-lemma}
 Let $V$ be an infinite-dimensional vector space of dimension
cardinality~$\lambda$ over a field~$k$ of cardinality~$\kappa$.
 Then the dimension cardinality of the dual $k$\+vector space
$V^*=\Hom_k(V,k)$ is equal to $\kappa^\lambda$.
\end{lem}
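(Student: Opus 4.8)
This is the classical Erd\H{o}s--Kaplansky theorem, and the plan is to reproduce its short proof by establishing the inequalities $\dim_k V^* \le \kappa^\lambda$ and $\dim_k V^* \ge \kappa^\lambda$ in turn.

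For the upper bound, I would choose a $k$-basis of $V$, identifying $V$ with $\bigoplus_{i\in\Lambda} k$ for an index set $\Lambda$ with $|\Lambda| = \lambda$, and hence $V^*$ with the product $\prod_{i\in\Lambda} k$, whose underlying set has cardinality $\kappa^\lambda$. The coordinate functionals attached to the chosen basis form a linearly independent family of size $\lambda$ in $V^*$, so $\dim_k V^*$ is infinite; and any infinite-dimensional $k$-vector space $W$ satisfies $|W| = \max(\dim_k W,\kappa) \ge \dim_k W$. Applied to $W = V^*$, this gives $\dim_k V^* \le |V^*| = \kappa^\lambda$.

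For the lower bound I would exhibit $\kappa^\lambda$ linearly independent elements of $V^* \cong \prod_\Lambda k$. Since $\lambda$ is infinite, the set $S$ of all finite sequences of elements of a fixed set $T$ with $|T| = \lambda$ again has cardinality $\lambda$, so $V^*$ may be reindexed as the function space $k^S$. To each of the $\kappa^\lambda$ functions $\phi\colon T \to k$ attach the element $G_\phi \in k^S$ sending a sequence $(t_1,\dots,t_n)$ to $\phi(t_1)\cdots\phi(t_n)$ (and the empty sequence to $1$). The crucial step is that $G_{\phi_1},\dots,G_{\phi_m}$ are linearly independent whenever $\phi_1,\dots,\phi_m$ are pairwise distinct. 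To see this, pick a finite subset $\{a_1,\dots,a_r\}\subseteq T$ containing, for each pair $j\ne j'$, an element where $\phi_j$ and $\phi_{j'}$ differ; then the points $p_j = (\phi_j(a_1),\dots,\phi_j(a_r)) \in k^r$ are pairwise distinct. Evaluating $G_{\phi_j}$ on the sequence consisting of $e_s$ copies of $a_s$ (for $s = 1,\dots,r$) returns the value at $p_j$ of the monomial $x_1^{e_1}\cdots x_r^{e_r}$; hence a relation $\sum_j c_j G_{\phi_j} = 0$ in $k^S$ specializes to $\sum_j c_j\,g(p_j) = 0$ for every polynomial $g\in k[x_1,\dots,x_r]$, i.e.\ the evaluation functionals at the distinct points $p_j$ satisfy $\sum_j c_j\,\mathrm{ev}_{p_j} = 0$. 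Since distinct points of $k^r$ can be separated by polynomials (take $g_l$ a product of linear factors vanishing at the other $p_j$ but not at $p_l$), evaluating on $g_l$ forces $c_l = 0$. Thus $\dim_k V^* \ge \kappa^\lambda$, and combined with the upper bound $\dim_k V^* = \kappa^\lambda$.

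The routine ingredients are the cardinal-arithmetic facts that an infinite-dimensional $k$-vector space $W$ has $|W| = \max(\dim_k W, \kappa)$ and that $S$ has cardinality $\lambda$; the one substantive point is the linear independence of the family $\{G_\phi\}_\phi$, equivalently the linear independence of distinct evaluation homomorphisms on a polynomial ring. As the statement is classical, one could alternatively just cite a standard reference for the Erd\H{o}s--Kaplansky theorem instead of giving this argument.
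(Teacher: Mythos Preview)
Your proof is correct: both the upper bound via $|V^*|=\kappa^\lambda$ together with $|W|=\max(\dim_k W,\kappa)$, and the lower bound via the linearly independent family $\{G_\phi\}_{\phi\in k^T}$, are standard and sound (the separation of finitely many distinct points in $k^r$ by products of linear forms works over any field, finite or infinite).

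The paper itself does not give a proof at all; it simply cites \cite[Section~IX.5]{Jac} for this classical result. So your approach differs from the paper's only in that you supply a complete self-contained argument where the paper defers to a reference. Your closing remark that one could instead just cite a standard source for the Erd\H{o}s--Kaplansky theorem is exactly what the paper does.
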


\begin{proof}
 This is~\cite[Section~IX.5]{Jac}.
\end{proof}

 Let $C$ be a conilpotent coalgebra.
 Then the vector space $\Ext^1_C(k,k)$ can be interpreted as
the \emph{space of cogenerators} of the coalgebra~$C$
(we refer to~\cite[Lemma~5.2]{Pqf} for a discussion).
 So we will say that a conilpotent coalgebra $C$ is \emph{finitely
cogenerated} if the vector space $\Ext^1_C(k,k)$ is finite-dimensional.

 For any coaugmented coalgebra $(C,\gamma)$, the dual linear map
$\gamma^*\:C^*\rarrow k$ to the coaugmentation $\gamma\:k\rarrow C$
defines an augmentation on the algebra~$C^*$.
 Accordingly, the one-dimensional vector space~$k$ can be endowed
with left and right $C^*$\+module structures provided by
the augmentation~$\gamma^*$.
 These $C^*$\+module structures on~$k$ can be also viewed as coming
from the left and right $C$\+comodule structures on~$k$ induced
by the coaugmentation~$\gamma$.
 The same module structures also come from the left and right
$C$\+contramodule structures on~$k$ induced by~$\gamma$.

 We start with a discussion of the case $n=1$ in
Theorem~\ref{comodule-ext-isom-implies-wf-koszulity-theorem}.

\begin{prop} \label{comodule-ext-1-isom-implies-prop}
 Let $C$ be a conilpotent coalgebra over a field~$k$.
 Then there is a natural $k$\+vector space monomorphism
\begin{equation} \label{Ext-1-double-dual-monomorphism}
 \Ext_C^1(k,k)^{**}\,\rightarrowtail\,\Ext_{C^*}^1(k,k).
\end{equation}
 The dimension cardinality of the vector space\/ $\Ext_{C^*}^1(k,k)$
is equal to that of the vector space\/ $\Ext_C^1(k,k)^{**}$.
\end{prop}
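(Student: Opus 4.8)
The plan is to reduce the entire statement to linear algebra inside the augmented algebra $C^*$, using the description of $\Ext^1$ over an augmented algebra in terms of its augmentation ideal. Write $\gamma^*\colon C^*\to k$ for the augmentation dual to~$\gamma$, let $\mathfrak m=\ker\gamma^*$ be its augmentation ideal, and note that $\mathfrak m=(C_+)^*$ as a vector space, where $C_+=C/\gamma(k)$. Since $k$ is the augmentation $C^*$\+module, the start $0\to\mathfrak m\to C^*\to k\to0$ of its minimal resolution gives the standard isomorphism $\Ext^1_{C^*}(k,k)\simeq(\mathfrak m/\mathfrak m^2)^*$. I will also bring in the subspace $\mathfrak m^{(2)}:=(F_1C)^\perp\subseteq C^*$, the functionals annihilating $F_1C\subseteq C$. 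From the comultiplicativity of the filtration $F$, namely $\mu(F_1C)\subseteq F_0C\ot_kF_1C+F_1C\ot_kF_0C$, together with $\mathfrak m=(F_0C)^\perp$, a one-line computation with the Sweedler notation for the product in $C^*$ shows $\mathfrak m^2\subseteq\mathfrak m^{(2)}$. The elementary annihilator-quotient identity then yields a natural isomorphism $\mathfrak m/\mathfrak m^{(2)}\simeq(F_1C/\gamma(k))^*$, and $F_1C/\gamma(k)$ is precisely the space of primitives of the coaugmented coalgebra~$C$, which is naturally isomorphic to $\Ext^1_C(k,k)$ (visible from the reduced cobar complex~\eqref{cobar-complex-of-k}, or from the minimal injective coresolution~\eqref{minimal-coresolution}, where $V_1\simeq\Ext^1_C(k,k)$).

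Granting this, the monomorphism~\eqref{Ext-1-double-dual-monomorphism} is obtained by dualizing the natural surjection $\mathfrak m/\mathfrak m^2\twoheadrightarrow\mathfrak m/\mathfrak m^{(2)}\simeq\Ext^1_C(k,k)^*$, producing $\Ext^1_C(k,k)^{**}\rightarrowtail(\mathfrak m/\mathfrak m^2)^*\simeq\Ext^1_{C^*}(k,k)$. (One can check this map agrees, under the identifications of Propositions~\ref{ext-k-k-left-right-symmetric} and~\ref{ext-k-k-co-contra-prop}, with the map on $\Ext^1(k,k)$ induced by the forgetful functor~$\Theta$, but that is not needed for the present statement.)

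For the equality of dimension cardinalities, write $\lambda=\dim_k\Ext^1_C(k,k)$. If $\lambda$ is finite, then $C$ is finitely cogenerated, so Theorem~\ref{wf-koszulity-implies-comodule-ext-isom-theorem} applied with $n=1$ already gives an isomorphism $\Ext^1_C(k,k)\simeq\Ext^1_{C^*}(k,k)$, and all three spaces have dimension~$\lambda$. So assume $\lambda$ infinite. The crucial point is the upper bound $\dim_kC_+\le\lambda$. Setting $W=F_1C/\gamma(k)$, so $\dim_kW=\lambda$, for every $m\ge1$ the iterated reduced comultiplication $\bar\mu^{(m-1)}\colon C_+\to C_+^{\ot m}$ has kernel $F_{m-1}C/\gamma(k)$ and carries $F_mC/\gamma(k)$ into $\bigcap_{j=1}^{m}\bigl(C_+^{\ot(j-1)}\ot W\ot C_+^{\ot(m-j)}\bigr)=W^{\ot m}$ (the first claim by coassociativity and exactness of $\ot$; the intersection computed in a basis of $C_+$ extending one of~$W$), whence $\dim_k\bigl(F_mC/F_{m-1}C\bigr)\le\dim_kW^{\ot m}=\lambda$; summing over~$m$ and using exhaustivity of $F$ gives $\dim_kC_+\le\aleph_0\cdot\lambda=\lambda$, the reverse inequality being trivial since $W\subseteq C_+$. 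By Lemma~\ref{dimension-of-the-dual-lemma} this yields $\dim_k\mathfrak m=\dim_k(C_+)^*=\kappa^\lambda$, an infinite cardinal, and the squeeze $\kappa^\lambda=\dim_k(\mathfrak m/\mathfrak m^{(2)})\le\dim_k(\mathfrak m/\mathfrak m^2)\le\dim_k\mathfrak m=\kappa^\lambda$ forces $\dim_k(\mathfrak m/\mathfrak m^2)=\kappa^\lambda$. Applying Lemma~\ref{dimension-of-the-dual-lemma} once more, $\dim_k\Ext^1_{C^*}(k,k)=\dim_k(\mathfrak m/\mathfrak m^2)^*=\kappa^{\kappa^\lambda}$, and two applications of the same lemma give $\dim_k\Ext^1_C(k,k)^{**}=\kappa^{\kappa^\lambda}$ as well.

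The main obstacle — the only step beyond routine bookkeeping with annihilators and dual vector spaces — is the upper bound $\dim_kC_+\le\lambda$ in the infinite case. The inclusion $\mathfrak m^2\subseteq\mathfrak m^{(2)}$ alone gives $\dim_k(\mathfrak m/\mathfrak m^2)\ge\kappa^\lambda$ but not the reverse, and it is the embedding of the associated graded pieces of the primitive filtration of $C_+$ into tensor powers of the space of primitives that makes the estimate tight; note that this estimate genuinely uses that $\lambda$ is infinite (it fails, e.g., for the graded dual of $k[x]$).
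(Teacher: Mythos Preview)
Your argument is correct and follows essentially the same approach as the paper: the paper phrases the monomorphism as a comparison of kernels $\ker\bigl(C_+^{**}\to(C_+\ot_kC_+)^{**}\bigr)\subseteq\ker\bigl(C_+^{**}\to(C_+^*\ot_kC_+^*)^*\bigr)$, which is precisely the dual of your inclusion $\mathfrak m^2\subseteq\mathfrak m^{(2)}$.  For the dimension cardinality the paper invokes the embedding $C\hookrightarrow\Ten(V)$ from~\cite[Lemma~5.2]{Pqf}, whereas your self-contained filtration argument (embedding $F_mC/F_{m-1}C$ into $W^{\ot m}$ via the iterated reduced comultiplication) reproves exactly the same bound $\dim_kC_+\le\lambda$.
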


\begin{proof}
 Notice first of all the isomorphism $\Ext_{C^\rop}^*(k,k)\simeq
\Ext_C^*(k,k)$ provided by
Proposition~\ref{ext-k-k-left-right-symmetric}.
 Furthermore, let $A$ be an augmented $k$\+algebra with
an augmentation $\alpha\:A\rarrow k$ and the augmentation ideal
$A_+=\ker(\alpha)\subset A$.
 Then the vector space $\Ext^1_A(k,k)\simeq\Tor_1^A(k,k)^*\simeq
\Ext^1_{A^\rop}(k,k)$ is computed as the kernel of the $k$\+linear map
$$
 A_+^*\lrarrow(A_+\ot_k A_+)^*
$$
dual to the multiplication map $A_+\ot_k A_+\rarrow A_+$.
 In particular, the vector space $\Ext^1_{C^*}(k,k)$ is
naturally isomorphic to the kernel of the map
\begin{equation} \label{ext-1-over-dual-algebra-computed}
 C_+^{**}\lrarrow(C_+^*\ot_k C_+^*)^*
\end{equation}
dual to the multiplication map $C_+^*\ot_k C_+^*\rarrow C_+^*$
(where $C_+=C/\gamma(k)$).

 On the other hand, from the cobar complex~\eqref{cobar-complex-of-k}
one can immediately see that the vector space $\Ext_C^1(k,k)$
is naturally isomorphic to the kernel of the comultiplication map
$$
 C_+\lrarrow C_+\ot_kC_+.
$$
 Hence the double dual vector space $\Ext_C^1(k,k)^{**}$ is
naturally isomorphic to the kernel of the map
\begin{equation} \label{double-dual-ext-1-over-coalgebra}
 C_+^{**}\lrarrow(C_+\ot_kC_+)^{**}.
\end{equation}

 Notice that $(C_+^*\ot_kC_+^*)$ is naturally a subspace
in $(C_+\ot_kC_+)^*$; hence $(C_+^*\ot_kC_+^*)^*$ is a quotient
space of $(C_+\ot_kC_+)^{**}$.
 One can also observe that the cokernel of the map $(C_+\ot_kC_+)^*
\rarrow C_+^*$ is naturally a quotient space of the cokernel of the map
$C_+^*\ot_kC_+^*\rarrow C_+^*$.
 Comparing the maps~\eqref{ext-1-over-dual-algebra-computed}
and~\eqref{double-dual-ext-1-over-coalgebra}, one immediately obtains
the desired monomorphism~\eqref{Ext-1-double-dual-monomorphism}.

 To compute the dimension cardinality, put $V=\Ext^1_C(k,k)$, and
notice that $C$ is a subcoalgebra of the cofree conilpotent (tensor)
coalgebra $\Ten(V)=\bigoplus_{n=0}^\infty V^{\ot n}$ cospanned by~$V$
\,\cite[Lemma~5.2]{Pqf} (see~\cite[Sections~2.3 and~3.3]{Pksurv} for
an introductory discussion).
 Hence, if $V$ is infinite-dimensional, then the dimension cardinalities
of $V$ and $C_+$ are equal to each other.
 Now $V^{**}$ is a subspace in $\Ext^1_{C^*}(k,k)$, which is in
turn a subspace in $C_+^{**}$.
 Thus the dimension cardinalities of all the three vector spaces
$V^{**}$, \ $\Ext^1_{C^*}(k,k)$, and $C_+^{**}$ are equal to
each other.
 If $V$ is finite-dimensional, then
the map~\eqref{Ext-1-double-dual-monomorphism} is an isomorphism
by Theorem~\ref{wf-koszulity-implies-comodule-ext-isom-theorem}
(for $n=1$).
\end{proof}

\begin{ex} \label{noncomodule-extension-example}
 Let $C$ be an infinitely cogenerated conilpotent coalgebra.
 Interpreted in the light of Lemma~\ref{induced-on-Ext-1-lemma} from
the appendix, Proposition~\ref{comodule-ext-1-isom-implies-prop} tells
us that there exists a short exact sequence of left $C^*$\+modules
\begin{equation} \label{two-dimensional-noncomodule}
 0\lrarrow k\lrarrow M\lrarrow k\lrarrow0
\end{equation}
with the two-dimensional $C^*$\+module $M$ \emph{not} coming from
any $C$\+comodule (while the one-dimensional $C^*$\+module $k$,
of course, comes from the one-dimensional $C$\+comodule~$k$) via
the comodule inclusion functor~$\Upsilon$.
 Let us explain how to construct a short exact
sequence~\eqref{two-dimensional-noncomodule} explicitly.

 Let $C$ be a conilpotent coalgebra.
 Recall the notation $F_0C=\gamma(k)$ and $F_1C=\ker(C\to C_+\ot_kC_+)$
from Section~\ref{wf-koszulity-implies-comodule-ext-isom-secn}.
 Then we have a natural direct sum decomposition $F_1C=F_0C\oplus V$,
where $V=\ker(C_+\to C_+\ot_kC_+)=\Ext^1_C(k,k)$.
 Choose a linear function $f\:V^*\rarrow k$.
 The composition of linear maps $C^*\twoheadrightarrow(F_1C)^*
\twoheadrightarrow V^*\overset f\rarrow k$ defines a linear
function $\tilde f\:C^*\rarrow k$.

 Let $M$ be the two-dimensional $k$\+vector space with the basis
vectors~$e_1$ and~$e_2$.
 Define the left action of $C^*$ in $M$ by the formulas
$ae_1=\gamma^*(a)e_1$ and $ae_2=\gamma^*(a)e_2+\tilde f(a)e_1$
for all $a\in C^*$ (where $\gamma^*\:C^*\rarrow k$ is the dual
map to the coaugmentation~$\gamma$).
 Then $ke_1\subset M$ is a $C^*$\+submodule of $M$ isomorphic to
$\Upsilon(k)$ and $M/ke_1$ is a quotient module of $M$ also
isomorphic to $\Upsilon(k)$.
 The $C^*$\+module $M$ itself belongs to the essential image
of the functor $\Upsilon$ if and only if the linear function
$f\:V^*\rarrow k$ comes from a vector in~$V$.
\end{ex}

 Before passing to the case $n>1$ in
Theorem~\ref{comodule-ext-isom-implies-wf-koszulity-theorem},
we need a preparatory lemma.

\begin{lem} \label{hom-from-dual-into-k}
 Let $(C,\gamma)$ be a coaugmented coalgebra over~$k$ and $N$ be
a right $C$\+comodule.
 Then there is a natural monomorphism of $k$\+vector spaces
$$
 \xi_N\:(N_\gamma)^{**}\lrarrow\Hom_{C^*}(N^*,k)
$$
from the double dual vector space to the vector subspace
$N_\gamma\subset N$ to the vector space of left $C^*$\+module
morphisms $N^*\rarrow k$.
 The map~$\xi_N$ is an isomorphism whenever the coalgebra $C$ is
conilpotent and finitely cogenerated. 
\end{lem}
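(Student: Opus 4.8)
The plan is to write $\xi_N$ down explicitly, observe that injectivity is then immediate, and afterwards recognize $\xi_N$ as the map on Hom groups induced by the contramodule forgetful functor $\Theta$, so that the isomorphism assertion reduces to the full-and-faithfulness of $\Theta$ over a finitely cogenerated conilpotent coalgebra, \cite[Theorem~2.1]{Psm}. First I would construct $\xi_N$ as follows. Dualizing the comodule inclusion $N_\gamma\hookrightarrow N$ gives a \emph{surjective} morphism of left $C^*$\+modules $r\:N^*\rarrow(N_\gamma)^*$, \,$g\longmapsto g|_{N_\gamma}$ (surjective because every linear functional on a subspace extends to the whole space). Since the right $C$\+comodule structure on $N_\gamma$ comes from the coaugmentation, the induced right $C^*$\+module structure on $N_\gamma$, and hence the left $C^*$\+module structure on $(N_\gamma)^*$, is the trivial one, with $C^*$ acting through the augmentation $\gamma^*\:C^*\rarrow k$. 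The $C^*$\+module $k$ is trivial for the same reason, so every $k$\+linear map $(N_\gamma)^*\rarrow k$ is automatically a morphism of $C^*$\+modules, and for $\psi\in(N_\gamma)^{**}=\Hom_k((N_\gamma)^*,k)$ the composition $\psi\circ r\:N^*\rarrow k$ lies in $\Hom_{C^*}(N^*,k)$. I would set $\xi_N(\psi)=\psi\circ r$. Naturality in $N$ is clear, and injectivity is immediate: $\psi\circ r=0$ forces $\psi=0$ since $r$ is surjective. Only the coaugmentation of $C$ has been used here, matching the generality of the monomorphism claim.

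Next I would reinterpret $\xi_N$. Applying the isomorphism~\eqref{E-quotcontramodule-of-Hom-contramodule} with the subcoalgebra $E=\gamma(k)$ and $V=k$ identifies ${}^\gamma(N^*)$ with $(N_\gamma)^*$, the identification being the one induced by the restriction map~$r$; composing with the natural isomorphism $\Hom_k({}^\gamma\!P,k)\simeq\Hom^C(P,k)$ (for $P=N^*$) recalled in Section~\ref{conilpotent-minimal-secn}, one obtains a natural isomorphism $(N_\gamma)^{**}\simeq\Hom^C(N^*,k)$ carrying $\psi$ to the contramodule morphism $\psi\circ r$. Under this isomorphism, $\xi_N$ becomes exactly the map $\Hom^C(N^*,k)\rarrow\Hom_{C^*}(N^*,k)$ induced by the forgetful functor $\Theta\:C\Contra\rarrow C^*\Modl$. (In particular, injectivity of $\xi_N$ can also be read off from the faithfulness of~$\Theta$.)

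Finally, when $C$ is conilpotent and finitely cogenerated the functor $\Theta$ is fully faithful by~\cite[Theorem~2.1]{Psm}, so the map $\Hom^C(N^*,k)\rarrow\Hom_{C^*}(N^*,k)$ is bijective; hence $\xi_N$ is an isomorphism. The only non-formal ingredient is thus this citation — the underlying Nakayama/completeness argument of~\cite{Psm} — which I would quote rather than reprove; note that its conclusion here is genuinely needed, since for a general comodule $N$ the image of $C^*_+\cdot N^*$ in $N^*$ is strictly smaller than $\ker(r)$, and it is precisely finite cogeneration of $C$ that prevents $\Hom_{C^*}(N^*,k)$ from being larger than $(N_\gamma)^{**}$. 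Apart from that, the proof is a matter of tracking the natural identifications; the one step deserving attention is to check that the isomorphism~\eqref{E-quotcontramodule-of-Hom-contramodule} is realized by the restriction map~$r$ (which is clear from its construction), so that the reinterpretation of $\xi_N$ as the $\Theta$\+induced map is legitimate. I do not expect a genuine obstacle beyond this.
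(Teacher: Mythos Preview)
Your proposal is correct and follows essentially the same approach as the paper: construct $\xi_N$ by applying $\Hom_{C^*}({-},k)$ to the surjection $N^*\twoheadrightarrow(N_\gamma)^*$ dual to the comodule inclusion $N_\gamma\hookrightarrow N$, identify $(N_\gamma)^{**}\simeq\Hom^C(N^*,k)$ via formula~\eqref{E-quotcontramodule-of-Hom-contramodule}, and then invoke \cite[Theorem~2.1]{Psm} (Theorem~\ref{fully-faithful-contramodule-forgetful} in the paper) to conclude that $\Hom^C(N^*,k)=\Hom_{C^*}(N^*,k)$ in the finitely cogenerated conilpotent case. The paper's proof is organized slightly differently---it first passes to $\Hom^C(N^*,k)$ and then unwinds the isomorphism~\eqref{contra-hom-from-dual-into-k-eqn}---but the ingredients and logic are the same.
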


\begin{proof}
 The inclusion $N_\gamma\rarrow N$ is a morphism of right
$C$\+comodules.
 Consequently, the $k$\+vector space dual map $N^*\rarrow (N_\gamma)^*$
is an epimorphism of left $C^*$\+modules (and in fact, of left
$C$\+contramodules), where the $C^*$\+module (or $C$\+contramodule)
structure on $(N_\gamma)^*$ is induced by the (co)augmentation.
 Applying the functor $\Hom_{C^*}({-},k)$, we obtain the desired
injective map of $k$\+vector spaces
$$
 \xi_N\:(N_\gamma)^{**}=\Hom_{C^*}((N_\gamma)^*,k)
 \lrarrow\Hom_{C^*}(N^*,k).
$$

 If the coalgebra $C$ is conilpotent and finitely cogenerated,
then the contramodule forgetful functor $C\Contra\rarrow C^*\Modl$
from the category of left $C$\+contramodules to the category of
left $C^*$\+modules is fully faithful~\cite[Theorem~2.1]{Psm}
(see Theorem~\ref{fully-faithful-contramodule-forgetful} below).
 Consequently, we have $\Hom_{C^*}(N^*,k)=\Hom^C(N^*,k)$.

 It remains to observe that the natural map
\begin{equation} \label{contra-hom-from-dual-into-k-eqn}
 (N_\gamma)^{**}\lrarrow\Hom^C(N^*,k)
\end{equation}
is an isomorphism for any coaugmented coalgebra~$(C,\gamma)$.
 Indeed, we have
$$
 \Hom^C(N^*,k)\simeq\Hom_k({}^\gamma(N^*),k)
 \simeq\Hom_k((N_\gamma)^*,k)=(N_\gamma)^{**},
$$
because ${}^\gamma(N^*)\simeq(N_\gamma)^*$.
 The latter isomorphism is a particular case of
the formula~\eqref{E-quotcontramodule-of-Hom-contramodule}
in Section~\ref{conilpotent-minimal-secn}.
 Alternatively, one can obtain
the isomorphism~\eqref{contra-hom-from-dual-into-k-eqn} as
a particular case of
the isomorphism~\eqref{comodule-contramodule-hom-double-dual}
from Lemma~\ref{co-contra-hom-double-dual-lemma} for $L=k$ and $M=N$.
\end{proof}

\begin{prop} \label{comodule-ext-n-isom-implies-prop}
 Let $C$ be a finitely cogenerated conilpotent coalgebra over
a field~$k$ and $n\ge2$ an integer such that the vector space\/
$\Ext^i_C(k,k)$ is finite-dimensional for all\/ $1\le i\le n-1$.
 Then there is a natural $k$\+vector space isomorphism
\begin{equation} \label{Ext-n-double-dual-isomorphism}
 \Ext_C^n(k,k)^{**}\simeq\Ext_{C^*}^n(k,k).
\end{equation}
\end{prop}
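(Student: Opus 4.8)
The plan is to run the computation from the proof of Theorem~\ref{wf-koszulity-implies-comodule-ext-isom-theorem} one homological degree further, applied to the right $C$\+comodule~$k$ with its second argument also equal to~$k$. First I would fix a minimal injective coresolution
\[
 0\lrarrow k\lrarrow V_0\ot_kC\overset{\tau_0}{\lrarrow} V_1\ot_kC
 \overset{\tau_1}{\lrarrow} V_2\ot_kC\lrarrow\dotsb
\]
of the right $C$\+comodule~$k$ by cofree comodules, which exists and is unique up to isomorphism by Corollary~\ref{inj-proj-co-free-cor}(a) and Proposition~\ref{minimal-co-resolutions-prop}(a); here the $V_i$ are $k$\+vector spaces. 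The functor $\Hom_{C^\rop}(k,{-})$, which sends a right $C$\+comodule $N$ to $N_\gamma$, takes this coresolution (by minimality) to a complex with zero differential, so $V_i\simeq\Ext_{C^\rop}^i(k,k)\simeq\Ext_C^i(k,k)$ by Proposition~\ref{ext-k-k-left-right-symmetric}; in particular $V_0\simeq k$, and $V_i$ is finite-dimensional for every $1\le i\le n-1$ by hypothesis. Applying the dual vector space functor $\Hom_k({-},k)$ to the coresolution, I obtain an exact complex of left $C^*$\+modules resolving~$k$,
\[
 0\llarrow k\llarrow C^*\ot_kV_0^*\llarrow\dotsb\llarrow C^*\ot_kV_{n-1}^*
 \llarrow(V_n\ot_kC)^*\llarrow(V_{n+1}\ot_kC)^*\llarrow\dotsb,
\]
whose terms in homological degrees $0$ through $n-1$ are finitely generated free $C^*$\+modules (this is where $\dim_kV_i<\infty$ for $1\le i\le n-1$ is used) and whose remaining terms are free left $C$\+contramodules. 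By Lemma~\ref{initially-projective-resolution-computes-ext}, applying $\Hom_{C^*}({-},k)$ to this resolution computes $\Ext_{C^*}^i(k,k)$ for all $0\le i\le n$.

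It remains to identify the terms and differentials of the resulting cochain complex in cohomological degrees $0$ through $n+1$. For each such~$i$, Lemma~\ref{hom-from-dual-into-k} applied to the cofree right $C$\+comodule $N=V_i\ot_kC$ provides a monomorphism $\xi_N\:(N_\gamma)^{**}\lrarrow\Hom_{C^*}(N^*,k)$ which is an \emph{isomorphism} because $C$ is conilpotent and finitely cogenerated; since $N_\gamma\simeq\Hom_{C^\rop}(k,N)\simeq V_i$, this reads $\Hom_{C^*}((V_i\ot_kC)^*,k)\simeq V_i^{**}$, and for $i\le n-1$ it reduces to the evident identification $\Hom_{C^*}(C^*\ot_kV_i^*,k)=V_i^{**}=V_i$. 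The coboundary from degree~$i$ to degree~$i+1$ is $\Hom_{C^*}(\tau_i^*,k)$; as $\xi_N$ is plainly natural in~$N$ (it is $\Hom_{C^*}({-},k)$ applied to the dual of the inclusion $N_\gamma\rarrow N$), this coboundary is carried by the identifications above to the double dual map $\bigl((\tau_i)_\gamma\bigr)^{**}\:V_i^{**}\lrarrow V_{i+1}^{**}$. By minimality of the chosen coresolution, $(\tau_i)_\gamma=0$ for every~$i$, so all of these coboundaries vanish.

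Therefore the complex computing $\Ext_{C^*}^*(k,k)$ has vanishing differential and degree-$i$ term $V_i^{**}$ throughout cohomological degrees $0\le i\le n$, whence $\Ext_{C^*}^n(k,k)\simeq V_n^{**}\simeq\Ext_C^n(k,k)^{**}$; since every identification in sight is canonical, this yields the asserted isomorphism~\eqref{Ext-n-double-dual-isomorphism}. The step I expect to require the most care is the identification of the coboundary maps: one must track the socle identification $(V_i\ot_kC)_\gamma\simeq V_i$ through the naturality square for $\xi_N$ and check that it is compatible with the finitely generated free-module identifications used in the low degrees, and one must verify that Lemma~\ref{initially-projective-resolution-computes-ext} genuinely licenses computing $\Ext^n_{C^*}$ from a resolution that is projective over $C^*$ only through degree $n-1$. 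This is also where the hypothesis $n\ge2$ matters: it ensures that $V_1,\dots,V_{n-1}$ — not merely $V_0\simeq k$ — are finite-dimensional, so that the dualized resolution is finitely generated projective far enough to support the Ext computation. The case $n=1$ is of a different character and is handled separately in Proposition~\ref{comodule-ext-1-isom-implies-prop} (where $C$ need not even be finitely cogenerated, and one obtains only a monomorphism in general).
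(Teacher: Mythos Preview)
Your proof is correct and follows essentially the same approach as the paper's: take the minimal cofree coresolution of the right $C$\+comodule~$k$, dualize to get an initially projective resolution of the left $C^*$\+module~$k$, apply $\Hom_{C^*}({-},k)$, and use Lemma~\ref{hom-from-dual-into-k} together with minimality to see that the resulting complex has zero differential and degree\+$n$ term $V_n^{**}$. The only difference is that the paper adds a short paragraph arguing naturality by passing to an \emph{arbitrary} injective coresolution (homotopy equivalent to the minimal one), whereas you appeal directly to the canonicity of the identifications; both are adequate, though the paper's version makes the independence of choices more explicit.
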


\begin{proof}
 Similarly to the proof of
Theorem~\ref{wf-koszulity-implies-comodule-ext-isom-theorem}, we
consider a minimal injective/cofree
coresolution~\eqref{minimal-coresolution} of
the right $C$\+comodule~$k$.
 The assumption of the proposition implies that the vector spaces
$V_i\simeq\Ext^i_C(k,k)$ are finite-dimensional for all $0\le i\le n-1$.
 Applying the dual vector space functor $\Hom_k({-},k)$ to
the complex~\eqref{minimal-coresolution}, we obtain a resolution
of the left $C^*$\+module~$k$,
\begin{multline} \label{dual-initially-less-projective-resolution}
 0\llarrow k\llarrow C^*\ot_kV_0^*\llarrow C^*\ot_kV_1^*\llarrow
 C^*\ot_kV_2^* \llarrow\dotsb \\
 \llarrow C^*\ot_kV_{n-1}^*\llarrow (V_n\ot_k C)^*
 \llarrow (V_{n+1}\ot_k C)^*\llarrow\dotsb
\end{multline}
(cf.\ formula~\eqref{dual-initially-projective-resolution},
which holds under the stricter assumptions of
Theorem~\ref{wf-koszulity-implies-comodule-ext-isom-theorem}).

 Lemma~\ref{initially-projective-resolution-computes-ext} from
the appendix tells us that, for every $0\le i\le n$ and any left
$C^*$\+module $M$, the space $\Ext^i_{C^*}(k,M)$ can be computed
as the degree~$i$ cohomology space of the complex obtained by applying
the functor $\Hom_{C^*}({-},M)$ to
the resolution~\eqref{dual-initially-less-projective-resolution} of
the left $C^*$\+module~$k$.
 For $M=k$, the resulting complex turns into
\begin{multline} \label{computing-ext-by-less-init-proj-for-k}
 0\lrarrow V_0\lrarrow V_1\lrarrow\dotsb\lrarrow V_{n-1}\\
 \lrarrow\Hom_{C^*}((V_n\ot_k C)^*,\>k)\lrarrow
 \Hom_{C^*}((V_{n+1}\ot_k C)^*,k).
\end{multline}
 By Lemma~\ref{hom-from-dual-into-k},
the complex~\eqref{computing-ext-by-less-init-proj-for-k} is
naturally isomorphic to the complex
\begin{equation} \label{double-dual-zero-differential}
 0\lrarrow V_0\lrarrow V_1\lrarrow\dotsb\lrarrow V_{n-1}\\
 \lrarrow V_n^{**}\lrarrow V_{n+1}^{**}
\end{equation}
obtained by applying the functor $(({-})_\gamma)^{**}$ to
the coresolution~\eqref{minimal-coresolution}.
 As the latter coresolution was chosen to be minimal,
the differential in the complex~\eqref{double-dual-zero-differential}
(or, which is the same, in
the complex~\eqref{computing-ext-by-less-init-proj-for-k}) vanishes.
 Thus $\Ext^n_{C^*}(k,k)\simeq V_n^{**}\simeq\Ext^n_C(k,k)^{**}$.

 This proves existence of
\emph{an} isomorphism~\eqref{Ext-n-double-dual-isomorphism}.
 To show that this isomorphism is \emph{natural}, consider
an arbitrary injective coresolution
\begin{equation} \label{arbitrary-injective-coresolution}
 0\lrarrow k\lrarrow J^0\lrarrow J^1\lrarrow J^2\lrarrow\dotsb
\end{equation}
of the right $C$\+comodule~$k$.
 Then
\begin{equation} \label{dual-nonprojective-resolution}
 0\llarrow k\llarrow (J^0)^*\llarrow(J^1)^*\llarrow(J^2)^*
 \llarrow\dotsb
\end{equation}
is a (nonprojective) resolution of the left $C^*$\+module~$k$.
 However, the coresolution~\eqref{arbitrary-injective-coresolution}
is homotopy equivalent (as a complex of injective right $C$\+comodules)
to the minimal coresolution~\eqref{minimal-coresolution}.
 Hence the resolution~\eqref{dual-nonprojective-resolution} is homotopy
equivalent (as a complex of left $C^*$\+modules) to the initially
projective resolution~\eqref{dual-initially-less-projective-resolution}.
 Consequently, the natural map~\eqref{resolution-to-ext-comparison}
from Lemma~\ref{initially-projective-resolution-computes-ext} for
the nonprojective resolution~\eqref{dual-nonprojective-resolution} is
naturally isomorphic to the similar map for the initially projective
resolution~\eqref{dual-initially-less-projective-resolution}.

 Thus one can compute $\Ext^i_{C^*}(k,k)$ for $0\le i\le n$
using the nonprojective
resolution~\eqref{dual-nonprojective-resolution}, obtaining
natural isomorphisms
$$
 \Ext^i_{C^*}(k,k)\simeq H^i\Hom_{C^*}((J^\bu)^*,k)
 \simeq H^i((J^\bu_\gamma)^{**})\simeq(H^i(J^\bu_\gamma))^{**}
 \simeq \Ext^i_C(k,k)^{**}
$$
(with the second isomorphism provided by
Lemma~\ref{hom-from-dual-into-k}) for all $0\le i\le n$.
\end{proof}

\begin{proof}[Proof of
Theorem~\ref{comodule-ext-isom-implies-wf-koszulity-theorem}]
 The map $\Ext^n_C(L,M)\rarrow\Ext^n_{C^*}(L,M)$ is
injective for all left $C$\+comodules $L$ and $M$ in our assumptions
by Corollary~\ref{comodule-ext-injective-map-cor}.
 The dimension cardinality assertion of the theorem is provided by
Proposition~\ref{comodule-ext-1-isom-implies-prop} (for $n=1$)
or Proposition~\ref{comodule-ext-n-isom-implies-prop} (for $n\ge2$)
together with Lemma~\ref{dimension-of-the-dual-lemma}.
 The nonsurjectivity assertion follows from the dimension inequality.
\end{proof}

\begin{rem}
 Arguing more carefully, one can show that, in the assumptions of
Proposition~\ref{comodule-ext-n-isom-implies-prop},
the isomorphism~\eqref{Ext-n-double-dual-isomorphism} identifies
the map $\Ext^n_C(k,k)\rarrow\Ext^n_{C^*}(k,k)$ induced by
the comodule inclusion functor $\Upsilon\:C\Comodl\rarrow C^*\Modl$
with the natural inclusion $\Ext^n_C(k,k)\rarrow
\Ext^n_{C^*}(k,k)^{**}$ of a vector space $\Ext^n_C(k,k)$ into its
double dual vector space.
 For this purpose, one can start with comparing the proofs of
Propositions~\ref{ext-k-k-co-contra-prop}
and~\ref{comodule-ext-n-isom-implies-prop} in order to observe that
the forgetful functor $\Theta\:C\Contra\rarrow C^*\Modl$ induces
an isomorphism $\Ext^{C,n}(k,k)\simeq\Ext_{C^*}^n(k,k)$ in
the assumptions of Proposition~\ref{comodule-ext-n-isom-implies-prop}.
 Then the result of Proposition~\ref{ext-k-k-co-contra-prop} identifies
the map $\Ext^n_{C^\rop}(k,k)\rarrow\Ext^n_{C^*}(k,k)$ induced by
the dual vector space functor $N\longmapsto N^*\:\Comodr C\rarrow
C^*\Modl$ with the natural inclusion $\Ext^n_{C^\rop}(k,k)\rarrow
\Ext^n_{C^\rop}(k,k)^{**}$.
 It remains to restrict the consideration to finite-dimensional
comodules and use the observations from the second proof of
Proposition~\ref{ext-k-k-left-right-symmetric}.
\end{rem}

\Section{Weakly Finite Koszulity Implies Contramodule Ext Isomorphism}
\label{wf-koszulity-implies-contramodule-ext-isom-secn}

 Before stating the main theorem of this section, let us have
a discussion of \emph{separated} and \emph{nonseparated} contramodules.

 Let $C=(C,\gamma)$ be a coaugmented coalgebra and $P$ be a left
$C$\+contramodule.
 For every $m\ge0$, denote by $F^mP\subset P$ the image of the subspace
$\Hom_k(C/F_{m-1}C,P)\subset\Hom_k(C,P)$ under the contraaction
map $\pi\:\Hom_k(C,P)\rarrow P$.
 So one has $F^0P=P$ and $F^1P=\ker(P\twoheadrightarrow{}^\gamma\!P)$
(in the notation of Sections~\ref{conilpotent-minimal-secn}
and~\ref{wf-koszulity-implies-comodule-ext-isom-secn}).
 One can check that $F$ is a contramultiplicative decreasing filtration
on $P$ compatible with the increasing filtration $F$ on $C$, that is
$$
 \pi(\Hom_k(C/F_{q-1}C,F^pP))\subset F^{p+q}P
$$
for all $p$, $q\ge0$.

 In other words, this means that $F^mP$ is a $C$\+subcontramodule of $P$
for every $m\ge0$ \emph{and} the successive quotient $C$\+contramodules
$F^mP/F^{m+1}P$ are trivial (i.~e., the contraaction of $C$ in them
is induced by the coaugmentation~$\gamma$).
 In fact, one has $F^mP/F^{m+1}P={}^\gamma(F^mP)$ for every $m\ge0$.

 The problem with the decreasing filtration $F$ on $P$ is that it is
\emph{not} in general separated (not even when $C$ is a conilpotent
coalgebra); see the counterexample in~\cite[Section~1.5]{Prev}
(going back to~\cite[Example~2.5]{Sim}, \cite[Example~3.20]{Yek},
and~\cite[Section~A.1.1]{Psemi}).
 The next lemma explains how this problem can be partially rectified
(for a conilpotent coalgebra~$C$).

 We say that a vector space $P$ endowed with a decreasing filtration $F$
is \emph{separated} if the natural map to the projective limit
$$
 \lambda_{P,F}\:P\lrarrow\varprojlim\nolimits_{m\ge1} P/F^mP
$$
is injective, and that $P$ is \emph{complete} if the map~$\lambda_{P,F}$
is surjective.
 A contramodule $P$ over a conilpotent coalgebra $C$ is said to be
\emph{separated} (respectively, \emph{complete}) if it is separated
(resp., complete) with respect to its natural decreasing filtration $F$
constructed above.

 Notice that, for any subcontramodule $Q$ in a contramodule $P$, one has
$F^mQ\subset F^mP$ for all $m\ge0$.
 Thus any subcontramodule of a separated contramodule is separated.
 For a free contramodule $P=\Hom_k(C,V)$, one has
$F^mP=\Hom_k(C/F_{m-1}C,V)$.
 Hence all free (and therefore, all projective) contramodules over
a conilpotent coalgebra are separated and complete.
{\emergencystretch=1em\par}

\begin{lem} \label{complete-separated-contramodules}
 Let $C$ be a conilpotent coalgebra.  Then \par
\textup{(a)} any $C$\+contramodule is complete; \par
\textup{(b)} any $C$\+contramodule can be presented as the quotient
contramodule of a separated contramodule by a separated subcontramodule.
\end{lem}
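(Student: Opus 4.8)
The plan is to reduce both assertions to the fact, recalled in the discussion preceding the lemma, that free $C$\+contramodules over a conilpotent coalgebra are separated and complete.

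Part~(b) is then essentially formal. Given a left $C$\+contramodule $P$, I would choose a surjective $C$\+contramodule morphism $f\:G\twoheadrightarrow P$ with $G$ a free $C$\+contramodule; such an $f$ exists since $C\Contra$ has enough projectives and projective $C$\+contramodules are free by Corollary~\ref{inj-proj-co-free-cor}(b) (alternatively, one can take $G=\Hom_k(C,P)$ mapping onto $P$ by the contraaction map, which is surjective by contraunitality). Since $G$ is separated and any subcontramodule of a separated contramodule is separated, the kernel $K=\ker(f)$ is separated, and the isomorphism $P\simeq G/K$ exhibits $P$ as the quotient of a separated contramodule by a separated subcontramodule.

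For part~(a), pick again a surjection $f\:G\twoheadrightarrow P$ with $G$ free, hence separated and complete, so that the comparison map $\lambda_{G,F}\:G\to\varprojlim_{m}G/F^{m}G$ is an isomorphism. The key point is the identity $f(F^{m}G)=F^{m}P$ for all $m\ge0$. The inclusion $\subseteq$ is just functoriality of the filtration; for $\supseteq$ one uses that $F^{m}P$ is the image under the contraaction map $\pi\:\Hom_k(C,P)\to P$ of the subspace $\Hom_k(C/F_{m-1}C,P)$, that $f$ intertwines the two contraactions, and that $\Hom_k(C/F_{m-1}C,{-})$ is an exact functor over a field, so that $\Hom_k(C/F_{m-1}C,G)\to\Hom_k(C/F_{m-1}C,P)$ is surjective. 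From this identity it follows first that each map $G/F^{m}G\to P/F^{m}P$ is surjective, and second that the kernels $K_{m}=\ker(G/F^{m}G\to P/F^{m}P)$ form a tower with surjective transition maps: an element of $K_{m}$ can be lifted to $G/F^{m+1}G$, and the lift corrected by a suitable element of $F^{m}G/F^{m+1}G$ so as to lie in $K_{m+1}$, precisely because $F^{m}G$ surjects onto $F^{m}P$. A countable tower with surjective transition maps has vanishing $\varprojlim^{1}$, so the induced map $\varprojlim_{m}G/F^{m}G\to\varprojlim_{m}P/F^{m}P$ is surjective; composing with the isomorphism $\lambda_{G,F}$, using naturality of the comparison maps $\lambda$, and using that $f$ is surjective, I would conclude that $\lambda_{P,F}\:P\to\varprojlim_{m}P/F^{m}P$ is surjective, i.e., that $P$ is complete.

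The step I expect to be the main obstacle is the identity $f(F^{m}G)=F^{m}P$ — more precisely the nontrivial inclusion $\supseteq$ and its use in verifying the Mittag\+-Leffler condition for the tower $\{K_{m}\}$; everything after that is the standard ``surjective tower'' argument. It is worth noting that the decreasing filtration $F$ on a contramodule is, by its very definition in terms of the contraaction, manifestly functorial, so no choices intervene in these comparisons, and that part~(b) requires nothing beyond the already-established closure of the class of separated contramodules under subobjects.
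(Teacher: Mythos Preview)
Your proof of part~(b) is correct and coincides verbatim with the paper's: present $P$ as a quotient of a free contramodule, which is separated, and observe that subcontramodules of separated contramodules are separated.

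For part~(a), the paper does not give an argument at all but simply refers to~\cite[Lemma~A.2.3]{Psemi}. Your self-contained proof is correct. The identity $f(F^mG)=F^mP$ follows exactly as you say from the commutative square expressing that $f$ is a contramodule morphism, together with surjectivity of $\Hom_k(C/F_{m-1}C,G)\to\Hom_k(C/F_{m-1}C,P)$ over a field. The surjectivity of the transition maps $K_{m+1}\to K_m$ is also correct: given $g\in G$ with $f(g)\in F^mP=f(F^mG)$, choose $h\in F^mG$ with $f(h)=f(g)$ and set $g'=g-h$; then $g'\equiv g\pmod{F^mG}$ and $f(g')=0\in F^{m+1}P$. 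After that, the Mittag--Leffler argument and the naturality square for $\lambda$ go through as you describe, and surjectivity of $\lambda_{P,F}\circ f$ immediately gives surjectivity of~$\lambda_{P,F}$.

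So your argument for~(a) is a genuine proof where the paper only supplies a citation; the trade-off is that the cited lemma in~\cite{Psemi} is stated in somewhat greater generality, but for the purposes of this paper your direct approach is entirely adequate and arguably preferable for self-containedness.
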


\begin{proof}
 Part~(b) holds because any contramodule is a quotient of
a free contramodule, which is separated, and any subcontramodule of
a separated contramodule is separated.
 For part~(a), see~\cite[Lemma~A.2.3]{Psemi}.
\end{proof}

 The aim of this section is to prove the following theorem.

\begin{thm} \label{wf-koszulity-implies-contramodule-ext-isom-theorem}
 Let $C$ be a conilpotent coalgebra over a field~$k$ and $n\ge1$
be an integer such that the vector space\/ $\Ext^i_C(k,k)$ is
finite-dimensional for all\/ $1\le i\le n$.
 Then the map of the Ext spaces
\begin{equation} \label{contramodule-ext-map-again}
 \Ext^{C,i}(P,Q)\lrarrow\Ext_{C^*}^i(P,Q)
\end{equation}
induced by the forgetful functor\/ $\Theta\:C\Contra\rarrow C^*\Modl$ is
an isomorphism for all left $C$\+contramodules $P$, all \emph{separated}
left $C$\+contramodules $Q$, and all\/ $0\le i\le n-1$.
 The map~\eqref{contramodule-ext-map-again} is also an isomorphism
for \emph{all} left $C$\+contramodules $P$ and $Q$ and all\/
$0\le i\le n-2$.
\end{thm}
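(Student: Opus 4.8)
The plan is to dualize the proof of Theorem~\ref{wf-koszulity-implies-comodule-ext-isom-theorem}, exchanging the roles of injective comodules and projective (= free) contramodules, and of $\Ext$ and $\Tor$. Since $\Ext^1_C(k,k)$ is finite-dimensional, $C$ is finitely cogenerated, so the forgetful functor $\Theta\:C\Contra\rarrow C^*\Modl$ is fully faithful by~\cite[Theorem~2.1]{Psm}; this disposes of the case $i=0$. Resolving an arbitrary left $C$\+contramodule $P$ by free contramodules $\Hom_k(C,V)$, dimension-shifting in $P$, and using that $\Theta$ is exact and fully faithful, one reduces the first (main) assertion of the theorem to the vanishing statement
$$
 \Ext^i_{C^*}(\Hom_k(C,V),\>Q)=0
$$
for all $k$\+vector spaces $V$, all \emph{separated} left $C$\+contramodules $Q$, and all $1\le i\le n-1$; granting this, the second assertion (arbitrary $Q$, and $0\le i\le n-2$) follows by writing $Q$ as a quotient of a separated contramodule by a separated subcontramodule (Lemma~\ref{complete-separated-contramodules}(b)), comparing the long exact sequences of $\Ext$, and applying the five lemma — which costs one cohomological degree.

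To handle the second variable I would use that a separated contramodule over a conilpotent coalgebra is automatically complete (Lemma~\ref{complete-separated-contramodules}(a)), so $Q=\varprojlim_m Q/F^mQ$ and there is a short exact sequence of $C^*$\+modules $0\rarrow Q\rarrow\prod_m Q/F^mQ\rarrow\prod_m Q/F^mQ\rarrow0$ (exact because the transition maps are surjective). Since $\Ext$ carries products in the second argument to products, the associated long exact sequence computes $\Ext^i_{C^*}(\Hom_k(C,V),Q)$ from $\varprojlim$ and $\varprojlim^1$ of the groups $\Ext^i_{C^*}(\Hom_k(C,V),\>Q/F^mQ)$; each $Q/F^mQ$ is a finite iterated extension of \emph{trivial} $C$\+contramodules, and for a trivial contramodule $T$ the identity $\Hom_{C^*}({-},T)=\Hom_k(k\ot_{C^*}({-}),\>T)$ together with exactness of $\Hom_k({-},T)$ gives $\Ext^i_{C^*}(M,T)\simeq\Hom_k(\Tor^{C^*}_i(k,M),\>T)$. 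The lone $\varprojlim^1$ term, in degree $i=1$, vanishes because $\Hom_{C^*}(\Hom_k(C,V),\>Q/F^mQ)\simeq\Hom_k(V,\>Q/F^mQ)$ (full faithfulness of $\Theta$ together with the description of morphisms out of a free contramodule) has surjective transition maps, hence is Mittag-Leffler. So everything comes down to
$$
 \Tor^{C^*}_i(k,\>\Hom_k(C,V))=0\qquad\text{for all $V$ and all }1\le i\le n-1 .
$$

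For this core computation I would take a minimal injective (cofree) coresolution $0\rarrow k\rarrow C\ot_k\widetilde V_0\rarrow C\ot_k\widetilde V_1\rarrow\dotsb$ of the \emph{left} $C$\+comodule $k$, so that $\widetilde V_i\simeq\Ext^i_C(k,k)$ is finite-dimensional for $0\le i\le n$ by hypothesis. Dualizing this coresolution by $\Hom_k({-},k)$ and pushing the result forward along the exact fully faithful forgetful functor $\Contrar C\rarrow\Modr C^*$ produces an \emph{initially free} resolution $\mathcal R_\bu$ of the right $C^*$\+module $k$ whose terms $\mathcal R_i=(C\ot_k\widetilde V_i)^*$ are free of finite rank for $0\le i\le n$. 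Now tensor $\mathcal R_\bu$ over $C^*$ with the left $C^*$\+module $\Hom_k(C,V)$; using the contramodule analogue of Lemma~\ref{C-star-hom-described} — a natural map $N^*\ot_{C^*}\Hom_k(C,V)\rarrow\Hom_k(N,V)$ which is an isomorphism when $N$ is a finitely cogenerated injective left $C$\+comodule — one identifies $\mathcal R_\bu\ot_{C^*}\Hom_k(C,V)$, in homological degrees $\le n$, with $\Hom_k(C\ot_k\widetilde V_\bu,\>V)$, i.e.\ with $\Hom_k({-},V)$ applied to the original coresolution. Since the latter is exact and $\Hom_k({-},V)$ is exact, this complex is acyclic in positive degrees, yielding $\Tor^{C^*}_i(k,\Hom_k(C,V))=0$ for $1\le i\le n-1$.

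The hard part is the passage from degree $n-1$ to degree $n$. To reach $\Tor^{C^*}_n$ one would have to control the term $\mathcal R_{n+1}\ot_{C^*}\Hom_k(C,V)=(C\ot_k\widetilde V_{n+1})^*\ot_{C^*}\Hom_k(C,V)$, but $\widetilde V_{n+1}\simeq\Ext^{n+1}_C(k,k)$ is not assumed finite-dimensional, and the comparison map $(C\ot_k\widetilde V_{n+1})^*\ot_{C^*}\Hom_k(C,V)\rarrow\Hom_k(C\ot_k\widetilde V_{n+1},\>V)$ is then in general \emph{not surjective} — whereas in the comodule proof the analogous map $\eta$ in Lemma~\ref{C-star-hom-described} is \emph{always injective}, which is exactly the property that let that argument gain one extra degree. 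So the argument genuinely stops at $n-1$, and this is sharp: it is precisely what underlies the existence (recorded in the introduction) of a projective contramodule $P$ with $\Ext^{n-1}_{C^*}(P,k)\ne0=\Ext^{C,n-1}(P,k)$ for the minimal $n$ with $\Ext^n_C(k,k)$ infinite-dimensional. Two further points requiring care are the identification of the contramodule cosocle ${}^\gamma\Hom_k(C,V)$ with the $C^*$\+module reduction $\Hom_k(C,V)\ot_{C^*}k$ — again a consequence of finite cogeneratedness of $C$ — and the bookkeeping of the various $\varprojlim^1$ terms in the reduction of the second variable.
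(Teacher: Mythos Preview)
Your proof is correct and follows the same overall architecture as the paper's: reduce in the first variable to free contramodules via Lemma~\ref{ext-preservation-enough-for-projectives}, reduce in the second variable to trivial contramodules, convert the resulting $\Ext$ into a $\Tor$ statement, and compute $\Tor^{C^*}_i(k,\Hom_k(C,V))$ via the initially free resolution of $k$ obtained by dualizing a minimal cofree coresolution of the left $C$\+comodule~$k$ (this is exactly Lemma~\ref{C-star-tensor-product-described} in the paper). The deduction of the nonseparated case from the separated one via Lemma~\ref{complete-separated-contramodules}(b) is likewise identical.

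The only substantive difference is in the reduction for~$Q$: the paper invokes the dual Eklof lemma~\cite[Proposition~18]{ET} directly on the cofiltration $Q=\varprojlim Q/F^mQ$, and then reduces $\Ext^i_{C^*}(P,T)$ for trivial $T$ to $\Tor^{C^*}_i(k,P)$ by writing $T$ as a summand of a dual vector space $W^*$ and using $\Ext^i_{C^*}(P,W^*)\simeq\Tor^{C^*}_i(W,P)^*$. Your route---the Milnor $\varprojlim/\varprojlim^1$ sequence for the countable tower $\{Q/F^mQ\}$, plus the direct identity $\Ext^i_{C^*}(M,T)\simeq\Hom_k(\Tor^{C^*}_i(k,M),T)$ for trivial~$T$---is equivalent (the dual Eklof lemma specializes to the Milnor sequence for $\omega$\+indexed towers) and arguably more self-contained, since it makes the single $\varprojlim^1$ obstruction at $i=1$ visible and disposes of it by Mittag--Leffler. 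A small remark: your mention of the fully faithful forgetful functor $\Contrar C\rarrow\Modr C^*$ is unnecessary---only exactness of dualization is used to produce the initially free resolution of~$k$.
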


 The following lemma, describing some tensor products of $C^*$\+modules
for a coalgebra~$C$, is a partial dual version of
Lemma~\ref{C-star-hom-described}.

\begin{lem} \label{C-star-tensor-product-described}
 Let $C$ be a coalgebra over $k$, let $N$ be a left $C$\+comodule,
and let $U$ be a $k$\+vector space.
 Then there is a natural map of $k$\+vector spaces
$$
 \zeta_{N,U}\:N^*\ot_{C^*}\Hom_k(C,U)\lrarrow
 \Hom_k(N,U)
$$
which is an isomorphism whenever $N$ is a finitely cogenerated injective
$C$\+comodule.
 Here the left $C^*$\+module structure on\/ $\Hom_k(C,U)$ comes from
the free left $C$\+con\-tra\-mod\-ule structure, or equivalently,
is induced by the right $C^*$\+module structure on $C$ (coming from
the right $C$\+comodule structure on~$C$).
\end{lem}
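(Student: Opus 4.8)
The plan is to imitate the proof of Lemma~\ref{C-star-hom-described}. First I would write the map $\zeta_{N,U}$ down explicitly by the formula
$$
 \zeta_{N,U}(\phi\ot g)(x)=\phi(x_{(0)})\,g(x_{(-1)})
$$
for $\phi\in N^*$, $g\in\Hom_k(C,U)$, and $x\in N$, where $\nu(x)=x_{(-1)}\ot x_{(0)}\in C\ot_kN$ denotes the left $C$\+coaction (so $x_{(-1)}\in C$, $x_{(0)}\in N$), and $\phi(x_{(0)})\in k$ while $g(x_{(-1)})\in U$. This expression is $k$\+bilinear in $(\phi,g)$ and additive in $x$, hence defines a $k$\+linear map $N^*\ot_k\Hom_k(C,U)\rarrow\Hom_k(N,U)$; the point that requires checking is that it annihilates the relations defining the tensor product over $C^*$, that is, $\zeta_{N,U}(\phi a\ot g)=\zeta_{N,U}(\phi\ot ag)$ for every $a\in C^*$. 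Evaluating at $x\in N$ and using coassociativity of the coaction on $N$, the formulas $(\phi a)(y)=\phi(a\cdot y)$ and $a\cdot y=a(y_{(-1)})y_{(0)}$ for the $C^*$\+actions on $N^*$ and on $N$, and the formula $(a\cdot g)(c)=a(c_{(2)})g(c_{(1)})$ for the free-contramodule $C^*$\+action on $\Hom_k(C,U)$, one finds that both sides equal $\phi(x_{(0)})\,a((x_{(-1)})_{(2)})\,g((x_{(-1)})_{(1)})$. The resulting map $\zeta_{N,U}$ is manifestly natural, contravariantly, in the $C$\+comodule $N$.

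For the isomorphism assertion I would first reduce to the cofree case. A finitely cogenerated injective left $C$\+comodule $N$ is, by definition, a subcomodule of some $C\ot_kV$ with $\dim_kV<\infty$, and, being injective, it is a direct summand of $C\ot_kV$. Now $N\longmapsto N^*\ot_{C^*}\Hom_k(C,U)$ and $N\longmapsto\Hom_k(N,U)$ are additive contravariant functors of $N$, and $\zeta_{-,U}$ is a natural transformation between them; hence $\zeta_{N,U}$ is a retract of $\zeta_{C\ot_kV,\>U}$, and a retract of an isomorphism is an isomorphism. So it is enough to handle $N=C\ot_kV$ with $\dim_kV<\infty$.

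In that case $N^*=(C\ot_kV)^*\simeq C^*\ot_kV^*$ (using $\dim_kV<\infty$), and a short computation identifies the right $C^*$\+module structure on it as the \emph{free} one on $V^*$, with $(b\ot\xi)\cdot a=ba\ot\xi$. Consequently
$$
 N^*\ot_{C^*}\Hom_k(C,U)\,\simeq\,V^*\ot_k\Hom_k(C,U)\,\simeq\,\Hom_k(V,\Hom_k(C,U))\,\simeq\,\Hom_k(N,U),
$$
using $\dim_kV<\infty$ again in the last isomorphism. It then remains to verify that this chain of canonical isomorphisms coincides with $\zeta_{N,U}$: tracing a generator $(b\ot\xi)\ot g$ through it, and recalling that the coaction on the cofree comodule is $\nu(c\ot v)=c_{(1)}\ot(c_{(2)}\ot v)$, one checks that both maps send $c\ot v$ to $\xi(v)\,(b\cdot g)(c)$. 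This finishes the proof.

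The only real labor is in the two bookkeeping verifications --- the $C^*$\+balancing of $\zeta_{N,U}$, and the fact that the cofree chain of natural isomorphisms is literally $\zeta_{N,U}$ --- and in both the delicacy is simply keeping the paper's conventions aligned: the multiplication $(fg)(c)=f(c_{(2)})g(c_{(1)})$ on $C^*$, the left $C^*$\+module structures on $N$ and on $\Hom_k(C,U)$, and the induced right $C^*$\+module structure on $N^*$. There is no conceptual obstacle beyond this; and, in contrast to Lemma~\ref{C-star-hom-described}, no injectivity of $\zeta_{N,U}$ for general $N$ is asserted or needed.
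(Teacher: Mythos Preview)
Your proof is correct and follows essentially the same approach as the paper's: define $\zeta_{N,U}$ by the identical Sweedler formula, then reduce the isomorphism claim to the cofree case $N=C\ot_kV$ with $\dim_kV<\infty$ and compute directly using $N^*\simeq V^*\ot_kC^*$ as a free right $C^*$\+module. You simply fill in details the paper leaves implicit---the $C^*$\+balancing check, the direct-summand reduction, and the verification that the chain of canonical isomorphisms agrees with $\zeta_{N,U}$---none of which the paper spells out.
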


\begin{proof}
 The map~$\zeta_{N,U}$ is defined by the formula
$$
 \zeta_{N,U}(f\ot g)(x)=f(x_{(0)})g(x_{(-1)})
$$
for all $f\in N^*$, \ $g\in\Hom_k(C,U)$, and $x\in N$.
 Here $\nu(x)=x_{(-1)}\ot x_{(0)}\in C\ot_k N$ is a notation for
the left $C$\+coaction map $\nu\:N\rarrow C\ot_k N$.
 So $f(x_{(0)})\in k$ and $g(x_{(-1)})\in U$.

 To prove the isomorphism assertion for a finitely cogenerated
injective $C$\+comodule $N$, it suffices to consider the case
$N=C\ot_kV$, where $\dim_kV<\infty$.
 In this case, $N^*=V^*\ot_k C^*$ is a free right $C^*$\+module,
so $N^*\ot_{C^*}\Hom_k(C,U)=V^*\ot_k\Hom_k(C,U)=\Hom_k(V,\Hom_k(C,U))
=\Hom_k(C\ot_kV,\>U)$.
\end{proof}

 The most difficult part of the proof of
Theorem~\ref{wf-koszulity-implies-contramodule-ext-isom-theorem}
is the case of $n=1$.
 This is a previously known result~\cite{Psm}.
 Notice that, according to the following theorem, no separatedness
assumptions are needed in
Theorem~\ref{wf-koszulity-implies-contramodule-ext-isom-theorem}
for $n=1$.

\begin{thm} \label{fully-faithful-contramodule-forgetful}
 Let $C$ be a finitely cogenerated conilpotent coalgebra over
a field~$k$.
 Then the contramodule forgetful functor\/ $\Theta\:C\Contra\rarrow
C^*\Modl$ is fully faithful.
\end{thm}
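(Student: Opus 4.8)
The plan is to show that $\Theta$ induces a bijection
$$
 \Hom^C(P,Q)\lrarrow\Hom_{C^*}(P,Q)
$$
for all left $C$-contramodules $P$ and $Q$. Faithfulness is immediate, since both Hom spaces sit inside $\Hom_k(P,Q)$ and the comparison map is the inclusion of one into the other. For fullness, given a $C^*$-module map $\phi\:P\rarrow Q$ I would write $\pi_P\:\Hom_k(C,P)\rarrow P$ and $\pi_Q\:\Hom_k(C,Q)\rarrow Q$ for the contraaction maps and consider the $k$-linear map
$$
 \psi\,=\,\pi_Q\circ\Hom_k(C,\phi)\,-\,\phi\circ\pi_P\:\Hom_k(C,P)\lrarrow Q.
$$
Here $\pi_Q$ is a $C$-contramodule morphism (the counit of the free-contramodule/forgetful adjunction at~$Q$), and so is $\Hom_k(C,\phi)$ (the free-contramodule functor applied to the $k$-linear map underlying~$\phi$), so $\pi_Q\circ\Hom_k(C,\phi)$ is a contramodule morphism; while $\pi_P$ is a contramodule morphism and $\phi$ is $C^*$-linear, so $\phi\circ\pi_P$ is $C^*$-linear. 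Hence $\psi$ is $C^*$-linear, and the contraunit axiom shows that it vanishes on the canonical copy of~$P$ in $\Hom_k(C,P)$ (the image of $\Hom(\epsilon,P)$). Since ``$\phi$ is a contramodule morphism'' means precisely ``$\psi=0$'', it suffices to prove the statement $(\star)$: for every $k$-vector space~$V$ and every left $C$-contramodule~$Q$, any $C^*$-linear map $f\:\Hom_k(C,V)\rarrow Q$ that restricts to zero on the canonical copy of~$V$ is itself zero. (Equivalently: $\Theta$ is fully faithful on every pair whose first argument is a free contramodule; one could also reach this point by presenting an arbitrary~$P$ as a cokernel of free contramodules and using exactness of~$\Theta$.)

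I would first prove $(\star)$ when $Q$ is \emph{separated}. Every contramodule over a conilpotent coalgebra is complete (Lemma~\ref{complete-separated-contramodules}(a)); combined with separatedness this gives $Q=\varprojlim_m Q/F^mQ$, so $f=0$ as soon as each composition $\Hom_k(C,V)\rarrow Q\rarrow Q/F^mQ$ vanishes, and we may assume $F^mQ=0$ for a fixed~$m\ge1$. At this point finite cogeneration is used: for conilpotent~$C$ it is equivalent to $\dim_k F_jC<\infty$ for all~$j$ (where $F$ is the coradical filtration; indeed $C$ embeds into the cofree conilpotent coalgebra on $\Ext^1_C(k,k)$, and $F_1C=F_0C\oplus\Ext^1_C(k,k)$), and each $F_jC$ is then a finite-dimensional subcoalgebra. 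Put $E=F_{m-1}C$ and let $E^\perp=\ker(C^*\twoheadrightarrow E^*)$, a finite-codimension ideal. Since the map $c\mapsto a(c)p$ lies in $\Hom_k(C/F_{m-1}C,P)$ whenever $a\in E^\perp$, one has $E^\perp P\subseteq F^mP$ for every contramodule~$P$; in particular $E^\perp Q\subseteq F^mQ=0$, so $Q$ is a module over the finite-dimensional \emph{local} algebra $E^*$ (local because $E$ is finite-dimensional conilpotent, so $\mathfrak m_E\subset E^*$ is nilpotent). Thus $f$ factors through the $E^*$-module $\overline M=\Hom_k(C,V)/E^\perp\Hom_k(C,V)$. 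By Nakayama's lemma over~$E^*$ --- valid for arbitrary modules since $\mathfrak m_E$ is nilpotent --- $\overline M$ is generated over~$E^*$ by the image of any subset whose image generates $\overline M/\mathfrak m_E\overline M\simeq\Hom_k(C,V)/\mathfrak m\Hom_k(C,V)$, where $\mathfrak m=\ker(\gamma^*\:C^*\rarrow k)$ is the preimage of~$\mathfrak m_E$. Granting the key identity $\mathfrak m\Hom_k(C,V)=F^1\Hom_k(C,V)$, this quotient is the cosocle $\Hom_k(C,V)/F^1\Hom_k(C,V)\simeq\Hom_k(F_0C,V)\simeq V$, onto which the canonical copy of~$V$ maps isomorphically; so $\overline M$ is generated over~$E^*$ by the image of the canonical~$V$. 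As $f$ kills that copy of~$V$, it kills~$\overline M$; hence $f=0$, and $(\star)$ holds for separated~$Q$.

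To remove separatedness, let $Q$ be arbitrary. Composing $f$ with the (surjective) completion map $Q\rarrow\varprojlim_m Q/F^mQ$ gives a $C^*$-linear map into a separated contramodule that still kills the canonical~$V$; by the separated case it is zero, so $f$ maps into the subcontramodule $Q^{(1)}:=\bigcap_m F^mQ$. Iterating this transfinitely --- $Q^{(\alpha+1)}=\bigcap_m F^mQ^{(\alpha)}$ and $Q^{(\lambda)}=\bigcap_{\alpha<\lambda}Q^{(\alpha)}$ --- one finds that $f$ maps into $Q^{(\alpha)}$ for every ordinal~$\alpha$. By a cardinality count the chain stabilizes, and at a stable value one has $Q^{(\alpha)}=\bigcap_m F^mQ^{(\alpha)}=F^1Q^{(\alpha)}$, so the cosocle ${}^\gamma Q^{(\alpha)}$ vanishes, whence $Q^{(\alpha)}=0$ by the contramodule Nakayama lemma (Lemma~\ref{coaugmented-nakayama}(b)). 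Therefore $f=0$, proving $(\star)$ in general and, with it, the theorem.

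The hard part will be the key identity $\mathfrak m\Hom_k(C,V)=F^1\Hom_k(C,V)$ for finitely cogenerated conilpotent~$C$ and arbitrary~$V$ (more generally $E^\perp\Hom_k(C,V)=\Hom_k(C/E,V)$ for finite-dimensional subcoalgebras~$E$, though Nakayama reduces us to the augmentation ideal). It asserts that the module-theoretic radical of the free $C$-contramodule on~$V$ coincides with the first step of its intrinsic contramodule filtration --- concretely, that every $k$-linear functional $C\rarrow V$ vanishing on the coaugmentation is a \emph{finite} $C^*$-combination of functionals vanishing on the coaugmentation. This genuinely requires finite cogeneration --- it already fails when $C$ is cofree on an infinite-dimensional space of cogenerators, consistently with $\Theta$ failing to be fully faithful there --- and obtaining enough control over the $C^*$-module structure of free $C$-contramodules to establish it is the substantive content of~\cite{Psm}.
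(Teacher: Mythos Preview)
Your outline is correct, and considerably more detailed than the paper's own proof, which is simply the citation ``See \cite[Theorem~2.1]{Psm}'' together with a remark that the result holds more generally for any dense subring of~$C^*$.  So there is no divergence of approach to report: you are unpacking the argument that the paper defers entirely to the reference.

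Your reduction to the statement~$(\star)$ via the map~$\psi$ is clean and correct (and your parenthetical alternative---present~$P$ as a cokernel of free contramodules---is exactly the paper's Lemma~\ref{full-and-faithfulness-enough-for-projectives}).  The treatment of the bounded-filtration case via Nakayama over the finite-dimensional local algebra $E^*=(F_{m-1}C)^*$ is right, as is the passage to separated~$Q$ by completeness, and the transfinite descent to arbitrary~$Q$ is a nice touch (note that each $Q/F^mQ$ indeed has $F^m(Q/F^mQ)=0$, which is what your ``separated case'' step actually uses).

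You have also correctly isolated the genuine crux: the identity $\mathfrak m\,\Hom_k(C,V)=F^1\Hom_k(C,V)$ (equivalently, that the module-theoretic radical of a free $C$-contramodule coincides with its intrinsic $F^1$).  This is exactly where finite cogeneration enters, it fails in the infinitely cogenerated case (consistently with Example~\ref{contramodule-extension-example}), and establishing it is, as you say, the substantive content of~\cite{Psm}.  Since both your argument and the paper's ultimately rest on that reference, your proposal should be read as a faithful expansion of the cited proof rather than an independent one.
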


\begin{proof}
 Moreover, the forgetful functor $C\Contra\rarrow R\Modl$ (defined
as the composition $C\Contra\rarrow C^*\Modl\rarrow R\Modl$) is fully
faithful for any dense subring $R\subset C^*$ in the natural
pro-finite-dimensional (otherwise known as linearly compact or
pseudocompact) topology of the $k$\+algebra~$C^*$.
 See~\cite[Theorem~2.1]{Psm}.
 For a discussion of related results with further references,
see~\cite[Section~3.8]{Prev}.
\end{proof}

\begin{proof}[Proof of
Theorem~\ref{wf-koszulity-implies-contramodule-ext-isom-theorem}]
 The argument resembles the proofs of the contramodule Ext comparison
theorems in~\cite[Sections~5.4 and~5.7]{Prel} (see
specifically~\cite[Theorem~5.20]{Prel}).
 Notice first of all that the coalgebra $C$ is finitely cogenerated
in the assumptions of
Theorem~\ref{wf-koszulity-implies-contramodule-ext-isom-theorem},
so the functor $\Theta$ is fully faithful by
Theorem~\ref{fully-faithful-contramodule-forgetful}.

 To prove the first assertion of the theorem, let $Q$ be a separated
left $C$\+con\-tra\-mod\-ule.
 According to Lemma~\ref{ext-preservation-enough-for-projectives}, it
suffices to show that $\Ext^i_{C^*}(P,Q)=0$ for all projective left
$C$\+contramodules $P$ and all integers $1\le i\le n-1$.

 Lemma~\ref{complete-separated-contramodules}(a) says that
the $C$\+contramodule $Q$ is complete with respect
to its natural decreasing filtration~$F$; so it is separated and
complete under our assumptions.
 According to the discussion preceding the lemma, the successive
quotient contramodules $F^mQ/F^{m-1}Q$ have trivial $C$\+contramodule
structures (i.~e., their $C$\+contramodule structures are induced by
the coaugmentation~$\gamma$).
 Using the dual Eklof lemma~\cite[Proposition~18]{ET}, the question
reduces to showing that $\Ext_{C^*}^i(P,T)=0$ for all $k$\+vector spaces
$T$ with trivial $C$\+contramodule structures.

 Any vector space $T$ is a direct summand of the dual vector space $W^*$
to some vector space~$W$.
 Thus it suffices to show that the vector space
$\Ext_{C^*}^i(P,W^*)\simeq\Tor^{C^*}_i(W,P)^*$ vanishes.
 Finally, $W$ is a direct sum of copies of the one-dimensional
vector space~$k$, and Tor commutes with the direct sums.
 So we need to show that $\Tor^{C^*}_i(k,P)=0$ for any projective left
$C$\+contramodule $P$ and $1\le i\le n-1$ (and the trivial
right $C^*$\+module structure on~$k$).
 Equivalently, this means that the space $\Tor^{C^*}_i(k,\Hom_k(C,U))$
should vanish for all $k$\+vector spaces $U$ and $1\le i\le n-1$.

 This time, we consider a minimal injective/cofree coresolution of
the left $C$\+comod\-ule~$k$,
\begin{multline} \label{left-comodule-minimal-coresolution}
 0\lrarrow k\lrarrow C\ot_k V_0\lrarrow C\ot_k V_1\lrarrow C\ot_k V_2
 \lrarrow\dotsb \\ \lrarrow C\ot_k V_{n-1}\lrarrow C\ot_k V_n
 \lrarrow C\ot_k V_{n+1}\lrarrow\dotsb,
\end{multline}
and apply the dual vector space functor $\Hom_k({-},k)$ to it in order
to obtain an initially projective resolution of the right
$C^*$\+module~$k$,
\begin{multline} \label{dual-initially-projective-resolution-other-side}
 0\llarrow k\llarrow V_0^*\ot_kC^*\llarrow V_1^*\ot_kC^*\llarrow
 V_2^*\ot_kC^* \llarrow\dotsb \\
 \llarrow V_{n-1}^*\ot_kC^*\llarrow V_n^*\ot_kC^*
 \llarrow (C\ot_k V_{n+1})^*\llarrow\dotsb
\end{multline}
 Here the vector space $V_i\simeq\Ext_C^i(k,k)$ is finite-dimensional
for $0\le i\le n$ by assumption.
 For every $0\le i\le n+1$ and any left $C^*$\+module $P$, the space
$\Tor^{C^*}_i(k,P)$ can be computed as the degree~$i$ homology space
of the complex obtained by applying the functor ${-}\ot_{C^*}P$ to
the resolution~\eqref{dual-initially-projective-resolution-other-side}
of the right $C^*$\+module~$k$ (see
Lemma~\ref{initially-projective-resolution-computes-tor}).
 We are only interested in $0\le i\le n-1$ now, so we only write down
the resulting complex in the cohomological degrees from~$0$ to~$n$.
 It has the form
$$
 0\llarrow V_0^*\ot_kP\llarrow V_1^*\ot_kP\llarrow\dotsb
 \llarrow V_{n-1}^*\ot_kP\llarrow V_n^*\ot_kP,
$$
which for $P=\Hom_k(C,U)$ turns into
\begin{multline} \label{computing-tor-by-init-proj-for-free}
 0\llarrow\Hom_k(V_0,\Hom_k(C,U))\llarrow\Hom_k(V_1,\Hom_k(C,U))
 \llarrow\dotsb \\ \llarrow\Hom_k(V_{n-1},\Hom_k(C,U))\llarrow
 \Hom_k(V_n,\Hom_k(C,U)).
\end{multline}

 By Lemma~\ref{C-star-tensor-product-described},
the complex~\eqref{computing-tor-by-init-proj-for-free}
(in the cohomological degrees~$\le n$) can be simply obtained by
applying the contravariant vector space Hom functor $\Hom_k({-},U)$
to the coresolution~\eqref{left-comodule-minimal-coresolution}.
 Consequently, the complex~\eqref{computing-tor-by-init-proj-for-free}
is exact in the cohomological degrees $0<i\le n-1$.
 Therefore, $\Tor^{C^*}_i(k,\Hom_k(C,U))=0$ for $0<i\le n-1$, as desired.

 Having proved the first assertion of the theorem, we can now easily
deduce the second one.
 For a nonseparated left $C$\+contramodule~$Y$,
by Lemma~\ref{ext-preservation-enough-for-projectives} we only need
to show that $\Ext^i_{C^*}(P,Y)=0$ for all projective left
$C$\+contramodules $P$ and $1\le i\le n-2$.
 Lemma~\ref{complete-separated-contramodules}(b) provides a short
exact sequence $0\rarrow Q'\rarrow Q''\rarrow Y\rarrow0$ with separated
$C$\+contramodules $Q'$ and~$Q''$.
 In the long exact sequence $\dotsb\rarrow\Ext^i_{C^*}(P,Q'')\rarrow
\Ext^i_{C^*}(P,Y)\rarrow\Ext^{i+1}_{C^*}(P,Q')\rarrow\dotsb$ we have
$\Ext^i_{C^*}(P,Q'')=0=\Ext^{i+1}_{C^*}(P,Q')$, hence
$\Ext^i_{C^*}(P,Y)=0$.
\end{proof}

\begin{cor} \label{contramodule-ext-injective-map-cor}
 Let $C$ be a conilpotent coalgebra over a field~$k$ and $n\ge0$ be
an integer such that the vector space\/ $\Ext_C^i(k,k)$ is
finite-dimensional for all\/ $1\le i\le n$.
 Then the map of the Ext spaces
\begin{equation} \label{contramodule-ext-map-2nd-repeat}
 \Ext^{C,i}(P,Q)\lrarrow\Ext_{C^*}^i(P,Q)
\end{equation}
induced by the forgetful functor\/ $\Theta\:C\Contra\rarrow C^*\Modl$
is injective for all left $C$\+contramodules $P$, all \emph{separated}
left $C$\+contramodules $Q$, and all\/ $0\le i\le n$.
 The map~\eqref{contramodule-ext-map-2nd-repeat} is also injective
for \emph{all} left $C$\+contramodules $P$ and $Q$ and all\/
$0\le i\le n-1$.
\end{cor}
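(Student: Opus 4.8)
The plan is to mirror the proof of Corollary~\ref{comodule-ext-injective-map-cor}: deduce everything from Theorem~\ref{wf-koszulity-implies-contramodule-ext-isom-theorem} together with one dimension-shifting lemma, playing the role that Lemma~\ref{next-degree-ext-injectivity-lemma} plays in the comodule case. Two preliminary observations dispose of the low degrees. The forgetful functor $\Theta$ is faithful, so the map~\eqref{contramodule-ext-map-2nd-repeat} is injective for $i=0$ unconditionally. When $n\ge1$ the coalgebra $C$ is finitely cogenerated (finite-dimensionality of $\Ext^1_C(k,k)$ being exactly this condition), hence $\Theta$ is fully faithful by Theorem~\ref{fully-faithful-contramodule-forgetful}; since an exact functor inducing an isomorphism on $\Hom$ induces a monomorphism on $\Ext^1$, the map~\eqref{contramodule-ext-map-2nd-repeat} is injective for $i=1$ and arbitrary $P$, $Q$.

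The heart of the matter is a dimension shift for $\Theta$. Fix $m\ge1$ and a class $\mathcal Q$ of left $C$\+contramodules such that the comparison map $\Ext^{C,i}(P',Q')\rarrow\Ext^i_{C^*}(P',Q')$ is an isomorphism for every left $C$\+contramodule $P'$, every $Q'\in\mathcal Q$, and all $0\le i\le m$; I claim the map is then injective for all $P$, all $Q\in\mathcal Q$, and $i=m+1$. To prove this, pick a short exact sequence $0\rarrow L\rarrow F\rarrow P\rarrow0$ in $C\Contra$ with $F$ free (possible since every contramodule is a quotient of a free one), and apply $\Theta$ to the two long exact sequences of $\Ext^\bu({-},Q)$. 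Since $F$ is projective in $C\Contra$, one has $\Ext^{C,i}(F,Q)=0$ for $i\ge1$, hence $\Ext^i_{C^*}(F,Q)=0$ for $1\le i\le m$ by the isomorphism hypothesis, and the connecting map yields $\Ext^{C,m}(L,Q)\simeq\Ext^{C,m+1}(P,Q)$. The comparison map $\Ext^{C,m}(L,Q)\rarrow\Ext^m_{C^*}(L,Q)$ is an isomorphism by hypothesis; and on the $C^*$\+side the connecting map $\Ext^m_{C^*}(L,Q)\rarrow\Ext^{m+1}_{C^*}(P,Q)$ is injective, its kernel being the image of $\Ext^m_{C^*}(F,Q)=0$. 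A diagram chase of the commuting square now shows $\Ext^{C,m+1}(P,Q)\rarrow\Ext^{m+1}_{C^*}(P,Q)$ is injective.

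It remains to assemble the cases. For the first assertion: the degrees $0\le i\le n-1$ are isomorphisms by Theorem~\ref{wf-koszulity-implies-contramodule-ext-isom-theorem}; the degree $i=n$, with $Q$ separated and $n\ge2$, follows from the dimension shift applied with $m=n-1$ and $\mathcal Q$ the class of separated contramodules (the isomorphism hypothesis holding by Theorem~\ref{wf-koszulity-implies-contramodule-ext-isom-theorem}); and the remaining case $n\le1$ was handled above. For the second assertion: the degrees $0\le i\le n-2$ are isomorphisms by Theorem~\ref{wf-koszulity-implies-contramodule-ext-isom-theorem}; the degree $i=n-1$, with $n\ge3$, follows from the dimension shift with $m=n-2$ and $\mathcal Q$ the class of all contramodules; and $n\le2$ reduces to faithfulness ($i=0$) or full faithfulness ($i=1$) of $\Theta$.

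The point requiring the most care is the top degree $i=n$ for separated $Q$: since separated contramodules are not closed under cokernels and hence do not form an abelian subcategory, the bare \emph{monomorphism-in-the-next-degree} principle cannot be invoked. The dimension-shift step above circumvents this, the crucial input being that free $C$\+contramodules, being projective in $C\Contra$, have vanishing $\Ext^i_{C^*}$ against separated contramodules for $1\le i\le n-1$ — a fact extracted from Theorem~\ref{wf-koszulity-implies-contramodule-ext-isom-theorem} itself. This is exactly the content an appendix lemma parallel to Lemma~\ref{next-degree-ext-injectivity-lemma} would encapsulate.
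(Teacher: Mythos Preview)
Your proof is correct and follows essentially the same route as the paper's. The paper simply cites Lemma~\ref{next-degree-ext-injectivity-lemma}(a) for the top-degree injectivity step, whereas you reprove that lemma inline via a free presentation $0\rarrow L\rarrow F\rarrow P\rarrow0$ in $C\Contra$; since $C\Contra$ has enough projectives, this is a perfectly good (and perhaps more transparent) instantiation of the same argument. One small comment: your final paragraph worries that separated contramodules do not form an abelian subcategory, but this concern is unfounded here---Lemma~\ref{next-degree-ext-injectivity-lemma}(a) (and your own dimension-shift step) fix the second argument $Y=Q$ and only vary the first argument $X=P$ over all of $\sB=C\Contra$, so no closure properties of the class of allowed $Q$'s are required.
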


\begin{proof}
 No separatedness assumption is needed in the obvious case $n=0$.
 For $n\ge1$, this is a purely formal consequence of
Theorem~\ref{wf-koszulity-implies-contramodule-ext-isom-theorem}.
 See Lemma~\ref{induced-on-Ext-1-lemma} for the case $n=1$
(when the separatedness assumption is not needed, either, by virtue
of Theorem~\ref{fully-faithful-contramodule-forgetful}) and
Lemma~\ref{next-degree-ext-injectivity-lemma}(a) for the general case.
\end{proof}

\Section{Contramodule Ext Isomorphism Implies Weakly Finite Koszulity}
\label{contramodule-ext-isom-implies-wf-koszulity-secn}

 The aim of this section is to prove the following theorem.
 Together with the results of the previous
Sections~\ref{wf-koszulity-implies-comodule-ext-isom-secn}\+-%
\ref{wf-koszulity-implies-contramodule-ext-isom-secn}, this
will allow us to finish the proof of
Theorem~\ref{cohomological-degree-careful-main-theorem}
from the introduction.

\begin{thm} \label{contramodule-ext-isom-implies-wf-koszulity-theorem}
 Let $C$ be a conilpotent coalgebra over a field~$k$ and $n\ge1$ be
the minimal integer for which the vector space\/ $\Ext^n_C(k,k)$ is
infinite-dimensional.
 Let $T$ be an infinite-dimensional $k$\+vector space endowed with
the trivial left $C$\+con\-tra\-mod\-ule structure (provided by
the coaugmentation $\gamma\:C\rarrow k$).
 Then the map\/ $\Ext^{C,n}(T,k)\rarrow\Ext_{C^*}^n(T,k)$ induced by
the contramodule forgetful functor\/ $\Theta\:C\Contra\rarrow
C^*\Modl$ is \emph{not} injective.
 Consequently, there exists a projective left $C$\+contramodule $P$ such
that the map\/ $\Ext^{C,n-1}(P,k)\rarrow\Ext_{C^*}^{n-1}(P,k)$ induced
by the functor\/ $\Theta$ is (injective, but) \emph{not} surjective.
\end{thm}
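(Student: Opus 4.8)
The plan is to prove the two assertions in turn: first that the map $\Ext^{C,n}(T,k)\rarrow\Ext_{C^*}^n(T,k)$ fails to be injective, and then to extract the projective contramodule~$P$ by a long exact sequence argument.

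For the first assertion, observe that, since $T$ carries the trivial contramodule structure, its underlying left $C^*$\+module structure factors through the augmentation $\gamma^*\:C^*\rarrow k$; hence $T\simeq\bigoplus_{i\in I}k$ in $C^*\Modl$ with $|I|=\dim_k T$, and each coordinate embedding $\iota_i\:k\rarrow T$ is simultaneously a morphism of trivial $C$\+contramodules. Therefore $\Ext_{C^*}^n(T,k)\simeq\prod_{i\in I}\Ext_{C^*}^n(k,k)$, and the $i$\+th component of the map induced by $\Theta$ is the composition $\Ext^{C,n}(T,k)\xrightarrow{\iota_i^*}\Ext^{C,n}(k,k)\xrightarrow{\Theta}\Ext_{C^*}^n(k,k)$. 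Next I would compute $\Ext^{C,n}(T,k)$. Put $V_i=\Ext_C^i(k,k)\simeq\Ext_{C^\rop}^i(k,k)$ (Proposition~\ref{ext-k-k-left-right-symmetric}); by the choice of~$n$, the spaces $V_i$ are finite-dimensional for $i\le n-1$ while $V_n$ is infinite-dimensional. Take the minimal injective/cofree coresolution $0\rarrow k\rarrow V_0\ot_kC\rarrow V_1\ot_kC\rarrow\dotsb$ of the right $C$\+comodule~$k$ (Corollary~\ref{inj-proj-co-free-cor}(a) and Proposition~\ref{minimal-co-resolutions-prop}(a)) and apply the exact functor $\Hom_k({-},T)$ termwise. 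Since $\Hom_k(V_i\ot_kC,\>T)\simeq\Hom_k(C,\Hom_k(V_i,T))$ is the free left $C$\+contramodule on $\Hom_k(V_i,T)$, this yields a complex of free $C$\+contramodules; it is a resolution of~$T$ because $\Hom_k({-},T)$ is exact, and it is minimal because ${}^\gamma({-})$ carries $\Hom_k(V_i\ot_kC,\>T)$ to $\Hom_k(V_i,T)$ (isomorphism~\eqref{E-quotcontramodule-of-Hom-contramodule}) and carries its differentials to the ${}_\gamma$\+differentials of the minimal coresolution, which vanish. Using $\Hom^C(\Hom_k(C,X),k)\simeq\Hom_k(X,k)$ and minimality, one obtains $\Ext^{C,n}(T,k)\simeq\Hom_k(V_n,T)^*$ naturally in~$T$; under this identification $\iota_i^*$ is the transpose of the post-composition map $(\iota_i)_*\:\Hom_k(V_n,k)\rarrow\Hom_k(V_n,T)$.

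It follows that $\ker\bigl(\Theta\:\Ext^{C,n}(T,k)\rarrow\Ext_{C^*}^n(T,k)\bigr)$ contains $\bigcap_{i\in I}\ker(\iota_i^*)$, which is the annihilator in $\Hom_k(V_n,T)^*$ of the image $U$ of the map $\bigoplus_{i\in I}\Hom_k(V_n,k)\rarrow\Hom_k(V_n,T)$ sending $(\phi_i)_i$ to $v\mapsto(\phi_i(v))_i$. This image $U$ consists exactly of the linear maps $V_n\rarrow T=\bigoplus_{i\in I}k$ whose range is contained in a finite subsum of the copies of~$k$. Since $V_n$ and $T$ are both infinite-dimensional, $U$ is a \emph{proper} subspace of $\Hom_k(V_n,T)$, so its annihilator is nonzero, and $\Theta$ is not injective on $\Ext^{C,n}(T,k)$. (This argument is uniform in $n\ge1$ and does not use whether $\Theta$ is fully faithful.)

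For the second assertion, take the short exact sequence $0\rarrow S\rarrow P\rarrow T\rarrow0$ in $C\Contra$, where $P=\Hom_k(C,T)$ is free, the surjection is the contraaction, and $S$ is separated, being a subcontramodule of a free contramodule. Applying $\Ext^{C,*}({-},k)$, the exact functor~$\Theta$, and $\Ext_{C^*}^*({-},k)$ produces a commutative ladder of long exact sequences. Since $P$ is projective, the connecting map $\partial\:\Ext^{C,n-1}(S,k)\rarrow\Ext^{C,n}(T,k)$ is surjective (an isomorphism for $n\ge2$). Pick $0\ne\xi\in\Ext^{C,n}(T,k)$ with $\Theta(\xi)=0$ and write $\xi=\partial(s)$. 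Chasing the ladder, $\Theta_S(s)$ lies in $\ker(\partial')=\operatorname{im}\bigl(\Ext_{C^*}^{n-1}(P,k)\xrightarrow{\rho}\Ext_{C^*}^{n-1}(S,k)\bigr)$, where $\Theta_S\:\Ext^{C,n-1}(S,k)\rarrow\Ext_{C^*}^{n-1}(S,k)$ is injective by Corollary~\ref{contramodule-ext-injective-map-cor} applied with parameter $n-1$ ($\Ext_C^i(k,k)$ being finite-dimensional for $i\le n-1$, and $k$ being separated). If $\Theta_P\:\Ext^{C,n-1}(P,k)\rarrow\Ext_{C^*}^{n-1}(P,k)$ were surjective, then $\Theta_S(s)=\rho\,\Theta_P(\tilde g)$ for some~$\tilde g$, hence $\Theta_S(s)=\Theta_S(\sigma\tilde g)$ by naturality (with $\sigma$ the restriction along $S\hookrightarrow P$), hence $s=\sigma\tilde g$ by injectivity of $\Theta_S$, hence $\xi=\partial(s)=0$, a contradiction. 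Thus $\Theta_P$ is not surjective, while it is injective by the same Corollary, and $P=\Hom_k(C,T)$ is the required projective contramodule.

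The step I expect to be the main obstacle is the identification $\Ext^{C,n}(T,k)\simeq\Hom_k(V_n,T)^*$: one must verify carefully that $\Hom_k(V_\bullet\ot_kC,\>T)$, obtained by applying $\Hom_k({-},T)$ to the minimal coresolution of the right $C$\+comodule~$k$, really is the \emph{minimal} projective resolution of the trivial contramodule~$T$ for an arbitrary infinite-dimensional vector space~$T$, with differentials natural in~$T$, so that the resulting identification is natural and converts $\iota_i^*$ into the transpose of $(\iota_i)_*$. Everything else is the infinite-dimensional linear algebra of the proper inclusion $U\subsetneq\Hom_k(V_n,T)$ --- which is exactly where the hypothesis that $\Ext_C^n(k,k)$ is infinite-dimensional is used --- together with a routine diagram chase.
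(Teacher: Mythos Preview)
Your proof is correct, and for the second assertion it follows essentially the same route as the paper (the short exact sequence $0\rarrow S\rarrow\Hom_k(C,T)\rarrow T\rarrow0$, the ladder of long exact sequences, and Corollary~\ref{contramodule-ext-injective-map-cor}; this is the content of Lemma~\ref{next-degree-ext-injectivity-lemma}(b)).

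For the first assertion your route is genuinely different. The paper computes \emph{both} sides explicitly: it builds an initially projective resolution of the $C^*$\+module $T$ (by dualizing the minimal coresolution of~$k$ and tensoring with~$T$), compares it to the minimal contramodule resolution of~$T$, and identifies the map $\Ext^{C,n}(T,k)\rarrow\Ext_{C^*}^n(T,k)$ with the dual of the inclusion $V_n^*\ot_kT\hookrightarrow\Hom_k(V_n,T)$; this shows the map is surjective but not injective. You instead decompose $T\simeq\bigoplus_{i\in I}k$ in $C^*\Modl$, so that $\Ext_{C^*}^n(T,k)\simeq\prod_i\Ext_{C^*}^n(k,k)$, and observe that the $i$\+th component of $\Theta$ factors through $\iota_i^*$; hence $\ker\Theta$ contains the annihilator of the image $U$ of $\bigoplus_iV_n^*\rarrow\Hom_k(V_n,T)$. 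Note that your $U$ is precisely $V_n^*\ot_kT$ inside $\Hom_k(V_n,T)$ (maps with range in a finite subsum are exactly the finite-rank maps), so the two arguments are secretly looking at the same subspace; but you never need to compute $\Ext_{C^*}^n(T,k)$, nor invoke Lemma~\ref{hom-from-dual-into-k} (which requires $C$ finitely cogenerated), nor Lemma~\ref{initially-projective-resolution-computes-ext}. The payoff is that your argument is uniform in $n\ge1$, whereas the paper must treat $n=1$ separately via the explicit Example~\ref{contramodule-extension-example}. The paper's approach, in exchange, yields the finer information that the map is surjective and identifies its kernel exactly.
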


 We start with a discussion of the case $n=1$ in
Theorem~\ref{contramodule-ext-isom-implies-wf-koszulity-theorem}.
 This is the only case when $\Ext^{C,n-1}(P,k)\ne0$.

\begin{ex} \label{contramodule-extension-example}
 Let $C$ be an infinitely cogenerated conilpotent coalgebra over~$k$,
and let $T$ be an infinite-dimensional vector space endowed with
the trivial left $C$\+contramodule structure.
 Then we claim that there exists a nonsplit short exact sequence
of left $C$\+contramodules
$$
 0\lrarrow k\lrarrow Q\lrarrow T\lrarrow0
$$
that splits as a short exact sequence of left $C^*$\+modules.
 Consequently, the splitting map $q\:Q\rarrow k$ is a morphism of
$C^*$\+modules but \emph{not} a morphism of $C$\+con\-tra\-mod\-ules.
 So the forgetful functor $\Theta$ is not full.

 To construct the desired short exact sequence, put $C_+=C/\gamma(k)$
and $V=\ker(C_+\to C_+\ot_k C_+)=\Ext^1_C(k,k)$, as in
Example~\ref{noncomodule-extension-example}.
 So we have $F_0C=\gamma(k)$ and $F_1C=F_0C\oplus V$.
 Notice that $V^*\ot_kT$ is naturally a proper vector subspace in
$\Hom_k(V,T)$, as both the vector spaces $V$ and $T$ are
infinite-dimensional.
 Essentially, $V^*\ot_kT\subset\Hom_k(V,T)$ is the subspace of all
linear maps $V\rarrow T$ of finite rank.
 Choose a linear function $\phi\:\Hom_k(V,T)\rarrow k$ vanishing
on $V^*\ot_kT$.

 Let the underlying vector space of $Q$ be simply the direct sum
$k\oplus T$.
 Define the contraaction map $\pi\:\Hom_k(C,Q)\rarrow Q$ as follows.
 The component $\pi_{k,T}\:\Hom_k(C,k)\rarrow T$ of the map~$\pi$
is zero.
 The component $\pi_{k,k}\:\Hom_k(C,k)\rarrow k$ is induced by
the coaugmentation~$\gamma$.
 The component $\pi_{T,T}\:\Hom_k(C,T)\rarrow T$ is also induced
by~$\gamma$; specifically, it is defined as the composition
$\Hom_k(C,T)\rarrow\Hom_k(k,T)\rarrow T$ of the surjective map
$\Hom_k(\gamma,T)\:\Hom_k(C,T)\rarrow\Hom_k(k,T)$ and the identity
isomorphism $\Hom_k(k,T)\simeq T$.

 Finally, the component $\pi_{T,k}\:\Hom_k(C,T)\rarrow k$ of
the map~$\pi$ is defined as the composition
$\Hom_k(C,T)\rarrow\Hom_k(F_1C,T)\rarrow\Hom_k(V,T)\rarrow k$ of
the surjective map $\Hom_k(C,T)\rarrow\Hom_k(F_1C,T)$ induced
by the inclusion $F_1C\rarrow C$, the direct summand projection
$\Hom_k(F_1C,T)=\Hom_k(F_0C\oplus V,\>T)\rarrow\Hom_k(V,T)$,
and the chosen linear function $\phi\:\Hom_k(V,T)\rarrow k$.

 Then the direct sum decomposition of vector spaces $Q=k\oplus T$
still holds as a direct sum decomposition in the module category
$C^*\Modl$, since the composition of the embedding $C^*\ot_k T
\rarrow\Hom_k(C,T)$ with the map $\pi_{T,k}\:\Hom_k(C,T)\rarrow k$
is the zero map.
 But the $C$\+contramodule structure on $Q$ is nontrivial (not
induced by the coaugmentation~$\gamma$), so $Q$ is \emph{not}
isomorphic to $k\oplus T$ in $C\Contra$.
\end{ex}

 In order to pass from some object $Q$ in
Example~\ref{contramodule-extension-example} to a projective object $P$
in the context
Theorem~\ref{contramodule-ext-isom-implies-wf-koszulity-theorem}
for $n=1$, we will use
Lemma~\ref{full-and-faithfulness-enough-for-projectives} from
the appendix.

 In the case $n>1$, we need the following simple
coalgebra-theoretic lemma.

\begin{lem} \label{hom-from-hom-into-k}
 Let $(C,\gamma)$ be a coaugmented coalgebra, $N$ be a right
$C$\+comodule, and $T$ be a $k$\+vector space.
 Then there is a natural isomorphism of $k$\+vector spaces
$$
 \Hom^C(\Hom_k(N,T),k)\simeq\Hom_k(N_\gamma,T)^*.
$$
\end{lem}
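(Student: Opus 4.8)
The plan is to obtain the asserted isomorphism by composing two natural isomorphisms already recorded earlier in the paper. Recall from Section~\ref{preliminaries-on-co-and-contra-secn} that, for a right $C$\+comodule $N$ and a $k$\+vector space $T$, the vector space $P=\Hom_k(N,T)$ carries a natural left $C$\+contramodule structure with contraaction $\pi=\Hom_k(\nu,T)$, where $\nu\:N\rarrow N\ot_kC$ is the coaction map. Since $(C,\gamma)$ is coaugmented, the image $E=\gamma(k)\subset C$ is a subcoalgebra (isomorphic to~$k$), so that $N_\gamma=N_E$ is defined, and the one-dimensional space $k$ carries the trivial left $C$\+contramodule structure induced by~$\gamma$.

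First I would apply the duality isomorphism $\Hom_k({}^\gamma P,k)\simeq\Hom^C(P,k)$ recorded in Section~\ref{conilpotent-minimal-secn} --- valid for any coaugmented coalgebra $(C,\gamma)$ and any left $C$\+contramodule $P$ --- to the contramodule $P=\Hom_k(N,T)$, obtaining
$$
 \Hom^C(\Hom_k(N,T),k)\simeq\Hom_k\bigl({}^\gamma\Hom_k(N,T),\,k\bigr).
$$
Next I would rewrite ${}^\gamma\Hom_k(N,T)={}^E\Hom_k(N,T)$ using the natural isomorphism of $E$\+contramodules~\eqref{E-quotcontramodule-of-Hom-contramodule} with $V=T$, which gives ${}^E\Hom_k(N,T)\simeq\Hom_k(N_E,T)=\Hom_k(N_\gamma,T)$. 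Combining the two steps yields
$$
 \Hom^C(\Hom_k(N,T),k)\simeq\Hom_k\bigl(\Hom_k(N_\gamma,T),\,k\bigr)=\Hom_k(N_\gamma,T)^*,
$$
which is the claim.

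Naturality comes for free, since each isomorphism in the chain --- the $\Hom^C({-},k)$ versus ${}^\gamma({-})$ duality, the isomorphism~\eqref{E-quotcontramodule-of-Hom-contramodule}, and the application of the exact contravariant functor $\Hom_k({-},k)$ --- is natural in $N$ and in $T$. There is no genuine obstacle here: this is simply the passage from $T=k$ to an arbitrary vector space $T$ of the chain of isomorphisms already used in the proof of Lemma~\ref{hom-from-dual-into-k}. The only point worth a word of care is that~\eqref{E-quotcontramodule-of-Hom-contramodule} is an isomorphism respecting the contramodule structures (not merely the underlying vector spaces), which is exactly how it was produced in Section~\ref{conilpotent-minimal-secn}; this is what makes it legitimate to feed it into the vector-space dualization functor.
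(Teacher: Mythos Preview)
Your proposal is correct and matches the paper's own proof essentially line for line: the paper also applies the identity $\Hom^C(P,k)\simeq\Hom_k({}^\gamma\!P,k)$ from Section~\ref{conilpotent-minimal-secn} to $P=\Hom_k(N,T)$, then invokes formula~\eqref{E-quotcontramodule-of-Hom-contramodule} to rewrite ${}^\gamma\!\Hom_k(N,T)$ as $\Hom_k(N_\gamma,T)$. The only difference is that the paper's version is terser and omits your closing remarks on naturality.
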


\begin{proof}
 One has ${}^\gamma\!\Hom_k(N,T)\simeq\Hom_k(N_\gamma,T)$ by
formula~\eqref{E-quotcontramodule-of-Hom-contramodule} from
Section~\ref{conilpotent-minimal-secn}.
 Hence
$$
 \Hom^C(\Hom_k(N,T),k)\simeq\Hom_k({}^\gamma\!\Hom_k(N,T),k)
 \simeq\Hom_k(\Hom_k(N_\gamma,T),k)
$$
as desired (cf.\ Lemma~\ref{hom-from-dual-into-k}).
\end{proof}

\begin{prop} \label{contramodule-ext-n-monom-implies-prop}
 Let $C$ be a finitely cogenerated conilpotent coalgebra over
a field~$k$ and $n\ge2$ be the minimal integer for which
the vector space\/ $\Ext^n_C(k,k)$ is infinite-dimensional.
 Let $T$ be an infinite-dimensional $k$\+vector space endowed with
the trivial left $C$\+con\-tra\-mod\-ule structure (provided by
the coaugmentation~$\gamma$).
 Then the map\/ $\Ext^{C,n}(T,k)\rarrow\Ext_{C^*}^n(T,k)$ induced by
the contramodule forgetful functor\/ $\Theta\:C\Contra\rarrow
C^*\Modl$ is surjective, but \emph{not} injective.
 Consequently, one has $\Ext_{C^*}^{n-1}(P,k)\ne0$ for
the projective/free left $C$\+contramodule $P=\Hom_k(C,T)$.
\end{prop}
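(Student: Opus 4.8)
The plan is to mirror the proof of Proposition~\ref{comodule-ext-n-isom-implies-prop}, replacing the comodule inclusion functor~$\Upsilon$ by the contramodule forgetful functor~$\Theta$ (which is fully faithful here, since~$C$ is finitely cogenerated, by Theorem~\ref{fully-faithful-contramodule-forgetful}). I would start from a minimal injective/cofree coresolution $0\rarrow k\rarrow J^\bullet$ of the \emph{right} $C$-comodule~$k$, with $J^i=V_i\ot_kC$ and $V_i\simeq\Ext^i_{C^\rop}(k,k)\simeq\Ext^i_C(k,k)$ (Corollary~\ref{inj-proj-co-free-cor}(a), Proposition~\ref{minimal-co-resolutions-prop}(a), Proposition~\ref{ext-k-k-left-right-symmetric}); thus $V_i$ is finite-dimensional for $i\le n-1$ and $V_n$ is infinite-dimensional. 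Applying the exact contravariant functor $\Hom_k({-},T)$ turns $J^\bullet$ into a \emph{minimal} free resolution $P_\bullet\rarrow T$ of the trivial left $C$-contramodule~$T$, with $P_i=\Hom_k(J^i,T)=\Hom_k(C,\Hom_k(V_i,T))$ (minimality follows from that of $J^\bullet$ via formula~\eqref{E-quotcontramodule-of-Hom-contramodule}). Lemma~\ref{hom-from-hom-into-k} then gives $\Hom^C(P_i,k)\simeq\Hom_k((J^i)_\gamma,T)^*=\Hom_k(V_i,T)^*$ with vanishing differentials, so $\Ext^{C,n}(T,k)\simeq\Hom_k(V_n,T)^*$, which is nonzero.

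For the $C^*$-side I would dualize $J^\bullet$ and tensor with~$T$ to obtain an initially projective resolution $R_\bullet\rarrow T$ of the trivial left $C^*$-module~$T$, with $R_i=(J^i)^*\ot_kT$ (a free $C^*$-module for $i\le n-1$, since $V_i$ is then finite-dimensional, cf.~\eqref{dual-initially-less-projective-resolution}); by Lemma~\ref{initially-projective-resolution-computes-ext} it computes $\Ext^i_{C^*}(T,k)$ for $i\le n$. Here $\Hom_{C^*}(R_n,k)=\Hom_k\bigl(T,\Hom_{C^*}((J^n)^*,k)\bigr)\simeq\Hom_k(T,V_n^{**})$ by Lemma~\ref{hom-from-dual-into-k} and $(J^n)_\gamma=V_n$, while independently $\Ext^n_{C^*}(T,k)\simeq\Tor^{C^*}_n(k,T)^*\simeq(\Tor^{C^*}_n(k,k)\ot_kT)^*\simeq\Hom_k(T,\Ext^n_{C^*}(k,k))\simeq\Hom_k(T,V_n^{**})$ using Proposition~\ref{comodule-ext-n-isom-implies-prop}; comparing the two descriptions shows the differentials of $\Hom_{C^*}(R_\bullet,k)$ near degree~$n$ vanish and $\Ext^n_{C^*}(T,k)\simeq\Hom_{C^*}(R_n,k)$.

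Next I would realize the map $\Theta_*$ in degree~$n$ concretely. The natural inclusions $\rho_i\colon R_i=(J^i)^*\ot_kT\rightarrowtail\Hom_k(J^i,T)=\Theta P_i$ (finite-rank maps inside all maps) are $C^*$-module morphisms and assemble into a chain map $R_\bullet\rarrow\Theta P_\bullet$ over $\mathrm{id}_T$; since $\Hom^C(P_\bullet,k)=\Hom_{C^*}(\Theta P_\bullet,k)$ by full faithfulness of~$\Theta$, the map $\Theta_*$ is the one induced on cohomology by $\rho_n^*$. Unwinding the identifications of Lemmas~\ref{hom-from-hom-into-k} and~\ref{hom-from-dual-into-k}, $\rho_n^*$ becomes the $k$-linear dual of the natural monomorphism $\iota\colon V_n^*\ot_kT\rightarrowtail\Hom_k(V_n,T)$. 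As $V_n$ and~$T$ are both infinite-dimensional, $\iota$ is \emph{not} surjective (its image is precisely the space of finite-rank maps), so $\iota^*$ is surjective with nonzero kernel; hence $\Theta_*\colon\Ext^{C,n}(T,k)\rarrow\Ext^n_{C^*}(T,k)$ is surjective but not injective. I expect this identification step --- carefully tracking the natural isomorphisms of Lemmas~\ref{hom-from-hom-into-k} and~\ref{hom-from-dual-into-k} through the forgetful functor so as to recognize $\rho_n^*$ as~$\iota^*$ --- to be the main obstacle; everything else is bookkeeping.

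Finally, for the assertion about the projective contramodule: since $V_0\simeq k$ we have $P_0=\Hom_k(C,T)$, and $P_\bullet$ yields a short exact sequence $0\rarrow Z\rarrow P_0\rarrow T\rarrow0$ in $C\Contra$, which $\Theta$ carries to $0\rarrow Z\rarrow P\rarrow T\rarrow0$ in $C^*\Modl$ with $P=\Theta P_0=\Hom_k(C,T)$. In the commutative square formed by $\Theta_*$ and the connecting homomorphisms $\partial\colon\Ext^{C,n-1}(Z,k)\rarrow\Ext^{C,n}(T,k)$ and $\partial'\colon\Ext^{n-1}_{C^*}(Z,k)\rarrow\Ext^n_{C^*}(T,k)$, the map~$\partial$ is an isomorphism (as $P_0$ is projective and $n-1\ge1$), the left vertical map is injective by Corollary~\ref{contramodule-ext-injective-map-cor} (applied with its ``$n$'' equal to $n-1$, legitimate since $\Ext^i_C(k,k)$ is finite-dimensional for $1\le i\le n-1$), and the right vertical map is surjective but not injective by the previous paragraph. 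Hence $\partial'$ composed with an injection fails to be injective, so $\ker\partial'\ne0$; the long exact sequence of $\Ext^*_{C^*}({-},k)$ for $0\rarrow Z\rarrow P\rarrow T\rarrow0$ identifies $\ker\partial'$ with the image of $\Ext^{n-1}_{C^*}(P,k)\rarrow\Ext^{n-1}_{C^*}(Z,k)$, whence $\Ext^{n-1}_{C^*}(P,k)\ne0$. Since moreover $P$ is projective and $n-1\ge1$, one has $\Ext^{C,n-1}(P,k)=0$, so the induced map $\Ext^{C,n-1}(P,k)\rarrow\Ext^{n-1}_{C^*}(P,k)$ is (trivially) injective but not surjective, as recorded in the statement.
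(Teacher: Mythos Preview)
Your approach is essentially the paper's own: the same minimal cofree coresolution of the right $C$-comodule~$k$, the same pair of resolutions $P_\bu=\Hom_k(J^\bu,T)$ (projective in $C\Contra$) and $R_\bu=(J^\bu)^*\ot_kT$ (initially projective in $C^*\Modl$), the same comparison morphism $R_\bu\hookrightarrow\Theta P_\bu$, and the same identification of the induced map in degree~$n$ with the dual of $V_n^*\ot_kT\hookrightarrow\Hom_k(V_n,T)$. Your deduction of the second assertion via the short exact sequence $0\to Z\to P_0\to T\to 0$ and Corollary~\ref{contramodule-ext-injective-map-cor} is exactly the diagram chase that the paper packages as Lemma~\ref{next-degree-ext-injectivity-lemma}(b).

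One point to tighten: your $\Tor$ detour (``comparing the two descriptions shows the differentials of $\Hom_{C^*}(R_\bu,k)$ near degree~$n$ vanish'') is not a valid inference---an abstract isomorphism $\Ext^n_{C^*}(T,k)\simeq\Hom_{C^*}(R_n,k)$ does not force the boundaries to vanish. The paper instead applies Lemma~\ref{hom-from-dual-into-k} \emph{naturally} in degrees $n$ and $n+1$ to identify the whole tail of $\Hom_{C^*}(R_\bu,k)$ with $\Hom_k(T,(({-})_\gamma)^{**})$ applied to the minimal coresolution, whence the zero differential. That said, your conclusion does not actually require this: since the source complex $\Hom^C(P_\bu,k)$ has zero differential and $\rho_n^*=\iota^*$ is surjective with nonzero kernel, the induced map on $H^n$ is automatically surjective (the image of $\rho_n^*$ is all of $\Hom_{C^*}(R_n,k)\supset Z^n$) and not injective ($\ker\rho_n^*$ lands in $0\in H^n$). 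So the detour is superfluous rather than wrong; you can simply drop it.
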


\begin{proof}
 Our strategy of proving the first assertion of the proposition is
to explicitly compute the map of Ext spaces in question, and see that
it is surjective but not injective.
 As in the proof of
Theorem~\ref{wf-koszulity-implies-comodule-ext-isom-theorem}, we choose
a minimal injective/cofree coresolution~\eqref{minimal-coresolution}
of the right $C$\+comodule~$k$,
\begin{multline} \label{minimal-coresolution-again}
 0\lrarrow k\lrarrow V_0\ot_kC\lrarrow V_1\ot_kC\lrarrow V_2\ot_kC
 \lrarrow\dotsb \\ \lrarrow V_{n-1}\ot_kC\lrarrow V_n\ot_kC
 \lrarrow V_{n+1}\ot_kC \lrarrow\dotsb
\end{multline}
 The vector space $V_i\simeq\Ext^i_{C^\rop}(k,k)\simeq\Ext^i_C(k,k)$
is finite-dimensional for $0\le i\le n-1$ and infinite-dimensional
for $i=n$ by assumption.

 Applying the contravariant vector space Hom functor $\Hom_k({-},T)$
to the complex~\eqref{minimal-coresolution-again}, we obtain
a minimal projective resolution of the left $C$\+contramodule~$T$,
\begin{multline} \label{minimal-contramodule-projective-resolution}
 0\larrow T\larrow\Hom_k(C,\Hom_k(V_0,T))\larrow
 \Hom_k(C,\Hom_k(V_1,T))\larrow\dotsb \\
 \larrow \Hom_k(C,\Hom_k(V_{n-1},T)) \larrow \Hom_k(C,\Hom_k(V_n,T))
 \\ \larrow\Hom_k(C,\Hom_k(V_{n+1},T))\larrow\dotsb
\end{multline}
 On the other hand, applying the dual vector space functor
$\Hom_k({-},k)$ to
the complex~\eqref{minimal-coresolution-again}, we obtain
a resolution of the left $C^*$\+module~$k$
\begin{multline} \label{dual-init-proj-left-module-k-resolution}
 0\llarrow k\llarrow C^*\ot_kV_0^*\llarrow C^*\ot_kV_1^*\llarrow
 C^*\ot_kV_2^* \llarrow\dotsb \\
 \llarrow C^*\ot_kV_{n-1}^*\llarrow (V_n\ot_k C)^*
 \llarrow (V_{n+1}\ot_k C)^*\llarrow\dotsb
\end{multline}
as in formula~\eqref{dual-initially-less-projective-resolution} from
the proof of Proposition~\ref{comodule-ext-n-isom-implies-prop}.
 Applying the vector space tensor product functor ${-}\ot_k T$
to the complex~\eqref{dual-init-proj-left-module-k-resolution}, we
get an initially projective resolution of the left $C^*$\+module~$T$,
\begin{multline} \label{dual-init-proj-left-module-T-resolution}
 0\larrow T\llarrow C^*\ot_kV_0^*\ot_kT\larrow C^*\ot_kV_1^*\ot_kT
 \larrow\dotsb \\ \larrow C^*\ot_kV_{n-1}^*\ot_kT
 \larrow (V_n\ot_k C)^*\ot_kT 
 \larrow (V_{n+1}\ot_k C)^*\ot_kT.
\end{multline}

 For any left $C$\+contramodule $Y$, the projective
resolution~\eqref{minimal-contramodule-projective-resolution} of
the trivial left $C$\+contramodule $T$ can be used in order to
compute the vector spaces $\Ext^{C,i}(T,Y)$ in all cohomological
degrees $i\ge0$.
 On the other hand,
the resolution~\eqref{dual-init-proj-left-module-T-resolution} of
the trivial left $C^*$\+module~$T$ is only initially projective.
 By Lemma~\ref{initially-projective-resolution-computes-ext}, for
any left $C^*$\+module $Y$,
the resolution~\eqref{dual-init-proj-left-module-T-resolution} can be
used in order to compute the vector spaces $\Ext_{C^*}^i(T,Y)$ in
the cohomological degrees $0\le i\le n$.

 Now we have a natural injective morphism of complexes of
left $C^*$\+modules from
the complex~\eqref{dual-init-proj-left-module-T-resolution}
to the complex~\eqref{minimal-contramodule-projective-resolution},
acting by the identity map on the leftmost terms~$T$.
 This morphism of resolutions can be used in order to compute
the maps $\Ext^{C,i}(T,Y)\lrarrow\Ext_{C^*}^i(T,Y)$ in
the cohomological degrees $0\le i\le n$ for any left
$C$\+contramodule~$Y$.

 Put $Y=k$.
 Applying the contramodule Hom functor $\Hom^C({-},k)$ to the projective
resolution~\eqref{minimal-contramodule-projective-resolution} of
the trivial left $C$\+contramodule~$T$, we obtain a complex
\begin{multline} \label{computing-ext-by-minimal-contramodule-proj}
 0\lrarrow\Hom_k(V_0,T)^*\lrarrow\Hom_k(V_1,T)^*\lrarrow\dotsb \\
 \lrarrow\Hom_k(V_{n-1},T)^* \lrarrow \Hom_k(V_n,T)^*
 \lrarrow\Hom_k(V_{n+1},T)^*\lrarrow\dotsb
\end{multline}
 By Lemma~\ref{hom-from-hom-into-k},
the complex~\eqref{computing-ext-by-minimal-contramodule-proj}
is naturally isomorphic to the complex obtained by applying
the functor $\Hom_k(({-})_\gamma,T)^*$ to
the coresolution~\eqref{minimal-coresolution-again}.
 As the latter coresolution was chosen to be minimal, the differential
in the complex~\eqref{computing-ext-by-minimal-contramodule-proj}
vanishes.

 Applying the module Hom functor $\Hom_{C^*}({-},k)$ to the initially
projective resolution~\eqref{dual-init-proj-left-module-T-resolution}
of the trivial left $C^*$\+module $T$, we obtain a complex
\begin{multline} \label{computing-ext-by-init-proj-of-T}
 0\lrarrow T^*\ot_k V_0\lrarrow T^*\ot_k V_1\lrarrow\dotsb
 \lrarrow T^*\ot_k V_{n-1} \\
 \lrarrow\Hom_k(T,\Hom_{C^*}((V_n\ot_k C)^*,k))
 \lrarrow\Hom_k(T,\Hom_{C^*}((V_{n+1}\ot_k C)^*,k)).
\end{multline}
 The complex~\eqref{computing-ext-by-init-proj-of-T} can be also
obtained by applying the covariant vector space Hom functor
$\Hom_k(T,{-})$ to the complex
\begin{multline} \label{computing-ext-by-init-proj-of-left-k}
 0\lrarrow V_0\lrarrow V_1\lrarrow\dotsb\lrarrow V_{n-1} \\
 \lrarrow\Hom_{C^*}((V_n\ot_k C)^*,k)
 \lrarrow\Hom_{C^*}((V_{n+1}\ot_k C)^*,k)
\end{multline}
produced by applying the module Hom functor $\Hom_{C^*}({-},k)$ to
the resolution~\eqref{dual-init-proj-left-module-k-resolution} of
the trivial $C^*$\+module~$k$
(see formula~\eqref{computing-ext-by-less-init-proj-for-k} in
the proof of Proposition~\ref{comodule-ext-n-isom-implies-prop}).

 By Lemma~\ref{hom-from-dual-into-k},
the complex~\eqref{computing-ext-by-init-proj-of-left-k} is
naturally isomorphic to the complex
\begin{equation} \label{double-dual-zero-differential-again}
 0\lrarrow V_0\lrarrow V_1\lrarrow\dotsb\lrarrow V_{n-1}\\
 \lrarrow V_n^{**}\lrarrow V_{n+1}^{**}
\end{equation}
obtained by applying the functor $(({-})_\gamma)^{**}$ to
the coresolution~\eqref{minimal-coresolution-again}.
 Once again, as the latter coresolution was chosen to be minimal,
the differential in
the complex~\eqref{double-dual-zero-differential-again}
(or, which is the same, in
the complex~\eqref{computing-ext-by-init-proj-of-left-k}) vanishes.
 We have computed the complex~\eqref{computing-ext-by-init-proj-of-T}
as having the form
\begin{multline} \label{further-computing-ext-by-init-proj-of-T}
 0\lrarrow\Hom_k(T,V_0)\lrarrow\Hom_k(T,V_1)\lrarrow\dotsb
 \lrarrow\Hom_k(T,V_{n-1}) \\
 \lrarrow\Hom_k(T,V_n^{**})
 \lrarrow\Hom_k(T,V_{n+1}^{**}),
\end{multline}
or, which is the same,
\begin{multline} \label{still-further-computing-ext-by-init-proj-of-T}
 0\lrarrow T^*\ot_k V_0\lrarrow T^*\ot_k V_1\lrarrow\dotsb
 \lrarrow T^*\ot_k V_{n-1} \\
 \lrarrow (V_n^*\ot_k T)^*
 \lrarrow (V_{n+1}^*\ot_k T)^*
\end{multline}
and zero differential.

 Now it is finally clear from
the formulas~\eqref{computing-ext-by-minimal-contramodule-proj}
and~\eqref{still-further-computing-ext-by-init-proj-of-T} that the map
\begin{equation} \label{ext-map-to-be-computed}
 \Ext^{C,n}(T,k)\lrarrow\Ext_{C^*}^n(T,k)
\end{equation}
is isomorphic to the natural map
\begin{equation} \label{ext-map-computed-as}
 \Hom(V_n,T)^*\lrarrow(V_n^*\ot_kT)^*,
\end{equation}
which can be obtained by applying the dual vector space functor
$\Hom_k({-},k)$ to the natural embedding
\begin{equation} \label{natural-embedding-to-be-dualized}
 V_n^*\ot_k T\lrarrow\Hom_k(V_n,T).
\end{equation}
 The map~\eqref{natural-embedding-to-be-dualized} is always injective,
but for infinite-dimensional vector spaces $V_n$ and $T$ it is not
surjective.
 Thus the map~\eqref{ext-map-computed-as}, and consequently the desired
map~\eqref{ext-map-to-be-computed}, is surjective but not injective.

 To deduce the second assertion of the proposition, we apply
Lemma~\ref{next-degree-ext-injectivity-lemma}(b) to the contramodule
forgetful functor $\Phi=\Theta$, the left $C$\+contramodule $Y=k$, and
the natural epimorphism of left $C$\+contramodules $P=\Hom_k(C,T)
\rarrow T$ with a kernel~$X$.
 By the first assertion of the proposition, there exists
a nonzero extension class $\beta\in\Ext^{C,n}(T,Y)$ annihilated by
the map~\eqref{ext-map-to-be-computed}.
 The class~$\beta$ is also annihilated by the map
$\Ext^{C,n}(T,Y)\rarrow\Ext^{C,n}(P,Y)=0$ (as $n\ge2$ and
the $C$\+contramodule $P$ is projective).
 Finally, the map $\Ext^{C,n-1}(X,Y)\rarrow\Ext_{C^*}^{n-1}(X,Y)$
is injective by Corollary~\ref{contramodule-ext-injective-map-cor}
(since the left $C$\+contramodule $Y=k$ is separated).
 So Lemma~\ref{next-degree-ext-injectivity-lemma}(b) tells us that
the map $0=\Ext^{C,n-1}(P,Y)\rarrow\Ext_{C^*}^{n-1}(P,Y)$ cannot
be surjective.
\end{proof}

\begin{proof}[Proof of
Theorem~\ref{contramodule-ext-isom-implies-wf-koszulity-theorem}]
 The case $n=1$ is covered by
Example~\ref{contramodule-extension-example} with
Lemma~\ref{full-and-faithfulness-enough-for-projectives}.
 The case $n\ge2$ is treated in
Proposition~\ref{contramodule-ext-n-monom-implies-prop}.
\end{proof}

\begin{proof}[Proof of
Theorem~\ref{cohomological-degree-careful-main-theorem}]
 Follows from
Theorems~\ref{wf-koszulity-implies-comodule-ext-isom-theorem},
\ref{comodule-ext-isom-implies-wf-koszulity-theorem},
\ref{wf-koszulity-implies-contramodule-ext-isom-theorem},
\ref{contramodule-ext-isom-implies-wf-koszulity-theorem}
(applied to the coalgebras $C$ and $C^\rop$)
and Propositions~\ref{ext-k-k-left-right-symmetric},
\ref{ext-k-k-co-contra-prop}.
 Specifically:

 (iv)\,$\Longrightarrow$\,(i) is
Theorem~\ref{wf-koszulity-implies-comodule-ext-isom-theorem};

 (i)\,$\Longrightarrow$\,(iv) is the first assertion of
Theorem~\ref{comodule-ext-isom-implies-wf-koszulity-theorem};

 (iv)\,$\Longrightarrow$\,(ii) is
Theorem~\ref{wf-koszulity-implies-comodule-ext-isom-theorem}
for the coalgebra $C^\rop$ together with
Proposition~\ref{ext-k-k-left-right-symmetric};

 (ii)\,$\Longrightarrow$\,(iv) is the first assertion of
Theorem~\ref{comodule-ext-isom-implies-wf-koszulity-theorem}
for the coalgebra $C^\rop$ together with
Proposition~\ref{ext-k-k-left-right-symmetric};

 (iv)\,$\Longrightarrow$\,(iii) is the first assertion of
Theorem~\ref{wf-koszulity-implies-contramodule-ext-isom-theorem};

 (iii)\,$\Longrightarrow$\,(iv) is the second assertion of
Theorem~\ref{contramodule-ext-isom-implies-wf-koszulity-theorem};

 (iv)\,$\Longleftrightarrow$\,(v) is
Proposition~\ref{ext-k-k-co-contra-prop}.

 The last assertion of
Theorem~\ref{cohomological-degree-careful-main-theorem} is provided
by the second assertion of
Theorem~\ref{wf-koszulity-implies-contramodule-ext-isom-theorem}.
\end{proof}

\Section{Half-Bounded Derived Full-and-Faithfulness}
\label{half-bounded-secn}

 The aim of this short section is to finish the proof of
Theorem~\ref{bounded-half-unbounded-derived-main-theorem} from
the introduction.
 The argument is based on the following result from
the preprint~\cite{Pper}.

\begin{prop} \label{equivalent-to-half-bounded-prop}
\textup{(a)} Let\/ $\sA$ and\/ $\sB$ be abelian categories, and\/
$\Upsilon\:\sB\rarrow\sA$ be a fully faithful exact functor.
 Assume that there are enough injective objects in the abelian
category\/ $\sA$, and the functor\/ $\Upsilon$ has a right adjoint
functor\/ $\Gamma\:\sA\rarrow\sB$.
 Then the induced triangulated functor between the bounded derived
categories\/ $\Upsilon^\bb\:\sD^\bb(\sB)\rarrow\sD^\bb(\sA)$ is fully
faithful if and only if the induced triangulated functor between
the bounded below derived categories\/ $\Upsilon^+\:\sD^+(\sB)\rarrow
\sD^+(\sA)$ is fully faithful. \par
\textup{(b)} Let\/ $\sA$ and\/ $\sB$ be abelian categories, and\/
$\Theta\:\sB\rarrow\sA$ be a fully faithful exact functor.
 Assume that there are enough projective objects in the abelian
category\/ $\sA$, and the functor\/ $\Theta$ has a left adjoint
functor\/ $\Delta\:\sA\rarrow\sB$.
 Then the induced triangulated functor between the bounded derived
categories\/ $\Theta^\bb\:\sD^\bb(\sB)\rarrow\sD^\bb(\sA)$ is fully
faithful if and only if the induced triangulated functor between
the bounded above derived categories\/ $\Theta^-\:\sD^-(\sB)\rarrow
\sD^-(\sA)$ is fully faithful.
\end{prop}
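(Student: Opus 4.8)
The passage from bounded-below to bounded full-and-faithfulness is immediate in both parts. Indeed, the natural functor $\sD^\bb(\sB)\rarrow\sD^+(\sB)$ (resp.\ $\sD^\bb(\sB)\rarrow\sD^-(\sB)$) is fully faithful; the exact functor $\Upsilon$ (resp.\ $\Theta$) carries bounded complexes to bounded complexes; and $\Upsilon^\bb$ (resp.\ $\Theta^\bb$) is simply the restriction of $\Upsilon^+$ (resp.\ $\Theta^-$) to the bounded subcategory. So the whole content lies in the converse implication, which I will prove for part~(a); part~(b) is dual, as sketched at the end.

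Assume $\Upsilon^\bb$ is fully faithful, equivalently (as recalled in the Introduction) that $\Ext^n_\sB(M,N)\rarrow\Ext^n_\sA(\Upsilon M,\Upsilon N)$ is an isomorphism for all $M$, $N\in\sB$ and $n\ge0$. Since $\sA$ has enough injectives, the left exact functor $\Gamma$ admits a right derived functor $\boR\Gamma\:\sD^+(\sA)\rarrow\sD^+(\sB)$; and since $\Upsilon$ is exact, $\Gamma$ takes injectives to injectives and $\boR\Gamma$ is right adjoint to the induced functor $\Upsilon\:\sD^+(\sB)\rarrow\sD^+(\sA)$ (concretely, for a bounded-below complex of injectives $I^\bu$ in $\sA$ one has $\Hom_{\sD^+(\sA)}(\Upsilon X,I^\bu)=\Hom_{\mathsf K(\sA)}(\Upsilon X,I^\bu)=\Hom_{\mathsf K(\sB)}(X,\Gamma I^\bu)=\Hom_{\sD^+(\sB)}(X,\Gamma I^\bu)$, with $\Gamma I^\bu$ a bounded-below complex of injectives in $\sB$). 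Let $u_Y\:Y\rarrow\boR\Gamma\Upsilon Y$ denote the derived unit. A triangle-identity computation (using also that the ordinary unit $\mathrm{id}\rarrow\Gamma\Upsilon$ is an isomorphism, as $\Upsilon$ is fully faithful) shows that for bounded-below $X$, $Y$ the map $\Hom_{\sD^+(\sB)}(X,Y)\rarrow\Hom_{\sD^+(\sA)}(\Upsilon X,\Upsilon Y)$ induced by $\Upsilon$ is carried by the adjunction isomorphism onto composition with~$u_Y$. Hence it suffices to show that $u_Y$ is an isomorphism in $\sD^+(\sB)$ for every bounded-below complex $Y$.

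The plan is to prove this first for a single object $Y\in\sB$. If $\Upsilon Y$ is known to be $\Gamma$-acyclic (equivalently, $\boR\Gamma\Upsilon Y$ is concentrated in degree~$0$) for every $Y\in\sB$, then for a bounded-below complex $Y^\bu$ all the terms $\Upsilon Y^i$ are $\Gamma$-acyclic, so $\Gamma\Upsilon Y^\bu$ computes $\boR\Gamma\Upsilon Y^\bu$ and $u_{Y^\bu}$ is an isomorphism (being, termwise, the ordinary unit $Y^\bu\rarrow\Gamma\Upsilon Y^\bu$). So fix an object $Y\in\sB$ and an injective resolution $\Upsilon Y\rarrow I_Y$ in $\sA$, so that $\boR\Gamma\Upsilon Y=\Gamma I_Y$ is a bounded-below complex. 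For any object $M\in\sB$ and any $n\in\boZ$, the adjunction together with the Ext-comparison hypothesis gives natural isomorphisms
\begin{multline*}
 \Hom_{\sD^+(\sB)}(M,\boR\Gamma\Upsilon Y[n])\simeq
 \Hom_{\sD^+(\sA)}(\Upsilon M,\Upsilon Y[n])=\Ext^n_\sA(\Upsilon M,\Upsilon Y) \\
 \simeq\Ext^n_\sB(M,Y)=\Hom_{\sD^+(\sB)}(M,Y[n]),
\end{multline*}
and (by the same triangle-identity check) the resulting natural isomorphism $\Hom_{\sD^+(\sB)}(M,Y[n])\simeq\Hom_{\sD^+(\sB)}(M,\boR\Gamma\Upsilon Y[n])$ is composition with~$u_Y$. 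Let $W$ be the cone of $u_Y$; then $\Hom_{\sD^+(\sB)}(M,W[n])=0$ for all $M\in\sB$ and all $n\in\boZ$. The complex $W$ is bounded below, so if $W\ne0$ we may choose the least integer $b$ with $H^b(W)\ne0$; the canonical morphism $H^b(W)[-b]\rarrow W$ is then a nonzero element of $\Hom_{\sD^+(\sB)}(H^b(W),W[b])$, contradicting the vanishing above (take $M=H^b(W)$, $n=b$). Therefore $W=0$, i.e.\ $u_Y$ is an isomorphism.

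The heart of the argument is thus the acyclicity statement: the objects $\Upsilon Y$, $Y\in\sB$, are $\Gamma$-acyclic. This is exactly the step that consumes the hypothesis that $\Upsilon^\bb$ be fully faithful, via the Ext-comparison isomorphisms, and I expect it to be the only non-formal point — everything else is bookkeeping with adjoint pairs and canonical truncations. Part~(b) is proved by the dual argument: since $\sA$ has enough projectives and $\Theta$ is exact, $\Delta$ takes projectives to projectives and its left derived functor $\boL\Delta\:\sD^-(\sA)\rarrow\sD^-(\sB)$ is left adjoint to $\Theta$; the $\Ext$-comparison isomorphisms for $\Theta^\bb$ show, exactly as above, that for every object $Y\in\sB$ the cone $W$ of the derived counit $\boL\Delta\Theta Y\rarrow Y$ satisfies $\Hom_{\sD^-(\sB)}(W,M[n])=0$ for all $M\in\sB$ and $n\in\boZ$; as $W$ is bounded above, the truncation argument — now using the canonical morphism $W\rarrow H^b(W)[-b]$ for the \emph{greatest} $b$ with $H^b(W)\ne0$ — forces $W=0$, so $\Theta Y$ is $\Delta$-acyclic, $\boL\Delta\Theta Y^\bu\simeq Y^\bu$ for every bounded-above $Y^\bu$, and $\Theta^-$ is fully faithful.
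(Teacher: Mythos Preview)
Your argument is correct and follows essentially the same route the paper indicates: both $\Upsilon^\bb$ and $\Upsilon^+$ being fully faithful are shown to be equivalent to the vanishing of the higher derived functors $\boR^n\Gamma$ on the essential image of $\Upsilon$ (dually, $\boL_n\Delta$ on the image of $\Theta$), which is exactly the ``$\Gamma$-acyclicity'' statement you isolate as the heart of the matter. The paper merely cites~\cite[Proposition~6.5]{Pper} for this equivalence, while you have written out the cone-and-truncation argument explicitly; the substance is the same.
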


\begin{proof}
 Part~(b) is~\cite[Proposition~6.5\,(c)\,$\Leftrightarrow$\,(d)]{Pper}.
 The point is that any one of the functors $\Theta^\bb$ and
$\Theta^-$ is fully faithful if and only if the higher left derived
functors $\boL_n\Delta\:\sA\rarrow\sB$, \,$n\ge1$ of the functor
$\Delta$ vanish on the essential image of the functor~$\Theta$.
 Part~(a) is the dual assertion.
\end{proof}

\begin{proof}[Proof of
Theorem~\ref{bounded-half-unbounded-derived-main-theorem}]
 Follows from Theorem~\ref{cohomological-degree-careful-main-theorem}
and Proposition~\ref{equivalent-to-half-bounded-prop}.
 Specifically:

 (i)\,$\Longleftrightarrow$\,(vii) is
Theorem~\ref{cohomological-degree-careful-main-theorem}%
(i)\,$\Leftrightarrow$\,(iv);

 (iii)\,$\Longleftrightarrow$\,(vii) is
Theorem~\ref{cohomological-degree-careful-main-theorem}%
(ii)\,$\Leftrightarrow$\,(iv);

 (v)\,$\Longleftrightarrow$\,(vii) is
Theorem~\ref{cohomological-degree-careful-main-theorem}%
(iii)\,$\Leftrightarrow$\,(iv) and the last assertion of
Theorem~\ref{cohomological-degree-careful-main-theorem};

 (vii)\,$\Longleftrightarrow$\,(viii) is
Theorem~\ref{cohomological-degree-careful-main-theorem}%
(iv)\,$\Leftrightarrow$\,(v);

 (i)\,$\Longleftrightarrow$\,(ii) is
Proposition~\ref{equivalent-to-half-bounded-prop}(a);

 (iii)\,$\Longleftrightarrow$\,(iv) is
Proposition~\ref{equivalent-to-half-bounded-prop}(a);

 (v)\,$\Longleftrightarrow$\,(vi) is
Proposition~\ref{equivalent-to-half-bounded-prop}(b).

 In order to apply Proposition~\ref{equivalent-to-half-bounded-prop},
it only needs to be explained why the functor $\Upsilon\:C\Comodl
\rarrow C^*\Modl$ has a right adjoint and why the functor $\Theta\:
C\Contra\rarrow C^*\Modl$ has a left adjoint.
 In fact, these are quite general properties of coalgebras, as
the assumption that $C$ is conilpotent is not needed here.

 For any coalgebra $C$ over a field~$k$, the comodule inclusion
functor $\Upsilon\:C\Comodl\rarrow C^*\Modl$ has a right adjoint
functor $\Gamma\:C^*\Modl\rarrow C\Comodl$.
 The functor $\Gamma$ assigns to every left $C^*$\+module $N$ its
maximal submodule belonging to the essential image of
the functor~$\Upsilon$.
 In other words, $\Gamma(N)$ is the sum of all submodules of $N$
whose $C^*$\+module structure comes from a $C$\+comodule structure.
 Equivalently, $\Gamma(N)\subset N$ is the submodule consisting of
all the elements $x\in N$ whose annihilator ideals in $C^*$
(with respect to the action map $C^*\times N\rarrow N$) contain
the annihilator of some finite-dimensional vector subspace of $C$
(with respect to the pairing map $C^*\times C\rarrow k$)
\cite[Theorem~2.1.3(d)]{Swe}.

 For any coalgebra $C$ over a field~$k$, the contramodule forgetful
functor $\Theta\:C\Contra\allowbreak\rarrow C^*\Modl$ has a left
adjoint functor $\Delta\:C^*\Modl\rarrow C\Contra$.
 Given a left $C^*$\+module $M$, the adjunction morphism $M\rarrow
\Delta(M)$ is \emph{not} surjective in general; so $\Delta(M)$
\emph{cannot} be constructed as a quotient (contra)module of~$M$.

 To construct the functor $\Delta$, one defines it on \emph{free}
$C^*$\+modules by the rule $\Delta(C^*\ot_k\nobreak V)=\Hom_k(C,V)$
for every $k$\+vector space~$V$.
 Functoriality of the category object corepresenting a (given
corepresentable) functor provides a natural way to define the action
of $\Delta$ on morphisms of free $C^*$\+modules.
 Then there is always a unique way to extend an abelian category-valued
covariant additive functor defined on the full subcategory of free
modules over some ring $R$ to a right exact functor on the category of
all $R$\+modules.
 Simply put, to compute the contramodule $\Delta(M)$, one needs to
represent $M$ as the cokernel of a morphism of free $C^*$\+modules
$f\:F'\rarrow F''$; then $\Delta(M)$ is the cokernel of
the contramodule map $\Delta(f)\:\Delta(F')\rarrow\Delta(F'')$.
\end{proof}

\begin{rem} \label{second-kind-not-relevant-remark}
 Following the philosophy of the book~\cite{Psemi} and
the memoir~\cite{Pkoszul} (see also the discussion
in the survey~\cite[Section~7]{Pksurv}), one is generally supposed
to consider the \emph{coderived category} of $C$\+comodules
$\sD^\co(C\Comodl)$ and the \emph{contraderived category} of
$C$\+contramodules $\sD^\ctr(C\Contra)$, rather than the conventional
derived categories $\sD(C\Comodl)$ and $\sD(C\Contra)$.
 Let us point out, in this connection, that the difference between
the derived and the co/contraderived categories mostly \emph{does not
manifest itself} in the context of the present paper; certainly not
in the context of
Theorem~\ref{bounded-half-unbounded-derived-main-theorem}.
 The point is that the distinction between the conventional derived
and the co/contraderived categories of abelian categories is only
relevant for \emph{unbounded} complexes, while the results of
the present paper mostly concern the $\Ext$ spaces, which can be
computed in the \emph{bounded} derived categories.

 Specifically, let $\sA$ be an abelian category with exact functors
of infinite coproduct.
 Then the natural triangulated functor from the coderived to the derived
category $\sD^\co(\sA)\rarrow\sD(\sA)$ induces an equivalence of
the full subcategories of bounded below complexes, $\sD^{\co,+}(\sA)
\simeq\sD^+(\sA)$.
 Dually, if $\sB$ is an abelian category with exact functors of
infinite product, then the natural triangulated functor from
the contraderived to the derived category $\sD^\ctr(\sB)\rarrow\sD(\sB)$
induces an equivalence of the full subcategories of bounded above
complexes, $\sD^{\ctr,-}(\sB)\simeq\sD^-(\sB)$ \,\cite[Theorems~3.4.1
and~4.3.1]{Pkoszul}, \cite[Lemma~A.1.3]{Pcosh}.
 These references cover the case of co/contraderived categories in
the sense of Positselski (see~\cite[Section~7]{Pksurv} for
the terminology); for a similar result for coderived categories in
the sense of Becker, assume that there are enough injective objects
in $\sA$, denote the full subcategory of injective objects by
$\sE=\sA_\inj\subset\sA$, and refer to~\cite[Lemma~5.4]{PS2}.
 See also~\cite[Remark~4.1]{Psemi}.
\end{rem}

\Section{Co-Noetherian and Cocoherent Conilpotent Coalgebras}

 The aim of this section is to explain that certain Noetherianity
or coherence-type conditions on a conilpotent coalgebra $C$ imply
weakly finite Koszulity.
 In particular, all \emph{cocommutative} conilpotent coalgebras are
weakly finitely Koszul; this fact will be relevant for the discussion
in the next Section~\ref{cocommutative-secn}.

 Let $C$ be a coalgebra over a field~$k$.
 A left $C$\+comodule is said to be \emph{finitely cogenerated} if it
can be embedded as a subcomodule into a cofree $C$\+comodule $C\ot_k V$
with a finite-dimensional space of cogenerators~$V$.
 A coalgebra $C$ is said to be \emph{left co-Noetherian} if every
quotient comodule of a finitely cogenerated left $C$\+comodule is
finitely cogenerated~\cite{WW,GTNT,Pmc}.
 A coalgebra $C$ is said to be \emph{left Artinian} if it is
Artinian as an object of the category of left $C$\+comodules
$C\Comodl$, that is, any descending chain of left coideals in $C$
terminates~\cite{GTNT}, \cite[Section~2]{Pmc}.
 (Here by a \emph{left coideal} in $C$ one means a left subcomodule
in the left $C$\+comodule~$C$.)

\begin{lem} \label{co-Noetherian-lemma}
\textup{(a)} A coalgebra $C$ is left Artinian if and only if it is
left co-Noetherian \emph{and} the maximal cosemisimple subcoalgebra
of $C$ is finite-dimensional.
 In particular, a conilpotent coalgebra is left Artinian if and
only if it is left co-Noetherian. \par
\textup{(b)} A coalgebra $C$ is left Artinian if and only if its
dual algebra $C^*$ is right Noetherian.
\end{lem}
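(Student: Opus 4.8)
The common device for both parts is the classical order-reversing bijection between the lattice of left coideals $W\subseteq C$ (that is, left subcomodules of the regular left $C$-comodule $C$) and the lattice of \emph{closed} right ideals $I\subseteq C^*$, given by $W\longmapsto W^\perp=\{f\in C^*: f|_W=0\}$ and $I\longmapsto{}^\perp I=\{c\in C: I(c)=0\}$; here ``closed'' refers to the pro-finite-dimensional (linearly compact) topology on $C^*$ in which the annihilators of the finite-dimensional subcoalgebras of $C$ form a base of neighborhoods of zero. A descending chain of left coideals corresponds under this bijection to an ascending chain of closed right ideals, so \emph{$C$ is left Artinian if and only if $C^*$ satisfies the ascending chain condition on closed right ideals}. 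This already yields one implication of part~(b): if $C^*$ is right Noetherian then a fortiori it has ACC on closed right ideals, and so $C$ is left Artinian.

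I would prove part~(a) as follows. The maximal cosemisimple subcoalgebra $C_0$ is precisely the socle of the regular left $C$-comodule $C$; hence ${}_{C_0}C=C_0$, while $C=C\otimes_k k$ is itself a finitely cogenerated left comodule. For the ``only if'' direction: if $C$ is left Artinian, then $C_0$, being a semisimple left comodule with no infinite strictly descending chain of subcomodules, cannot be an infinite direct sum of simple (finite-dimensional) comodules, so $\dim_k C_0<\infty$; and any finitely cogenerated left comodule embeds into some $C^{\oplus n}$, hence is Artinian, hence so are all its quotient comodules, and an Artinian comodule has a finite-dimensional socle and therefore embeds into a finite direct sum of indecomposable injective comodules (each a direct summand of $C$), i.e.\ is finitely cogenerated; thus $C$ is left co-Noetherian. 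For the ``if'' direction: when $\dim_k C_0<\infty$, being finitely cogenerated is equivalent to having a finite-dimensional socle (a finitely cogenerated comodule embeds into some $C\otimes_k V$ whose socle $C_0\otimes_k V$ is finite-dimensional; conversely a finite socle yields an embedding into $E(\operatorname{soc})\subseteq C^{\oplus n}$). Since $C$ is finitely cogenerated and left co-Noetherian, every quotient of $C$ is finitely cogenerated, hence has a finite-dimensional (and automatically essential) socle; invoking the dual of the usual characterization of Noetherian objects, this forces $C$ to be an Artinian object of $C\Comodl$: if $C\supsetneq K_1\supsetneq K_2\supsetneq\cdots$, then in $C/\bigcap_j K_j$ the socle is finite-dimensional and meets each of the nonzero descending images $\overline{K_i}$, so the descending chain of subspaces $\operatorname{soc}\cap\overline{K_i}$ stabilizes at a nonzero subspace equal to $\operatorname{soc}\cap\bigcap_i\overline{K_i}=0$, a contradiction. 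The final ``in particular'' assertion is the case $C_0=k$.

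For the remaining implication of part~(b), that $C$ left Artinian implies $C^*$ right Noetherian, I would first use part~(a) together with the coradical filtration: left co-Noetherianity and $\dim_k C_0<\infty$ force, by induction on $n$, each successive quotient $C_n/C_{n-1}$ to be a finitely cogenerated semisimple comodule, hence finite-dimensional, so every $C_n$ is finite-dimensional and $C^*=\varprojlim_n C_n^*$ is an inverse limit of finite-dimensional algebras along surjective transition maps. It then remains to upgrade ``ACC on closed right ideals'' to ``ACC on all right ideals'' of $C^*$; equivalently, to show that under the left Artinian hypothesis every right ideal of $C^*$ is automatically closed. This is the crux of the argument and the step I expect to be the main obstacle: given a strictly ascending chain of right ideals, one passes to closures, which stabilize by the ACC on closed ideals already in hand, and one is thereby reduced to excluding a strictly ascending chain of \emph{dense} right subideals of a single closed right ideal $J$. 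Here one has to exploit the completeness of $C^*$ and the linear compactness of $J$ (as a closed subspace of the linearly compact ring $C^*$), together with the finite-dimensionality of the layers $C_n$; the contramodule-theoretic rigidity of Theorem~\ref{fully-faithful-contramodule-forgetful}, available because $C$ is finitely cogenerated, is what ultimately rules out such a proper dense subideal. This topological/completeness argument is essentially the content of the cited results of~\cite{GTNT} and~\cite{Pmc}, and it is the only part of the proof that is not a formal consequence of the material developed above.
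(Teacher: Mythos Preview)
The paper's own ``proof'' is nothing more than a citation to~\cite{GTNT} and~\cite{Pmc}, so there is no in-text argument to compare against; you have supplied considerably more detail than the paper does.

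Your argument for part~(a) is correct.  The ``only if'' direction is standard, and your ``if'' direction---passing to the quotient $C/\bigcap_j K_j$, noting that its socle is finite-dimensional by co-Noetherianity and the hypothesis $\dim_k C_0<\infty$, and then obtaining a contradiction from the descending chain $\operatorname{soc}\cap\overline{K_i}$ of nonzero subspaces of a finite-dimensional space whose intersection is zero---is a clean and complete proof.  (The one point to make explicit is that every nonzero comodule has nonzero socle, which holds because $C\Comodl$ is locally finite; you use this when asserting $\operatorname{soc}\cap\overline{K_i}\ne0$.)

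For part~(b), the implication $C^*$ right Noetherian $\Rightarrow$ $C$ left Artinian via the lattice anti-isomorphism is fine, and you correctly isolate the crux of the converse: upgrading the ACC on \emph{closed} right ideals (which you already have) to the ACC on \emph{all} right ideals, equivalently showing that every right ideal of $C^*$ is automatically closed.  There is, however, a genuine gap in the route you propose for this step.  You invoke Theorem~\ref{fully-faithful-contramodule-forgetful}, but that theorem is stated only for \emph{conilpotent} finitely cogenerated coalgebras, whereas Lemma~\ref{co-Noetherian-lemma}(b) is asserted for arbitrary coalgebras; and even in the conilpotent case you have not explained how full faithfulness of the forgetful functor $\Theta$ would exclude a proper dense right subideal of a closed ideal~$J$---the connection between the contramodule statement and closure of ideals is not made, and I do not see a short bridge.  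So the appeal to Theorem~\ref{fully-faithful-contramodule-forgetful} should be dropped.  In the end you fall back on~\cite{GTNT} and~\cite{Pmc} for this step, which is precisely what the paper does; your treatment of~(b) is therefore no less complete than the paper's, but the contramodule detour is a red herring rather than an argument.
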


\begin{proof}
 See~\cite[Proposition~1.6 of the published version,
or Proposition~2.5 of the \texttt{arXiv} version]{GTNT}
or~\cite[Lemmas~2.6 and~2.10(b), Example~2.7, and the general
discussion in Section~2]{Pmc}.
\end{proof}

\begin{prop} \label{co-Noetherian-are-wf-koszul}
 Any left or right co-Noetherian conilpotent coalgebra is weakly
finitely Koszul.
\end{prop}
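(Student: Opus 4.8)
The plan is to work with the minimal injective coresolution of the trivial left $C$\+comodule~$k$ and to show that all its terms are finitely cogenerated. First I would reduce to the case of a left co-Noetherian coalgebra: by Proposition~\ref{ext-k-k-left-right-symmetric} one has $\Ext_C^n(k,k)\simeq\Ext_{C^\rop}^n(k,k)$ for all $n\ge0$, while $C$ is right co-Noetherian if and only if $C^\rop$ is left co-Noetherian, so it suffices to prove that a left co-Noetherian conilpotent coalgebra~$C$ is weakly finitely Koszul. For such a~$C$, choose a minimal injective coresolution
$$
 0\lrarrow k\lrarrow J^0\lrarrow J^1\lrarrow J^2\lrarrow\dotsb
$$
of $k$ in $C\Comodl$, which exists by Proposition~\ref{minimal-co-resolutions-prop}(a). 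By Corollary~\ref{inj-proj-co-free-cor}(a) each $J^i$ is cofree, say $J^i\simeq C\ot_kW_i$ with $W_i={}_\gamma J^i$; applying $\Hom_C(k,{-})\simeq{}_\gamma({-})$ to this coresolution and using minimality (the differentials of the complex ${}_\gamma J^\bu$ vanish), one obtains $W_i\simeq\Ext_C^i(k,k)$. Thus the statement reduces to the claim that $\dim_kW_i<\infty$ for every $i\ge0$.

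The key step is an induction on~$i$ along the cosyzygies $Z^0=k$ and $Z^{i+1}=J^i/Z^i$, where the inclusion $Z^i\hookrightarrow J^i$ exhibits $J^i$ as an injective envelope of $Z^i$ (as in Lemma~\ref{one-step-co-free-co-resolution}(a)), so that the induced map ${}_\gamma Z^i\rarrow{}_\gamma J^i=W_i$ is an isomorphism. The base case is immediate, since $W_0={}_\gamma k=k$. For the inductive step, assume $\dim_kW_i<\infty$. Then $J^i\simeq C\ot_kW_i$ is a finitely cogenerated left $C$\+comodule, hence so is its quotient comodule $Z^{i+1}=J^i/Z^i$ by the left co-Noetherian hypothesis: there is an embedding $Z^{i+1}\hookrightarrow C\ot_kV$ with $\dim_kV<\infty$. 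Applying the left-exact functor ${}_\gamma({-})$ and using ${}_\gamma(C\ot_kV)={}_\gamma C\ot_kV=\gamma(k)\ot_kV\simeq V$ (valid since ${}_\gamma C=\gamma(k)$ by counitality and ${}_\gamma({-})\simeq\Hom_C(k,{-})$ commutes with direct sums), one gets $W_{i+1}\simeq{}_\gamma Z^{i+1}\hookrightarrow V$, so $\dim_kW_{i+1}<\infty$. This completes the induction, and hence the proof.

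I do not anticipate a serious obstacle: the whole argument is a bootstrap along the minimal injective coresolution. The one point that deserves a moment of care is the passage used in the inductive step — namely that finite-dimensionality of the socle ${}_\gamma J^i=W_i$ is exactly what makes $J^i$ finitely cogenerated, so that the co-Noetherian property can be applied to the quotient $Z^{i+1}$. Equivalently, one may run the induction directly on the assertion ``$Z^i$ is finitely cogenerated'', using that over a conilpotent coalgebra a comodule is finitely cogenerated precisely when its socle is finite-dimensional, its injective envelope being the cofree comodule cospanned by that socle.
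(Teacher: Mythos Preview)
Your proof is correct and follows essentially the same approach as the paper's: both reduce to the left co-Noetherian case via Proposition~\ref{ext-k-k-left-right-symmetric}, then argue that~$k$ admits a coresolution by finitely cogenerated cofree comodules, from which finite-dimensionality of $\Ext_C^n(k,k)\simeq{}_\gamma J^n$ is immediate. The only difference is presentational: the paper states in one line that co-Noetherianity yields such a coresolution for any finitely cogenerated comodule, whereas you spell out the induction along the cosyzygies of the minimal coresolution explicitly. (A tiny quibble: your remark that $\Hom_C(k,{-})$ ``commutes with direct sums'' is more than you need and not obviously true in general; since $V$ is finite-dimensional at that point, finite additivity suffices.)
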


\begin{proof}
 The weak finite Koszulity property of a conilpotent coalgebra is
left-right symmetric by Proposition~\ref{ext-k-k-left-right-symmetric};
so it suffices to consider the case of a left co-Noetherian conilpotent
coalgebra~$C$.
 Then it follows easily from the co-Noetherianity that every finitely
cogenerated left $C$\+comodule $L$ has a coresolution $J^\bu$ by finitely
cogenerated cofree $C$\+comodules $J^n$, \,$n\ge0$.
 In particular, this applies to all finite-dimensional left
$C$\+comodules~$L$; so the left $C$\+comodule~$k$ has such
a coresolution~$J^\bu$.
 It remains to compute the spaces $\Ext^n_C(k,k)$ as
$H^n(\Hom_C(k,J^\bu))$ and notice that the vector space
$\Hom_C(k,J^n)={}_\gamma J^n$ is finite-dimensional for every $n\ge0$.
\end{proof}

 A left $C$\+comodule is said to be \emph{finitely copresented} if it
can be obtained as the kernel of a morphism of cofree $C$\+comodules
$C\ot_k V\rarrow C\ot_k U$ with finite-dimensional vector spaces of 
cogenerators $V$ and~$U$.
 A coalgebra $C$ is said to be \emph{left cocoherent} if every
finitely cogenerated quotient comodule of a finitely copresented left
$C$\+comodule is finitely copresented~\cite[Section~2]{Pmc}.

\begin{lem} \label{finitely-copresented-over-conilpotent}
 Let $C$ be a conilpotent coalgebra over~$k$.
 Then \par
\textup{(a)} a left $C$\+comodule $L$ is finitely cogenerated if and
only if the vector space\/ $\Hom_k(k,L)={}_\gamma L$
is finite-dimensional; \par
\textup{(b)} a finitely cogenerated left $C$\+comodule $L$ is
finitely copresented if and only if the vector space\/ $\Ext_C^1(k,L)$
is finite-dimensional; \par
\textup{(c)} in particular, the left $C$\+comodule~$k$ is finitely
copresented if and only if the coalgebra $C$ is finitely cogenerated.
\end{lem}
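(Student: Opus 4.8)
The plan is to reduce all three parts to two elementary facts about a conilpotent coalgebra $C$: the natural identification $\Hom_C(k,M)\simeq{}_\gamma M$ for an arbitrary left $C$\+comodule $M$, and the computation ${}_\gamma(C\ot_kV)\simeq V$ for an arbitrary $k$\+vector space $V$ (which follows either from ${}_\gamma C=F_0C=\gamma(k)$, or directly from the cofree adjunction $\Hom_C(k,\>C\ot_kV)\simeq\Hom_k(k,V)=V$). Besides these, I will only use that $M\longmapsto{}_\gamma M$ is a left exact subfunctor of the identity on $C\Comodl$, that cofree comodules are injective, and Lemma~\ref{one-step-co-free-co-resolution}(a).

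For part~(a), the ``only if'' direction is immediate: if $L$ embeds as a subcomodule into a cofree comodule $C\ot_kV$ with $\dim_kV<\infty$, then left exactness of ${}_\gamma({-})$ yields an embedding ${}_\gamma L\hookrightarrow{}_\gamma(C\ot_kV)\simeq V$, so ${}_\gamma L$ is finite-dimensional. For the converse, I would apply Lemma~\ref{one-step-co-free-co-resolution}(a) with $V={}_\gamma L=\Hom_C(k,L)$: it produces an injective $C$\+comodule morphism $L\rarrow C\ot_kV$ exhibiting $L$ as a subcomodule of a cofree comodule whose space of cogenerators $V$ is finite-dimensional by hypothesis; hence $L$ is finitely cogenerated.

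For part~(b), in the ``only if'' direction I would write $L=\ker(C\ot_kV\rarrow C\ot_kU)$ with $\dim_kV,\,\dim_kU<\infty$, set $W=(C\ot_kV)/L$, and observe that $W$ embeds into $C\ot_kU$. Applying ${}_\gamma({-})=\Hom_C(k,{-})$ to the short exact sequence $0\rarrow L\rarrow C\ot_kV\rarrow W\rarrow0$ and using $\Ext^1_C(k,\>C\ot_kV)=0$ (cofreeness of $C\ot_kV$), the long exact sequence identifies $\Ext^1_C(k,L)$ with a quotient of ${}_\gamma W$, which is in turn a subspace of ${}_\gamma(C\ot_kU)\simeq U$; so $\Ext^1_C(k,L)$ is finite-dimensional. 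In the ``if'' direction I would construct the first two terms of a minimal injective coresolution of $L$: take $J^0=C\ot_kV_0$ with $V_0={}_\gamma L$, finite-dimensional by part~(a), together with the embedding $L\hookrightarrow J^0$ from Lemma~\ref{one-step-co-free-co-resolution}(a), for which the induced map ${}_\gamma L\rarrow{}_\gamma J^0$ is an \emph{isomorphism}. Applying ${}_\gamma({-})$ to $0\rarrow L\rarrow J^0\rarrow J^0/L\rarrow0$ and using $\Ext^1_C(k,J^0)=0$, this isomorphism forces ${}_\gamma(J^0/L)\simeq\Ext^1_C(k,L)$, which is finite-dimensional by hypothesis; so by part~(a) the comodule $J^0/L$ embeds into a cofree comodule $J^1=C\ot_kV_1$ with $\dim_kV_1<\infty$. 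The composite $J^0\twoheadrightarrow J^0/L\hookrightarrow J^1$ then has kernel exactly $L$, so $L=\ker(C\ot_kV_0\rarrow C\ot_kV_1)$ is finitely copresented.

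Part~(c) is the special case $L=k$ of part~(b): the $C$\+comodule $k$ is finite-dimensional, hence finitely cogenerated by part~(a), and by definition the conilpotent coalgebra $C$ is called finitely cogenerated exactly when $\Ext^1_C(k,k)$ is finite-dimensional. I do not anticipate a genuine obstacle anywhere here --- all the substance is already packaged into the conilpotent Nakayama lemma and Lemma~\ref{one-step-co-free-co-resolution}(a). The one point requiring care is in the ``if'' direction of~(b): one must use the injective-envelope embedding $L\hookrightarrow J^0$ furnished by Lemma~\ref{one-step-co-free-co-resolution}(a), so that ${}_\gamma L\rarrow{}_\gamma J^0$ is an isomorphism rather than merely a monomorphism --- this is exactly what makes the socle ${}_\gamma(J^0/L)$ of the cokernel compute $\Ext^1_C(k,L)$ on the nose.
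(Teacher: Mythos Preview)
Your proof is correct and follows essentially the same strategy as the paper's. The one noteworthy difference is in the ``only if'' direction of~(b): the paper invokes an external fact (from~\cite{WW} or~\cite[Lemma~2.8(a)]{Pmc}) that the quotient of a finitely cogenerated comodule by a finitely copresented subcomodule is finitely cogenerated, whereas you observe directly that $W=(C\ot_kV)/L$ embeds into $C\ot_kU$ as the image of the given map, making your argument more self-contained and avoiding the citation. For the ``if'' direction of~(b), the paper cites Proposition~\ref{minimal-co-resolutions-prop}(a) and Corollary~\ref{inj-proj-co-free-cor}(a) for the existence of the initial segment of a minimal coresolution, while you spell out the same construction by hand; these amount to the same thing.
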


\begin{proof}
 Part~(a) follows from
Lemma~\ref{one-step-co-free-co-resolution}(a).
 To prove the ``if'' assertion in part~(b), one can use
Proposition~\ref{minimal-co-resolutions-prop}(a) together with
Corollary~\ref{inj-proj-co-free-cor}(a) to the effect
that for any left $C$\+comodule $L$ there exists a left exact
sequence of left $C$\+comodules $0\rarrow L\rarrow C\ot V_0\rarrow
C\ot_k V_1$ with $V_0\simeq\Hom_C(k,L)$ and $V_1\simeq\Ext_C^1(k,L)$.
 The ``only if'' assertion in part~(b) is deduced from the fact that
the quotient comodule of a finitely cogenerated comodule by a finitely
copresented subcomodule is finitely cogenerated;
see~\cite[Theorem~6]{WW} or~\cite[Lemma~2.8(a)]{Pmc}.
 In particular, the left $C$\+comodule~$k$ is finitely copresented
if and only if the vector space $\Hom_k(k,C/\gamma(k))=
{}_\gamma(C/{}_\gamma C)=\Ext^1_C(k,k)$ is finite-dimensional.
 This is part~(c).
\end{proof}

\begin{prop} \label{fin-cogen-cocoherent-are-wf-koszul}
 Any finitely cogenerated left or right cocoherent conilpotent
coalgebra is weakly finitely Koszul.
\end{prop}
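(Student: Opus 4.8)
The plan is to imitate the proof of Proposition~\ref{co-Noetherian-are-wf-koszul}. By Proposition~\ref{ext-k-k-left-right-symmetric}, both the property of being weakly finitely Koszul and the property of being finitely cogenerated are left-right symmetric; so, replacing $C$ by $C^\rop$ if necessary, it suffices to treat the case of a \emph{left} cocoherent finitely cogenerated conilpotent coalgebra~$C$. The goal is then to produce an injective coresolution $0\rarrow k\rarrow J^0\rarrow J^1\rarrow\dotsb$ of the trivial left $C$\+comodule $k$ in $C\Comodl$ (recall that cofree comodules are injective) in which every term $J^n$ is a \emph{finitely cogenerated} cofree comodule, $J^n=C\ot_k V_n$ with $\dim_kV_n<\infty$. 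Given such a coresolution, one has $\Ext_C^n(k,k)=H^n(\Hom_C(k,J^\bu))=H^n({}_\gamma J^\bu)$, which is a subquotient of the finite-dimensional space ${}_\gamma J^n\simeq V_n$; this proves weak finite Koszulity.

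The coresolution is constructed by iterating Lemma~\ref{one-step-co-free-co-resolution}(a), exactly as in the construction of a minimal injective coresolution in Proposition~\ref{minimal-co-resolutions-prop}(a): set $Z^0=k$, and having produced the cosyzygy $Z^n$, embed it into the cofree comodule $J^n=C\ot_k{}_\gamma Z^n$ and put $Z^{n+1}=J^n/Z^n$. For the terms of this coresolution to be finitely cogenerated, one needs every cosyzygy $Z^n$ to be finitely cogenerated, equivalently (Lemma~\ref{finitely-copresented-over-conilpotent}(a)) that ${}_\gamma Z^n$ is finite-dimensional. The key is to prove the stronger assertion that \emph{every cosyzygy $Z^n$ is finitely copresented}, by induction on~$n$. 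For $n=0$ this is precisely the hypothesis that $C$ is finitely cogenerated, via Lemma~\ref{finitely-copresented-over-conilpotent}(c). For the inductive step, assume $Z^n$ is finitely copresented; then $Z^n$ is finitely cogenerated, hence ${}_\gamma Z^n$ is finite-dimensional and $J^n=C\ot_k{}_\gamma Z^n$ is a finitely cogenerated cofree comodule, which is trivially finitely copresented. Then $Z^{n+1}=J^n/Z^n$ is finitely cogenerated, being the quotient of the finitely cogenerated comodule $J^n$ by the finitely copresented subcomodule $Z^n$ --- this is the fact of \cite[Theorem~6]{WW}, \cite[Lemma~2.8(a)]{Pmc} already invoked in the proof of Lemma~\ref{finitely-copresented-over-conilpotent}(b). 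Finally, $Z^{n+1}$ is a finitely cogenerated quotient comodule of the finitely copresented comodule $J^n$, so it is finitely copresented by the left cocoherence of~$C$. This closes the induction, the desired coresolution exists, and the proof is complete.

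I do not anticipate a real obstacle; the one point that genuinely needs care is the choice of inductive invariant. Since the construction above is (up to isomorphism) the minimal injective coresolution, one has ${}_\gamma Z^{n+1}\simeq\Ext_C^{n+1}(k,k)$, so an attempt to run the induction directly on finite-dimensionality of ${}_\gamma Z^n$ is circular: knowing that ${}_\gamma Z^n$ is finite-dimensional gives no a~priori bound on ${}_\gamma Z^{n+1}$. Strengthening the inductive hypothesis to finite copresentedness of the cosyzygies --- which \emph{does} propagate, because it can be fed into the left cocoherence assumption together with the quotient lemma of~\cite{WW,Pmc} --- is exactly what makes the argument close.
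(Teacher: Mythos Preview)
Your proof is correct and follows essentially the same approach as the paper's: both reduce to the left cocoherent case, use Lemma~\ref{finitely-copresented-over-conilpotent}(c) to see that $k$ is finitely copresented, and then build a coresolution of~$k$ by finitely cogenerated cofree comodules via the inductive mechanism ``finitely copresented cosyzygy $\Rightarrow$ finitely cogenerated cofree term $\Rightarrow$ finitely cogenerated quotient (by~\cite{WW,Pmc}) $\Rightarrow$ finitely copresented next cosyzygy (by cocoherence)''. The paper states this construction for an arbitrary finitely copresented comodule~$L$ and then specializes to $L=k$, while you run the induction directly for~$k$; otherwise the arguments coincide, including your remark that strengthening the inductive hypothesis to finite copresentedness is what makes it close.
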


\begin{proof}
 It suffices to consider the case when $C$ is left cocoherent.
 Then it follows from the cocoherence and the same fact that
the quotient comodule of a finitely cogenerated comodule by a finitely
copresented subcomodule is finitely cogenerated (mentioned in
the proof of Lemma~\ref{finitely-copresented-over-conilpotent}) that
any finitely copresented left $C$\+comodule $L$ has a coresolution
$J^\bu$ by finitely cogenerated cofree $C$\+comodules $J^n$, \,$n\ge0$.
 If $C$ is finitely cogenerated, then this applies to $L=k$
(by Lemma~\ref{finitely-copresented-over-conilpotent}(c)); so
the left $C$\+comodule $k$ has such a coresolution $J^\bu$.
 The argument finishes exactly in the same way as the proof of
Proposition~\ref{co-Noetherian-are-wf-koszul}.
\end{proof}

\begin{rem}
 A coalgebra version of the Morita equivalence theory was developed
by Takeuchi~\cite{Tak}.
 The notions of co-Noetherianity and co-coherence for coalgebras are
\emph{not} Morita--Takeuchi invariant (neither is conilpotence, of
course; while Artinianity of coalgebras \emph{is} invariant under
the Morita--Takeuchi equivalence).

 The co-Noetherianity and cocoherence properties have Morita--Takeuchi
invariant versions, which are called \emph{quasi-co-Noetherianity}
and \emph{quasi-cocoherence} in~\cite[Section~3]{Ppc},
\cite[Sections~5.1\+-5.4]{Psm}.
 (The former property was called ``strict quasi-finiteness''
in~\cite{GTNT}.)
 The key definition, going back to Takeuchi~\cite{Tak}, is that of
a ``quasi-finite comodule'', called \emph{quasi-finitely cogenerated}
in the terminology of~\cite{Ppc,Psm}.
 A left $C$\+comodule $L$ is called quasi-finitely cogenerated if
the vector space $\Hom_C(K,L)$ is finite-dimensional for any
finite-dimensional left $C$\+comodule~$K$.

 One can see from~\cite[Lemma~2.2(e)]{Pmc} that over a coalgebra $C$
with finite-dimen\-sional maximal cosemisimple subcoalgebra (in
particular, over a conilpotent coalgebra~$C$) the classes of
finitely cogenerated and quasi-finitely cogenerated comodules coincide.
 Consequently, a conilpotent coalgebra is quasi-co-Noetherian in
the sense of~\cite{Ppc,Psm} if and only if it is co-Noetherian, and
a conilpotent coalgebra is quasi-cocoherent in the the sense
of~\cite{Ppc,Psm} if and only if it is cocoherent.
 That is why we were not concerned with the quasi-co-Noetherianity
and quasi-cocoherence properties in this section, but only with
the co-Noetherianity and cocoherence properties.
\end{rem}

\Section{Cocommutative Conilpotent Coalgebras}
\label{cocommutative-secn}

 Let $V$ be a $k$\+vector space.
 We refer to~\cite[Sections~2.3 and~3.3]{Pksurv} for an introductory
discussion of the \emph{cofree conilpotent} (\emph{tensor}) coalgebra
$$
 \Ten(V)=\bigoplus\nolimits_{n=0}^\infty V^{\ot n}
$$
cospanned by a vector space~$V$.
 The \emph{cofree conilpotent cocommutative} (\emph{symmetric})
coalgebra $\Sym(V)$ is defined as the subcoalgebra
$$
 \Sym(V)=\bigoplus\nolimits_{n=0}^\infty\Sym^n(V)\subset\Ten(V),
$$
where $\Sym^n(V)\subset V^{\ot n}$ is the vector subspace of all
symmetric tensors in~$V^{\ot n}$.
 One can easily see that $\Sym(V)$ is the maximal cocommutative
subcoalgebra in $\Ten(V)$; in fact, for any cocommutative coalgebra $C$
over~$k$ and coalgebra homomorphism $f\:C\rarrow\Ten(V)$, the image
of~$f$ is contained in $\Sym(V)$.

 Let $C$ be a conilpotent coalgebra over~$k$.
 Assume that $C$ is finitely cogenerated; so the vector space
$V=\Ext^1_C(k,k)=\ker(C_+\to C_+\ot_k C_+)$ is finite-dimensional.
 Choose a $k$\+linear projection $g\:C_+\rarrow V$ onto the vector
subspace $V\subset C_+$.
 Then, according to~\cite[Lemma~5.2(a)]{Pqf}, the map~$g$ extends
uniquely to coalgebra homomorphism $f\:C\rarrow\Ten(V)$.
 Moreover, by~\cite[Lemma~5.2(b)]{Pqf}, the map~$f$ is injective.
 
 Assume additionally that $C$ is cocommutative.
 Then, following the discussion above, the image of the map~$f$ is
contained in the subcoalgebra $\Sym(V)\subset\Ten(V)$.
 Hence $C$ is a subcoalgebra in $\Sym(V)$.

 Choose a $k$\+vector space basis $x_1^*$,~\dots, $x_m^*$ in
the vector space~$V$.
 Then $x_1$,~\dots, $x_m$ is a basis in the dual vector space~$V^*$.
 The choice of such bases identifies the $k$\+vector space dual
$k$\+algebra $\Sym(V)^*$ to the symmetric coalgebra $\Sym(V)$ with
the algebra of formal power series in the variables~$x_1$,~\dots,~$x_m$,
$$
 \Sym(V)^*\simeq k[[x_1,\dotsc,x_m]].
$$
 Accordingly, the $k$\+algebra $C^*$ dual to $C$ is a quotient algebra
of the algebra of formal power series by an ideal
$J\subset k[[x_1,\dotsc,x_m]]$,
$$
 C^*\simeq k[[x_1,\dotsc,x_m]]/J.
$$

 Denote by $s_1$,~\dots, $s_m\in C^*$ the images of the elements
$x_1$,~\dots, $x_m$ under the surjective $k$\+algebra homomorphism
$k[[x_1,\dotsc,x_m]]\rarrow C^*$ dual to the injective coalgebra map
$C\rarrow\Sym(V)$.
 We have shown that $C^*$ is a complete Noetherian commutative local
ring with the maximal ideal $I=(s_1,\dotsc,s_m)\subset C^*$
generated by the elements $s_1$,~\dots, $s_m\in C^*$.
 Indeed, the ring of formal power series $k[[x_1,\dotsc,x_m]]$ is
a complete Noetherian commutative local ring; hence so is any (nonzero)
quotient ring of $k[[x_1,\dotsc,x_m]]$.

\begin{cor} \label{fin-cogen-cocomm-conilp-cor}
 All finitely cogenerated cocommutative conilpotent coalgebras are
co-Noetherian.
 Consequently, all such coalgebras are weakly finitely Koszul.
\end{cor}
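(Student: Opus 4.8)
The plan is to deduce the statement from Lemma~\ref{co-Noetherian-lemma} and Proposition~\ref{co-Noetherian-are-wf-koszul}, feeding in the explicit description of the dual algebra obtained in the discussion preceding the corollary. First I would record what that discussion provides: for a finitely cogenerated cocommutative conilpotent coalgebra $C$, the space of cogenerators $V=\Ext^1_C(k,k)$ is finite-dimensional, $C$ embeds as a subcoalgebra into the symmetric coalgebra $\Sym(V)$, and, dualizing, $C^*$ is a quotient algebra of the formal power series ring $k[[x_1,\dots,x_m]]$ with $m=\dim_k V<\infty$. In particular $C^*$ is a complete Noetherian commutative local ring; being commutative and Noetherian, it is in particular right Noetherian.

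Next I would invoke Lemma~\ref{co-Noetherian-lemma}(b): a coalgebra is left Artinian if and only if its dual algebra is right Noetherian. Hence $C$ is left Artinian. Since $C$ is conilpotent, Lemma~\ref{co-Noetherian-lemma}(a) identifies left Artinianity with left co-Noetherianity, so $C$ is left co-Noetherian. Because $C$ is cocommutative, the categories of left and right $C$\+comodules coincide (a left coaction yields a right coaction via the flip, and cocommutativity matches the axioms), so $C$ is co-Noetherian with no side qualification needed; this is the first assertion of the corollary. Alternatively, one may simply apply the left statement and note that Proposition~\ref{co-Noetherian-are-wf-koszul} requires only \emph{one} side.

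The ``consequently'' is then immediate: Proposition~\ref{co-Noetherian-are-wf-koszul} states that every left or right co-Noetherian conilpotent coalgebra is weakly finitely Koszul, so $\Ext^n_C(k,k)$ is finite-dimensional for all $n\ge0$.

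There is no genuine obstacle here — all the content has already been extracted in the paragraphs leading up to the corollary together with Lemma~\ref{co-Noetherian-lemma} and Proposition~\ref{co-Noetherian-are-wf-koszul}. The one point worth a word of care is the indispensability of the finite-cogeneration hypothesis: it is precisely what forces $V$ to be finite-dimensional, hence the ambient power series ring — and therefore $C^*$ — to be Noetherian. Without it, $C^*$ would be a quotient of a power series algebra in infinitely many variables, which need not be Noetherian.
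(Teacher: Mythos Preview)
Your proof is correct and follows essentially the same route as the paper: use the preceding discussion to see that $C^*$ is a quotient of a formal power series ring in finitely many variables and hence Noetherian, apply Lemma~\ref{co-Noetherian-lemma}(a--b) to deduce that $C$ is co-Noetherian, and then invoke Proposition~\ref{co-Noetherian-are-wf-koszul}. Your additional remarks on cocommutativity erasing the left/right distinction and on the role of finite cogeneration are accurate but not strictly needed for the argument.
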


\begin{proof}
 Let $C$ be a finitely cogenerated cocommutative conilpotent coalgebra
over~$k$.
 Then the algebra $C^*$ is Noetherian, as explained above.
 By Lemma~\ref{co-Noetherian-lemma}(a\+-b), it follows that
the coalgebra $C$ is co-Noetherian.
 Now Proposition~\ref{co-Noetherian-are-wf-koszul} tells us that $C$ is
weakly finitely Koszul.
\end{proof}

 Corollary~\ref{fin-cogen-cocomm-conilp-cor} says that the condition
of Theorem~\ref{bounded-half-unbounded-derived-main-theorem}(vii) is
satisfied for any finitely cogenerated cocommutative conilpotent
coalgebra~$C$.
 Consequently, the triangulated functors $\Upsilon^+\:\sD^+(C\Comodl)
\rarrow\sD^+(C^*\Modl)$ and $\Theta^-\:\sD^-(C\Contra)\rarrow
\sD^-(C^*\Modl)$ are fully faithful.
 In the rest of this section, our aim is to show, using the results of
the paper~\cite{Pmgm}, that the triangulated functors
$\Upsilon^\varnothing\:\sD(C\Comodl)\rarrow\sD(C^*\Modl)$ and
$\Theta^\varnothing\:\sD(C\Contra)\rarrow\sD(C^*\Modl)$ between
the unbounded derived categories are actually fully faithful in this
case, too.

 Let $R$ be a commutative ring and $I\subset R$ be an ideal.
 An $R$\+module $M$ is said to be \emph{$I$\+torsion} if for every
pair of elements $s\in I$ and $x\in M$ there exists an integer $n\ge1$
such that $s^nx=0$ in~$M$.
 Equivalently, $M$ is $I$\+torsion if and only if for every $s\in I$
one has $R[s^{-1}]\ot_RM=0$.
 Here $R[s^{-1}]$ is the ring obtained by inverting formally the element
$s\in R$ (or equivalently, by localizing $R$ at the multiplicative
subset $S=\{1,s,s^2,s^3,\dotsc\}\subset R$ spanned by~$s$).

 An $R$\+module $P$ is said to be
an \emph{$I$\+contramodule}~\cite{Pcta,Pmgm} if for every element
$s\in I$ one has
$$
 \Hom_R(R[s^{-1}],P)=0=\Ext^1_R(R[s^{-1}],P).
$$
 It is important to notice here that the projective dimension of
the $R$\+module $R[s^{-1}]$ can never exceed~$1$
\,\cite[proof of Lemma~2.1]{Pcta}.

 The full subcategory $R\Modl_{I\tors}$ of all $I$\+torsion
$R$\+modules is closed under submodules, quotients, extensions,
and infinite direct sums in $R\Modl$, as one can easily
see~\cite[Theorem~1.1(b)]{Pcta}.
 The full subcategory $R\Modl_{I\ctra}$ of all $I$\+contramodule
$R$\+modules is closed under kernels, cokernels, extensions,
and infinite products in $R\Modl$
\,\cite[Proposition~1.1]{GL}, \cite[Theorem~1.2(a)]{Pcta}.
 Consequently, both the categories $R\Modl_{I\tors}$ and
$R\Modl_{I\ctra}$ are abelian, and the inclusion functors
$R\Modl_{I\tors}\rarrow R\Modl$ and $R\Modl_{I\ctra}\rarrow R\Modl$
are exact.

\begin{thm} \label{essential-images-described}
 Let $C$ be a finitely cogenerated cocommutative conilpotent coalgebra
over a field~$k$, and let $I\subset C^*$ be the maximal ideal of
the complete Noetherian commutative local ring~$C^*$.
 Then \par
\textup{(a)} the essential image of the comodule inclusion functor\/
$\Upsilon\:C\Comodl\rarrow C^*\Modl$ coincides with the full subcategory
of $I$\+torsion $C^*$\+modules $C^*\Modl_{I\tors}\subset C^*\Modl$,
so the functor\/ $\Upsilon$ induces a category equivalence
$C\Comodl\simeq C^*\Modl_{I\tors}$; \par
\textup{(b)} the contramodule forgetful functor\/ $\Theta\:C\Contra
\rarrow C^*\Modl$ is fully faithful, and its essential image coincides
with the full subcategory of $I$\+contramodule $C^*$\+modules
$C^*\Modl_{I\ctra}\subset C^*\Modl$, so the functor\/ $\Theta$
induces a category equivalence $C\Contra\simeq C^*\Modl_{I\ctra}$.
\end{thm}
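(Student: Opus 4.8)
The plan is to reduce the statement to the structural description of $C$ obtained above together with the Matlis--Greenlees--May type results of~\cite{Pmgm}. Put $R=C^*$; as shown above, $R$ is a complete Noetherian commutative local $k$\+algebra with maximal ideal $I=\mathfrak m=(s_1,\dotsc,s_m)$, and, $C$ being finitely cogenerated and conilpotent, $C$ is the union of its finite-dimensional subcoalgebras. The finite topology on $R=C^*$, whose base of neighborhoods of~$0$ consists of the annihilators $E^\perp\subset R$ of the finite-dimensional subcoalgebras $E\subset C$, coincides with the $\mathfrak m$\+adic topology (this is standard for the dual algebra of a conilpotent coalgebra with $C^*$ Noetherian; cf.\ \cite{GTNT,Pmc}). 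Consequently the finite-dimensional subcoalgebras of $C$ are precisely the annihilators $\mathfrak a^\perp\subset C$ of cofinite ideals $\mathfrak a\subset R$, with $(\mathfrak a^\perp)^*=R/\mathfrak a$; in particular, for each $n\ge0$ the finite-dimensional algebra $R/\mathfrak m^n$ is the dual algebra of a finite-dimensional subcoalgebra $E_n\subset C$, and $C=\varinjlim_n E_n$.

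Part~(a) I would prove directly. The comodule inclusion functor $\Upsilon$ is exact and fully faithful, so it suffices to identify its essential image with $R\Modl_{I\tors}$. If $M$ is a left $C$\+comodule, each $x\in M$ lies in a finite-dimensional subcomodule, whose coaction factors through some finite-dimensional subcoalgebra $E\subset C$; hence, as an element of the $C^*$\+module $\Upsilon(M)$, $x$ is annihilated by the ideal $E^\perp$, and since $R/E^\perp$ is a finite-dimensional local algebra its maximal ideal is nilpotent, so $\mathfrak m^N x=0$ for some~$N$ and $\Upsilon(M)$ is $I$\+torsion. Conversely, an $I$\+torsion $R$\+module $M$ is the filtered union of the submodules $M[\mathfrak m^n]$ annihilated by $\mathfrak m^n$ (here one uses that $\mathfrak m$ is finitely generated), and $M[\mathfrak m^n]$, being a module over the finite-dimensional algebra $R/\mathfrak m^n=E_n^*$, is an $E_n$\+comodule, hence a $C$\+comodule via $E_n\subset C$; the transition maps are $C$\+comodule homomorphisms, and $\Upsilon$ applied to the colimit returns the $R$\+module~$M$. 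Therefore the essential image of $\Upsilon$ is exactly $R\Modl_{I\tors}$, so $\Upsilon$ induces an equivalence $C\Comodl\simeq R\Modl_{I\tors}$.

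For part~(b), the forgetful functor $\Theta$ is fully faithful by Theorem~\ref{fully-faithful-contramodule-forgetful} (as $C$ is finitely cogenerated and conilpotent), so it remains to identify its essential image with the full subcategory $R\Modl_{I\ctra}\subset R\Modl$ of $I$\+contramodule $R$\+modules; this I would obtain from~\cite{Pmgm} applied to $R$ with $I=\mathfrak m$. Schematically, the inclusion $\Theta(C\Contra)\subset R\Modl_{I\ctra}$ amounts to the vanishing $\Hom_R(R[s^{-1}],\Theta(P))=0=\Ext^1_R(R[s^{-1}],\Theta(P))$ for every left $C$\+contramodule $P$ and every $s\in\mathfrak m$; one analyzes these groups through the telescope presentation of $R[s^{-1}]$ (of projective dimension $\le1$), which rewrites them as $\varprojlim$ and $\varprojlim^1$ of the tower $\cdots\xrightarrow{s}P\xrightarrow{s}P$, using the completeness of contramodules over a conilpotent coalgebra (Lemma~\ref{complete-separated-contramodules}(a)) and the fact that $s$ lies in the Jacobson radical of the local ring~$R$. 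The reverse inclusion is the reconstruction of a genuine $C$\+contraaction $\Hom_k(C,P)=\varprojlim_n\Hom_k(E_n,P)\to P$ on a given $I$\+contramodule $R$\+module $P$, using the $\mathfrak m$\+adic completeness built into the definition and verifying the contraassociativity and contraunitality axioms. Granting both inclusions, full-and-faithfulness of $\Theta$ yields the equivalence $C\Contra\simeq R\Modl_{I\ctra}$.

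The step I expect to be the main obstacle is this reverse inclusion in part~(b): whereas on the comodule side an $I$\+torsion module is transparently assembled from its finite-dimensional (hence comodule) pieces, reconstructing the $C$\+contramodule structure of an $I$\+contramodule $R$\+module is genuinely delicate, the difficulty being precisely the possible non-separatedness of contramodules noted before Lemma~\ref{complete-separated-contramodules}. This is where the machinery of~\cite{Pmgm} — and the finite homological dimension of the left adjoint functor~$\Delta$ recalled in the Introduction — does the essential work, and I would invoke it rather than reprove it here.
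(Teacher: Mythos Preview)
Your treatment of part~(a) is correct and essentially the same as the paper's, which phrases the same argument in terms of \emph{discrete} $C^*$\+modules (relative to the profinite topology on~$C^*$) and the coincidence of that topology with the $I$\+adic one.

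For part~(b), however, your citations are misdirected.  The reference~\cite{Pmgm} concerns the full-and-faithfulness of the induced functors on \emph{unbounded derived categories}; it does not contain the abelian-level identification $C\Contra\simeq C^*\Modl_{I\ctra}$ that you need here, and the finite homological dimension of the left adjoint~$\Delta$ is the key input for the \emph{next} theorem (Theorem~\ref{commutative-algebra-unbounded-derived-ff}), not for this one.  The paper's proof of part~(b) instead passes through the intermediate notion of \emph{contramodules over the topological ring}~$C^*$: it invokes~\cite[Section~1.10]{Pweak} or~\cite[Section~2.3]{Prev} for the equivalence between $C$\+contramodules and contramodules over the topological ring~$C^*$, and then~\cite[Theorem~B.1.1]{Pweak} or~\cite[Theorem~2.2]{Prev} for the statement that, over a commutative Noetherian ring with the $I$\+adic topology, the forgetful functor from topological-ring contramodules to modules is fully faithful with essential image exactly the $I$\+contramodule $R$\+modules.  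Your telescope sketch for the forward inclusion $\Theta(C\Contra)\subset R\Modl_{I\ctra}$ is a plausible alternative for that half, but the delicate reverse inclusion---reconstructing a genuine $C$\+contraaction on an arbitrary $I$\+contramodule $R$\+module, with the non-separatedness issue you correctly flag---is precisely what~\cite[Theorem~B.1.1]{Pweak} and~\cite[Theorem~2.2]{Prev} establish, and is not to be found in~\cite{Pmgm}.
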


\begin{proof}
 The dual vector space $W^*$ to any (discrete infinite-dimensional)
vector space $W$ is naturally endowed with a pro-finite-dimensional
(otherwise known as linearly compact or pseudocompact) topology.
 The annihilators of finite-dimensional vector subspaces in $W$ are
precisely all the open vector subspaces in $W^*$, and they form
a base of neighborhoods of zero in~$W^*$.
 In the case of a coalgebra $C$, the vector space $C^*$ with its
pro-finite-dimensional topology is a topological algebra.

 A left $C^*$\+module $N$ is said to be \emph{discrete} (or ``rational''
in the terminology of~\cite{Swe}) if, for every element $x\in N$,
the annihilator of~$x$ is an open left ideal in~$C^*$.
 Equivalently, this means that the action map $C^*\times N\rarrow N$
is continuous in the pro-finite-dimensional topology on $C^*$ and
the discrete topology on~$N$.
 The essential image of the comodule inclusion functor $\Upsilon\:
C\Comodl\rarrow C^*\Modl$ consists precisely of all the discrete
$C^*$\+modules~\cite[Propositions~2.1.1\+-2.1.2]{Swe}.
 (Cf.\ the proof of
Theorem~\ref{bounded-half-unbounded-derived-main-theorem}
in Section~\ref{half-bounded-secn}.)
 These assertions hold for any coalgebra $C$ over~$k$.

 In the case of a finitely cogenerated cocommutative conilpotent
coalgebra $C$, the surjective map $k[[x_1,\dotsc,x_m]]\rarrow C^*$
is open and continuous, because it is obtained by applying the dual
vector space functor $W\longmapsto W^*=\Hom_k(W,k)$ to the inclusion
of discrete vector spaces (coalgebras) $C\rarrow\Sym(V)$.
 The resulting pro-finite-dimensional topology on the algebra of formal
power series $k[[x_1,\dotsc,x_m]]$ is the usual (adic) topology of
the formal power series.
 One easily concludes that a $C^*$\+module is discrete if and only if
it is discrete over $k[[x_1,\dotsc,x_m]]$.
 By the definition of the adic topology on a formal power series ring
$k[[x_1,\dotsc,x_m]]$, a module over this ring is discrete if and
only if it is a torsion module for the maximal ideal
$(x_1,\dotsc,x_m)\subset k[[x_1,\dotsc,x_m]]$.
 Hence a $C^*$\+module $N$ is discrete if and only if it is a torsion
module for the ideal $I=(s_1,\dotsc,s_m)\subset C^*$.
 One can also observe that the pro-finite-dimensional topology on
$C^*$ coincides with the $I$\+adic topology, since this holds for
the coalgebra $\Sym(V)$.
 This proves part~(a).

 To prove part~(b), one needs to use the concept of a \emph{contramodule
over a topological ring} as an intermediate step.
 Without going into the (somewhat involved) details of this definition,
which can be found in~\cite[Section~2.1]{Pweak},
\cite[Section~2.1]{Prev}, or~\cite[Sections~2.5\+-2.7]{Pproperf},
let us say that the category of left $C$\+contramodules is naturally
equivalent (in fact, isomorphic) to the category of left contramodules
over the topological ring~$C^*$.
 This equivalence agrees with the natural forgetful functors acting
from the categories of left $C$\+contramodules and left
$C^*$\+contramodules to the category of left $C^*$\+modules.
 We refer to~\cite[Section~1.10]{Pweak} or~\cite[Section~2.3]{Prev}
for the details of the proof of this assertion.

 The contramodule forgetful functor $C\Contra\rarrow C^*\Modl$ is
fully faithful by Theorem~\ref{fully-faithful-contramodule-forgetful}.
 In the special case of \emph{cocommutative} coalgebras $C$, one can
also obtain this result as a particular case of the following
theorems, which provide more information.
 By~\cite[Theorem~B.1.1]{Pweak} or~\cite[Theorem~2.2]{Prev},
the forgetful functor from the category of $C^*$\+contramodules to
the category of $C^*$\+modules is fully faithful, and its essential
image is precisely the full subcategory of all $I$\+contramodule
$C^*$\+modules.
 These results are actually applicable to any commutative Noetherian
ring $R$ with a fixed ideal $I$ and the $I$\+adic completion of $R$
viewed as a topological ring with the $I$\+adic topology.
 (Morever, the Noetherianity condition can be weakened and replaced
with a certain piece of the weak proregularity condition;
see~\cite[Proposition~1.5, Corollary~3.7, and Remark~3.8]{Pdc}.)
 The combination of the references in this paragraph and in
the previous one establishes part~(b).
\end{proof}

\begin{thm} \label{commutative-algebra-unbounded-derived-ff}
 Let $R$ be a commutative Noetherian ring and $I\subset R$ be an ideal.
 Then \par
\textup{(a)} the triangulated functor between the unbounded derived
categories
$$
 \sD(R\Modl_{I\tors})\lrarrow\sD(R\Modl)
$$
induced by the inclusion of abelian categories $R\Modl_{I\tors}\rarrow
R\Modl$ is fully faithful; \par
\textup{(b)} the triangulated functor between the unbounded derived
categories
$$
 \sD(R\Modl_{I\ctra})\lrarrow\sD(R\Modl)
$$
induced by the inclusion of abelian categories $R\Modl_{I\ctra}\rarrow
R\Modl$ is fully faithful.
\end{thm}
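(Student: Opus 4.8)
I would obtain both parts by checking the hypotheses of~\cite{Pmgm}; in fact parts~(a) and~(b) are, respectively, \cite[Theorems~1.3 and~2.9]{Pmgm}. Let me recall the structure of the argument. The inclusion functor $R\Modl_{I\tors}\rarrow R\Modl$ is fully faithful and exact, and it has a right adjoint, the $I$\+torsion submodule functor $\Gamma_I$, whose restriction to $R\Modl_{I\tors}$ is the identity; being right adjoint to an exact functor, $\Gamma_I$ preserves injectives, and --- since $R$ is Noetherian --- the $I$\+torsion part of an injective $R$\+module is again an injective $R$\+module, so the inclusion functor itself preserves injectives. Dually, the inclusion functor $R\Modl_{I\ctra}\rarrow R\Modl$ is fully faithful and exact, with a left adjoint $\Delta_I$ (the zeroth left derived functor of $I$\+adic completion), restricting to the identity on $R\Modl_{I\ctra}$; here $R\Modl$ has enough projectives, and the higher left derived functors of $\Delta_I$ vanish on every $I$\+contramodule $R$\+module.

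The decisive input from Noetherianity is the following: writing $I=(a_1,\dotsc,a_c)$, the derived functor $\boR\Gamma_I$ is computed by the \v Cech complex of $a_1,\dotsc,a_c$, and $\boL\Delta_I$ by the telescope complex of the (automatically weakly proregular) sequence $a_1,\dotsc,a_c$; since both are complexes of length~$c$, the functors $\boR\Gamma_I$ and $\boL\Delta_I$ have \emph{finite homological dimension}. Granting this, the theorem follows from the general principle --- a strengthening of Proposition~\ref{equivalent-to-half-bounded-prop}, also established in~\cite{Pper} --- that a fully faithful exact functor $\Phi\:\sB\rarrow\sA$ into an abelian category with enough injectives, having a right adjoint $\Gamma$ whose derived functor has finite homological dimension and vanishes in positive degrees on the essential image of $\Phi$, induces a fully faithful triangulated functor between the \emph{unbounded} derived categories $\sD(\sB)\rarrow\sD(\sA)$; together with the order-dual statement, with projectives and a left adjoint. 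Applying the first form to $\Phi=$ the torsion inclusion with $\Gamma=\Gamma_I$ gives~(a); applying the dual form to $\Phi=$ the contramodule inclusion with $\Delta=\Delta_I$ gives~(b).

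The step I expect to be the main obstacle is exactly the passage from the half-bounded to the fully unbounded derived categories. Full-and-faithfulness of $\Phi^\varnothing$ is equivalent to the adjunction unit $\mathrm{Id}_{\sD(\sB)}\rarrow\boR\Gamma\circ\Phi^\varnothing$ being an isomorphism; using $\Gamma\circ\Phi=\mathrm{Id}_\sB$ and the fact that $\Phi$ carries injectives to $\Gamma$\+acyclic objects, one verifies this on bounded below complexes just as in the Ext-space comparison (which already recovers Proposition~\ref{equivalent-to-half-bounded-prop}). To propagate it to an arbitrary complex $X^\bu$, one writes $X^\bu$ as a homotopy limit of its stupid truncations from above and each of those as a homotopy colimit of its stupid truncations from below; finite homological dimension of $\boR\Gamma$ is precisely what makes $\boR\Gamma\circ\Phi^\varnothing$ commute with these homotopy (co)limits, so that the class of complexes on which the unit is an isomorphism --- a triangulated subcategory closed under the relevant (co)products and containing all objects of $\sB$ --- exhausts $\sD(\sB)$. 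The contramodule case of~(b) is entirely dual, with projective resolutions, $\boL\Delta_I$, and homotopy colimits. On the commutative side these commutations are made completely explicit by the finite-length \v Cech and telescope complexes, which is the computation carried out in~\cite{Pmgm}.
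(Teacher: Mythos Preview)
Your proposal is correct and follows essentially the same route as the paper: both reduce to \cite[Theorems~1.3 and~2.9]{Pmgm} and identify the finite homological dimension of $\boR\Gamma_I$ and $\boL\Delta_I$ (via the \v Cech and telescope complexes for a weakly proregular generating sequence of~$I$) as the decisive input enabling the passage from the half-bounded to the unbounded derived categories. One minor citation correction: the abstract unbounded criterion you attribute to~\cite{Pper} is actually stated in~\cite[Theorem~6.4]{PMat}; \cite[Proposition~6.5]{Pper} only covers the half-bounded case of Proposition~\ref{equivalent-to-half-bounded-prop}.
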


\begin{proof}
 These assertions actually hold for any \emph{weakly proregular}
finitely generated ideal $I$ in a (not necessarily Noetherian)
commutative ring~$R$; while in a Noetherian commutative ring, any ideal
is weakly proregular.
 See~\cite[Theorems~1.3 and~2.9]{Pmgm} for the details.
 The proofs are based on the observations that (the derived functors of)
the right adjoint functor $\Gamma\:R\Modl\rarrow R\Modl_{I\tors}$
and the left adjoint functor $\Delta\:R\Modl\rarrow R\Modl_{I\ctra}$ to
the inclusions of abelian categories in question have finite
homological dimensions, which go back to~\cite[Corollaries~4.28
and~5.27]{PSY} and were also mentioned in~\cite[Lemmas~1.2(b)
and~2.7(b)]{Pmgm}.
 See~\cite[Theorem~6.4]{PMat} (cf.~\cite[Proposition~6.5]{Pper}) for
an abstract formulation.
\end{proof}

\begin{cor}
 Let $C$ be a finitely cogenerated cocommutative conilpotent coalgebra
over a field~$k$.
 Then \par
\textup{(a)} the triangulated functor between the unbounded derived
categories
$$
 \Upsilon^\varnothing\:\sD(C\Comodl)\lrarrow\sD(C^*\Modl)
$$
induced by the comodule inclusion functor\/ $\Upsilon\:C\Comodl\rarrow
C^*\Modl$ is fully faithful; \par
\textup{(b)} the triangulated functor between the unbounded derived
categories
$$
 \Theta^\varnothing\:\sD(C\Contra)\lrarrow\sD(C^*\Modl)
$$
induced by the contramodule forgetful functor $C\Contra\rarrow
C^*\Modl$ is fully faithful.
\end{cor}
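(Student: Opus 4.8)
The plan is to combine Theorem~\ref{essential-images-described} with Theorem~\ref{commutative-algebra-unbounded-derived-ff}. Put $R=C^*$. As explained in the discussion preceding Theorem~\ref{essential-images-described}, for a finitely cogenerated cocommutative conilpotent coalgebra $C$ the dual algebra $R=C^*$ is a complete Noetherian commutative local ring, with maximal ideal $I=(s_1,\dotsc,s_m)\subset R$. In particular $R$ is commutative Noetherian, so Theorem~\ref{commutative-algebra-unbounded-derived-ff} applies to the pair $(R,I)$: the triangulated functors $\sD(R\Modl_{I\tors})\rarrow\sD(R\Modl)$ and $\sD(R\Modl_{I\ctra})\rarrow\sD(R\Modl)$ induced by the respective inclusions of abelian categories are fully faithful. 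Here $R\Modl=C^*\Modl$.

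Next I would invoke Theorem~\ref{essential-images-described}. Part~(a) provides an equivalence of abelian categories $C\Comodl\simeq R\Modl_{I\tors}$ which is compatible with the inclusion functors into $R\Modl$, in the sense that the comodule inclusion functor $\Upsilon\:C\Comodl\rarrow C^*\Modl$ is identified with the composition $C\Comodl\overset{\sim}{\rarrow}R\Modl_{I\tors}\rarrow R\Modl$; similarly, part~(b) provides an equivalence $C\Contra\simeq R\Modl_{I\ctra}$ identifying the contramodule forgetful functor $\Theta$ with the composition $C\Contra\overset{\sim}{\rarrow}R\Modl_{I\ctra}\rarrow R\Modl$. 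An equivalence of abelian categories induces an equivalence of the associated unbounded derived categories, and this equivalence is compatible, up to natural isomorphism, with any exact functors out of these categories that agree on the abelian level. Hence the functor $\Upsilon^\varnothing\:\sD(C\Comodl)\rarrow\sD(C^*\Modl)$ is isomorphic to the composition of the triangulated equivalence $\sD(C\Comodl)\simeq\sD(R\Modl_{I\tors})$ with the fully faithful functor $\sD(R\Modl_{I\tors})\rarrow\sD(R\Modl)$ of Theorem~\ref{commutative-algebra-unbounded-derived-ff}(a), and likewise $\Theta^\varnothing$ is isomorphic to the composition of $\sD(C\Contra)\simeq\sD(R\Modl_{I\ctra})$ with the fully faithful functor of Theorem~\ref{commutative-algebra-unbounded-derived-ff}(b). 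A composition of a triangulated equivalence with a fully faithful triangulated functor is fully faithful, so both $\Upsilon^\varnothing$ and $\Theta^\varnothing$ are fully faithful, which is the assertion of parts~(a) and~(b).

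I do not expect any real obstacle here; the argument is a formal consequence of the two cited theorems, and mirrors, for the unbounded derived categories, the way Theorem~\ref{bounded-half-unbounded-derived-main-theorem} treats the half-bounded ones. The only point worth underlining is that Theorem~\ref{essential-images-described} gives category equivalences that genuinely intertwine the functors $\Upsilon$ and $\Theta$ with the inclusions $R\Modl_{I\tors}\rarrow R\Modl$ and $R\Modl_{I\ctra}\rarrow R\Modl$, and not merely an abstract identification of the categories involved; this is exactly what makes the passage to derived categories transparent.
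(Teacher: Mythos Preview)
Your proposal is correct and follows precisely the approach of the paper, which simply instructs the reader to combine Theorem~\ref{essential-images-described} with Theorem~\ref{commutative-algebra-unbounded-derived-ff}; you have merely spelled out the straightforward details of that combination.
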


\begin{proof}
 For part~(a), compare the results of
Theorems~\ref{essential-images-described}(a) and
\ref{commutative-algebra-unbounded-derived-ff}(a).
 For part~(b), similarly compare
Theorems~\ref{essential-images-described}(b) and
\ref{commutative-algebra-unbounded-derived-ff}(b).
\end{proof}

\appendix

\bigskip
\section*{Appendix.  Standard Category-Theoretic Observations}
\medskip

\setcounter{section}{1}
\setcounter{thm}{0}

 In this appendix we collect several elementary category-theoretic
homological algebra lemmas, which are used in
Sections~\ref{wf-koszulity-implies-comodule-ext-isom-secn}\+-%
\ref{contramodule-ext-isom-implies-wf-koszulity-secn}.
 The following lemma tells how far one can go computing the Ext groups
with a resolution which is only \emph{initially} projective.

\begin{lem} \label{initially-projective-resolution-computes-ext}
 Let\/ $\sA$ be an abelian category and\/
$0\larrow X\larrow P_0\larrow P_1\larrow P_2\larrow\dotsb$
be an exact sequence in\/~$\sA$.
 Then for every object $Y\in\sA$ there are natural maps of abelian
groups
\begin{equation} \label{resolution-to-ext-comparison}
 H^n(\Hom_\sA(P_\bu,Y))\lrarrow\Ext^n_\sA(X,Y)
\end{equation}
defined for all $n\ge0$.
 The map~\eqref{resolution-to-ext-comparison} is an isomorphism
whenever the objects $P_i$ are projective in\/ $\sA$ for all\/
$0\le i\le n-1$.
\end{lem}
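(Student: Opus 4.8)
The plan is to reduce everything to the standard dimension‑shift formalism for Yoneda Ext in the abelian category $\sA$. Set $Z_{-1}:=X$ and, for $i\ge 0$, $Z_i:=\ker(P_i\to P_{i-1})$ with the convention $P_{-1}:=X$. Exactness of the given sequence then says precisely that there are short exact sequences
\[
 0\to Z_i\to P_i\to Z_{i-1}\to 0\qquad(i\ge 0),
\]
and that the tail $\dotsb\to P_{n+1}\to P_n\to Z_{n-1}\to 0$ is exact for every $n\ge 1$. Applying $\Hom_\sA(-,Y)$ to this tail and using left exactness of $\Hom_\sA$, one identifies $\ker\bigl(\Hom_\sA(P_n,Y)\to\Hom_\sA(P_{n+1},Y)\bigr)$ with $\Hom_\sA(Z_{n-1},Y)$, and a direct inspection of the differential gives, for $n\ge 1$,
\[
 H^n\bigl(\Hom_\sA(P_\bu,Y)\bigr)\;\cong\;\coker\bigl(\Hom_\sA(P_{n-1},Y)\to\Hom_\sA(Z_{n-1},Y)\bigr),
\]
the map on the right being restriction along $Z_{n-1}\hookrightarrow P_{n-1}$; for $n=0$ exactness of $P_1\to P_0\to X\to 0$ gives $H^0\bigl(\Hom_\sA(P_\bu,Y)\bigr)\cong\Hom_\sA(X,Y)=\Ext^0_\sA(X,Y)$.

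Next I would define the comparison map $\theta^n_Y$ of \eqref{resolution-to-ext-comparison}. For $n=0$ it is the identity of $\Hom_\sA(X,Y)$. For $n\ge 1$, the connecting homomorphism of the short exact sequence $0\to Z_{n-1}\to P_{n-1}\to Z_{n-2}\to 0$ is a map $\Hom_\sA(Z_{n-1},Y)\to\Ext^1_\sA(Z_{n-2},Y)$ that annihilates the image of $\Hom_\sA(P_{n-1},Y)$, hence factors through the cokernel above to give $H^n\bigl(\Hom_\sA(P_\bu,Y)\bigr)\to\Ext^1_\sA(Z_{n-2},Y)$; composing this with the string of connecting homomorphisms
\[
 \Ext^1_\sA(Z_{n-2},Y)\xrightarrow{\ \delta\ }\Ext^2_\sA(Z_{n-3},Y)\xrightarrow{\ \delta\ }\dotsb\xrightarrow{\ \delta\ }\Ext^{n-1}_\sA(Z_0,Y)\xrightarrow{\ \delta\ }\Ext^n_\sA(X,Y)
\]
of the sequences $0\to Z_i\to P_i\to Z_{i-1}\to 0$ (with $Z_{-1}=X$; the string is empty when $n=1$) defines $\theta^n_Y$. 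All the ingredients — the long exact sequence of Yoneda Ext attached to a short exact sequence in an arbitrary abelian category, and its naturality — are classical, and naturality of $\theta^n_Y$ in $Y$ follows formally (as does, if one wants it, compatibility with morphisms of the resolution $P_\bu$).

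For the isomorphism claim I would simply observe that each arrow in the defining composite of $\theta^n_Y$ is invertible under the stated hypothesis. When $P_{n-1}$ is projective, $\Ext^1_\sA(P_{n-1},Y)=0$, so exactness of $\Hom_\sA(P_{n-1},Y)\to\Hom_\sA(Z_{n-1},Y)\xrightarrow{\delta}\Ext^1_\sA(Z_{n-2},Y)\to\Ext^1_\sA(P_{n-1},Y)$ shows the first arrow of $\theta^n_Y$ is an isomorphism; and when $P_{n-1-i}$ is projective one has $\Ext^i_\sA(P_{n-1-i},Y)=0=\Ext^{i+1}_\sA(P_{n-1-i},Y)$ for $i\ge 1$, so the long exact sequence of $0\to Z_{n-1-i}\to P_{n-1-i}\to Z_{n-2-i}\to 0$ makes $\delta\colon\Ext^i_\sA(Z_{n-1-i},Y)\to\Ext^{i+1}_\sA(Z_{n-2-i},Y)$ an isomorphism. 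Hence if $P_0,\dotsc,P_{n-1}$ are all projective, $\theta^n_Y$ is a composite of isomorphisms. I expect the only real work to be the notational bookkeeping with the syzygies $Z_i$ and the (standard but slightly fussy) check that the identification of $H^n$ with a cokernel is compatible with $\delta$; there is no genuine obstacle, and no hypothesis beyond ``abelian'' is needed, since Yoneda Ext, its long exact sequences, and the vanishing $\Ext^{\ge 1}_\sA(\text{projective},-)=0$ are all available in any abelian category.
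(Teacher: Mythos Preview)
Your proposal is correct and follows essentially the same approach as the paper: define syzygies (the paper calls $X_i$ what you call $Z_{i-1}$), identify $H^n(\Hom_\sA(P_\bu,Y))$ with the cokernel $\coker(\Hom_\sA(P_{n-1},Y)\to\Hom_\sA(Z_{n-1},Y))$, and then compose the chain of connecting homomorphisms from the short exact sequences $0\to Z_i\to P_i\to Z_{i-1}\to 0$, observing that each becomes an isomorphism when the relevant $P_j$ is projective. The paper's argument is identical up to this reindexing.
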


\begin{proof}
 The point of the lemma is that one does \emph{not} need the objects
$P_n$ or $P_{n+1}$ (but only the objects $P_0$,~\dots, $P_{n-1}$) to be
projective in order to compute $\Ext^n_\sA(X,Y)$ using a resolution
$P_\bu$ of an object $X\in\sA$.
 The proof is standard.
 Denote by $X_i$ the image of the differential $P_i\rarrow P_{i-1}$
(so $X=X_0$).
 Then the connecting homomorphisms in the long exact sequences of
groups $\Ext^*_\sA({-},Y)$ related to the short exact sequences
$0\rarrow X_{i+1}\rarrow P_i\rarrow X_i\rarrow0$ in $\sA$ provide
natural maps of abelian groups
{\setlength{\multlinegap}{0em}
\begin{multline*}
 H^n(\Hom_\sA(P_\bu,Y))=
 \coker(\Hom_\sA(P_{n-1},Y)\to\Hom_\sA(X_n,Y))
 \lrarrow\Ext^1_\sA(X_{n-1},Y) \\ \lrarrow
 \Ext^2_\sA(X_{n-2},Y) \lrarrow\dotsb\rarrow
 \Ext^{n-1}_\sA(X_1,Y)\lrarrow\Ext^n_\sA(X_0,Y),
\end{multline*}
whose composition} is the desired
map~\eqref{resolution-to-ext-comparison}.
 Furthermore, the map $H^n(\Hom_\sA(P_\bu,Y))\allowbreak\rarrow
\Ext^1_\sA(X_{n-1},Y)$ is an isomorphism whenever
$\Ext^1_\sA(P_{n-1},Y)=0$; the map
$\Ext^1_\sA(X_{n-1},Y)\rarrow\Ext^2_\sA(X_{n-2},Y)$ is
an isomorphism whenever $\Ext^1_\sA(P_{n-2},Y)=0=\Ext^2_\sA(P_{n-2},Y)$,
etc.; and the map $\Ext^{n-1}_\sA(X_1,Y)\rarrow\Ext^n_\sA(X_0,Y)$
is an isomorphism whenever $\Ext^{n-1}_\sA(P_0,Y)=0=\Ext^n_\sA(P_0,Y)$.
\end{proof}

 The next lemma is a Tor version of
Lemma~\ref{initially-projective-resolution-computes-ext}.

\begin{lem} \label{initially-projective-resolution-computes-tor}
 Let $R$ be an associative ring and\/ $0\larrow X\larrow F_0
\larrow F_1\larrow F_2\larrow\dotsb$ be an exact sequence of right
$R$\+modules.
 Then for every left $R$\+module $Y$ there are natural maps of
abelian groups
\begin{equation} \label{tor-to-resolution-comparison}
  \Tor^R_n(X,Y)\lrarrow H_n(F_\bu\ot_RY)
\end{equation}
defined for all $n\ge0$.
 The map~\eqref{tor-to-resolution-comparison} is an isomorphism
whenever the $R$\+modules $F_i$ are flat for all\/ $0\le i\le n-1$. \qed
\end{lem}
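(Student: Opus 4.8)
The plan is to carry over, in the covariant homological setting, the argument used for Lemma~\ref{initially-projective-resolution-computes-ext}. Write $X_i$ for the image of the differential $F_i\rarrow F_{i-1}$, so that $X_0=X$ and there are short exact sequences of right $R$\+modules
$$
 0\rarrow X_{i+1}\rarrow F_i\rarrow X_i\rarrow 0
$$
for all $i\ge0$.

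First I would identify the homology of the complex $Y\ot_R F_\bu$. Since the functor $Y\ot_R{-}$ is right exact, applying it to the short exact sequence $0\rarrow X_{i+1}\rarrow F_i\rarrow X_i\rarrow0$ shows that the map $Y\ot_R F_i\rarrow Y\ot_R X_i$ is surjective with kernel equal to the image of $Y\ot_R F_{i+1}\rarrow Y\ot_R F_i$. Factoring each differential $Y\ot_R F_n\rarrow Y\ot_R F_{n-1}$ as $Y\ot_R F_n\twoheadrightarrow Y\ot_R X_n\rarrow Y\ot_R F_{n-1}$ and chasing the resulting diagram, one obtains $H_0(Y\ot_R F_\bu)=Y\ot_R X=\Tor^R_0(Y,X)$ together with a natural isomorphism
$$
 H_n(Y\ot_R F_\bu)\simeq\ker\bigl(Y\ot_R X_n\rarrow Y\ot_R F_{n-1}\bigr)\qquad\text{for }n\ge1.
$$

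Next I would construct the map~\eqref{tor-to-resolution-comparison}. The long exact sequences of the functors $\Tor^R_*(Y,{-})$ attached to the short exact sequences $0\rarrow X_{i+1}\rarrow F_i\rarrow X_i\rarrow0$ supply connecting homomorphisms $\Tor^R_{m+1}(Y,X_i)\rarrow\Tor^R_m(Y,X_{i+1})$ for $m\ge1$, as well as, from the tail of the long exact sequence for $0\rarrow X_n\rarrow F_{n-1}\rarrow X_{n-1}\rarrow0$, a natural map $\Tor^R_1(Y,X_{n-1})\rarrow\ker(Y\ot_R X_n\rarrow Y\ot_R F_{n-1})$. Composing
$$
 \Tor^R_n(Y,X_0)\rarrow\Tor^R_{n-1}(Y,X_1)\rarrow\dotsb\rarrow\Tor^R_1(Y,X_{n-1})\rarrow\ker\bigl(Y\ot_R X_n\rarrow Y\ot_R F_{n-1}\bigr)
$$
and invoking the isomorphism of the previous paragraph yields the desired natural map~\eqref{tor-to-resolution-comparison}; for $n=0$ it is simply the identification $\Tor^R_0(Y,X)=Y\ot_R X=H_0(Y\ot_R F_\bu)$.

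Finally I would read off the isomorphism claim from the same long exact sequences. The connecting map $\Tor^R_{m+1}(Y,X_i)\rarrow\Tor^R_m(Y,X_{i+1})$ is an isomorphism whenever $\Tor^R_{m+1}(Y,F_i)=0=\Tor^R_m(Y,F_i)$, while the last arrow $\Tor^R_1(Y,X_{n-1})\rarrow\ker(Y\ot_R X_n\rarrow Y\ot_R F_{n-1})$ is always surjective and is injective once $\Tor^R_1(Y,F_{n-1})=0$. Thus, if $F_i$ is flat for all $0\le i\le n-1$, then $\Tor^R_m(Y,F_i)=0$ for every $m\ge1$ and every such $i$, so each arrow in the composition above is an isomorphism, and hence so is~\eqref{tor-to-resolution-comparison}. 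The argument is entirely routine; the only point to keep an eye on is the slight asymmetry with the Ext version, namely that the final arrow needs only a single $\Tor$\+vanishing --- because here $H_n$ appears as a kernel rather than a cokernel --- but this does not affect the statement, since flatness of $F_{n-1}$ supplies that vanishing anyway.
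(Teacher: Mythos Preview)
Your argument is correct and is precisely the dualization of the paper's proof of Lemma~\ref{initially-projective-resolution-computes-ext} that the author has in mind; indeed, the paper gives no separate proof for this Tor version at all, leaving only a \qed\ box. Your observation about the slight asymmetry (the final arrow requiring only a single $\Tor$\+vanishing because $H_n$ arises here as a kernel rather than a cokernel) is accurate and a nice refinement, though as you note it does not affect the statement.
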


 The following lemma is very easy.

\begin{lem} \label{induced-on-Ext-1-lemma}
 Let\/ $\Phi\:\sB\rarrow\sA$ be a fully faithful exact functor of
abelian categories, and let $X$, $Y\in\sB$ be two objects.
 Then the map
$$
 \Ext^1_\sB(X,Y)\lrarrow\Ext^1_\sA(\Phi(X),\Phi(Y))
$$
induced by the functor\/~$\Phi$ is injective.
 This map is surjective if and only if, for any short exact sequence\/
$0\rarrow\Phi(Y)\rarrow A\rarrow\Phi(X)\rarrow0$ in\/ $\sA$,
the object $A$ belongs to the essential image of\/~$\Phi$.
\end{lem}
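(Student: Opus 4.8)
The plan is to work with the Yoneda description of $\Ext^1$, which makes sense in an arbitrary abelian category and so requires no assumption about enough injectives or projectives. Recall that $\Ext^1_\sB(X,Y)$ is the set of equivalence classes of short exact sequences $0\rarrow Y\rarrow E\rarrow X\rarrow0$ in $\sB$, with the split sequence representing the zero class, and that the map induced by $\Phi$ sends the class of $0\rarrow Y\rarrow E\rarrow X\rarrow0$ to the class of $0\rarrow\Phi(Y)\rarrow\Phi(E)\rarrow\Phi(X)\rarrow0$; the latter is a short exact sequence in $\sA$ precisely because $\Phi$ is exact, and the assignment is a homomorphism of abelian groups for the Baer sum.

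For injectivity I would show that $\Phi$ reflects split exact sequences, so that the kernel of the induced homomorphism is trivial. Suppose $0\rarrow Y\overset f\rarrow E\overset g\rarrow X\rarrow0$ is a short exact sequence in $\sB$ whose image $0\rarrow\Phi(Y)\rarrow\Phi(E)\rarrow\Phi(X)\rarrow0$ splits, and choose a section $\sigma\:\Phi(X)\rarrow\Phi(E)$ with $\Phi(g)\sigma=\mathrm{id}_{\Phi(X)}$. By fullness, $\sigma=\Phi(s)$ for some $s\:X\rarrow E$, and then $\Phi(gs)=\Phi(g)\Phi(s)=\mathrm{id}_{\Phi(X)}=\Phi(\mathrm{id}_X)$, whence $gs=\mathrm{id}_X$ by faithfulness. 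So the original sequence splits, i.e.\ the class was already zero.

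For the surjectivity criterion, assume first that every short exact sequence $0\rarrow\Phi(Y)\rarrow A\rarrow\Phi(X)\rarrow0$ in $\sA$ has $A$ in the essential image of $\Phi$, say via an isomorphism $\phi\:A\overset\sim\rarrow\Phi(E)$. Using fullness, write the composite $\Phi(Y)\rarrow A\overset\phi\rarrow\Phi(E)$ as $\Phi(f)$ and the composite $\Phi(E)\overset{\phi^{-1}}\rarrow A\rarrow\Phi(X)$ as $\Phi(g)$; faithfulness gives $gf=0$, and since $\Phi$ reflects short exact sequences (see below) the resulting complex $0\rarrow Y\overset f\rarrow E\overset g\rarrow X\rarrow0$ is exact in $\sB$. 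Then $\phi$, together with the identity maps on $\Phi(Y)$ and $\Phi(X)$, is an equivalence of extensions exhibiting $[A]$ as the image of the class of $0\rarrow Y\rarrow E\rarrow X\rarrow0$. Conversely, if the induced map is surjective, then any $0\rarrow\Phi(Y)\rarrow A\rarrow\Phi(X)\rarrow0$ is equivalent to $0\rarrow\Phi(Y)\rarrow\Phi(E)\rarrow\Phi(X)\rarrow0$ for some $E\in\sB$, and the short five lemma promotes this equivalence of extensions to an isomorphism $A\simeq\Phi(E)$, so $A$ is in the essential image of $\Phi$.

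The argument is entirely formal; the one point deserving a moment's care is the claim that a fully faithful exact functor of abelian categories reflects short exact sequences. For this I would note that $\Phi$ reflects monomorphisms and epimorphisms (from faithfulness) and preserves kernels and images (from exactness), so that for a complex $Y\overset f\rarrow E\overset g\rarrow X$ in $\sB$ whose image under $\Phi$ is exact, the canonical comparison morphism $\mathrm{im}(f)\rarrow\ker(g)$ in $\sB$ becomes an isomorphism after applying $\Phi$ and is therefore an isomorphism already in $\sB$. No step here is an essential obstacle; the proof is a short diagram chase combined with full-and-faithfulness, which is exactly why the lemma is labelled ``very easy''.
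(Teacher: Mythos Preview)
Your argument is correct and is exactly the standard Yoneda-extension proof the paper has in mind; indeed, the paper's own ``proof'' of this lemma reads in full: \emph{The proof is left to the reader.} Your write-up fills in precisely the routine verification (reflection of split sequences from full-and-faithfulness, and the short-five-lemma/reflection-of-exactness check for the surjectivity criterion) that the author is implicitly invoking.
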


\begin{proof}
 The proof is left to the reader.
\end{proof}

 The next lemma is essentially well-known, but we give it a more
precise formulation than it usually receives.

\begin{lem} \label{next-degree-ext-injectivity-lemma}
 Let\/ $\sA$ and\/ $\sB$ be abelian categories, and let\/
$\Phi\:\sB\rarrow\sA$ be an exact functor.
 Let $n\ge1$ be an integer and $Y\in\sB$ be a fixed object. \par
\textup{(a)} Assume that the map of groups\/ $\Ext^{n-1}$
$$
 \Ext^{n-1}_\sB(X,Y)\lrarrow\Ext^{n-1}_\sA(\Phi(X),\Phi(Y))
$$
induced by the functor\/ $\Phi$ is an isomorphism for all objects
$X\in\sB$.
 Then the map of groups\/ $\Ext^n$
$$
 \Ext^n_\sB(X,Y)\lrarrow\Ext^n_\sA(\Phi(X),\Phi(Y))
$$
induced by the functor\/ $\Phi$ is injective for all objects $X\in\sB$. 
\par
\textup{(b)} More generally, let\/ $0\rarrow X\rarrow P\rarrow T
\rarrow0$ be a short exact sequence in\/ $\sB$.
 Assume that the map\/ $\Ext^{n-1}_\sB(P,Y)\rarrow
\Ext^{n-1}_\sA(\Phi(P),\Phi(Y))$ is surjective, while the map
$\Ext^{n-1}_\sB(X,Y)\rarrow\Ext^{n-1}_\sA(\Phi(X),\Phi(Y))$
is injective.
 Then the intersection of the kernel of the map\/
$\Ext^n_\sB(T,Y)\rarrow\Ext^n_\sB(P,Y)$ induced by the epimorphism
$P\rarrow T$ with the kernel of the map\/ $\Ext^n_\sB(T,Y)\rarrow
\Ext^n_\sA(\Phi(T),\Phi(Y))$ induced by the functor\/ $\Phi$ is
the zero subgroup in\/ $\Ext^n_\sB(T,Y)$.
\end{lem}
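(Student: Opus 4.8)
The plan is to establish part~(b) first by a diagram chase between the long exact sequences of $\Ext$\+groups attached to the given short exact sequence and to its image under~$\Phi$, and then to obtain part~(a) from part~(b) by the standard reduction of an $\Ext^n$\+class to the connecting homomorphism of a single short exact sequence (a ``first syzygy''). This reduction uses \emph{no} projective objects in~$\sB$, which matters here since $C\Comodl$ need not have enough projectives.

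For part~(b), start from the short exact sequence $0\rarrow X\rarrow P\rarrow T\rarrow0$ in~$\sB$; applying the exact functor~$\Phi$ gives a short exact sequence $0\rarrow\Phi(X)\rarrow\Phi(P)\rarrow\Phi(T)\rarrow0$ in~$\sA$. The contravariant functors $\Ext^*_\sB(-,Y)$ and $\Ext^*_\sA(-,\Phi(Y))$ then produce two long exact sequences linked by the maps induced by~$\Phi$, which commute with the connecting homomorphisms. Let $\xi\in\Ext^n_\sB(T,Y)$ lie in the intersection of the two kernels in the statement. Since $\xi$ maps to~$0$ in $\Ext^n_\sB(P,Y)$, exactness of the upper sequence gives $\xi=\partial(\xi')$ for some $\xi'\in\Ext^{n-1}_\sB(X,Y)$, where $\partial\:\Ext^{n-1}_\sB(X,Y)\rarrow\Ext^n_\sB(T,Y)$ is the connecting map. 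Let $\bar\xi'$ be the image of~$\xi'$ in $\Ext^{n-1}_\sA(\Phi(X),\Phi(Y))$. Then $\partial(\bar\xi')=\Phi(\partial\xi')=\Phi(\xi)=0$, so by exactness of the lower sequence $\bar\xi'$ is the restriction of some $\beta\in\Ext^{n-1}_\sA(\Phi(P),\Phi(Y))$. By the surjectivity hypothesis, $\beta=\Phi(\gamma)$ for some $\gamma\in\Ext^{n-1}_\sB(P,Y)$; the restriction of~$\gamma$ along $X\rarrow P$ is an element of $\Ext^{n-1}_\sB(X,Y)$ whose image under~$\Phi$ is $\bar\xi'=\Phi(\xi')$, hence equals~$\xi'$ by the injectivity hypothesis. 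Therefore $\xi'$ lies in the image of $\Ext^{n-1}_\sB(P,Y)\rarrow\Ext^{n-1}_\sB(X,Y)$, and exactness of the upper sequence at $\Ext^{n-1}_\sB(X,Y)$ forces $\partial(\xi')=0$, i.e.\ $\xi=0$.

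For part~(a), fix $X\in\sB$ and a class $\xi\in\Ext^n_\sB(X,Y)$ annihilated by~$\Phi$. Represent~$\xi$ by an $n$\+fold Yoneda extension $0\rarrow Y\rarrow Z_{n-1}\rarrow\dotsb\rarrow Z_0\rarrow X\rarrow0$, and put $X'=\ker(Z_0\rarrow X)$; then $0\rarrow X'\rarrow Z_0\rarrow X\rarrow0$ is exact in~$\sB$, and $\xi$ is the image of the class of $0\rarrow Y\rarrow Z_{n-1}\rarrow\dotsb\rarrow Z_1\rarrow X'\rarrow0$ under the connecting homomorphism $\Ext^{n-1}_\sB(X',Y)\rarrow\Ext^n_\sB(X,Y)$ (for $n=1$ this is the class of the short exact sequence itself, with $X'=Y$). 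In particular $\xi$ maps to~$0$ in $\Ext^n_\sB(Z_0,Y)$. Now apply part~(b) to this short exact sequence, with~$X$, $Z_0$ in the roles of~$T$, $P$ and $X'$ in the role of~$X$: the two hypotheses hold because the maps $\Ext^{n-1}_\sB(-,Y)\rarrow\Ext^{n-1}_\sA(\Phi(-),\Phi(Y))$ are assumed to be isomorphisms on all objects. Since~$\xi$ lies in both kernels, part~(b) yields $\xi=0$.

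No step presents a genuine obstacle; the only points that need care are the functoriality of the connecting homomorphisms under the exact functor~$\Phi$, and the bookkeeping that attaches the surjectivity hypothesis to the ``larger'' object~$P$ and the injectivity hypothesis to the subobject~$X$ in part~(b) --- equivalently, to~$Z_0$ and~$X'$ in the deduction of part~(a).
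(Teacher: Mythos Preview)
Your argument is correct and follows essentially the same route as the paper's own proof: part~(b) is done by the identical diagram chase on the morphism of long exact sequences, and part~(a) is deduced from part~(b) by producing a short exact sequence $0\rarrow X'\rarrow P\rarrow X\rarrow0$ in which the given class~$\xi$ dies in $\Ext^n_\sB(P,Y)$. The only difference is that the paper simply asserts ``choose an epimorphism $P\rarrow T$ such that~$\beta$ is annihilated by the induced map,'' whereas you spell out the Yoneda construction (take a representing $n$\+extension and peel off its rightmost term~$Z_0$) and correctly stress that this avoids any appeal to projectives in~$\sB$---a point the paper leaves implicit.
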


\begin{proof}
 This observation goes back, at least,
to~\cite[Remarque~3.1.17(i)]{BBD}.
 Part~(b) is provable by a straightforward diagram chase of
the commutative diagram of a morphism of long exact sequences
$$
 \xymatrixcolsep{1.25em}
 \xymatrix{
  \Ext^{n-1}_\sB(P,Y) \ar[r] \ar@{->>}[d]
  & \Ext^{n-1}_\sB(X,Y) \ar[r] \ar@{>->}[d]
  & \Ext^n_\sB(T,Y) \ar[r] \ar[d] & \Ext^n_\sB(P,Y) \\
  \Ext^{n-1}_\sA(\Phi(P),\Phi(Y)) \ar[r]
  & \Ext^{n-1}_\sA(\Phi(X),\Phi(Y)) \ar[r]
  & \Ext^n_\sA(\Phi(T),\Phi(Y))
 }
$$
induced by the functor~$\Phi$.

 To deduce~(a) from~(b), let $T\in\sB$ be an object and
$\beta\in\Ext^n_\sB(T,Y)$ be an extension class annihilated by the map
$\Ext^n_\sB(T,Y)\rarrow\Ext^n_\sA(\Phi(T),\Phi(Y))$.
 Choose an epimorphism $P\rarrow T$ in $\sB$ such that~$\beta$
is annihilated by the induced map $\Ext^n_\sB(T,Y)\rarrow
\Ext^n_\sB(P,Y)$.
 Then part~(b) implies that $\beta=0$.
\end{proof}

 The final series of lemmas concerns abelian categories with enough
injective or projective objects.

\begin{lem} \label{ext-preservation-enough-for-injectives}
 Let\/ $\sA$ and\/ $\sB$ be abelian categories, and let\/
$\Phi\:\sB\rarrow\sA$ be a fully faithful exact functor.
 Assume that there are enough injective objects in the abelian
category\/ $\sB$, and denote by $\sB_\inj\subset\sB$ the class of
all such injective objects.
 Let $n\ge1$ be an integer.
 Then the following two conditions are equivalent:
\begin{enumerate}
\item the map of Ext groups
$$
 \Ext^i_\sB(X,Y)\lrarrow\Ext^i_\sA(\Phi(X),\Phi(Y))
$$
induced by the functor\/ $\Phi$ is an isomorphism for all objects
$X$, $Y\in\sB$ and integers\/ $0\le i\le n$;
\item $\Ext^i_\sA(\Phi(X),\Phi(J))=0$ for all objects $X\in\sB$
and $J\in\sB_\inj$, and all\/ $1\le i\le n$.
\end{enumerate}
\end{lem}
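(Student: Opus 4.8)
The plan is to settle the implication (1)$\Rightarrow$(2) at once and to deduce (2)$\Rightarrow$(1) by induction on~$i$, using a single injective embedding in~$\sB$ together with the morphism of long exact $\Ext$\+sequences induced by the exact functor~$\Phi$. First I would note that the case $i=0$ of condition~(1) holds unconditionally, since full faithfulness of~$\Phi$ means precisely that $\Hom_\sB(X,Y)\rarrow\Hom_\sA(\Phi(X),\Phi(Y))$ is an isomorphism for all $X$, $Y\in\sB$. The implication (1)$\Rightarrow$(2) is then immediate: for $J\in\sB_\inj$ one has $\Ext^i_\sB(X,J)=0$ for every $i\ge1$, so $\Ext^i_\sA(\Phi(X),\Phi(J))\simeq\Ext^i_\sB(X,J)=0$ for $1\le i\le n$ by~(1).

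For (2)$\Rightarrow$(1), I would prove by induction on $i$ in the range $0\le i\le n$ that the map $\Ext^i_\sB(X,Y)\rarrow\Ext^i_\sA(\Phi(X),\Phi(Y))$ is an isomorphism for all $X$, $Y\in\sB$; the base $i=0$ is full faithfulness. For the inductive step, fix $X$ and $Y$, choose a monomorphism $Y\rarrow J$ with $J\in\sB_\inj$, and put $Z=J/Y$, so that $0\rarrow Y\rarrow J\rarrow Z\rarrow0$ is exact in~$\sB$; applying the exact functor~$\Phi$ gives a short exact sequence $0\rarrow\Phi(Y)\rarrow\Phi(J)\rarrow\Phi(Z)\rarrow0$ in~$\sA$. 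Since $\Phi$ carries short exact sequences to short exact sequences, the natural maps $\Ext^*_\sB(X,{-})\rarrow\Ext^*_\sA(\Phi(X),\Phi({-}))$ constitute a morphism of cohomological $\delta$\+functors on~$\sB$; in particular they form a morphism from the long exact sequence attached to $0\rarrow Y\rarrow J\rarrow Z\rarrow0$ to the one attached to $0\rarrow\Phi(Y)\rarrow\Phi(J)\rarrow\Phi(Z)\rarrow0$, compatibly with the connecting homomorphisms. In this morphism of long exact sequences the groups $\Ext^j_\sB(X,J)$ vanish for all $j\ge1$ because $J$ is injective in~$\sB$, while $\Ext^j_\sA(\Phi(X),\Phi(J))=0$ for $1\le j\le n$ by hypothesis~(2). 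Consequently, for $2\le i\le n$ both connecting maps $\Ext^{i-1}_\sB(X,Z)\rarrow\Ext^i_\sB(X,Y)$ and $\Ext^{i-1}_\sA(\Phi(X),\Phi(Z))\rarrow\Ext^i_\sA(\Phi(X),\Phi(Y))$ are isomorphisms, since in each long exact sequence they sit between two vanishing $\Ext$ groups of $J$, resp.\ of $\Phi(J)$; combining these with the inductive hypothesis applied to $Z$ in degree $i-1$ yields the isomorphism in degree~$i$. For $i=1$ the relevant segments of the two long exact sequences read $\Hom_\sB(X,J)\rarrow\Hom_\sB(X,Z)\rarrow\Ext^1_\sB(X,Y)\rarrow0$ and $\Hom_\sA(\Phi(X),\Phi(J))\rarrow\Hom_\sA(\Phi(X),\Phi(Z))\rarrow\Ext^1_\sA(\Phi(X),\Phi(Y))\rarrow0$, the final zeros coming from $\Ext^1_\sB(X,J)=0$ and $\Ext^1_\sA(\Phi(X),\Phi(J))=0$ (the latter by~(2), using $n\ge1$); the two $\Hom$\+level vertical maps are isomorphisms by full faithfulness, hence so is their induced map on cokernels, which is exactly $\Ext^1_\sB(X,Y)\rarrow\Ext^1_\sA(\Phi(X),\Phi(Y))$.

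The argument is routine; the only points requiring attention are the degree bookkeeping --- hypothesis~(2) supplies the vanishing $\Ext^j_\sA(\Phi(X),\Phi(J))=0$ precisely for $1\le j\le n$, which is just what is needed to push the induction through degree~$n$ --- and the separate treatment of the base degree $i=1$, where the absent vanishing of $\Hom$ in degree~$0$ must be replaced by full faithfulness of~$\Phi$ on $\Hom$ groups. I do not anticipate any genuine obstacle here.
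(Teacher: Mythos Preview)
Your proof is correct and follows essentially the same approach as the paper's: both directions are handled identically, and the inductive step for (2)$\Rightarrow$(1) uses an injective embedding $0\rarrow Y\rarrow J\rarrow Z\rarrow0$ together with the morphism of long exact sequences induced by~$\Phi$. The only cosmetic difference is that the paper treats $i=1$ and $i\ge2$ uniformly by invoking the induction hypothesis in degree $i-1$ on both $J$ and $Z$ (which for $J$ at $i=1$ is just full faithfulness and for $i\ge2$ is the vanishing on both sides), whereas you split off $i=1$ explicitly; the content is the same.
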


\begin{proof}
 (1)\,$\Longrightarrow$\,(2)  Condition~(2) is clearly the particular
case of~(1) for $Y=J\in\sB_\inj$.

 (2)\,$\Longrightarrow$\,(1)  For $i=0$, the map of Ext groups
in question is an isomorphism by the assumption that the functor
$\Phi$ is fully faithful.
 For $1\le i\le n$, we proceed by increasing induction on~$i$.

 Given an object $Y\in\sB$, choose a short exact sequence $0\rarrow
Y\rarrow J\rarrow Y'\rarrow0$ in $\sB$ with $J\in\sB_\inj$.
 Then the exact functor $\Phi$ induces a commutative diagram of
a morphism of long exact sequences
$$
 \xymatrix{
  \Ext^{i-1}_\sB(X,J) \ar[r] \ar@{=}[d]
  & \Ext^{i-1}_\sB(X,Y') \ar[r] \ar@{=}[d]
  & \Ext^i_\sB(X,Y) \ar[r] \ar[d] & 0 \\
  \Ext^{i-1}_\sA(\Phi(X),\Phi(J)) \ar[r]
  & \Ext^{i-1}_\sA(\Phi(X),\Phi(Y')) \ar[r]
  & \Ext^i_\sA(\Phi(X),\Phi(Y)) \ar[r] & 0
 }
$$
 Here $\Ext^i_\sB(X,J)=0$ since $i\ge1$ and $J\in\sB_\inj$, while
$\Ext^i_\sA(\Phi(X),\Phi(J))=0$ by~(2).
 The leftmost and the middle vertical morphisms are isomorphisms
by the induction assumption.
 It follows that the rightmost vertical morphism is an isomorphism, too
(as desired).
\end{proof}

 The following lemma is the dual version of
Lemma~\ref{ext-preservation-enough-for-injectives}, but with
a more precise claim.

\begin{lem} \label{ext-preservation-enough-for-projectives}
 Let\/ $\sA$ and\/ $\sB$ be abelian categories, and let\/
$\Phi\:\sB\rarrow\sA$ be a fully faithful exact functor.
 Assume that there are enough projective objects in the abelian
category\/ $\sB$, and denote by $\sB_\proj\subset\sB$ the class of
all such projective objects.
 Let $n\ge1$ be an integer.
 Then, for any fixed object $Y\in\sB$, the following two conditions are
equivalent:
\begin{enumerate}
\item the map of Ext groups
$$
 \Ext^i_\sB(X,Y)\lrarrow\Ext^i_\sA(\Phi(X),\Phi(Y))
$$
induced by the functor\/ $\Phi$ is an isomorphism for all objects
$X\in\sB$ and integers\/ $0\le i\le n$;
\item $\Ext^i_\sA(\Phi(P),\Phi(Y))=0$ for all objects $P\in\sB_\proj$
and all\/ $1\le i\le n$.
\end{enumerate}
\end{lem}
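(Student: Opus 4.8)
The plan is to obtain this as the dual of Lemma~\ref{ext-preservation-enough-for-injectives}, but I will keep track of the extra precision: here the statement is relative to a \emph{fixed} object $Y\in\sB$, and the reason one can afford this is that the relevant dimension-shifting short exact sequences will live entirely in the contravariant variable. The implication (1)\,$\Longrightarrow$\,(2) is immediate, since~(2) is precisely the special case of~(1) with $X=P\in\sB_\proj$: for a projective object $P$ one has $\Ext^i_\sB(P,Y)=0$ for all $i\ge1$, so~(1) forces $\Ext^i_\sA(\Phi(P),\Phi(Y))$ to vanish for $1\le i\le n$.

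For (2)\,$\Longrightarrow$\,(1) I would argue by increasing induction on $i$, for $0\le i\le n$. The base case $i=0$ is just the statement that $\Hom_\sB(X,Y)\rarrow\Hom_\sA(\Phi(X),\Phi(Y))$ is an isomorphism for every $X\in\sB$, which holds because $\Phi$ is fully faithful. For the inductive step from $i-1$ to $i$ (with $1\le i\le n$), fix $X\in\sB$ and, using that $\sB$ has enough projectives, choose a short exact sequence $0\rarrow X'\rarrow P\rarrow X\rarrow0$ in $\sB$ with $P\in\sB_\proj$. Applying the exact functor $\Phi$ and comparing the long exact sequences of $\Ext^*_\sB({-},Y)$ and $\Ext^*_\sA({-},\Phi(Y))$, one gets a morphism of long exact sequences. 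In the degrees that matter, the terms with $P$ in the first argument vanish: $\Ext^j_\sB(P,Y)=0$ for all $j\ge1$ because $P$ is projective, and $\Ext^j_\sA(\Phi(P),\Phi(Y))=0$ for $1\le j\le n$ by~(2). Hence for $i\ge2$ the connecting maps give a commutative square
$$
 \xymatrix{
  \Ext^{i-1}_\sB(X',Y) \ar[r] \ar[d]
  & \Ext^i_\sB(X,Y) \ar[d] \\
  \Ext^{i-1}_\sA(\Phi(X'),\Phi(Y)) \ar[r]
  & \Ext^i_\sA(\Phi(X),\Phi(Y))
 }
$$
in which both horizontal arrows are isomorphisms and the left vertical arrow is an isomorphism by the induction hypothesis applied to $X'$; therefore the right vertical arrow is an isomorphism, which is the assertion of~(1) at degree~$i$ for the object~$X$. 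For $i=1$ the argument is the same except that the connecting isomorphism is replaced by the presentations $\Ext^1_\sB(X,Y)=\coker(\Hom_\sB(P,Y)\rarrow\Hom_\sB(X',Y))$ and $\Ext^1_\sA(\Phi(X),\Phi(Y))=\coker(\Hom_\sA(\Phi(P),\Phi(Y))\rarrow\Hom_\sA(\Phi(X'),\Phi(Y)))$, valid because $\Ext^1_\sB(P,Y)=0=\Ext^1_\sA(\Phi(P),\Phi(Y))$, together with the fact that the two $\Hom$ comparison maps are isomorphisms by full faithfulness (the $i=0$ base case); the five lemma then finishes it.

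Since all the inputs (enough projectives in $\sB$, the long exact sequences of $\Ext$, vanishing of $\Ext^{\ge1}$ out of projectives, and the five lemma) are entirely formal, there is no genuine obstacle. The only point requiring a little care is the bookkeeping of the degree range in~(2): in the step from $i-1$ to $i$ one uses the vanishing $\Ext^{i-1}_\sA(\Phi(P),\Phi(Y))=0=\Ext^i_\sA(\Phi(P),\Phi(Y))$, which is exactly covered by~(2) when $1\le i\le n$, and one should also note explicitly that the object $Y$ is untouched throughout the induction because the resolution $0\rarrow X'\rarrow P\rarrow X\rarrow0$ is taken in the first variable — this is the structural difference from Lemma~\ref{ext-preservation-enough-for-injectives}, where the dimension shift is performed in $Y$ and one is consequently forced to quantify over all $Y\in\sB$.
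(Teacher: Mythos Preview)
Your proof is correct and follows essentially the same approach as the paper: the paper's proof simply says ``Observe that the proof of Lemma~\ref{ext-preservation-enough-for-injectives} works for a fixed object $X$, and then dualize,'' and what you have written is exactly that dualization carried out in full, including the explicit observation that the dimension shift happens in the contravariant variable so that $Y$ may remain fixed throughout.
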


\begin{proof}
 Observe that the proof of
Lemma~\ref{ext-preservation-enough-for-injectives} works for
a fixed object $X$, and then dualize.
\end{proof}

 Our last lemma is the $n=0$ counterpart of
Lemma~\ref{ext-preservation-enough-for-projectives}.

\begin{lem} \label{full-and-faithfulness-enough-for-projectives}
 Let\/ $\sA$ and\/ $\sB$ be abelian categories, and let\/
$\Phi\:\sB\rarrow\sA$ be an exact functor.
 Assume that there are enough projective objects in the abelian
category\/~$\sB$, and denote by\/ $\sB_\proj\subset\sB$ the class
of all such projective objects.
 Then, for any fixed object $Y\in\sB$, the following two conditions are
equivalent:
\begin{enumerate}
\item the map of Hom groups
$$
 \Hom_\sB(X,Y)\lrarrow\Hom_\sA(\Phi(X),\Phi(Y))
$$
induced by the functor\/ $\Phi$ is an isomorphism for all objects
$X\in\sB$;
\item the map of Hom groups
$$
 \Hom_\sB(P,Y)\lrarrow\Hom_\sA(\Phi(P),\Phi(Y))
$$
induced by the functor\/ $\Phi$ is an isomorphism for all objects
$P\in\sB_\proj$.
\end{enumerate}
\end{lem}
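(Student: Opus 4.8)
The plan is to establish the nontrivial implication $(2)\Longrightarrow(1)$ by choosing a projective presentation of an arbitrary object of~$\sB$ and running a diagram chase; the reverse implication $(1)\Longrightarrow(2)$ is immediate, since $\sB_\proj$ is simply a subclass of the class of all objects of~$\sB$.

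For $(2)\Longrightarrow(1)$, I would first fix an arbitrary object $X\in\sB$ and, using the assumption that there are enough projective objects in~$\sB$, pick a projective presentation $P_1\rarrow P_0\rarrow X\rarrow0$ with $P_0$, $P_1\in\sB_\proj$. Applying the exact functor~$\Phi$ to this right-exact sequence yields a right-exact sequence $\Phi(P_1)\rarrow\Phi(P_0)\rarrow\Phi(X)\rarrow0$ in~$\sA$.

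Next I would apply the contravariant left-exact functors $\Hom_\sB({-},Y)$ and $\Hom_\sA({-},\Phi(Y))$ to these two presentations, obtaining a commutative diagram with exact rows
$$
 \xymatrix{
  0 \ar[r] & \Hom_\sB(X,Y) \ar[r] \ar[d]
   & \Hom_\sB(P_0,Y) \ar[r] \ar[d]
   & \Hom_\sB(P_1,Y) \ar[d] \\
  0 \ar[r] & \Hom_\sA(\Phi(X),\Phi(Y)) \ar[r]
   & \Hom_\sA(\Phi(P_0),\Phi(Y)) \ar[r]
   & \Hom_\sA(\Phi(P_1),\Phi(Y))
 }
$$
in which the two rightmost vertical maps are isomorphisms by hypothesis~(2). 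A routine diagram chase — the ``kernel'' case of the five lemma, using only left-exactness of the two rows — then shows that the leftmost vertical map $\Hom_\sB(X,Y)\rarrow\Hom_\sA(\Phi(X),\Phi(Y))$ is an isomorphism, which is precisely condition~(1) for~$X$. Since $X$ was arbitrary, this completes the proof.

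Every step here is purely formal, so I do not anticipate any real obstacle. The one point worth keeping in mind is that, in contrast to Lemmas~\ref{ext-preservation-enough-for-projectives} and~\ref{next-degree-ext-injectivity-lemma}, the functor~$\Phi$ is \emph{not} assumed here to be fully faithful; consequently one genuinely needs the hypothesis on $\Hom$ out of projective objects, and the chase must be fed the two right-hand columns as input rather than being bootstrapped from any a priori faithfulness of~$\Phi$.
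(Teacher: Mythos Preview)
Your argument is correct and is precisely the approach indicated in the paper: the paper's proof is the one-line hint ``Represent the object $X$ as the cokernel of a morphism of projective objects in the category~$\sB$,'' and you have simply written out the resulting diagram chase in full.
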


\begin{proof}
 Represent the object $X$ as the cokernel of a morphism of projective
objects in the category~$\sB$.
\end{proof}

\bigskip

\end{document}